\documentclass[12pt,a4paper]{article}
\usepackage[hmargin=2.5cm,vmargin=2.5cm]{geometry}
\usepackage{amssymb,latexsym,amsmath,amsfonts,amsthm,color,bbm}
\usepackage{graphicx,psfrag}
\usepackage{hyperref}
\hypersetup{colorlinks=true}

\DeclareMathOperator{\diag}{diag}
\DeclareMathOperator{\Res}{Res}
\DeclareMathOperator{\Ai}{Ai}
\DeclareMathOperator{\BK}{BK}
\DeclareMathOperator{\KMW}{KMW}
\DeclareMathOperator{\Pea}{P}

\newcommand{\R}{\mathbb{R}}
\newcommand{\id}{\mathbbm 1}
\renewcommand{\P}{\mathbf P}
\renewcommand{\d}{\mathrm d}
\newcommand{\Or}{\mathcal O}

\newcommand{\uu}{u}
\newcommand{\UU}{\bf U}
\newcommand{\vv}{v}
\newcommand{\er}{\mathbb{R}}
\newcommand{\cee}{\mathbb{C}}

\newcommand{\zet}{\mathbb{Z}}
\newcommand{\ii}{\mathbf{i}}
\newcommand{\lam}{\lambda}

\newcommand{\al}{\alpha}

\newcommand{\wtil}{\widetilde}
\newcommand{\what}{\widehat}
\renewcommand{\Re}{\mathrm{Re}\,}

\newcommand{\ud}{\,\mathrm{d}}

\newcommand{\pa}{\partial}
\newcommand{\pee}{\mathbf{p}}

\newcommand{\Kaa}{K}
\newcommand{\res}{\mathrm{res}}
\newcommand{\wt}{\widetilde}
\newcommand{\wh}{\widehat}
\newcommand{\si}{\sigma}
\newcommand{\Aa}{\mathbb{A}}
\newcommand{\Bb}{\mathbb{B}}
\newcommand{\Cc}{\mathbb{C}}
\newcommand{\Dd}{\mathbb{D}}
\newcommand{\Ss}{\mathbb{S}}
\newcommand{\Mm}{\mathbb{M}}
\newcommand{\vbf}{\mathbf{v}}
\newcommand{\alt}{\mathrm{alt}}
\newcommand{\DD}[1]{\mathcal D_{#1}}
\newcommand{\EE}{\mathbb E}
\newcommand{\FF}{\mathbb F}
\newcommand{\GG}{\mathbb G}
\newcommand{\HH}{\mathbb H}

\newtheorem{theorem}{Theorem}[section]
\newtheorem{lemma}[theorem]{Lemma}
\newtheorem{proposition}[theorem]{Proposition}

\newtheorem{corollary}[theorem]{Corollary}

\newtheorem{definition}[theorem]{Definition}

\newtheorem{remark}[theorem]{Remark}

\newtheorem{conj}[theorem]{Conjecture}

\numberwithin{equation}{section}

\hyphenation{pa-ra-me-tri-za-tion}

\allowdisplaybreaks

\author{Steven Delvaux\thanks{Department of Mathematics, University of Leuven (KU Leuven),
Celestijnenlaan 200B, B--3001 Leuven, Belgium. E-mail: {\tt steven.delvaux\symbol{'100}wis.kuleuven.be}} \and
B\'alint Vet\H o\thanks{Institute for Applied Mathematics, Bonn University, Endenicher Allee 60, 53115 Bonn, Germany;
MTA--BME Stochastics Research Group, Egry J.\ u.\ 1, 1111 Budapest, Hungary. E-mail: {\tt vetob@math.bme.hu}}}
\title{The hard edge tacnode process and the hard edge Pearcey process with non-intersecting squared Bessel paths}

\begin{document}

\maketitle

\begin{abstract}
A system of non-intersecting squared Bessel processes is considered which all start from one point and they all return to another point.
Under the scaling of the starting and ending points when the macroscopic boundary of the paths touches the hard edge,
a limiting critical process is described in the neighbourhood of the touching point which we call the hard edge tacnode process.
We derive its correlation kernel in an explicit new form which involves Airy type functions and operators that act on the direct sum of $L^2(\R_+)$ and a finite dimensional space.
As the starting points of the squared Bessel paths are set to $0$, a cusp in the boundary appears.
The limiting process is described near the cusp and it is called the hard edge Pearcey process.
We compute its multi-time correlation kernel which extends the existing formulas for the single-time kernel.
Our pre-asymptotic correlation kernel involves the ratio of two Toeplitz determinants which are rewritten using a Borodin--Okounkov type formula.
\end{abstract}

\section{Introduction}

In recent years, the investigations of non-intersecting Brownian motion and random walk paths
focused on the description of a critical process called the tacnode process.
This process appears when two groups of trajectories are asymptotically supported in two ellipses in the time-space plane
such that the ellipses touch each other creating a tacnode (self-touching point) of the macroscopic boundary.
The aim is to describe the behaviour of the paths near the touching point.

A series of recent papers by different groups of authors studied the tacnode process in parallel using various methods.
The first result in this direction is due to Adler, Ferrari and van Moerbeke:
in \cite{AFvM11}, a model of non-intersecting random walk paths is considered such that the paths form a symmetric tacnode.
In \cite{DKZ}, Delvaux, Kuijlaars and Zhang studied the non-symmetric case of non-intersecting Brownian trajectories
and they expressed the critical correlation kernel in terms of the solution of a $4\times4$ Riemann--Hilbert problem.
A different approach to the Brownian case is due to Johansson who gave a formula for the tacnode kernel
in terms of the resolvent of the Airy kernel in \cite{Joh11} in the symmetric case.
The latter approach was extended by Ferrari and Vet\H o in \cite{FV} to the general non-symmetric case.
The tacnode process was also obtained in the tiling problem of the double Aztec diamond by Adler, Johansson and van Moerbeke in \cite{AJvM}.

It was not a priori clear that the various formulas for the tacnode kernel give rise to the same limit process,
since the results in \cite{AFvM11}, \cite{Joh11} and \cite{FV} contain Airy resolvent type formulas whereas the kernel in \cite{DKZ} is expressed with the solution of a Riemann--Hilbert problem.
It was shown in \cite{AJvM} that the formulation of \cite{AFvM11} and \cite{Joh11} are equivalent.
A more recent result \cite{D} gives the equivalence of the Riemann--Hilbert formulas in \cite{DKZ}
and the Airy resolvent formulas in \cite{Joh11} and \cite{FV}.

\begin{figure}
\begin{center}
\def\svgwidth{200pt}
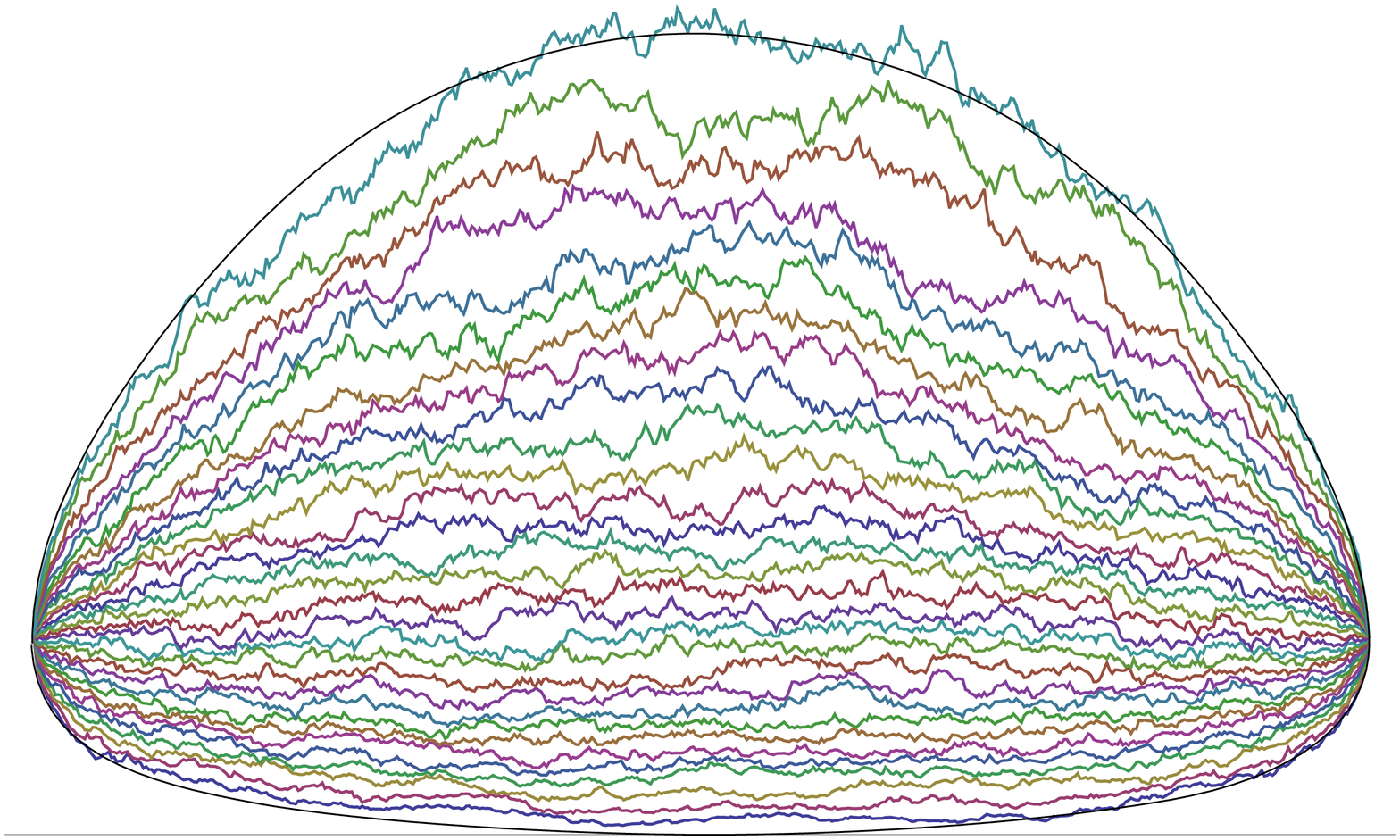\qquad
\def\svgwidth{200pt}
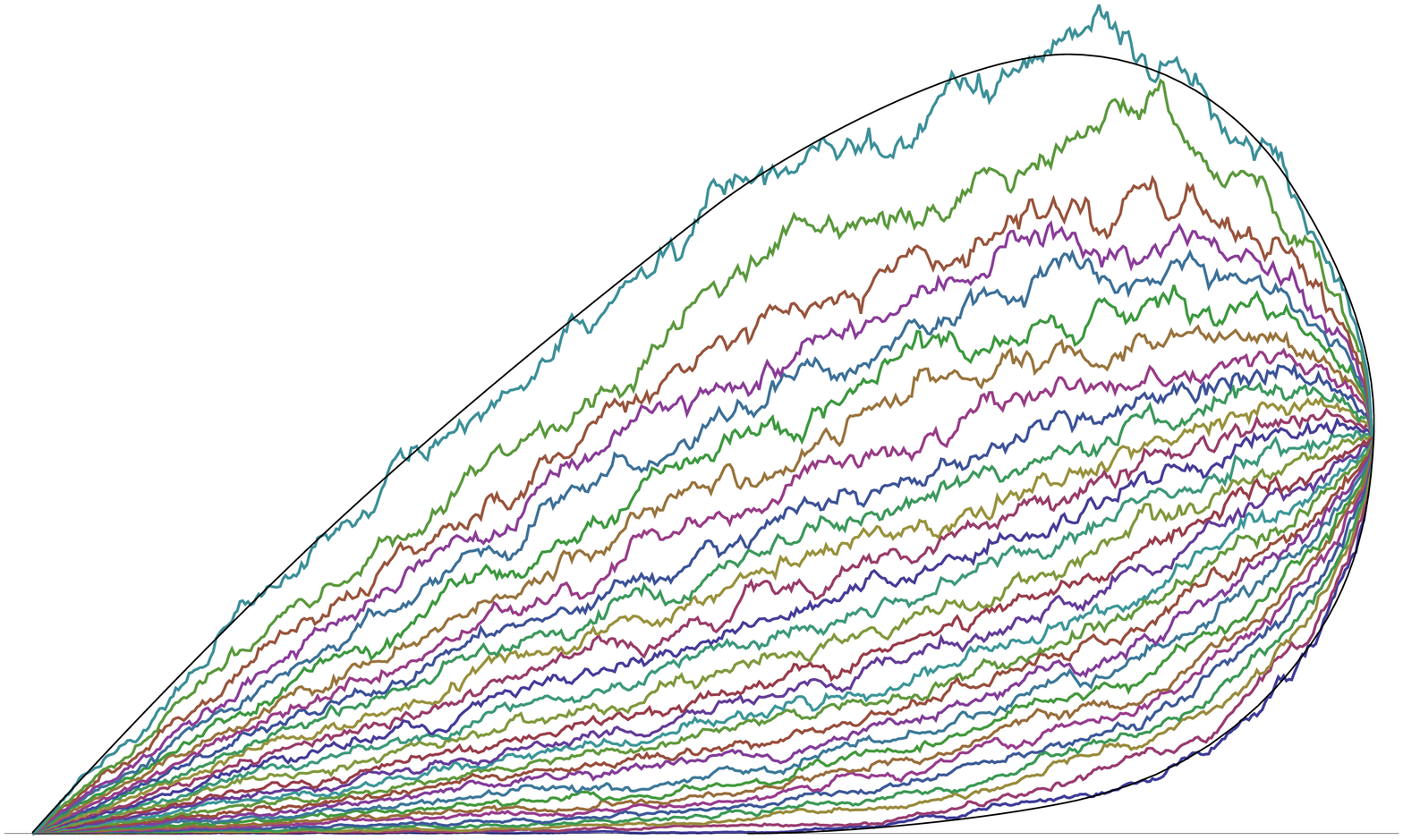
\end{center}
\caption{Simulation picture in the time-space plane of $n=30$ non-intersecting squared Bessel paths with the tacnode (left-hand side) and the Pearcey (right-hand side) scaling of the starting and endpoints.
The horizontal axis in both cases denotes the time $t$, running from time $t=0$ (starting time) to time $t=1$ (ending time).
The paths are conditioned to be non-intersecting throughout the time interval $t\in (0,1)$, and they have a fixed starting position $a$ and ending position $b$ at times $t=0$ and $t=1$ respectively.
In the left hand picture, $a,b>0$ are fine-tuned so that the limiting hull of the paths touches the $x=0$ line (horizontal axis) at a critical point, the hard edge tacnode.
In the right hand picture, we have $a=0$ and the limiting hull of the paths forms a cusp with the horizontal axis;
in the neighborhood of the cusp point, the hard edge Pearcey process is observed.\label{fig:simulation}}
\end{figure}

Apart from systems of non-intersecting Brownian motions and random walk paths,
the study of non-intersecting squared Bessel paths is also natural due to their representation as eigenvalues of the Laguerre process, see~\cite{KO01}.
This process is a positive definite matrix valued Brownian bridges.
Since the transition kernel of the squared Bessel paths can be given explicitly via modified Bessel functions, see \eqref{transitionprob:sqB},
a Karlin--McGregor type formula like \eqref{extendedkernel:start} can be applied.
Non-intersecting squared Bessel paths were also studied by Katori and Tanemura in \cite{KT11}, but the first description of the hard edge tacnode process is due to Delvaux \cite{Dhet}.
The formation of the tacnode in this case is slightly different: instead of two touching groups of trajectories in the time-space plane,
only one group of paths is considered and its boundary macroscopically touches the hard edge, i.e.\ the $x=0$ line.
This configuration is referred to as the hard edge tacnode and the critical process in the neighbourhood of the touching point is called the hard edge tacnode process.
In \cite{Dhet}, the correlation kernel of the hard edge tacnode process is expressed
in terms of the solution of a $4\times4$ Riemann--Hilbert problem which is different from the one that appears in the case of the Brownian trajectories.
See the left-hand side of Figure~\ref{fig:simulation} for non-intersecting squared Bessel paths with the tacnode scaling of the starting and endpoints.
For further figures, see also~\cite{D}.

In Theorem~\ref{thm:hardedgetacnode} which is the main result of the present paper
we provide an explicit Airy type formula for the multi-time correlation kernel of the hard edge tacnode process, under the assumption that the parameter $\al$ of the Bessel process is a (non-negative) integer.
The kernel is expressed as a double complex integral, see \eqref{deflimkernel}.
The formula is completely new in the literature.
It is reminiscent to the usual tacnode kernel, but the Airy resolvent operator on $L^2(\R_+)$ is replaced by an operator which acts on the direct sum space of $L^2(\R_+)$ and a finite dimensional space.

A different phenomenon appears when the starting points of the non-intersecting squared Bessel paths are taken to be $0$ and the endpoints are scaled linearly with the number of paths.
In this case, there is a critical time such that for any earlier time the lowest path stays close to $0$ whereas for any later time the distance of the lowest path from $0$ is macroscopic.
After rescaling around the critical time, one obtains the hard edge Pearcey process, see the right-hand side of Figure~\ref{fig:simulation}.
The single time hard edge Pearcey kernel was already described by Desrosiers and Forrester \cite{DF}
and later in a different formulation by Kuijlaars, Mart\'\i nez-Finkelshtein and Wielonsky \cite{KMW2}.
We give the multi-time correlation kernel of the hard edge Pearcey process in different formulations in Theorems~\ref{thm:hardedgePearcey} and~\ref{thm:Pearcey:alternative}
which also shows the equivalence of the formulas in \cite{DF} and \cite{KMW2}.
In Corollary~\ref{cor:BK}, we obtain the kernel by Borodin and Kuan \cite{BK} in the special case of one-dimensional squared Bessel paths, i.e.\ of absolute values of one-dimensional Brownian motions.

The main steps how the convergence of the kernel of $n$ non-intersecting squared Bessel paths to that of the hard edge tacnode is proved are the following.
First we write the kernel as a double complex contour integral using the representation of the modified Bessel function.
The integrand can be transformed into the ratio of two Toeplitz determinants of sizes $n-1$ and $n$ respectively.
The Toeplitz determinants have symbols with non-zero winding numbers around $0$, hence we apply a generalized Borodin--Okounkov formula due to B\"ottcher and Widom in \cite{BW} to obtain Theorem~\ref{theorem:toeplitz}
if the parameter $\alpha$ of the squared Bessel paths is a (non-negative) integer.
Then we obtain the ratio of two Fredholm type determinants where the two operators are rank one perturbations of each other.
This yields a resolvent type formula in Theorem~\ref{theorem:kernel:n} which is suitable for asymptotic analysis.
The functions that appear in the finite $n$ kernel can be rewritten in terms of Bessel functions, hence the asymptotic analysis relies on the convergence of Bessel functions which is proved separately in the appendix.

Now we introduce the model of non-intersecting squared Bessel paths that we consider.
The squared Bessel process depends on a parameter $\alpha>-1$.
The transition probability of the squared Bessel process for any time $t>0$ is defined by
\begin{equation}\label{transitionprob:sqB}
p_t(x,y) = \frac{1}{2t}
\left(\frac{y}{x}\right)^{\alpha/2}\exp\left(-\frac{x+y}{2t}\right)I_{\alpha}\left(\frac{\sqrt{xy}}{t}\right)
\end{equation}
for $x>0$ and $y\ge0$ where $I_\alpha$ is the modified Bessel function which can be given by the series
\begin{equation}\label{BesselI:def}
I_{\alpha}(z) = \sum_{k=0}^{\infty}
\frac{(z/2)^{2k+\alpha}}{k!\Gamma(k+\alpha+1)}.
\end{equation}
The transition probability $p_t(0,y)$ is obtained by taking the limit $x\to 0$.
If $d=2(\alpha+1)$ is an integer, then the squared Bessel process can be obtained as the squared absolute value of a $d$-dimensional Brownian motion.
In this case, we call $d$ the dimension of the squared Bessel process.

In the present paper, we consider non-intersecting squared Bessel paths which start from one fixed point and end at another fixed point.
In order to construct this system of paths, we first take $n$ non-intersecting squared Bessel paths with fixed different starting points $a_1>a_2>\ldots>a_n>0$ at time $t=0$
and fixed different ending points $b_1>b_2>\ldots>b_n>0$ at time $t=1$.
The paths are conditioned to be non-intersecting in the time interval $(0,1)$.
It is well known that this defines an extended determinantal point process.
That is, the joint probability at a sequence of times $0<t_1<t_2<\ldots<t_m<1$ can be expressed via the determinant of an extended correlation kernel $K_n(s,x;t,y)$.
The kernel $K_n$ is defined in terms of the transition probability $p_t(x,y)$ in \eqref{transitionprob:sqB}.
Namely as given also in \cite[Eq.~(1.1)]{Joh11}, we have
\begin{equation}\label{extendedkernel:start}
K_n(s,x;t,y) = -p_{t-s}(x,y) \id_{t>s}+ \sum_{j,k=1}^n p_{1-s}(x,b_k)(A^{-1})_{k,j} p_t(a_j,y)
\end{equation}
for any positions $x,y>0$ and times $s,t\in (0,1)$ with $\id_{t>s}$ denoting the characteristic function of $t>s$ and with $A$ defined as the $n\times n$ matrix
\begin{equation}\label{extendedkernel:Amatrix}
A = \left(p_1(a_j,b_k)\right)_{j,k=1}^n.
\end{equation}

Next we take the confluent limit of the starting and ending points $a_j\to a\geq 0$ and $b_j\to b> 0$.
If $a\geq 0$ and $b>0$ are suitably scaled with the number $n$ of the paths, we can create a picture in the time-space plane with a cusp or a tacnode at the hard edge.
In this paper, we obtain the limiting extended correlation kernel of non-intersecting squared Bessel paths near the tacnode
and the cusp which we call the hard edge tacnode process and the hard edge Pearcey process.

The paper is organized as follows.
We first state our main results in Section~\ref{s:mainres}.
The correlation kernel $K_n$ is expressed as a ratio of two Toeplitz determinants and in a pre-asymptotic form for finite $n$ in Subsection~\ref{ss:finiten}.
Then two different scalings and the corresponding limit processes are considered:
the hard edge tacnode process is introduced and discussed along with our results on the convergence in Subsections~\ref{ss:hetacnode} and \ref{ss:alt_tacnode}.
The hard edge Pearcey process with different formulations of its correlation kernel and our results are given in Subsection~\ref{ss:hePearcey}.
The finite $n$ formulas for the kernel $K_n$ are proved in Section~\ref{s:derivation_finite}.
The asymptotic analysis for the hard edge tacnode process is performed in Section~\ref{s:analysis_tacnode}, the one for the hard edge Pearcey process is in Section~\ref{s:analysis_Pearcey}.
The alternative hard edge Pearcey formulas are proved in Section~\ref{s:alternative_Pearcey}.
The proof of Proposition~\ref{prop:limSchur} which is needed for the existence of the given formulation of the hard edge tacnode kernel is postponed to Section~\ref{s:asymptinv}.
Section~\ref{s:Bessel_limit} of the appendix contains a statement about the convergence of the derivatives of Bessel functions to those of the Airy function and useful tail bounds.

\section{Main results}\label{s:mainres}

We first report our formulas for $n$ non-intersecting squared Bessel paths for $n$ finite.
Next we describe the limits under the tacnode and the Pearcey scaling.

\subsection{Correlation kernel of non-intersecting squared Bessel paths}\label{ss:finiten}

In what follows, we will assume that the parameter $\alpha$ of the squared Bessel process is a (non-negative) integer.
See however Remark~\ref{remark:nonint} where we state an extension of one of our results for finite $n$ to the case of non-integer $\alpha$.

Define the weight function
\begin{equation}\label{weight:wxyt}
w(z;x,y,t) = \frac{1}{4\pi \ii t}z^{\alpha}\exp\left(\frac{z-1}{2t}x\right)\exp\left(\frac{z^{-1}-1}{2t}y\right)
\end{equation}
on the unit circle $|z|=1$ with parameters $x,y,t>0$.
We abbreviate
\begin{equation}\label{weight:w}
w(z):=w(z;a,b,1)
\end{equation}
where $a,b>0$ denote the starting and ending point of the non-intersecting squared Bessel paths.
We also define the weight function
\begin{equation}\label{weight:wtil}
\wtil w(z) := (1-\xi z)(1-\eta^{-1}z^{-1})w(z)
\end{equation}
where $\xi,\eta$ are free parameters for the moment, but they will depend on the integration variables in \eqref{extendedkernel:toeplitz} as given by \eqref{xi:eta} below.

First, we express the kernel of $n$ non-intersecting squared Bessel paths as a double integral where the integrand is a ratio of two Toeplitz determinants.
The proof of the theorem is given in Section~\ref{subsection:proof:theorem:Toeplitz}.

\begin{theorem}[Toeplitz determinant formula for $K_n$]\label{theorem:toeplitz}
Let $\alpha$ be an integer and consider $n$ non-intersecting squared Bessel paths of parameter $\alpha$ with starting point $a>0$ and ending point $b>0$.
Then the extended correlation kernel can be written as
\begin{multline}\label{extendedkernel:toeplitz} K_n(s,x;t,y) = -p_{t-s}(x,y)
\id_{t>s} + \int_{S_1}\int_{S_1} C_{n}(\xi,\eta)w(u;a,y,t)w(v;x,b,1-s)\frac{\ud u}{u}\frac{\ud v}{v}
\end{multline}
where $S_1$ denotes the unit circle in the complex plane, oriented counterclockwise and
\begin{equation}\label{Toeplitz:ratio}
C_{n}(\xi,\eta) := \frac{\eta^{n-1}}{\xi^{n-1}}\det\left(\int_{S_1} z^{j-k}\wtil w(z)\frac{\ud z}{z}\right)_{j,k=1}^{n-1} /
\det\left(\int_{S_1} z^{j-k}w(z)\frac{\ud z}{z}\right)_{j,k=1}^{n}
\end{equation}
is a ratio of two Toeplitz determinants with weight functions $w(z)$ and $\wtil w(z)$ defined in \eqref{weight:wxyt}--\eqref{weight:wtil} with
\begin{equation}\label{xi:eta}
\xi:=\left(\frac{u-1+t}{t}\right)^{-1},\qquad \eta :=\frac{v^{-1}-s}{1-s}.
\end{equation}
\end{theorem}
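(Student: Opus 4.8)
The plan is to start from the Karlin--McGregor type formula \eqref{extendedkernel:start} for the kernel $K_n$, and to process the two building blocks in it: the single transition probability $p_{t-s}(x,y)$ (which will directly give the first term of \eqref{extendedkernel:toeplitz}, and need not be touched further) and the rank-$n$ sum $\sum_{j,k} p_{1-s}(x,b_k)(A^{-1})_{k,j}p_t(a_j,y)$. The key idea is to represent each modified Bessel function appearing in $p_t$ via a contour integral over the unit circle. Recalling \eqref{BesselI:def}, one has the classical generating-function identity $I_\alpha(\zeta)=\frac{1}{2\pi\ii}\oint_{S_1} z^{-\alpha-1}\exp\!\big(\tfrac{\zeta}{2}(z+z^{-1})\big)\,\ud z$ (up to the standard normalization), so that $p_t(x,y)$ from \eqref{transitionprob:sqB} becomes, after collecting the prefactors $(y/x)^{\alpha/2}$ and $\exp(-(x+y)/2t)$ and substituting $\zeta=\sqrt{xy}/t$ and rescaling the integration variable, precisely an integral of the weight $w(z;x,y,t)$ of \eqref{weight:wxyt} against $\ud z/z$. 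I would carry this out first and record it as: $p_t(x,y)=\int_{S_1} w(z;x,y,t)\,\frac{\ud z}{z}$, i.e.\ $w$ is exactly the integrand that reproduces the squared Bessel transition density. This is the ``double integral'' mechanism: one integration variable $u$ will come from $p_t(a_j,y)$ and the other $v$ from $p_{1-s}(x,b_k)$.

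**Next I would confluent-limit** the starting and ending points $a_j\to a$, $b_j\to b$ inside the sum. Here the matrix $A=(p_1(a_j,b_k))$ and its inverse must be handled with care since $A$ degenerates. The standard device (as in the derivation of \eqref{extendedkernel:start} and in e.g.\ \cite{KT11}) is to differentiate: $\sum_{j,k}f(a_j)(A^{-1})_{k,j}g(b_k)$ has a confluent limit expressible through a ratio of determinants built from $f,f',\dots,f^{(n-1)}$, $g,g',\dots,g^{(n-1)}$ and the mixed derivatives $\partial_a^{j-1}\partial_b^{k-1}p_1(a,b)$. After inserting the contour representations $f(a_j)=p_t(a_j,y)=\int w(u;a_j,y,t)\,\ud u/u$ and likewise for $g$, the $a$- and $b$-derivatives act only on the exponential factors $\exp(\tfrac{u-1}{2t}a_j)$ and $\exp(\tfrac{v^{-1}-1}{2(1-s)}b_k)$, so they pull out simple scalar powers; the resulting determinants become Vandermonde-like in those scalars. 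A column-reduction then collapses them into a single Toeplitz-type determinant of the kernel function $p_1(a,b)$ — and here $p_1(a,b)$ itself, through its contour representation, is what manufactures the Toeplitz structure $\big(\int_{S_1}z^{j-k}w(z)\,\ud z/z\big)$ in the denominator of \eqref{Toeplitz:ratio}. The extra factor $(1-\xi z)(1-\eta^{-1}z^{-1})$ in $\wtil w$ of \eqref{weight:wtil}, and the specific substitutions \eqref{xi:eta}, should drop out naturally: $\xi^{-1}=\tfrac{u-1+t}{t}$ is the coefficient appearing when one differentiates the $u$-dependent exponential, and $\eta=\tfrac{v^{-1}-s}{1-s}$ is its analogue on the $v$-side, so that the rank-one modification of the symbol encodes exactly the ``one fewer path'' numerator determinant of size $n-1$ against the size-$n$ denominator. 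The powers $\eta^{n-1}/\xi^{n-1}$ in front of $C_n$ are bookkeeping from these rescalings.

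**The main obstacle** I anticipate is making the confluent limit rigorous and identifying the quotient of (limiting) Cauchy/Vandermonde-type determinants with the clean Toeplitz ratio $C_n(\xi,\eta)$ of \eqref{Toeplitz:ratio} — in particular, tracking all prefactors (the $(y/x)^{\alpha/2}$, the Gaussians, the $1/(2t)$, and the Jacobians from rescaling the contour variables) so that they assemble exactly into $w(u;a,y,t)\,w(v;x,b,1-s)$ and nothing is left over. A secondary technical point is justifying that $A$ is invertible along the confluent family and that the limit may be taken inside the contour integrals (uniform convergence on $S_1$, which is harmless since the integrands are entire in $a_j,b_k$ and the circle is compact). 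The role of $\alpha$ being a non-negative integer enters precisely in the contour representation of $I_\alpha$: for integer $\alpha$ the factor $z^{-\alpha-1}$ is single-valued on $S_1$, so the representation is an honest closed-contour integral and the ensuing determinants are genuine Toeplitz determinants; this is why the hypothesis appears in the statement. I would organize the write-up as (i) contour formula for $p_t$; (ii) confluent limit of the bilinear form $A^{-1}$; (iii) column operations turning Vandermonde/Cauchy determinants into Toeplitz ones and extracting $\wtil w$; (iv) reassembling constants to match \eqref{extendedkernel:toeplitz}.
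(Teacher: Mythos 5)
Your proposal follows essentially the same route as the paper: contour representation of $I_\alpha$ (valid for integer $\alpha$) turning $p_t(x,y)$ into $\int_{S_1} w(z;x,y,t)\,\ud z/z$, confluent limit $a_j\to a$, $b_j\to b$ via successive derivatives, writing the resulting bilinear form as a Christoffel--Darboux-type sum with the moment matrix \eqref{extendedkernel:Amatrix:conf:2}, then a bordered-determinant/Cramer argument with row and column operations producing the size-$(n-1)$ Toeplitz determinant with symbol $\wtil w$. One small imprecision worth flagging: after differentiating, the entries involve $\bigl(\tfrac{u-1}{2t}\bigr)^j$ and $\bigl(\tfrac{u^{-1}-1}{2(1-s)}\bigr)^k$, not yet pure powers $u^{j-k}$; the paper's trick at this stage (which you do not make explicit) is the observation that sandwiching with $A^{-1}$ is invariant under replacing the monomial $\lambda^j$ by any polynomial $P_j(\lambda)$ of degree $j$, and then choosing $P_j(\lambda)=(\lambda+1)^j$, $Q_k(\mu)=(\mu+1)^k$ to collapse the prefactors and make $A$ a genuine Toeplitz matrix. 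Likewise, the factor $(1-\xi z)(1-\eta^{-1}z^{-1})$ in $\wtil w$ arises not from the differentiation itself but from the specific elementary operations on the bordered determinant (subtracting $\xi$ times row $j+1$ from row $j$, $\eta^{-1}$ times column $k+1$ from column $k$) which zero out the border except for the corner, leaving the modified moments $m_{j,k}-\xi m_{j+1,k}-\eta^{-1}m_{j,k+1}+\xi\eta^{-1}m_{j+1,k+1}$; your description of this as a ``rank-one modification of the symbol'' is not the right way to think about it, but the mechanism you point at is correct.
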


\begin{remark}[Non-integer $\al$]\label{remark:nonint}
Theorem~\ref{theorem:toeplitz} can be extended to arbitrary real values of the parameter $\alpha$ ($\alpha>-1$), not necessarily integer.
To that end, it suffices to replace both integration contours $S_1$ in \eqref{Toeplitz:ratio} by a contour $C$
where $C$ is a contour encircling the origin in counterclockwise direction, beginning at and returning to $-\infty$, and never intersecting the negative real line except at $-\infty$.
The contour $C$ is the negative of the standard `Hankel contour'.
We then assume that all powers in the formulas have a branch cut along the negative real axis.
Note that in the special case where $\alpha$ is integer, the integration over the contour $C$ can be replaced by an integration over
the unit circle $S_1$ and then we retrieve the formula stated in Theorem~\ref{theorem:toeplitz} above.
The proof of the extension to non-integer $\alpha$ is similar to the one of Theorem~\ref{theorem:toeplitz} given in Section~\ref{subsection:proof:theorem:Toeplitz},
by noting that \eqref{intrepr:unitcircle:1} holds for non-integer values of $\al$ provided that the integration contour $S_1$ is replaced by the contour $C$ described above.
We omit the details.

Unfortunately, we were unable to apply an asymptotic analysis to the Toeplitz determinants for this generalized setting with $\al$ non-integer,
therefore we will always assume below that $\al$ is integer.
\end{remark}

In order to calculate the asymptotics of the Toeplitz determinants in \eqref{Toeplitz:ratio}, we apply a Borodin--Okounkov type formula.
Note that the symbols of our Toeplitz matrices have winding number $\alpha$ around the origin, which is in general non-zero.
A Borodin--Okounkov type formula in this setting is described by B\"ottcher--Widom \cite{BW}.

The formulas in \cite{BW} allow to write each Toeplitz determinant as the Fredholm determinant of an operator on $l^2(\zet_{\geq 0})$ times a determinant of size $\al\times \al$.
Alternatively, we will see that the Toeplitz determinant can be expressed via the Fredholm determinant of a \emph{single} operator acting on the direct sum space
\begin{equation}\label{direct:sum:space} L:=l^2(\zet_{\geq 0})\oplus\cee^\alpha.\end{equation}

Fix $n\in\zet_{>0}$. We define a block matrix operator of size $(\infty+\al)\times (\infty+\al)$
\begin{equation}\label{block:matrix:def}
\begin{array}{rl} & \hspace{-4mm}\begin{array}{rr} \infty & \al\end{array} \\ \begin{array}{r} \infty\\ \al\end{array} & \hspace{-5mm}\begin{pmatrix} A & C \\ B & D\end{pmatrix}\end{array}
\end{equation}
which we view as the matrix representation of a linear operator acting on the space $L$ \eqref{direct:sum:space} with respect to the natural basis of $L$.
The block matrix decomposition is compatible with the direct sum decomposition of $L$.

Let us describe the four blocks of \eqref{block:matrix:def}. We will denote by $S_\rho$ the circle with radius $\rho>1$.
\begin{itemize}
\item $A$ is a semi-infinite matrix with $(k,l)$th entry
\begin{equation}\label{A:def}
A_{k,l}=\delta_{k-l}- \frac{1}{(2\pi \ii)^2}\int_{S_{\rho^{-1}}}\ud z\int_{S_{\rho}}\ud w\frac{z^{l+n+\al}}{w^{k+n+\al+1}} \frac{1}{w-z}
\exp\left(\frac{w-z}{2}b+\frac{z^{-1}-w^{-1}}{2}a\right)
\end{equation}
for $k,l\in\zet_{\geq 0}$.

\item $B$ is a matrix of size $\al\times\infty$ with $(k,l)$th entry given by
\begin{equation}\label{B:def}
B_{k,l}=- \frac{1}{(2\pi \ii)^2}\int_{S_{\rho^{-1}}}\ud z\int_{S_{\rho}}\ud w\frac{z^{l+n+\al}}{w^{k+n+1}} \frac{1}{w-z}
\exp\left(\frac{w-z}{2}b+\frac{z^{-1}-w^{-1}}{2}a\right)
\end{equation}
for $k=0,\ldots,\al-1,\quad l\in\zet_{\geq 0}$.

\item $C$ is a Toeplitz matrix of size $\infty\times\al$  with $(k,l)$th entry
\begin{equation}\label{C:def}
C_{k,l} = \frac{1}{2\pi \ii}\int_{S_\rho}\ud w\,w^{l-k-n-\al-1}\exp\left(\frac{bw-aw^{-1}}{2}\right),\qquad k\in\zet_{\geq 0},\quad l=0,\ldots,\al-1.
\end{equation}

\item $D$ is a Toeplitz matrix of size $\al\times\al$ with $(k,l)$th entry
given by
\begin{equation}\label{D:def} D_{k,l} = \frac{1}{2\pi \ii}\int_{S_\rho}\ud w\,w^{l-k-n-1}\exp\left(\frac{bw-aw^{-1}}{2}\right),\qquad k,l=0,\ldots,\al-1.
\end{equation}
\end{itemize}

Next we define two vectors in the space $L$.
The vectors will depend on parameters $\xi,\eta\in\cee$ which we consider for the moment to be fixed numbers such that $|\xi|<1$ and $|\eta|>1$.
Denote again by $S_\rho$ the circle of radius $\rho$ which we now take such that $\rho\in(1,\min\{|\xi|^{-1},|\eta|\})$.
We define $\mathbf{h},\mathbf{\what h}$ to be the column vectors of length $\infty$ and $\al$ respectively with $k$th entry
\begin{align}
h_k &= \frac{1}{2\pi\ii}\int_{S_{\rho}}\ud w\, \frac{w^{-(k+n+\al+1)}}{w+\eta}\exp\left(\frac{bw-aw^{-1}}{2}\right),& k&\in\zet_{\geq 0},\label{hk:def}\\
\what h_k &= \frac{1}{2\pi \ii}\int_{S_{\rho}}\ud w\,\frac{w^{-(k+n+1)}}{w+\eta} \exp\left(\frac{bw-aw^{-1}}{2}\right),& k&=0,\ldots,\al-1.\label{hktil:def}
\end{align}
Let $\mathbf{g}$ be the row vector of length $\infty$ with $l$th entry
\begin{equation}\label{gk:def}
g_l = \frac{1}{2\pi \ii}\int_{S_{\rho^{-1}}}\ud z\,\frac{z^{l+n+\al}}{z+\xi}\exp\left(\frac{az^{-1}-bz}{2}\right),\qquad l\in\zet_{\geq 0}
\end{equation}
and we define $\boldsymbol{\xi}$ as the row vector of length $\al$ given by
\begin{equation}\label{xibold:def}
\boldsymbol{\xi} := \begin{pmatrix} 1 & -\xi & \ldots & (-\xi)^{\al-1}\end{pmatrix}.
\end{equation}
With these notations, let
\begin{equation}\label{M:xi:eta}
M(\xi,\eta)=\frac{1}{\eta-\xi}- \begin{pmatrix} \mathbf{g} & \boldsymbol{\xi} \end{pmatrix}\begin{pmatrix} A & C \\ B & D\end{pmatrix}^{-1}\begin{pmatrix} \bf h\\ \bf\what h\end{pmatrix}
\end{equation}
where the matrix is the same as the one given in \eqref{block:matrix:def}
and the entries of the two vectors are defined in \eqref{gk:def}--\eqref{xibold:def} and \eqref{hk:def}--\eqref{hktil:def} respectively.
The block matrix and block vector notation is again compatible with the direct sum decomposition of the space $L$ given in \eqref{direct:sum:space}.

\begin{remark}
The inverse of the block matrix in the above formula can again be partitioned as a block matrix of size $(\infty+\al)\times (\infty+\al)$:
\begin{equation}\label{schur:inversion}
\begin{pmatrix} A & C \\ B & D\end{pmatrix}^{-1} = \begin{pmatrix} A^{-1}+A^{-1}CS^{-1}BA^{-1} & -A^{-1}CS^{-1} \\ -S^{-1}BA^{-1} & S^{-1} \end{pmatrix}
\end{equation}
where $S:=D-BA^{-1}C$ is the Schur complement of $A$.
Hence the invertibility of the block matrix in \eqref{schur:inversion} follows from the invertibility of $A$ and $S$.

It is not clear a priori that the block matrix in the definition of $M(\xi,\eta)$ in \eqref{M:xi:eta} is invertible,
but as it will be seen later in Proposition~\ref{prop:limSchur} for the tacnode scaling and in \eqref{detDDalpha} for the Pearcey scaling, we get invertible matrices in these two limits.
Therefore, $M(\xi,\eta)$ is certainly well-defined if the parameters are close enough to any of these limits.
\end{remark}

\begin{theorem}\label{theorem:inner:product}
The ratio of Toeplitz determinants in \eqref{Toeplitz:ratio} can be written as
\begin{equation}\label{Cn:inner:product}
C_{n}(\xi,\eta) = 2\frac{\eta^{n+\al}}{\xi^{n-1}}\exp\left(\frac{1-\eta^{-1}}{2}a+\frac{1-\xi}{2}b\right) M(\xi,\eta)
\end{equation}
where $M(\xi,\eta)$ is given above by \eqref{M:xi:eta}.
\end{theorem}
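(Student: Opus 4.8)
The goal is to convert the ratio of Toeplitz determinants $C_n(\xi,\eta)$ from \eqref{Toeplitz:ratio} into the resolvent/inner-product formula \eqref{Cn:inner:product}, where the block operator $\begin{pmatrix} A & C \\ B & D\end{pmatrix}$ on $L=l^2(\zet_{\geq 0})\oplus\cee^\alpha$ makes an appearance. The plan is to apply the B\"ottcher--Widom generalization of the Borodin--Okounkov formula to \emph{each} of the two Toeplitz determinants in \eqref{Toeplitz:ratio} separately, observing that the numerator symbol $\wtil w(z)=(1-\xi z)(1-\eta^{-1}z^{-1})w(z)$ differs from the denominator symbol $w(z)$ only by a rank-two (indeed two rank-one) multiplicative perturbation built from $(1-\xi z)$ and $(1-\eta^{-1}z^{-1})$. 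Since both symbols have winding number $\alpha$ around the origin, the B\"ottcher--Widom formula writes each determinant as an $\alpha\times\alpha$ determinant times a Fredholm determinant on $l^2(\zet_{\geq 0})$; assembling both pieces into a single block operator on $L$ is exactly the content of the decomposition \eqref{block:matrix:def}, so the first step is to identify, by direct contour-integral bookkeeping against \eqref{A:def}--\eqref{D:def}, that $\det\begin{pmatrix} A & C \\ B & D\end{pmatrix}$ equals (a prefactor times) the denominator Toeplitz determinant of size $n$.

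The second and main step is to express the numerator Toeplitz determinant of size $n-1$ with symbol $\wtil w$ in terms of the \emph{same} block operator, modified by the two rank-one factors. Here I would use the standard Wiener--Hopf / Fisher--Hartwig-type factorization manipulations behind Borodin--Okounkov: multiplying the symbol by $(1-\xi z)$ and by $(1-\eta^{-1}z^{-1})$ translates, on the operator side, into conjugation by lower/upper triangular Toeplitz operators and a shift in the index $n$. The effect is to replace the block operator by a rank-one (or rank-two) perturbation of it, with the perturbing vectors being precisely $\begin{pmatrix}\mathbf g & \boldsymbol\xi\end{pmatrix}$ on one side and $\begin{pmatrix}\mathbf h\\ \mathbf{\what h}\end{pmatrix}$ on the other — this is why the integrals \eqref{hk:def}--\eqref{gk:def} carry the factors $1/(w+\eta)$ and $1/(z+\xi)$, which are the geometric-series resolvents of the shift encoding multiplication by $(1-\eta^{-1}z^{-1})$ and $(1-\xi z)$. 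Then the ratio of the two determinants collapses, via the matrix determinant lemma $\det(\mathbb{M}-\mathbf{a}\mathbf{b}^{\!\top})/\det \mathbb{M} = 1-\mathbf{b}^{\!\top}\mathbb{M}^{-1}\mathbf{a}$, exactly to the scalar $M(\xi,\eta)$ in \eqref{M:xi:eta}. The constant term $\tfrac{1}{\eta-\xi}$ in $M(\xi,\eta)$ arises as the "diagonal"/non-operator contribution of the rank-one perturbation (the residue at $w=z$ when $\xi,\eta$ are fed back in), and keeping track of it carefully is where the combinatorics is most delicate.

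The third step is pure bookkeeping of prefactors: the powers $\eta^{n-1}/\xi^{n-1}$ already present in \eqref{Toeplitz:ratio}, the index shifts $n\to n-1$ and $n\to n+\alpha$ that the rank-one factorizations introduce, and the Szeg\H o-type constant $\exp(\cdots)$ coming from the zeroth Fourier coefficient of $\log w$ (which, for the weight \eqref{weight:wxyt}--\eqref{weight:w} with $x=a$, $y=b$, $t=1$, produces precisely the exponential $\exp\big(\tfrac{1-\eta^{-1}}{2}a+\tfrac{1-\xi}{2}b\big)$ together with the factor of $2$). I expect the main obstacle to be Step two: matching the B\"ottcher--Widom $\alpha\times\alpha$ correction determinant against the $D$-block and the $C,B$ blocks, and in particular verifying that the rank-one perturbation vectors land in the stated $L^2$-type spaces and that the geometric series defining the resolvent factors converge — this is exactly why the radii are constrained to $\rho\in(1,\min\{|\xi|^{-1},|\eta|\})$ and why one assumes $|\xi|<1$, $|\eta|>1$. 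Once that identification is in place, the rest is a routine (if lengthy) comparison of contour integrals with \eqref{A:def}--\eqref{xibold:def}.
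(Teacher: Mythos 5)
Your proposal follows essentially the same route as the paper: apply the B\"ottcher--Widom formula to both Toeplitz determinants, identify each as (prefactor)$\times\det$ of a block operator on $l^2(\zet_{\ge0})\oplus\cee^\alpha$, show the numerator and denominator block operators are related by a rank-one perturbation (after a finite upper-triangular Toeplitz correction $T_\alpha$ on the $\cee^\alpha$ factor, which is what you call ``conjugation by triangular Toeplitz operators''), and collapse the determinant ratio via the matrix determinant lemma. The only minor imprecision is that the perturbation is exactly rank one (not ``rank-one or rank-two''), and the $1/(\eta-\xi)$ in $M(\xi,\eta)$ originates in the Szeg\H o-type constant $\wtil E$ of the numerator symbol rather than as a residue at $w=z$; otherwise your plan matches the paper's Steps 1--3.
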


The proof of the theorem is given in Section~\ref{subsection:proof:theorem:BO}.

\begin{definition}[Contours $\Gamma_\xi,\Gamma_\eta$]\label{def:contour:xi:eta}
We denote by $\Gamma_\xi$ and $\Gamma_\eta$ counterclockwise oriented closed contours as follows.
$\Gamma_\xi$ is a loop encircling $0$ but not the points $-s/(1-s)$ and $-t/(1-t)$.
$\Gamma_\eta$ consists of two pieces: a loop encircling $0$ lying at the inside of $\Gamma_\xi$, and a small loop surrounding the point at $\eta=-s/(1-s)$ lying at the outside of $\Gamma_\xi$.
A particular choice is shown later in Figure~\ref{fig:xietacontours}.
\end{definition}

\begin{theorem}\label{theorem:kernel:n}
Under the same assumptions as in Theorem~\ref{theorem:toeplitz}, the correlation kernel $K_n$ for $n$ non-intersecting squared Bessel paths can be written as
\begin{multline}\label{K_nkernelxieta}
K_n(s,x;t,y)=-p_{t-s}(x,y)\id_{t>s}+\frac1{2(2\pi\ii)^2}\int_{\Gamma_\eta}\d\eta\int_{\Gamma_\xi}\d\xi
\frac{((1-t)\xi+t)^{\alpha-1}}{((1-s)\eta+s)^{\alpha+1}}\\
\times\frac{\eta^{n+\alpha}}{\xi^{n+\al}}
\exp\left(\frac{\eta-\xi}2b+\frac{\xi^{-1}-\eta^{-1}}2a-\frac{x(\eta-1)}{2(1-s)\eta+2s}+\frac{y(\xi-1)}{2(1-t)\xi+2t}\right)M(\xi,\eta)
\end{multline}
where $M(\xi,\eta)$ is defined by \eqref{M:xi:eta} and the contours $\Gamma_\eta$ and $\Gamma_\xi$ are given in Definition~\ref{def:contour:xi:eta}.
\end{theorem}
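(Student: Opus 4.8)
The plan is to derive \eqref{K_nkernelxieta} from the Toeplitz determinant formula of Theorem~\ref{theorem:toeplitz} by substituting the resolvent representation $C_n(\xi,\eta)=2\tfrac{\eta^{n+\al}}{\xi^{n-1}}\exp\!\big(\tfrac{1-\eta^{-1}}{2}a+\tfrac{1-\xi}{2}b\big)M(\xi,\eta)$ of Theorem~\ref{theorem:inner:product} and then changing the integration variables from $(u,v)$ to $(\xi,\eta)$ according to \eqref{xi:eta}. Solving \eqref{xi:eta} gives the two M\"obius maps
\[
u=\frac{(1-t)\xi+t}{\xi},\qquad v=\frac{1}{(1-s)\eta+s},
\]
so that $u$ and $v$ run over the images of the unit circle $S_1$. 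First I would insert into \eqref{extendedkernel:toeplitz} the explicit weights $w(u;a,y,t)$, $w(v;x,b,1-s)$ from \eqref{weight:wxyt} together with the above expression for $C_n(\xi,\eta)$, leaving the additive term $-p_{t-s}(x,y)\id_{t>s}$ untouched.

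The second step is elementary bookkeeping. Using $\tfrac{\d u}{u}=-\tfrac{t\,\d\xi}{\xi((1-t)\xi+t)}$, $\tfrac{\d v}{v}=-\tfrac{(1-s)\,\d\eta}{(1-s)\eta+s}$, the power identities $u^\al=((1-t)\xi+t)^\al\xi^{-\al}$, $v^\al=((1-s)\eta+s)^{-\al}$, and
\[
\frac{u-1}{2t}=\frac{\xi^{-1}-1}{2},\quad \frac{u^{-1}-1}{2t}=\frac{\xi-1}{2((1-t)\xi+t)},\quad \frac{v-1}{2(1-s)}=\frac{1-\eta}{2((1-s)\eta+s)},\quad \frac{v^{-1}-1}{2(1-s)}=\frac{\eta-1}{2},
\]
one checks that the three exponential factors -- from $w(u;a,y,t)$, from $w(v;x,b,1-s)$, and from $\exp(\tfrac{1-\eta^{-1}}{2}a+\tfrac{1-\xi}{2}b)$ -- combine into exactly the exponential of \eqref{K_nkernelxieta}, that the algebraic powers collapse to $\tfrac{((1-t)\xi+t)^{\al-1}}{((1-s)\eta+s)^{\al+1}}\tfrac{\eta^{n+\al}}{\xi^{n+\al}}$, and that the scalar $2$ from $C_n$ together with $\tfrac{1}{4\pi\ii t}\tfrac{1}{4\pi\ii(1-s)}$ from the two weights and the Jacobian factor $t(1-s)$ produce $\tfrac{1}{2(2\pi\ii)^2}$. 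This reproduces the integrand of \eqref{K_nkernelxieta}.

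The substantive step is the contour analysis. Since the pole $u=1-t$ of $u\mapsto\xi$ lies inside the unit disc, $S_1$ is carried to a circle $\Gamma'_\xi$ whose bounded component contains $\xi=0$ but not $\xi=-t/(1-t)$ (the image of $u=0$), and likewise $v\mapsto\eta$ carries $S_1$ to a circle $\Gamma'_\eta$ whose bounded component contains both $\eta=0$ and $\eta=-s/(1-s)$ (the image of $v=\infty$). On $S_1$ one has $|u-(1-t)|\ge t$ and $|v^{-1}-s|\ge 1-s$, whence $|\xi|\le 1$ on $\Gamma'_\xi$ and $|\eta|\ge 1$ on $\Gamma'_\eta$ with equality only at one point each; replacing $S_1$ in \eqref{extendedkernel:toeplitz} beforehand by $\{|u|=1+\epsilon\}$ and $\{|v|=1-\epsilon\}$ -- permitted by Cauchy's theorem, the integrands being analytic in the corresponding thin annuli -- makes the inequalities strict, so the vectors $\mathbf g,\mathbf h,\what{\mathbf h}$, and hence $M(\xi,\eta)$ in \eqref{M:xi:eta}, are honestly defined on $\Gamma'_\xi\times\Gamma'_\eta$. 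A short check of orientations (each M\"obius map sends the unit disc to the unbounded component of its image circle, and the two resulting orientation reversals are compensated together with the two minus signs in the Jacobians) shows that $\Gamma'_\xi$ and $\Gamma'_\eta$ may be taken positively oriented. It then remains to deform $\Gamma'_\xi$ onto $\Gamma_\xi$ and $\Gamma'_\eta$ onto $\Gamma_\eta$ of Definition~\ref{def:contour:xi:eta}. For fixed $\eta\in\Gamma'_\eta$ the transformed integrand is meromorphic in $\xi$ with singularities only at $\xi=0$ (essential, kept inside), $\xi=-t/(1-t)$ (kept outside), and $\xi=\eta$ (which never enters the region swept, since $|\xi|<1<|\eta|$ there), so the deformation of $\Gamma'_\xi$ onto $\Gamma_\xi$ is unobstructed; then, for fixed $\xi\in\Gamma_\xi$, one contracts $\Gamma'_\eta$ onto a small loop around $0$ together with a small loop around $-s/(1-s)$, which is exactly $\Gamma_\eta$ (the loop around $0$ chosen small enough to lie inside $\Gamma_\xi$), using that $g_l(\xi)$, $h_k(\eta)$, $\what h_k(\eta)$ continue analytically to $\xi,\eta\in\cee\setminus\{0\}$, so that the only singularities of the integrand in $\eta$ are the essential ones at $\eta=0$ and $\eta=-s/(1-s)$ and a possible simple pole on $\eta=\xi$ from the term $\tfrac1{\eta-\xi}$ of $M(\xi,\eta)$. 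Contracting past $\eta=\xi$ produces a residue equal to $\tfrac{((1-t)\xi+t)^{\al-1}}{((1-s)\xi+s)^{\al+1}}\exp\!\big(-\tfrac{x(\xi-1)}{2((1-s)\xi+s)}+\tfrac{y(\xi-1)}{2((1-t)\xi+t)}\big)$ times $\Res_{\eta=\xi}M(\xi,\eta)$; as this prefactor is analytic inside $\Gamma_\xi$ -- its only singularities, at $\xi=-s/(1-s)$ and $\xi=-t/(1-t)$, lying outside -- the extra term integrates to zero over $\Gamma_\xi$ and drops out, leaving \eqref{K_nkernelxieta}. I expect this third step -- locating the images of $S_1$ and their orientations, and above all justifying the analytic continuation of $M(\xi,\eta)$ off the domain $|\xi|<1<|\eta|$ together with the vanishing of the $\eta=\xi$ contribution -- to be the only delicate point; the first two steps are routine.
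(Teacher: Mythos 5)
Your overall strategy -- insert \eqref{Cn:inner:product} into \eqref{extendedkernel:toeplitz}, change variables $(u,v)\mapsto(\xi,\eta)$ via the M\"obius maps \eqref{xi:eta}, track Jacobians and powers, deform onto the contours of Definition~\ref{def:contour:xi:eta}, and observe that the $\eta=\xi$ residue integrates to zero over $\Gamma_\xi$ -- is exactly the paper's route, and the bookkeeping of prefactors and exponentials in your second step is correct.

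There is, however, a genuine gap at precisely the point you flag as delicate: the continuation of $M(\xi,\eta)$ off the region $\{|\xi|<1<|\eta|\}$. You appeal to the fact that each \emph{component} $g_l(\xi)$, $h_k(\eta)$, $\widehat h_k(\eta)$ continues analytically to $\cee\setminus\{0\}$ by shrinking the $w$- or $z$-contour past the simple pole at $-\eta$ or $-\xi$. That is true, but the resulting sequences are no longer square-summable once the contours $\Gamma_\xi,\Gamma_\eta$ leave the strip $|\xi|<1<|\eta|$: on the small loop of $\Gamma_\eta$ around $0$ one has $|\eta|<1$, so the continued $h_k(\eta)$ is an integral over a circle of radius $\rho<|\eta|<1$ and grows like $\rho^{-k}$; similarly, for $|\xi|>1$ the continued $g_l(\xi)$ is an integral over a circle of radius $\rho^{-1}>|\xi|>1$ and grows like $\rho^{-l}$. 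The infinite inner product in \eqref{M:xi:eta} then need not converge, so the assertion that "the transformed integrand is meromorphic in $\eta$ with singularities only at $0$, $-s/(1-s)$ and $\xi$" is unjustified as stated, and the contour contraction that follows is not licensed. The paper closes this gap before deforming: it replaces the $(\xi,\eta)$-dependent radius $\rho\in(1,\min\{|\xi|^{-1},|\eta|\})$ by a fixed $\rho$ satisfying \eqref{radius:rho:big}, which keeps $\mathbf g,\mathbf h,\widehat{\mathbf h}$ uniformly in $\ell^2$; this change of $\rho$ introduces the explicit residue terms \eqref{hk:res}--\eqref{gl:res}, and a separate (decoupled $\eta$- or $\xi$-integral plus Cauchy) argument shows these terms integrate to zero in the kernel. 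Your proposal needs an analogue of this step -- either adopt the fixed-$\rho$ definition of $M$ and verify the discarded residue contributions vanish as the paper does, or supply some other mechanism (e.g., decay estimates on the block-inverse) that makes the continued inner product converge on all of $\Gamma_\xi\times\Gamma_\eta$.
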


\subsection{Hard edge tacnode process}\label{ss:hetacnode}

It turns out that it is more convenient for our purposes to work with
\begin{equation}\label{defN}
N=\frac n2
\end{equation}
when we consider the hard edge tacnode process.
The free parameter $q\in(0,\infty)$ will parametrize the location of the hard edge tacnode process, however $q$ will disappear from the limit process.

\begin{definition}\label{def:tacscale}
Let the \emph{tacnode scaling} be the scaling when time and space are scaled according to
\begin{equation}\label{timespacescaling}
\frac q{1+q}+\frac q{(1+q)^2}tN^{-1/3},\qquad\frac {2q}{(1+q)^2}xN^{-1/3}
\end{equation}
and the starting and ending points are rescaled by
\begin{align}
a&=2qN\left(1-\frac\sigma{2N^{2/3}}\right),\label{defa}\\
b&=2q^{-1}N\left(1-\frac\sigma{2N^{2/3}}\right)\label{defb}
\end{align}
where $\sigma\in\R$ is the \emph{temperature parameter}.
\end{definition}

For $\sigma\in\R$, we define the shifted Airy kernel by
\begin{equation}\label{def:shiftedAi}
\Kaa_{\Ai,\sigma}(x,y)=\int_\sigma^\infty \d\lambda \Ai(x+\lambda)\Ai(y+\lambda).
\end{equation}
To state the result about the limiting kernel, we define the function
\begin{equation}\label{h:repr:0}
h(x;\vv)=\int_\sigma^{\infty} \d\lambda\Ai(x+\lambda)\exp\left(-\vv\lambda\right)
\end{equation}
and the two-variate functions (with $\Ai^{(k)}$ denoting the $k$th derivative of the Airy function)
\begin{align}
\Aa_\sigma(x,y)&=(\id-\Kaa_{\Ai,\sigma})(x,y), && x,y\in\R_+,\label{defAa}\\
\Bb_\sigma(k,y)&=-\int_\sigma^\infty\d\lambda\Ai^{(k)}(\lambda)\Ai(y+\lambda), && k\in\{0,1,\dots,\alpha-1\},y\in\R_+,\label{defBb}\\
\Cc_\sigma(x,l)&=\Ai^{(l)}(x+\sigma), && x\in\R_+,l\in\{0,1,\dots,\alpha-1\},\label{defCc}\\
\Dd_\sigma(k,l)&=\Ai^{(k+l)}(\sigma), && k,l\in\{0,1,\dots,\alpha-1\}.\label{defDd}
\end{align}
The matrix component $\Cc_\sigma$ always appears with a subscript throughout this paper in order to avoid confusions with the set of complex numbers.
Let
\begin{equation}\label{defXi}
{\bf\UU}=\begin{pmatrix} 1 & \uu & \uu^2 & \dots & \uu^{\alpha-1} \end{pmatrix}
\end{equation}
and for any function $f(x)$ or $f(x;\vv)$, we define the column vector on the space $L^2(\R_+)\oplus\mathbb C^\alpha$
\begin{equation}\label{vf:def}
\vbf[f]=\begin{pmatrix} (f(x))_{x\in\R_+} & f(x)|_{x=0} & \frac{\pa}{\pa x} f(x)|_{x=0} & \ldots & \frac{\pa^{\al-1}}{\pa x^{\al-1}} f(x)|_{x=0} \end{pmatrix}^T.
\end{equation}
Finally define
\begin{equation}\label{defM}
\Mm(\uu,\vv)=\frac{\exp\left(\sigma(\uu-\vv)\right)}{\vv-\uu}+\begin{pmatrix} (h(y;-\uu))_{y\in\R_+} & -{\bf\UU}\exp(\sigma\uu) \end{pmatrix}
\begin{pmatrix} \Aa_\sigma & \Cc_\sigma \\ \Bb_\sigma & \Dd_\sigma\end{pmatrix}^{-1} \vbf[h(\cdot;\vv)].
\end{equation}

\begin{definition}[Contours $\Gamma_\uu$, $\Gamma_\vv$]\label{def:contours}
The contours depend on real parameters $s,t$ but we will not indicate this dependence in the notation.
Let $\Gamma_\uu$ be a Jordan arc in the complex plane which comes from $e^{-\ii2\pi/3}\infty$, goes to $e^{\ii2\pi/3}\infty$ and it crosses the real axis on the right of $-s$ and $-t$.
Let $\Gamma_\vv$ consist of two pieces: a Jordan arc in the complex plane coming from $e^{-\ii\pi/3}\infty$ and going to $e^{\ii\pi/3}\infty$, lying to the right of $\Gamma_\uu$,
and a clockwise oriented small loop around $-s$, lying to the left of $\Gamma_\uu$.
A possible choice of these contours can be seen in Figure~\ref{fig:HXicontours}.
\end{definition}

\begin{figure}
\begin{center}
\def\svgwidth{200pt}
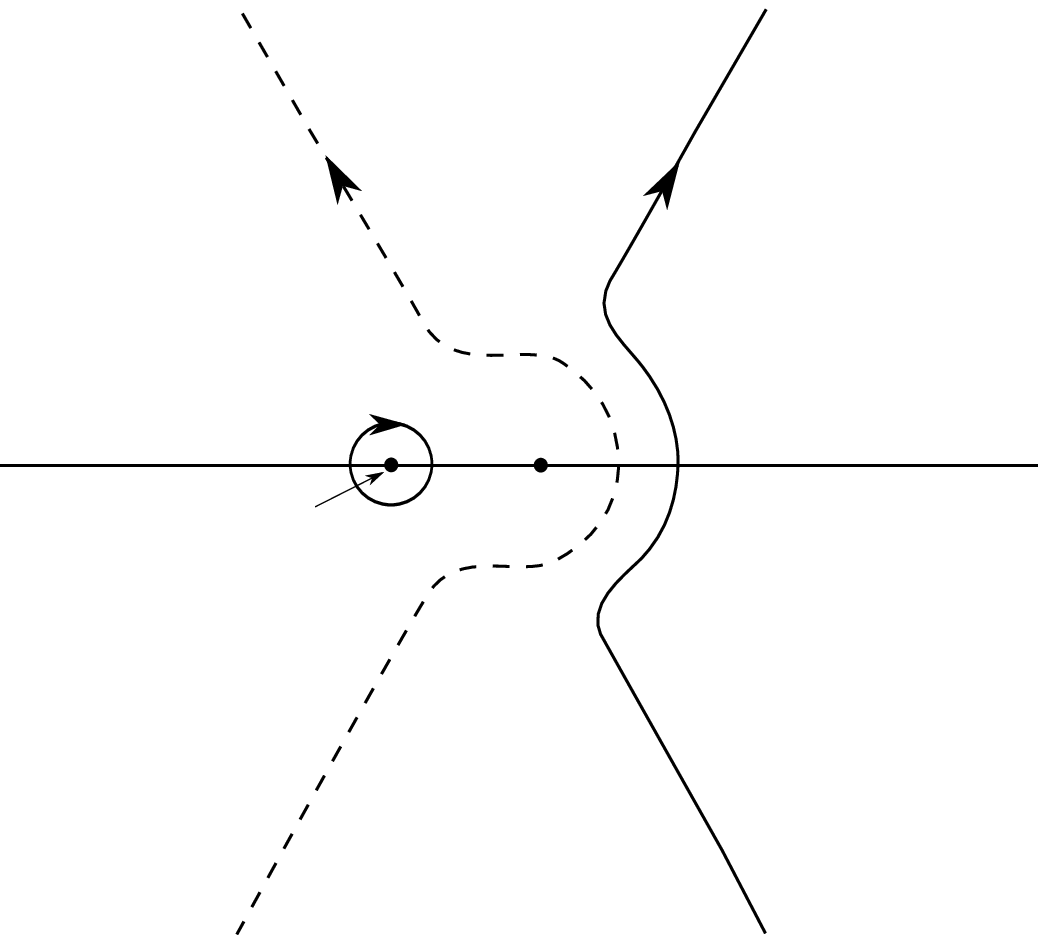
\end{center}
\caption{The integration contours $\Gamma_\uu$ and $\Gamma_\vv$.\label{fig:HXicontours}}
\end{figure}

Let
\begin{equation}\begin{aligned}\label{deflimkernel}
K^\alpha(s,x;t,y)=&-p_{\frac{t-s}{2}}(x,y)\id_{t>s}\\
&+\frac1{(2\pi\ii)^2}\int_{\Gamma_\vv}\d\vv\int_{\Gamma_\uu}\d\uu \frac{\exp\left(\frac{\vv^3}3+\frac x{\vv+s}\right)}{\exp\left(\frac{\uu^3}3+\frac y{\uu+y}\right)}
\frac{(\uu+t)^{\alpha-1}}{(\vv+s)^{\alpha+1}}\Mm(\uu,\vv)
\end{aligned}\end{equation}
be the hard edge tacnode kernel where the integration contours are given in Definition~\ref{def:contours} and the function $\Mm(\uu,\vv)$ is defined by \eqref{defM}.
The next theorem will be proved in Section~\ref{s:analysis_tacnode}.

\begin{theorem}\label{thm:hardedgetacnode}
Let $\alpha$ be a non-negative integer.
\begin{enumerate}
\item
Under the tacnode scaling given by Definition~\ref{def:tacscale}, the sequence of rescaled kernels converges
\begin{equation}\label{kernelconv}
2rK_{2N}\left(\frac q{1+q}+rs,2rx;\frac q{1+q}+rt,2ry\right)\to K^\alpha(s,x;t,y)
\end{equation}
uniformly as $x$ and $y$ are in a compact subset of $\R$ where
\begin{equation}
r=q(1+q)^{-2}N^{-1/3}
\end{equation}
and the kernel on the right-hand side of \eqref{kernelconv} is defined by \eqref{deflimkernel}.

\item
As a consequence, the hard edge tacnode process $\mathcal T^\alpha$ exists as the limit of $n$ non-intersecting squared Bessel processes
under the tacnode scaling.
It is characterized by the following gap probabilities.
For any fixed $k$ and $t_1,\dots,t_k\in\R$ and for any compact set $E\subset\{t_1,\dots,t_k\}\times\R$,
\begin{equation}
\P(\mathcal T^\alpha(\id_E)=\emptyset)=\det(\id-K^\alpha)_{L^2(E)}.
\end{equation}
\end{enumerate}
\end{theorem}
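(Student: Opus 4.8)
The plan is to start from the finite-$n$ resolvent formula of Theorem~\ref{theorem:kernel:n} and pass to the limit term by term. Substituting the tacnode scaling of Definition~\ref{def:tacscale} — that is, $n=2N$, $s\mapsto\frac q{1+q}+rs$, $t\mapsto\frac q{1+q}+rt$, $x\mapsto2rx$, $y\mapsto2ry$ with $r=q(1+q)^{-2}N^{-1/3}$, and $a,b$ as in \eqref{defa}--\eqref{defb} — I would then perform a local change of variables $\xi\leftrightarrow\uu$, $\eta\leftrightarrow\vv$ that is affine to leading order in an $N^{-1/3}$-neighbourhood of the relevant saddle point(s) and carries the contours $\Gamma_\xi,\Gamma_\eta$ of Definition~\ref{def:contour:xi:eta} onto the Airy-type contours $\Gamma_\uu,\Gamma_\vv$ of Definition~\ref{def:contours}; in particular the small loop of $\Gamma_\eta$ around $-s/(1-s)$ becomes the small clockwise loop of $\Gamma_\vv$ around $-s$, and the finite-$n$ singular point $\xi=-t/(1-t)$ of the algebraic prefactor is carried to $\uu=-t$. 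The integrand of \eqref{K_nkernelxieta} is a scalar prefactor times $M(\xi,\eta)$, so the proof reduces to three claims: (i) the scalar prefactor converges to the scalar prefactor of \eqref{deflimkernel}; (ii) $M(\xi,\eta)\to\Mm(\uu,\vv)$, locally uniformly; and (iii) there is an integrable dominating bound along $\Gamma_\uu,\Gamma_\vv$ so that dominated convergence applies. The non-integral term causes no trouble, since $-p_{t-s}(x,y)$ under the space-time scaling \eqref{timespacescaling} tends to $-p_{(t-s)/2}(x,y)$, which one checks directly from \eqref{transitionprob:sqB} and the classical uniform asymptotics of $I_\alpha$ from \eqref{BesselI:def}.

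For (i), the exponent $n\log(\eta/\xi)+\tfrac{\eta-\xi}2b+\tfrac{\xi^{-1}-\eta^{-1}}2a$ together with the $x,y$-dependent terms of \eqref{K_nkernelxieta} has a (coalescing) critical point under the confluent scaling \eqref{defa}--\eqref{defb}; a Taylor expansion to third order in $\uu,\vv$ produces the cubic exponent $\tfrac{\vv^3}3-\tfrac{\uu^3}3$ and the linear-in-$x,y$ terms of \eqref{deflimkernel}, while the algebraic factor $\tfrac{((1-t)\xi+t)^{\alpha-1}}{((1-s)\eta+s)^{\alpha+1}}$ together with the residual part of $\tfrac{\eta^{n+\alpha}}{\xi^{n+\alpha}}$ converges to $\tfrac{(\uu+t)^{\alpha-1}}{(\vv+s)^{\alpha+1}}$, and the temperature shift in \eqref{defa}--\eqref{defb} produces the factor $\exp(\sigma(\uu-\vv))$ appearing in \eqref{defM} (as well as pinning down where the contours cross the real axis relative to $\sigma$). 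The deformation of $\Gamma_\xi,\Gamma_\eta$ to the unbounded contours and the off-critical decay estimates are standard steepest-descent bookkeeping.

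Claim (ii) is the heart of the matter. I would first collapse the $w$- and $z$-integrals in the entries \eqref{A:def}--\eqref{D:def} of the block matrix and in the vectors \eqref{hk:def}--\eqref{gk:def} by residues, rewriting everything in terms of Bessel functions and their derivatives. Conjugating the block operator $\begin{pmatrix}A&C\\B&D\end{pmatrix}$ on $L=\ell^2(\zet_{\geq0})\oplus\cee^\alpha$ from \eqref{block:matrix:def}--\eqref{direct:sum:space} by the natural rescaling that identifies $\ell^2(\zet_{\geq0})$ with $L^2(\R_+)$ through the appropriate power of $N$ (together with the scalar gauge turning $\tfrac1{\eta-\xi}$ into $\tfrac{\exp(\sigma(\uu-\vv))}{\vv-\uu}$), the four blocks converge to $\Aa_\sigma=\id-\Kaa_{\Ai,\sigma}$, $\Bb_\sigma$, $\Cc_\sigma$, $\Dd_\sigma$ of \eqref{defAa}--\eqref{defDd}, and the vectors $\mathbf g,\boldsymbol\xi,\mathbf h,\wh{\mathbf h}$ of \eqref{hk:def}--\eqref{xibold:def} converge to the components entering \eqref{defM}, namely $h(\cdot;-\uu)$ of \eqref{h:repr:0}, $\UU\exp(\sigma\uu)$ of \eqref{defXi}, and the vector $\vbf[h(\cdot;\vv)]$ of \eqref{vf:def}. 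The analytic input for all these limits is the convergence of Bessel functions and their derivatives to the Airy function and its derivatives, together with uniform exponential tail bounds, supplied by Section~\ref{s:Bessel_limit}; the tails are what upgrade pointwise convergence to trace-norm convergence of $A-\id$ and to norm convergence of the vectors. Granting invertibility of the limiting block operator $\begin{pmatrix}\Aa_\sigma&\Cc_\sigma\\\Bb_\sigma&\Dd_\sigma\end{pmatrix}$ — which is exactly Proposition~\ref{prop:limSchur}, proved in Section~\ref{s:asymptinv} via invertibility of $\id-\Kaa_{\Ai,\sigma}$ and of the limiting Schur complement — the finite-$n$ block operator is invertible for $n$ large (being trace-norm close to an invertible operator, so $M(\xi,\eta)$ is well defined), and continuity of inversion in trace norm yields $M(\xi,\eta)\to\Mm(\uu,\vv)$, locally uniformly in $\uu,\vv$ and in $x,y$ over compacts.

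Combining (i)--(iii), the convergence \eqref{kernelconv} follows by dominated convergence on the $\uu,\vv$-contours, the dominating function coming from the tail bounds of Section~\ref{s:Bessel_limit} and the Gaussian-type decay of $\exp(\vv^3/3-\uu^3/3)$ along $\Gamma_\uu,\Gamma_\vv$, uniformly for $x,y$ in a compact set; Part~1 is then done. Part~2 is a soft consequence: locally uniform convergence of the kernels, together with trace-class bounds on $K^\alpha$ restricted to a compact $E\subset\{t_1,\dots,t_k\}\times\R$, upgrades to $\det(\id-K_{2N})_{L^2(E)}\to\det(\id-K^\alpha)_{L^2(E)}$ by applying Hadamard's inequality termwise to the Fredholm series and dominating; since the left-hand sides are gap probabilities of honest determinantal processes, lie in $[0,1]$ and obey inclusion--exclusion, the limit defines the determinantal point process $\mathcal T^\alpha$ with the stated gap probabilities. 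The step I expect to be the main obstacle is (ii): controlling the convergence of the \emph{inverse} of the semi-infinite block operator uniformly in the spectral parameters $(\uu,\vv)$ along unbounded contours — identifying the correct conjugation realizing $\ell^2(\zet_{\geq0})\approx L^2(\R_+)$, obtaining uniform trace-norm (not merely strong) control of $A-\id$ and of the Schur complement $S=D-BA^{-1}C$, and proving the Bessel-to-Airy estimates with uniform remainders — with the simultaneous steepest-descent deformation of $\Gamma_\xi,\Gamma_\eta$ to the unbounded contours, while retaining control of the operator part, being the other delicate point.
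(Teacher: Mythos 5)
Your proposal follows essentially the same route as the paper: steepest-descent analysis of the double-integral formula of Theorem~\ref{theorem:kernel:n} with the affine change of variables $\xi=q(-1+\uu N^{-1/3})$, $\eta=q(-1+\vv N^{-1/3})$, Taylor expansion producing the cubic exponent and algebraic prefactor, convergence $M(\xi,\eta)\to\Mm(\uu,\vv)$ established entrywise by rewriting the block-operator ingredients in terms of Bessel functions and invoking the Bessel-to-Airy limits and tail bounds of Section~\ref{s:Bessel_limit} (the paper's Propositions~\ref{prop:detdetlimit}--\ref{prop:functopbounds}), with the conjugation by Pascal-triangle matrices to produce the derivative structure of the finite block, invertibility of the limit via Proposition~\ref{prop:limSchur}, and dominated convergence along the contours. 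A minor remark: the paper does not phrase the operator convergence in trace norm but works directly with pointwise kernel convergence plus uniform exponential dominants (Propositions~\ref{prop:functopbounds} and~\ref{prop:Jbound}), which is slightly more elementary than your suggested trace-norm framework; also, under the scaling the subtracted term $-p_{r(t-s)}(2rx,2ry)$ multiplied by $2r$ equals $-p_{(t-s)/2}(x,y)$ exactly, not just in the limit.
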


In the definition of the kernel $K^\alpha(s,x,t,y)$ in \eqref{defXi}--\eqref{defM}, we implicitly assume that the limiting block matrix is invertible.
Equivalently, the limiting Schur complement
\begin{equation}\label{schur:lim}
\Ss_\sigma= \Dd_\sigma-\Bb_\sigma{\Aa}_\sigma^{-1}{\Cc}_\sigma
\end{equation}
should be invertible.
This follows from Proposition~\ref{prop:limSchur} below for $\si$ large enough.

\subsection{Alternative formulations of the hard edge tacnode kernel}\label{ss:alt_tacnode}

Let $A_\si$ be the integral operator on $L^2(0,\infty)$ with kernel
\begin{equation}\label{defAsi}
A_\si(x,y)=\Ai(x+y+\si).
\end{equation}
Note that $K_{\Ai,\si}=A_\si^2$ and the matrices in \eqref{defAa}--\eqref{defDd} can also be easily expressed in terms of the operator $A_\si$ as
\begin{align}
\Aa_\si(x,y)&=(\id-A_\si^2)(x,y),&
\Cc_\si(x,l)&=\frac{\pa^l}{\pa y^l}A_\si(x,y)\bigm|_{y=0},\label{AC}\\
\Bb_\si(k,y)&=-\frac{\pa^k}{\pa x^k}A_\si^2(x,y)\bigm|_{x=0},&
\Dd_\si(k,l)&=\frac{\pa^k}{\pa x^k}\frac{\pa^l}{\pa y^l}A_\si(x,y)\bigm|_{x=y=0}.\label{BD}
\end{align}

We will use the following general notation in the sequel: if $f$ is a smooth function on $(-1,1)$, we denote the kernel of the integral operator $f(A_\si)$ on $L^2(0,\infty)$ by $f(A_\si)(x,y)$.
In particular, we have $f(x)=x/(1-x^2)$ in \eqref{schur:repr} below.
Straightforward substitution of \eqref{AC}--\eqref{BD} into \eqref{schur:lim} yields the following.

\begin{lemma}\label{lemma:schur:repr}
\begin{enumerate}
\item The Schur complement $\Ss_\si$ in \eqref{schur:lim} can be represented as
\begin{equation}\label{schur:repr}
\Ss_\si = \left(\frac{\pa^{k+l}}{\pa x^k\pa y^l}\left(A_\si(\id-A_\si^2)^{-1}(x,y)\right)\Bigm|_{x=y=0} \right)_{k,l=0}^{\alpha-1}.
\end{equation}
\item The Schur complement $\Ss_\si$ is a symmetric matrix.
\end{enumerate}
\end{lemma}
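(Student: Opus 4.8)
The plan is to prove both parts by a direct substitution of the kernel formulas \eqref{AC}--\eqref{BD} into the definition \eqref{schur:lim} of the Schur complement $\Ss_\si=\Dd_\si-\Bb_\si\Aa_\si^{-1}\Cc_\si$, followed by a short operator identity. One works under the standing assumption, guaranteed by Proposition~\ref{prop:limSchur} for $\si$ large enough, that $\|A_\si\|<1$, so that $\Aa_\si=\id-A_\si^2$ is boundedly invertible on $L^2(0,\infty)$ and $\Aa_\si^{-1}=\id+A_\si^2(\id-A_\si^2)^{-1}$. Since $A_\si$ has the jointly real-analytic, super-exponentially decaying kernel $\Ai(x+y+\si)$, the operator $A_\si$ and the operators $A_\si^{m}(\id-A_\si^2)^{-1}$ as well as all their products have kernels that are smooth up to the boundary lines $\{x=0\}$ and $\{y=0\}$, with derivatives of all orders decaying rapidly; this is what makes the pointwise evaluations and the interchange of $\pa_x,\pa_y$ with the inner integrations below legitimate.

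Write $R:=(\id-A_\si^2)^{-1}$. First I would insert \eqref{BD} for $\Bb_\si$ and \eqref{AC} for $\Cc_\si$ into the product $\Bb_\si\Aa_\si^{-1}\Cc_\si$ and pull the outer $x$- and $y$-derivatives through the $L^2(0,\infty)$-integrations, obtaining
\[
(\Bb_\si\Aa_\si^{-1}\Cc_\si)(k,l)=-\frac{\pa^{k}}{\pa x^{k}}\frac{\pa^{l}}{\pa y^{l}}\bigl(A_\si^{2}RA_\si\bigr)(x,y)\Bigm|_{x=y=0},
\]
while $\Dd_\si(k,l)=\frac{\pa^{k}}{\pa x^{k}}\frac{\pa^{l}}{\pa y^{l}}A_\si(x,y)\Bigm|_{x=y=0}$ by \eqref{BD}. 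Hence $\Ss_\si(k,l)=\frac{\pa^{k}}{\pa x^{k}}\frac{\pa^{l}}{\pa y^{l}}\bigl(A_\si+A_\si^{2}RA_\si\bigr)(x,y)\Bigm|_{x=y=0}$. The key observation is then that $A_\si$ commutes with $R$ (both are bounded functions of the self-adjoint operator $A_\si$, and indeed $A_\si(\id-A_\si^{2})=(\id-A_\si^{2})A_\si$), so
\begin{multline*}
A_\si+A_\si^{2}RA_\si=A_\si+A_\si^{3}R=A_\si(\id-A_\si^{2})R+A_\si^{3}R\\
=A_\si\bigl[(\id-A_\si^{2})+A_\si^{2}\bigr]R=A_\si R=A_\si(\id-A_\si^{2})^{-1},
\end{multline*}
which is precisely \eqref{schur:repr}. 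For part (2), since $A_\si(x,y)=\Ai(x+y+\si)$ is symmetric the operator $A_\si$ is self-adjoint, hence $A_\si(\id-A_\si^{2})^{-1}$ is self-adjoint and its kernel $F(x,y):=\bigl(A_\si(\id-A_\si^{2})^{-1}\bigr)(x,y)$ satisfies $F(x,y)=F(y,x)$; differentiating this identity $k$ times in $x$ and $l$ times in $y$ and setting $x=y=0$ turns the left-hand side into $\Ss_\si(k,l)$ and the right-hand side into $\Ss_\si(l,k)$, so $\Ss_\si$ is symmetric.

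I expect the main thing requiring care to be the analytic bookkeeping folded into the first paragraph: the splitting $\Aa_\si^{-1}=\id+A_\si^2(\id-A_\si^2)^{-1}$ with a smooth remainder kernel, the smoothness of all the composite kernels up to the corner of the positive quadrant, and the commutation of the spatial derivatives with the integrations defining the operator products. All of these should follow routinely from the analyticity and super-exponential decay of $\Ai$ at $+\infty$ together with $\|A_\si\|<1$, and I would dispatch them in a brief remark rather than a separate lemma.
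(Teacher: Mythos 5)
Your proof is correct and follows exactly the route the paper intends when it says the lemma follows by ``straightforward substitution'' — you spell out the implicit operator identity $A_\si+A_\si^{2}(\id-A_\si^{2})^{-1}A_\si=A_\si(\id-A_\si^{2})^{-1}$ via commutativity, and part (2) from self-adjointness. One small mis-citation: Proposition~\ref{prop:limSchur} is about the invertibility of the Schur complement $\Ss_\si$ itself, not about $\|A_\si\|<1$; what you actually need is the invertibility of $\Aa_\si=\id-\Kaa_{\Ai,\si}$, which holds for every $\si\in\R$ and which the paper invokes as a classical fact, so there is no need to restrict to $\si$ large.
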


In particular, for $\al=1$ the Schur complement takes the scalar form
\[\Ss_\si = (\id-A_\si^2)^{-1}A_\si(0,0).\]
It is well-known \cite{TW1} that this is equal to $q(\si)$ where $q$ is the Hastings--McLeod solution to the Painlev\'e~II equation.
The invertibility of the Schur complement then amounts to the fact that $q(\si)\neq 0$ for all $\si\in\er$.
We conjecture that it holds more generally.

\begin{conj}\label{conj:schur:inv}
The Schur complement \eqref{schur:repr} is invertible for all $\si\in\er$.
\end{conj}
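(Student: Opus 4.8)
The plan is to reduce Conjecture~\ref{conj:schur:inv} to the statement that $\det\Ss_\si\neq0$ for every $\si\in\er$, and then to split the range of $\si$ into $\si\to+\infty$, compact $\si$, and $\si\to-\infty$. For the reduction, since $\Aa_\si=\id-\Kaa_{\Ai,\si}$ is invertible (the shifted Airy kernel has operator norm strictly less than $1$), the Schur--complement factorisation
\[
\begin{pmatrix}\Aa_\si & \Cc_\si\\ \Bb_\si & \Dd_\si\end{pmatrix}
=\begin{pmatrix}\id & 0\\ \Bb_\si\Aa_\si^{-1} & \id\end{pmatrix}
\begin{pmatrix}\Aa_\si & 0\\ 0 & \Ss_\si\end{pmatrix}
\begin{pmatrix}\id & \Aa_\si^{-1}\Cc_\si\\ 0 & \id\end{pmatrix}
\]
gives $\det\begin{pmatrix}\Aa_\si & \Cc_\si\\ \Bb_\si & \Dd_\si\end{pmatrix}=\det(\id-\Kaa_{\Ai,\si})\cdot\det\Ss_\si$, a Fredholm determinant times an $\al\times\al$ determinant. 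The first factor, $\det(\id-\Kaa_{\Ai,\si})$, is the value at $\si$ of the GUE Tracy--Widom distribution function and hence lies in $(0,1)$ for every real $\si$; therefore $\Ss_\si$ is invertible if and only if the block operator $\begin{pmatrix}\Aa_\si & \Cc_\si\\ \Bb_\si & \Dd_\si\end{pmatrix}$ on $L^2(\R_+)\oplus\mathbb C^\al$ is, and it suffices to show this block operator has nonzero Fredholm determinant for all $\si$.

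For $\si\to+\infty$ this is essentially Proposition~\ref{prop:limSchur}: because $\Ai$ and all its derivatives decay like $e^{-\frac23\si^{3/2}}$, one has $A_\si\to0$ in trace norm, so $\Aa_\si\to\id$ and $\Bb_\si,\Cc_\si\to0$, whence $\det\Ss_\si=\det\Dd_\si\,(1+o(1))$ with $\Dd_\si=\bigl(\Ai^{(k+l)}(\si)\bigr)_{k,l=0}^{\al-1}$ by \eqref{defDd}. This is a Tur\'an/Wronskian-type Hankel determinant: using the Airy equation $\Ai''(x)=x\Ai(x)$ to write $\Ai^{(m)}(\si)=p_m(\si)\Ai(\si)+q_m(\si)\Ai'(\si)$ for polynomials $p_m,q_m$, a row reduction expresses $\det\Dd_\si$ as $\Ai(\si)^\al$ times a polynomial in $\si$ and $\Ai'(\si)/\Ai(\si)$; inserting $\Ai'(\si)/\Ai(\si)=-\sqrt\si+O(\si^{-1})$ then shows $\det\Dd_\si\neq0$ for all sufficiently large $\si$. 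Thus only compact $\si$ and the oscillatory regime $\si\to-\infty$ remain genuinely open.

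For these ranges I would aim to generalise the case $\al=1$, where $\Ss_\si=(\id-A_\si^2)^{-1}A_\si(0,0)$ equals the Hastings--McLeod solution $q(\si)$ of Painlev\'e~II and the nonvanishing $q(\si)\neq0$ on $\er$ follows from the Riemann--Hilbert analysis behind \cite{TW1}. By \eqref{schur:repr} the entries of $\Ss_\si$ are the Taylor coefficients at the corner $x=y=0$ of the resolvent kernel $\bigl(A_\si(\id-A_\si^2)^{-1}\bigr)(x,y)$; differentiating in $\si$ one checks that they satisfy a closed system governed by the Painlev\'e~II Lax pair, and the hope would be to identify $\det\Ss_\si$ with a non-vanishing isomonodromic $\tau$-function built from $\id-\Kaa_{\Ai,\si}$ and its logarithmic $\si$-derivatives. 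A second possibility is to prove directly that $\Ss_\si$ is positive definite: diagonalising the self-adjoint trace-class operator $A_\si$ as $A_\si\psi_j=\mu_j\psi_j$ with $\mu_j\in(-1,1)$ and $\{\psi_j\}$ an orthonormal basis of $L^2(\R_+)$, one obtains $c^\top\Ss_\si c=\sum_j\frac{\mu_j}{1-\mu_j^2}\bigl(\sum_{k=0}^{\al-1}c_k\psi_j^{(k)}(0)\bigr)^2$, which for $\al=1$ collapses to $c_0^2 q(\si)>0$; in general one would need the boundary data $\bigl(\psi_j(0),\dots,\psi_j^{(\al-1)}(0)\bigr)$ of the modes with $\mu_j<0$ never to outweigh those with $\mu_j>0$ against any test vector $c$.

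The hardest step --- and the reason the statement remains a conjecture --- is precisely the compact and negative-$\si$ range: unlike $\al=1$ there is no apparent monotonicity or sign-definiteness of $\det\Ss_\si$ when $\al\ge2$ (already the leading asymptotics at $\si\to+\infty$ cancel and the answer is carried by subleading terms), so no soft argument or one-sided asymptotic comparison can close the gap, and a full Riemann--Hilbert analysis controlling the relevant solution uniformly over all $\si\in\er$, through the transition into the oscillatory regime, seems unavoidable. A complementary but weaker route is to write $\det\begin{pmatrix}\Aa_\si & \Cc_\si\\ \Bb_\si & \Dd_\si\end{pmatrix}$ as the tacnode limit of the finite-$n$ block-matrix determinants of \eqref{block:matrix:def}, which are strictly positive because, up to explicit positive prefactors, they are the B\"ottcher--Widom reductions of the partition function of the non-intersecting squared Bessel paths; this gives $\det\Ss_\si\ge0$, but upgrading it to strict positivity would need a lower bound uniform in $\si$ on compact sets, which the fixed-$\si$ asymptotics of Section~\ref{s:analysis_tacnode} do not provide.
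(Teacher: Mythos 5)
The statement is a conjecture in the paper --- there is no full proof to compare against. What the paper actually establishes (Proposition~\ref{prop:limSchur}) is only that $\Ss_\si$ is invertible for $\si$ large, via the explicit asymptotic formula \eqref{Airy:der:matrix} for $\det\bigl(\Ai^{(k+l)}(\si)\bigr)_{k,l=0}^{\al-1}$, which is obtained through Lemmas~\ref{lemma:detAirewrite}, \ref{lemma:fklasympt} and \ref{lemma:detAn} in Section~\ref{s:asymptinv}; the compact and negative $\si$ range is explicitly left open. You read that situation correctly, and the Schur-complement factorisation you open with, giving $\det\begin{pmatrix}\Aa_\si&\Cc_\si\\\Bb_\si&\Dd_\si\end{pmatrix}=\det(\id-\Kaa_{\Ai,\si})\det\Ss_\si$ with the first factor strictly positive, is exactly the reduction that underlies \eqref{schur:inversion}.

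Your large-$\si$ sketch is, however, too quick as written and slightly self-contradictory. After writing $\Ai^{(m)}(\si)=p_m(\si)\Ai(\si)+q_m(\si)\Ai'(\si)$ and pulling out $\Ai(\si)^\al$, substituting only the leading term $\Ai'(\si)/\Ai(\si)\approx-\sqrt\si$ yields $p_{k+l}+q_{k+l}r\approx(-1)^{k+l}\si^{(k+l)/2}$, which is a rank-one Hankel matrix whose determinant vanishes identically --- precisely the cancellation you yourself flag two sentences later when you write that ``already the leading asymptotics at $\si\to+\infty$ cancel.'' The nonvanishing of $\det\Dd_\si$ is therefore carried entirely by subleading terms, and extracting a nonzero coefficient from those is the real work; it is exactly what Lemmas~\ref{lemma:detAirewrite}--\ref{lemma:detAn} do, culminating in the combinatorial identity $\det\bigl((a_{k,l})_{k,l=0}^{n-1}\bigr)=2^{\binom n2}\prod_{j=0}^{n-1}j!$. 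Your Airy-ODE reduction is a plausible alternative route to the same end, but it is not a completed proof until that bookkeeping is carried out. For compact and negative $\si$, your candidate approaches --- Painlev\'e $\tau$-function interpretation, the spectral positive-definiteness argument, and passing positivity of the finite-$n$ B\"ottcher--Widom block determinants to the limit --- are reasonable programs but remain unverified, and you rightly observe that the last of these yields only $\det\Ss_\si\ge0$, not strict positivity. That honest assessment matches the paper's: the conjecture remains open beyond the large-$\si$ regime.
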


Let us prove Conjecture \ref{conj:schur:inv} under the assumption that $\si\gg0$ is large enough.
Note that for $\si\to\infty$ the operator $(\id-\Kaa_{\Ai,\si})^{-1}$ tends exponentially fast to the identity operator.
Hence the $(k,l)$th entry of the matrix \eqref{schur:repr} is approximated by
\[\frac{\pa^{k+l}}{\pa x^{k+l}}\Ai(x+\si)|_{x=0}.\]
We will prove the following proposition in Section~\ref{s:asymptinv}.

\begin{proposition}\label{prop:limSchur}
\begin{enumerate}
\item Let $n$ be fixed. We have the asymptotics
\begin{equation}\label{Airy:der:matrix}
\det\left(\frac{\pa^{k+l}}{\pa x^{k+l}}\Ai(x)\right)_{k,l=0}^{n-1}
\sim \frac{(-1)^{\binom n2}}{2^{\binom{n+1}2}\pi^{n/2}}\left(\prod_{j=0}^{n-1} j!\right) x^{-n^2/4}\exp\left(-\frac 23 n x^{3/2}\right)
\end{equation}
for $x\to\infty$ where $\sim$ means that the ratio of the two sides goes to $1$.

\item The Schur complement $\Ss_\si$ is invertible for all $\si\gg0$ large enough.
\end{enumerate}
\end{proposition}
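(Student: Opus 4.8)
\emph{Proof proposal.}
The plan is to prove part (1) by a steepest-descent analysis of a multiple contour integral extracted from the Airy integral representation, and then to deduce part (2) from part (1) together with exponential decay estimates for the off-diagonal blocks $\Bb_\si,\Cc_\si$.

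For part (1) I would start from $\Ai(x)=\frac1{2\pi\ii}\int_C e^{\zeta^3/3-x\zeta}\,\ud\zeta$, where $C$ runs from $e^{-\ii\pi/3}\infty$ to $e^{\ii\pi/3}\infty$; differentiating under the integral sign gives $\Ai^{(k)}(x)=\frac1{2\pi\ii}\int_C(-\zeta)^k e^{\zeta^3/3-x\zeta}\,\ud\zeta$. Writing $\frac{\pa^{k+l}}{\pa x^{k+l}}\Ai(x)=\frac1{2\pi\ii}\int_C(-\zeta)^k(-\zeta)^l e^{\zeta^3/3-x\zeta}\,\ud\zeta$ and applying the Andréief (Heine) identity turns the Hankel determinant into the $n$-fold integral
\[
\det\Bigl(\frac{\pa^{k+l}}{\pa x^{k+l}}\Ai(x)\Bigr)_{k,l=0}^{n-1}
=\frac{1}{n!\,(2\pi\ii)^n}\int_{C^n}\prod_{i<j}(\zeta_i-\zeta_j)^2\prod_{j=1}^n e^{\zeta_j^3/3-x\zeta_j}\,\ud\zeta_j ,
\]
using that both Heine Vandermonde factors $\det((-\zeta_k)^i)$ coincide. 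Now deform each $C$ through the saddle $\zeta_0=\sqrt x$ of $\phi(\zeta)=\zeta^3/3-x\zeta$ along the steepest-descent direction and rescale $\zeta_j=\sqrt x+\ii x^{-1/4}u_j$, $u_j\in\R$, so that $\phi(\zeta_j)=-\tfrac23 x^{3/2}-u_j^2+O(x^{-3/4}u_j^3)$, $\zeta_i-\zeta_j=\ii x^{-1/4}(u_i-u_j)$ and $\ud\zeta_j=\ii x^{-1/4}\ud u_j$.

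By standard steepest-descent estimates (the integrand is Gaussian-dominated away from the saddle while the Vandermonde factor grows only polynomially, and the cubic remainder is uniformly negligible on bounded $u$-sets and absorbed by the Gaussian elsewhere) one may replace the integrand by its leading form, obtaining
\[
\det\Bigl(\frac{\pa^{k+l}}{\pa x^{k+l}}\Ai(x)\Bigr)_{k,l=0}^{n-1}
\sim \frac{(-1)^{\binom n2}}{n!\,2^n\pi^n}\,x^{-n^2/4}e^{-\frac23 n x^{3/2}}
\int_{\R^n}\prod_{i<j}(u_i-u_j)^2\prod_{j=1}^n e^{-u_j^2}\,\ud u_j ,
\]
where $x^{-n^2/4}=x^{-n(n-1)/4}\cdot x^{-n/4}$ comes from the squared Vandermonde and the $n$ differentials, and the factors $\ii^n$ from $\ud\zeta_j$ cancel the $\ii^n$ in $(2\pi\ii)^n$. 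The remaining integral is the Mehta integral at $\beta=2$: rescaling to the weight $e^{-\sum u_j^2/2}$ and using $\int_{\R^n}\prod_{i<j}(u_i-u_j)^2 e^{-\sum u_j^2/2}\prod\ud u_j=(2\pi)^{n/2}\prod_{j=1}^n j!$ gives the value $2^{-\binom n2}\pi^{n/2}\prod_{j=1}^n j!$. Substituting, together with $\prod_{j=1}^n j!=n!\prod_{j=0}^{n-1}j!$ and $n+\binom n2=\binom{n+1}2$, yields exactly the constant claimed in \eqref{Airy:der:matrix}.

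For part (2), observe first that by \eqref{defDd} the matrix $\Dd_\si$ is precisely the matrix of part (1) evaluated at $x=\si$ and $n=\alpha$, so $\det\Dd_\si\sim \frac{(-1)^{\binom\alpha2}}{2^{\binom{\alpha+1}2}\pi^{\alpha/2}}\bigl(\prod_{j=0}^{\alpha-1}j!\bigr)\si^{-\alpha^2/4}e^{-\frac23\alpha\si^{3/2}}\neq0$ for $\si$ large; in particular $\Dd_\si$ is invertible. It remains to bound the correction $R_\si:=-\Bb_\si\Aa_\si^{-1}\Cc_\si=\Dd_\si-\Ss_\si$. Since $\Kaa_{\Ai,\si}=\int_\si^\infty\Ai(\cdot+\lambda)\Ai(\cdot+\lambda)\,\ud\lambda$ has operator norm on $L^2(\R_+)$ tending to $0$ super-exponentially as $\si\to\infty$, one has $\|\Aa_\si^{-1}\|_{\mathrm{op}}\le2$ for $\si$ large; using the classical decay bounds for $\Ai$ and its derivatives (of the form recorded in the appendix), together with the tangent-line estimate $(y+\lambda)^{3/2}\ge\lambda^{3/2}+\tfrac32\sqrt\lambda\,y$, one gets $\|\Cc_\si(\cdot,l)\|_{L^2(\R_+)}\le (\text{poly in }\si)\,e^{-\frac23\si^{3/2}}$ and $\|\Bb_\si(k,\cdot)\|_{L^2(\R_+)}\le (\text{poly in }\si)\,e^{-\frac43\si^{3/2}}$, so each entry of $R_\si$ is $(\text{poly in }\si)\,e^{-2\si^{3/2}}$, while each entry of $\Dd_\si$ is at most $(\text{poly in }\si)\,e^{-\frac23\si^{3/2}}$. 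Expanding $\det\Ss_\si=\det(\Dd_\si+R_\si)$ by multilinearity, every term using at least one entry of $R_\si$ is $(\text{poly in }\si)\,e^{-\frac23(\alpha+2)\si^{3/2}}=o(|\det\Dd_\si|)$, whence $\det\Ss_\si=\det\Dd_\si\,(1+o(1))\neq0$ for all $\si$ large enough.

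The main obstacle is the steepest-descent step in part (1). Because the leading contribution of the factor $(-\zeta)^{k+l}$ at the saddle is the rank-one matrix $\bigl((-\sqrt x)^k(-\sqrt x)^l\bigr)_{k,l}$, the naive leading term of the determinant vanishes identically; it is exactly the factor $\prod_{i<j}(\zeta_i-\zeta_j)^2$ in the Andréief representation that supplies the genuine leading order, so the analysis must be done on the multiple integral and cannot be reduced to plugging one-variable Airy asymptotics into the matrix. Care is also required in matching all the constants ($2^{-\binom{n+1}2}$, $(-1)^{\binom n2}$, $\pi^{-n/2}$, $\prod j!$): the factors of $\ii$ produced by the contour must be tracked against the $(2\pi\ii)^{-n}$ prefactor, and the exact value of the Mehta integral invoked.
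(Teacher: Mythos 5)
Your proof is correct, but part (1) takes a genuinely different route from the paper's. You convert the Hankel determinant into an $n$-fold contour integral via the Andr\'eief identity applied to $\Ai^{(k+l)}(x)=\frac1{2\pi\ii}\int_C(-\zeta)^k(-\zeta)^l e^{\zeta^3/3-x\zeta}\,\ud\zeta$, run a saddle-point analysis on the multiple integral at $\zeta=\sqrt x$, and close with the Mehta integral $\int_{\R^n}\prod_{i<j}(u_i-u_j)^2\prod_j e^{-u_j^2/2}\,\ud u_j=(2\pi)^{n/2}\prod_{j=1}^n j!$; your bookkeeping of powers of $x$, $2$, $\pi$, $\ii$ and the sign $(-1)^{\binom n2}$ reproduces exactly the constant in \eqref{Airy:der:matrix}. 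The paper instead works entirely at the level of finite matrices: setting $f_k(x)=2\sqrt\pi\,e^{\frac23x^{3/2}}\Ai^{(k)}(x)$ and exploiting the recursion $f_{k+1}=-x^{1/2}f_k+f_k'$, it uses column operations to replace the Hankel matrix $(f_{k+l})$ by the Wronskian-type matrix $(f_k^{(l)})$ (Lemma~\ref{lemma:detAirewrite}), substitutes entry-wise asymptotics (Lemma~\ref{lemma:fklasympt}), and evaluates the residual constant determinant $\det(a_{k,l})=2^{\binom n2}\prod_{j=0}^{n-1}j!$ by an elementary row reduction and induction (Lemma~\ref{lemma:detAn}). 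Both methods are devices to defeat the same degeneracy you correctly single out --- naive scalar Airy asymptotics make the $\alpha\times\alpha$ matrix rank one, so the true leading order must come either from the Vandermonde-squared factor in the Andr\'eief integrand (your route) or from the passage to the Wronskian form (the paper's). Your argument is shorter if one grants the Mehta integral and standard multi-variable steepest descent; the paper's is more elementary and self-contained, requiring no contour deformation or Selberg-type evaluation. For part (2) you and the paper share the same idea: $\Ss_\si=\Dd_\si-\Bb_\si\Aa_\si^{-1}\Cc_\si$, $\det\Dd_\si$ is eventually nonzero by part (1), and the correction is exponentially smaller; you helpfully make the competing rates $e^{-\frac23\si^{3/2}}$ versus $e^{-2\si^{3/2}}$ explicit, whereas the paper disposes of this step in one line by reference to the preceding discussion.
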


We take the derivative of the kernel~\eqref{deflimkernel} with respect to the temperature parameter $\si$.
It turns out that this derivative has a convenient form: it is a rank 1 kernel.
The proof of this fact is a rather lengthy calculation which we omit here, hence we give the next theorem without proof.

\begin{theorem}[Temperature derivative of the kernel]\label{theorem:temp:der}
The derivative of the hard edge tacnode kernel \eqref{deflimkernel} with respect to $\si$ is a rank 1 kernel, that is
\begin{equation}\label{tempder:rank1}
\frac{\pa}{\pa \si} K^\al(s,x;t,y) = y^{\al} g_\si(s,x) h_\si(-t,y).
\end{equation}
If $\alpha=0$, then $g_\sigma(s,x)=h_\sigma(s,x)$ and they both are given by the contour integral
\begin{equation}
\frac1{2\pi\ii}\int_{\Gamma_\vv}\d\vv\,\frac{e^{\frac{\vv^3}3+\frac x{\vv+s}}}{\vv+s}\left(e^{-\vv\sigma}+\int_0^\infty\d w\int_0^\infty\d z\,(\id-K_{\Ai,\sigma})^{-1}(w,z)\Ai(w+\sigma)h(z;\vv)\right).
\end{equation}
If $\alpha\ge1$, then the function $g_\si$ is given by
\begin{equation}\label{f1:def}
g_\si(s,x) = \frac{1}{2\pi\ii}\int_{\Gamma_\vv}\ud\vv\,\frac{\exp\left(\frac{\vv^3}3+\frac x{\vv+s}\right)}{(\vv+s)^{\alpha+1}}
\begin{pmatrix}0& \dots& 0& 1\end{pmatrix} \begin{pmatrix} \Aa_\si & \Cc_\si\\ \Bb_\si & \Dd_\si\end{pmatrix}^{-1} \vbf[h(\cdot;\vv)]
\end{equation}
and $h_\si$ can be represented as
\begin{multline}\label{f2:def}
h_\si(s,x) = -x^{-\al}\frac{1}{2\pi\ii}\int_{\Gamma_\uu}\ud \uu\, \frac{(\uu-s)^{\alpha-1}}{\exp\left(\frac{\uu^3}3+\frac x{\uu-s}\right)}\\
\times\left[\uu^{\al}\exp\left(\si\uu\right)+\begin{pmatrix} (h(y;-\uu))_{y\in\R_+} & -\boldsymbol{\UU}\exp\left(\si\uu\right) \end{pmatrix}\begin{pmatrix} \Aa_\si & \Cc_\si\\
\Bb_\si & \Dd_\si\end{pmatrix}^{-1} \vbf[\Ai^{(\al)}(\cdot+\si)]\right].
\end{multline}
\end{theorem}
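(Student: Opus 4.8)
The plan is to differentiate the double contour integral in \eqref{deflimkernel} under the integral sign and to show that $\pa_\si\Mm(\uu,\vv)$ is a product of a function of $\uu$ alone and a function of $\vv$ alone, the two factors being exactly the integrands of \eqref{f1:def} and of the bracket in \eqref{f2:def}. Since the free term $-p_{(t-s)/2}(x,y)\id_{t>s}$ and every prefactor of the $\uu,\vv$--integrand in \eqref{deflimkernel} are independent of $\si$, one has
\[\pa_\si K^\al(s,x;t,y)=\frac1{(2\pi\ii)^2}\int_{\Gamma_\vv}\d\vv\int_{\Gamma_\uu}\d\uu\,\frac{\exp\left(\frac{\vv^3}3+\frac x{\vv+s}\right)}{\exp\left(\frac{\uu^3}3+\frac y{\uu+t}\right)}\frac{(\uu+t)^{\al-1}}{(\vv+s)^{\al+1}}\,\pa_\si\Mm(\uu,\vv),\]
differentiation under the integral being licit because the integrands decay super-exponentially along $\Gamma_\uu$ and $\Gamma_\vv$. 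Once $\pa_\si\Mm(\uu,\vv)=-F(\uu)\,G(\vv)$ is established, this double integral factors into a $\vv$-integral equal to $g_\si(s,x)$ as in \eqref{f1:def} and a $\uu$-integral equal to $-y^{\al}h_\si(-t,y)$, the latter by reading \eqref{f2:def} at $(s,x)=(-t,y)$ (the two contour prescriptions agree after a deformation crossing no singularity of the entire $\uu$-function $F$), so that \eqref{tempder:rank1} follows.

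To compute $\pa_\si\Mm$ I apply the product rule to \eqref{defM}, abbreviating $\mathcal M:=\bigl(\begin{smallmatrix}\Aa_\si&\Cc_\si\\\Bb_\si&\Dd_\si\end{smallmatrix}\bigr)$, $\mathbf q(\vv):=\vbf[h(\cdot;\vv)]$, and writing $\mathbf p(\uu)^T$ for the row vector $\begin{pmatrix}(h(y;-\uu))_{y\in\R_+} & -\UU e^{\si\uu}\end{pmatrix}$, so that $\Mm(\uu,\vv)=\frac{e^{\si(\uu-\vv)}}{\vv-\uu}+\mathbf p(\uu)^T\mathcal M^{-1}\mathbf q(\vv)$ and $\pa_\si\mathcal M^{-1}=-\mathcal M^{-1}(\pa_\si\mathcal M)\mathcal M^{-1}$. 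The elementary inputs are: from \eqref{h:repr:0}, $\pa_\si h(x;\vv)=-\Ai(x+\si)e^{-\vv\si}$, hence $\pa_\si\mathbf q(\vv)=-e^{-\vv\si}\vbf[\Ai(\cdot+\si)]$ and $\pa_\si\mathbf p(\uu)^T=-e^{\uu\si}\begin{pmatrix}(\Ai(y+\si))_{y\in\R_+} & \uu\UU\end{pmatrix}$; and differentiating \eqref{defAa}--\eqref{defDd} gives $\pa_\si\bigl(\begin{smallmatrix}\Aa_\si\\\Bb_\si\end{smallmatrix}\bigr)=\vbf[\Ai(\cdot+\si)]\otimes\Ai(\cdot+\si)$, while the $l$-th column of $\pa_\si\bigl(\begin{smallmatrix}\Cc_\si\\\Dd_\si\end{smallmatrix}\bigr)$ is the $(l+1)$-st column of $\bigl(\begin{smallmatrix}\Cc_\si\\\Dd_\si\end{smallmatrix}\bigr)$ when $l\le\al-2$ and equals $\vbf[\Ai^{(\al)}(\cdot+\si)]$ when $l=\al-1$ (because $\pa_\si\Ai^{(l)}(x+\si)=\Ai^{(l+1)}(x+\si)$).

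The cancellations that collapse everything to a rank-one form hinge on the fact that $\vbf[\Ai(\cdot+\si)]$ is precisely the $0$-th column of $\bigl(\begin{smallmatrix}\Cc_\si\\\Dd_\si\end{smallmatrix}\bigr)$, so that $\mathcal M^{-1}\vbf[\Ai(\cdot+\si)]=\bigl(\begin{smallmatrix}0\\\vece_0\end{smallmatrix}\bigr)$ and, more generally, $\mathcal M^{-1}\bigl(\begin{smallmatrix}\Cc_\si\\\Dd_\si\end{smallmatrix}\bigr)=\bigl(\begin{smallmatrix}0\\\id\end{smallmatrix}\bigr)$. Feeding this into the product-rule expansion, a mechanical computation shows: the term $\mathbf p^T\mathcal M^{-1}\pa_\si\mathbf q$ equals $e^{\si(\uu-\vv)}$ and cancels $\pa_\si\frac{e^{\si(\uu-\vv)}}{\vv-\uu}=-e^{\si(\uu-\vv)}$; the term $(\pa_\si\mathbf p)^T\mathcal M^{-1}\mathbf q$ cancels against the contributions to $-\mathbf p^T\mathcal M^{-1}(\pa_\si\mathcal M)\mathcal M^{-1}\mathbf q$ coming from $\pa_\si\bigl(\begin{smallmatrix}\Aa_\si\\\Bb_\si\end{smallmatrix}\bigr)$ and from the shifted columns of $\pa_\si\bigl(\begin{smallmatrix}\Cc_\si\\\Dd_\si\end{smallmatrix}\bigr)$, where the column shift interacts with the Vandermonde row through $\UU N=\uu\UU-\uu^{\al}\vece_{\al-1}^T$ for $N$ the shift $\vece_l\mapsto\vece_{l+1}$ on $\cee^\al$; only
\[\pa_\si\Mm(\uu,\vv)=-\Bigl(\uu^{\al}e^{\si\uu}+\mathbf p(\uu)^T\mathcal M^{-1}\vbf[\Ai^{(\al)}(\cdot+\si)]\Bigr)\Bigl(\begin{pmatrix}0&\cdots&0&1\end{pmatrix}\mathcal M^{-1}\vbf[h(\cdot;\vv)]\Bigr)\]
survives, which is the sought factorization. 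When $\al=0$ there is no finite-dimensional block, $\mathcal M=\id-\Kaa_{\Ai,\si}$, one uses instead $\pa_\si\Kaa_{\Ai,\si}(x,y)=-\Ai(x+\si)\Ai(y+\si)$ and $\pa_\si\mathcal M^{-1}=\mathcal M^{-1}(\pa_\si\Kaa_{\Ai,\si})\mathcal M^{-1}$, and the same bookkeeping gives $\pa_\si\Mm(\uu,\vv)=-(e^{\si\uu}+Q(\uu))(e^{-\si\vv}+P(\vv))$ with $Q(\uu)=P(-\uu)$ by symmetry of the Airy resolvent; substituting $\uu\mapsto-\uu$ in the $\uu$-factor identifies it with the $\vv$-factor and yields $g_\si=h_\si$.

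The main obstacle is not conceptual but the bookkeeping: one must track carefully the various row/column vectors on $L^2(\R_+)\oplus\cee^\al$ and the sign conventions in \eqref{weight:wxyt}--\eqref{M:xi:eta} inherited from the finite-$n$ derivation, verify that the $\vv$-integral is exactly \eqref{f1:def} and the $\uu$-integral is exactly $-y^{\al}h_\si(-t,y)$ as in \eqref{f2:def} (including the global factor $y^{\al}$ and the orientation of $\Gamma_\uu$), and check that no boundary terms arise in differentiating under the integral sign — this last point following again from the super-exponential decay of the integrand. This is the lengthy but essentially routine computation to which the statement of Theorem~\ref{theorem:temp:der} alludes.
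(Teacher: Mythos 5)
The paper states Theorem~\ref{theorem:temp:der} \emph{without proof} (``The proof of this fact is a rather lengthy calculation which we omit here, hence we give the next theorem without proof.''), so there is no proof of record to compare against. Your proposal does supply what the paper omits, and I have checked the central algebra: with $\mathcal M=\bigl(\begin{smallmatrix}\Aa_\si&\Cc_\si\\\Bb_\si&\Dd_\si\end{smallmatrix}\bigr)$, the identities $\mathcal M^{-1}\vbf[\Ai(\cdot+\si)]=\bigl(\begin{smallmatrix}0\\\vece_0\end{smallmatrix}\bigr)$ and $\mathcal M^{-1}\bigl(\begin{smallmatrix}\Cc_\si\\\Dd_\si\end{smallmatrix}\bigr)=\bigl(\begin{smallmatrix}0\\\id\end{smallmatrix}\bigr)$ hold trivially, the product rule plus the column-shift structure of $\pa_\si\bigl(\begin{smallmatrix}\Cc_\si\\\Dd_\si\end{smallmatrix}\bigr)$ and the relation $\UU N=\uu\UU-\uu^{\al}\vece_{\al-1}^T$ indeed collapse $\pa_\si\Mm$ to the rank-one form $-(\uu^\al e^{\si\uu}+\mathbf p(\uu)^T\mathcal M^{-1}\vbf[\Ai^{(\al)}(\cdot+\si)])\cdot(\vece_{\al-1}^T\mathcal M^{-1}\vbf[h(\cdot;\vv)])$, the terms $\pa_\si\frac{e^{\si(\uu-\vv)}}{\vv-\uu}$ and $\mathbf p^T\mathcal M^{-1}\pa_\si\mathbf q$ cancel exactly, and the separated $\vv$- and $\uu$-integrals reproduce $g_\si(s,x)$ of \eqref{f1:def} and $-y^\al h_\si(-t,y)$ of \eqref{f2:def}. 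The $\al=0$ reduction via $\pa_\si\Kaa_{\Ai,\si}=-\Ai(\cdot+\si)\otimes\Ai(\cdot+\si)$ and symmetry of the resolvent giving $Q(\uu)=P(-\uu)$ (hence $g_\si=h_\si$) is also correct and matches the paper's remark following the theorem. The only point that remains at the level of plausibility rather than rigor is the contour bookkeeping when reading \eqref{f2:def} at $(s,x)=(-t,y)$; your remark that the bracketed $\uu$-factor is entire, so that the required deformation crosses no singularity, is the right observation but deserves one explicit sentence verifying that the deformed contour still gives absolute convergence. In short: this is a correct and essentially complete proof of a result the paper asserts without argument; only the justification of the contour deformation and of differentiation under the integral sign needs to be written out to make it airtight.
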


\begin{conj}
The functions $g_\si$ and $h_\si$ are the same.
\end{conj}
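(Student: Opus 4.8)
The plan is to deduce $g_\si=h_\si$ from a reflection symmetry of the hard edge tacnode kernel $K^\al$ that is already present at finite $n$. Time reversal $t\mapsto 1-t$ on the ensemble of $n$ non-intersecting squared Bessel bridges interchanges the starting point $a$ with the ending point $b$, and, combined with the reversibility $x^\al p_t(x,y)=y^\al p_t(y,x)$ of the transition density (immediate from \eqref{transitionprob:sqB}), it gives the exact identity
\[
x^\al K_n^{(a,b)}(s,x;t,y)=y^\al K_n^{(b,a)}(1-t,y;1-s,x),
\]
which one verifies from \eqref{extendedkernel:start}--\eqref{extendedkernel:Amatrix}: under the reversal the matrix $A$ is replaced by $D_b^{-1}A^{\mathrm{T}}D_a$ with $D_a=\diag(a_j^\al)$, $D_b=\diag(b_k^\al)$, and the two diagonal conjugations cancel exactly against the reversibility factors produced by the two $p$-factors (the inhomogeneous term is handled the same way). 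This identity is continuous in the $a_j$'s and $b_k$'s, hence survives the confluent limit $a_j\to a$, $b_k\to b$. Feeding it into Definition~\ref{def:tacscale} and using that $q\mapsto q^{-1}$ swaps $a$ and $b$ in \eqref{defa}--\eqref{defb}, leaves the temperature $\si$ and the scale $r=q(1+q)^{-2}N^{-1/3}$ unchanged, and reflects the scaling window \eqref{timespacescaling} about $t=1/2$, the ($q$-independent) convergence \eqref{kernelconv} then yields
\[
x^\al K^\al(s,x;t,y)=y^\al K^\al(-t,y;-s,x).
\]

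Differentiating this in $\si$ and inserting the rank-one form from Theorem~\ref{theorem:temp:der}, namely $\pa_\si K^\al(s,x;t,y)=y^\al g_\si(s,x)h_\si(-t,y)$, the factors $x^\al,y^\al$ cancel and one obtains
\[
g_\si(s,x)\,h_\si(-t,y)=g_\si(-t,y)\,h_\si(s,x)
\]
for all admissible arguments. Since $g_\si$ and $h_\si$ are analytic in their variables and not identically zero (the $\si$-dependence of $K^\al$ is genuine, e.g.\ through \eqref{def:shiftedAi}), they share the same zero set and $g_\si/h_\si$ extends to a function independent of both arguments; thus $g_\si=c(\si)h_\si$ for a single constant $c(\si)$. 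For $\al=0$, Theorem~\ref{theorem:temp:der} already gives $c\equiv1$, so only $\al\ge1$ remains, and there it suffices to match one coefficient of the two explicit representations \eqref{f1:def} and \eqref{f2:def}. The cleanest regime is $\si\to+\infty$: there $(\id-K_{\Ai,\si})^{-1}\to\id$ exponentially fast, the block inverse in \eqref{defM} reduces (via Lemma~\ref{lemma:schur:repr}) to the inverse of the Schur complement $\Ss_\si$, and $\Ss_\si$ is, to leading order, the matrix $(\Ai^{(k+l)}(\si))_{k,l=0}^{\al-1}$ whose asymptotics are supplied by Proposition~\ref{prop:limSchur}(1). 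Substituting this into \eqref{f1:def} and \eqref{f2:def} and evaluating the $\vv$- and $\uu$-integrals by a steepest descent/residue analysis near the singular points $\vv=-s$, respectively $\uu=s$, one should find the same leading term on both sides, which forces $c(\si)=1$ and hence $g_\si=h_\si$.

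I expect the normalization step --- showing $c(\si)=1$ for $\al\ge1$ --- to be the real obstacle. The kernel symmetry is essentially bookkeeping and the differentiation is formal, but the formulas \eqref{f1:def} and \eqref{f2:def} are written in visibly different gauges: $g_\si$ uses the source vector $\vbf[h(\cdot;\vv)]$, whereas $h_\si$ uses $\vbf[\Ai^{(\al)}(\cdot+\si)]$ together with the extra row $\bigl((h(y;-\uu))_{y\in\R_+},\,-\mathbf{U}e^{\si\uu}\bigr)$ and the polynomial term $\uu^\al e^{\si\uu}$, and the two carry different $\al$-dependent powers of $(\vv+s)$, $(\uu-s)$ and of the position variable; there is no evident shortcut to $c=1$ beyond an honest asymptotic computation, in which these powers must be tracked carefully. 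One could instead attempt to prove the symmetry of $K^\al$ directly from \eqref{deflimkernel} via the change of variables $\uu\mapsto-\vv$, $\vv\mapsto-\uu$, but this does not render $\Mm$ manifestly symmetric --- its asymmetry is compensated only after rewriting the block operator through the Airy operator $A_\si$ as in Subsection~\ref{ss:alt_tacnode} --- so the route through finite $n$ seems the more transparent one.
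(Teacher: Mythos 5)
Your reduction via the kernel symmetry is correct and is exactly the ``hidden symmetry'' alluded to in the paper's remark after the conjecture: the finite-$n$ identity $x^\al K_n^{(a,b)}(s,x;t,y)=y^\al K_n^{(b,a)}(1-t,y;1-s,x)$ does follow from the reversibility $x^\al p_t(x,y)=y^\al p_t(y,x)$ exactly as you describe (writing $\wtil A=D_b^{-1}A^{\mathrm T}D_a$ and cancelling the diagonal factors), and the tacnode scaling combined with $q\mapsto q^{-1}$ and the $q$-independence of the limit (established in the course of Proposition~\ref{prop:detdetlimit}) transports it to $x^\al K^\al(s,x;t,y)=y^\al K^\al(-t,y;-s,x)$. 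Differentiating in $\si$ and substituting the rank-one form of Theorem~\ref{theorem:temp:der} then gives $g_\si(s,x)h_\si(-t,y)=g_\si(-t,y)h_\si(s,x)$, hence $g_\si=c(\si)\,h_\si$.

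This is where the proposal stops being a proof. The symmetry cannot possibly fix the scalar $c(\si)$: the kernel $y^\al g_\si(s,x)h_\si(-t,y)$ is invariant under the gauge $g_\si\mapsto\lambda g_\si$, $h_\si\mapsto\lambda^{-1}h_\si$, while the conjecture is a statement about the specific functions defined by \eqref{f1:def} and \eqref{f2:def}, which are written in visibly different forms. Establishing $c(\si)=1$ for $\al\ge1$ therefore requires a direct comparison of the two formulas --- e.g.\ the $\si\to+\infty$ asymptotics you sketch via Proposition~\ref{prop:limSchur} and Lemma~\ref{lemma:schur:repr} --- and you neither carry out that computation nor offer a structural reason replacing it; as you yourself flag, the two formulas carry different $\al$-dependent powers of $(\vv+s)$ and $(\uu-s)$ and of the space variable, and \eqref{f2:def} has the extra terms $\uu^\al e^{\si\uu}$ and $-\boldsymbol{\UU}e^{\si\uu}$ that have no counterpart in \eqref{f1:def}. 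This is a genuine gap, and it is precisely why the paper leaves the statement as a conjecture: the authors verify it only for $\al=0$ (trivially) and $\al=1$ (via Schur complement rewriting), and the phrase that it ``follows from the hidden symmetry'' is, as your own analysis shows, an overstatement --- the symmetry delivers proportionality, not the normalization.
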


The conjecture holds for $\alpha=0$ trivially, the $\alpha=1$ case can still be checked by rewriting the functions in Schur complement forms.
For general $\alpha$, it is more complicated.
The equality of the two functions $g_\sigma$ and $h_\sigma$ follows from the hidden symmetry of the kernel.
Integration of \eqref{tempder:rank1} with respect to the temperature yields the following.

\begin{corollary}
The hard edge tacnode kernel can be written as
\begin{equation}
K^\al(s,x;t,y) = -y^{\al} \int_\si^\infty\ud\si' g_{\si'}(s,x) g_{\si'}(-t,y).
\end{equation}
\end{corollary}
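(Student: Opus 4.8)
The plan is to integrate the rank-$1$ identity \eqref{tempder:rank1} from Theorem~\ref{theorem:temp:der} over the temperature variable. Concretely, we view $K^\al(s,x;t,y)$ as a function of $\si$ via the definitions \eqref{defAa}--\eqref{defM} and \eqref{deflimkernel}, and we write
\begin{equation*}
K^\al(s,x;t,y)\big|_{\si} - K^\al(s,x;t,y)\big|_{\si'} = \int_{\si}^{\si'} \frac{\pa}{\pa \si''} K^\al(s,x;t,y)\,\ud\si'' = \int_{\si}^{\si'} (y^\al\, g_{\si''}(s,x)\, h_{\si''}(-t,y))\,\ud\si''.
\end{equation*}
Granting the conjectured symmetry $g_{\si''} = h_{\si''}$, the integrand becomes $y^\al g_{\si''}(s,x)\, g_{\si''}(-t,y)$. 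The remaining task is to identify the boundary term as $\si'\to\infty$: one must show that $K^\al(s,x;t,y)\big|_{\si'}\to 0$. The desired formula then follows upon sending $\si'\to\infty$ and relabelling the dummy variable $\si''$ as $\si'$.

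First I would justify differentiation under the double contour integral in \eqref{deflimkernel}: since the contours $\Gamma_\uu,\Gamma_\vv$ are fixed (independent of $\si$, cf.\ Definition~\ref{def:contours}) and the integrand depends on $\si$ only through the $\si$-dependent matrices \eqref{defAa}--\eqref{defDd}, the function $h(\cdot;\vv)$ in \eqref{h:repr:0}, and the prefactor $\exp(\si(\uu-\vv))$ and $\exp(\si\uu)$ terms, analyticity and the Gaussian-type decay along the contours give enough dominated-convergence control; this is exactly the regime in which Theorem~\ref{theorem:temp:der} was derived, so I would simply invoke it. Second, for the vanishing of the boundary term, I would use that as $\si\to\infty$ the shifted Airy kernel $\Kaa_{\Ai,\si}$ tends to $0$ exponentially fast (so $\Aa_\si\to\id$), that $\Bb_\si\to 0$ exponentially, and that the entries of $\Cc_\si,\Dd_\si$ as well as $h(y;\vv)$ decay like $\Ai$-derivatives evaluated near $\si$, hence exponentially in $\si^{3/2}$. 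Feeding this into \eqref{defM} shows $\Mm(\uu,\vv)\to 0$ uniformly for $\uu,\vv$ on the contours, while the $\id_{t>s}$ transition term in \eqref{deflimkernel} is $\si$-independent and must be handled via the alternative representation; in fact the cleanest route is to note that the full kernel $K^\al$ restricted to a compact $E$ converges to $0$ in trace norm because the associated gap probability $\det(\id-K^\al)_{L^2(E)}\to 1$ as $\si\to\infty$ (the determinantal point process has no points in the high-temperature limit), and trace-norm convergence to $0$ forces the kernel itself to $0$ in the relevant sense.

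I would structure the write-up as: (i) cite Theorem~\ref{theorem:temp:der} for the rank-$1$ derivative and the conjectured identity $g_\si = h_\si$; (ii) note the $\si$-independence of the contours to integrate; (iii) establish the $\si\to\infty$ decay of $\Mm(\uu,\vv)$ and of the transition term, concluding $K^\al|_{\si'}\to 0$; (iv) take the limit $\si'\to\infty$ to obtain
\begin{equation*}
K^\al(s,x;t,y) = -\,y^\al \int_{\si}^{\infty} g_{\si'}(s,x)\, g_{\si'}(-t,y)\,\ud\si',
\end{equation*}
the minus sign arising from $\int_\si^{\si'} = -\int_{\si'}^{\si}$ and reversing the limits.

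The main obstacle is the vanishing of the boundary term $K^\al|_{\si'}$ as $\si'\to\infty$: the rank-$1$ derivative only pins down $K^\al$ up to an additive $\si$-independent ``constant of integration,'' so the whole content of the corollary is that this constant is $0$. Proving this rigorously requires either a clean operator-norm estimate showing $\Mm(\uu,\vv)\to 0$ together with a matching control on the $-p_{(t-s)/2}(x,y)\id_{t>s}$ piece — which does \emph{not} vanish pointwise and must be absorbed by a cancellation inside the double integral — or, more conceptually, the probabilistic fact that the hard edge tacnode process $\mathcal T^\al$ becomes empty as the temperature $\si\to\infty$, which gives trace-norm convergence of $K^\al$ to $0$ on every compact set. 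The latter is the argument I would favour, since it sidesteps delicate contour estimates and uses only the gap-probability characterisation already established in Theorem~\ref{thm:hardedgetacnode}.
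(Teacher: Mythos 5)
Your route is the same as the paper's: integrate the rank-$1$ identity \eqref{tempder:rank1}, invoke the conjectured symmetry $g_\si=h_\si$, and send the upper limit to infinity. The paper states this in a single sentence (``Integration of \eqref{tempder:rank1} with respect to the temperature yields the following'') with no discussion of the boundary term, so the boundary-term discussion is the only nontrivial content you add.

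Two remarks on your write-up. First, your initial display has the fundamental theorem of calculus backwards: it should read $K^\al|_{\si'}-K^\al|_\si=\int_\si^{\si'}\pa_{\si''}K^\al\,\ud\si''$, from which the overall minus sign in the corollary comes out directly after letting $\si'\to\infty$; your parenthetical explanation of where the sign comes from does not match either version. Second, and more substantively, the trace-norm argument you favour for the vanishing of the boundary term does not actually close the gap you yourself flag. Even granting that $\det(\id-K^\al)_{L^2(E)}\to 1$ and hence $\tr|K^\al|\to 0$ on compacts (which already presupposes nonnegativity of the operator in a suitable gauge), trace-norm convergence to zero gives no pointwise control on the kernel, and in particular says nothing about the off-diagonal-in-time piece $-p_{(t-s)/2}(x,y)\id_{t>s}$, which is $\si$-independent and therefore cannot vanish on its own as $\si\to\infty$. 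What must actually be established is that this transition term is cancelled in the limit by the $\si\to\infty$ behaviour of the double contour integral — driven by the $e^{\si(\uu-\vv)}/(\vv-\uu)$ piece of $\Mm(\uu,\vv)$ and the residue it contributes at $\vv=\uu$ — and only the contour-estimate route you sketch first can deliver that cancellation. So the probabilistic detour does not in fact ``sidestep'' the delicate estimates; it is the estimates that are doing the work.
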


Finally, we show how the block matrix notations in the above formulas can be avoided.
We show this first for the function $\Mm(\uu,\vv)$.
Let us use the notation
\begin{equation}\label{defexp}
e_s(x)=\exp(s(x+\sigma))
\end{equation}
for the exponential function for any $s\in\mathbb C$ and let
\begin{equation}
\langle f,g\rangle=\int_0^\infty f(x)g(x)\,\d x
\end{equation}
be the usual scalar product of functions $f$ and $g$ in $L^2(\R_+)$.
Observe that with this notation
\begin{equation}\label{hrewrite}
h(x;\vv)=A_\si e_{-\vv}(x).
\end{equation}

\begin{proposition}\label{lemma:Mschur} We can rewrite the function $\Mm(\uu,\vv)$ in \eqref{defM} with the notation of \eqref{defAsi} and \eqref{defexp} as
\begin{multline}\label{defM:schur}
\Mm(\uu,\vv)=\langle e_\uu,(\id-\Kaa_{\Ai,\si})^{-1}e_{-\vv}\rangle
\\ -\left(\frac{\pa^k}{\pa x^k}\left((\id-\Kaa_{\Ai,\si})^{-1}e_\uu\right)(x)\Bigm|_{x=0}\right)_{k=0}^{\al-1} \Ss_\si^{-1}
\left(\frac{\pa^k}{\pa x^k}\left(A_\si(\id-\Kaa_{\Ai,\si})^{-1}e_{-\vv}\right)(x)\Bigm|_{x=0}\right)_{k=0}^{\al-1}.
\end{multline}
Here the two expressions between large parentheses $(\dots)_{k=0}^{\al-1}$ denote a row and a column vector respectively, both of length $\al$.
\end{proposition}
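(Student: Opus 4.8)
The plan is to invert the block operator appearing in \eqref{defM} by means of the Schur-complement formula \eqref{schur:inversion}, with $\Ss_\si=\Dd_\si-\Bb_\si\Aa_\si^{-1}\Cc_\si$ as in \eqref{schur:lim}, and then to simplify the resulting scalar, mixed and $\al\times\al$ contributions using the operator representations \eqref{AC}--\eqref{BD}, the identity $\Kaa_{\Ai,\si}=A_\si^2$ (so that $\Aa_\si=\id-\Kaa_{\Ai,\si}$ and $\Aa_\si^{-1}$ commutes with $A_\si$), and the relation $h(\cdot\,;\vv)=A_\si e_{-\vv}$ from \eqref{hrewrite} together with its specialisation $(h(y;-\uu))_{y\in\R_+}=A_\si e_\uu$. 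Throughout I will use that $A_\si$, and hence every function of it, is symmetric for the bilinear pairing $\langle\cdot,\cdot\rangle$, and that since $A_\si(x,y)=\Ai(x+y+\si)$ depends only on $x+y$ one has $\pa_y^l A_\si(x,y)|_{y=0}=\pa_x^l\Ai(x+\si)$, so that $\Cc_\si(\cdot,l)$ is exactly the $l$th $x$-derivative of $A_\si(\cdot,0)$.

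Abbreviate $\phi:=A_\si e_{-\vv}$ and $\mathbf p:=A_\si e_\uu$; then $\vbf[h(\cdot;\vv)]$ has $L^2$-part $\phi$ and finite part $\psi:=\bigl(\pa_x^k\phi(x)|_{x=0}\bigr)_{k=0}^{\al-1}$, while the left row of \eqref{defM} is $\begin{pmatrix}\mathbf p & \mathbf q\end{pmatrix}$ with $\mathbf q=-\UU e^{\si\uu}$, so $\mathbf q_l=-\pa_x^l e_\uu(x)|_{x=0}$. Expanding \eqref{schur:inversion} against these and collecting terms rewrites \eqref{defM} as $\Mm(\uu,\vv)=\frac{e^{\si(\uu-\vv)}}{\vv-\uu}+\langle\mathbf p,\Aa_\si^{-1}\phi\rangle-(\mathbf g-\mathbf q)\Ss_\si^{-1}\bigl(\psi-\Bb_\si\Aa_\si^{-1}\phi\bigr)$, where $\mathbf g$ is the length-$\al$ row with $\mathbf g_l=\langle\mathbf p,\Aa_\si^{-1}\Cc_\si(\cdot,l)\rangle$. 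I would then handle the three pieces separately. For the scalar piece, symmetry of $A_\si$ gives $\langle\mathbf p,\Aa_\si^{-1}\phi\rangle=\langle e_\uu,A_\si^2\Aa_\si^{-1}e_{-\vv}\rangle$, and the algebraic identity $A_\si^2\Aa_\si^{-1}=\Aa_\si^{-1}-\id$ turns this into $\langle e_\uu,\Aa_\si^{-1}e_{-\vv}\rangle-\langle e_\uu,e_{-\vv}\rangle$; since $\langle e_\uu,e_{-\vv}\rangle=\int_0^\infty e^{(\uu-\vv)(x+\si)}\,\d x=\frac{e^{\si(\uu-\vv)}}{\vv-\uu}$ (the integral converging when $\Re\vv>\Re\uu$, which holds on the contours of Definition~\ref{def:contours}, both sides being meromorphic in $(\uu,\vv)$), the rational term cancels and the scalar piece reduces to $\langle e_\uu,(\id-\Kaa_{\Ai,\si})^{-1}e_{-\vv}\rangle$, the first term of \eqref{defM:schur}.

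It then remains to identify the two vectors flanking $\Ss_\si^{-1}$. By \eqref{BD}, the $k$th entry of $\psi-\Bb_\si\Aa_\si^{-1}\phi$ equals $\pa_x^k\bigl((\id+A_\si^2\Aa_\si^{-1})\phi\bigr)(x)|_{x=0}=\pa_x^k(\Aa_\si^{-1}\phi)(x)|_{x=0}=\pa_x^k\bigl(A_\si(\id-\Kaa_{\Ai,\si})^{-1}e_{-\vv}\bigr)(x)|_{x=0}$, which is the column vector of \eqref{defM:schur}. For the row, by \eqref{AC} one pushes $A_\si$ through $\langle\cdot,\cdot\rangle$ and uses $\int_0^\infty\chi(z)\Ai^{(l)}(z+\si)\,\d z=\pa_x^l(A_\si\chi)(x)|_{x=0}$ with $\chi=A_\si\Aa_\si^{-1}e_\uu$ to get $\mathbf g_l=\pa_x^l(A_\si^2\Aa_\si^{-1}e_\uu)(x)|_{x=0}$, so that $\mathbf g_l-\mathbf q_l=\pa_x^l\bigl((\id+A_\si^2\Aa_\si^{-1})e_\uu\bigr)(x)|_{x=0}=\pa_x^l\bigl((\id-\Kaa_{\Ai,\si})^{-1}e_\uu\bigr)(x)|_{x=0}$, the row vector of \eqref{defM:schur}. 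Combining the three pieces yields \eqref{defM:schur}. The argument is essentially bookkeeping rather than substance; the only points needing a little care are keeping the bilinear (not Hermitian) pairing consistent so that symmetry of $A_\si$ may be invoked freely, justifying that the boundary derivatives $\pa_x^k(\cdot)|_{x=0}$ commute with the Airy integral operators (immediate from the super-exponential decay of the Airy kernel and its derivatives), and the analytic-continuation remark used to dispose of the convergence condition on $\langle e_\uu,e_{-\vv}\rangle$; none of these is a genuine obstacle.
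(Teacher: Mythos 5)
The paper states Proposition~\ref{lemma:Mschur} without any proof: after introducing $A_\si$ and the operator representations \eqref{AC}--\eqref{BD} and deriving Lemma~\ref{lemma:schur:repr}, the proposition is simply asserted. Your argument therefore supplies a proof that the authors omitted, and it is correct and complete. It is also exactly the argument the setup is pointing toward: apply the Schur inversion \eqref{schur:inversion} to the block matrix in \eqref{defM}, collect the five resulting terms into the rank-one Schur form $\langle\mathbf p,\Aa_\si^{-1}\phi\rangle-(\mathbf g-\mathbf q)\Ss_\si^{-1}(\psi-\Bb_\si\Aa_\si^{-1}\phi)$, and then use the algebraic identity $A_\si^2(\id-A_\si^2)^{-1}=(\id-A_\si^2)^{-1}-\id$, the self-adjointness of $A_\si$ (hence of all functions of it) under $\langle\cdot,\cdot\rangle$, and the fact that integrating against $\Ai^{(l)}(\cdot+\si)$ is the same as applying $A_\si$ and taking the $l$th boundary derivative at $0$. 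These manipulations match term by term: the rational prefactor $e^{\si(\uu-\vv)}/(\vv-\uu)$ cancels against $-\langle e_\uu,e_{-\vv}\rangle$ inside the scalar piece, the column $\psi-\Bb_\si\Aa_\si^{-1}\phi$ telescopes to $\bigl(\pa_x^k(A_\si(\id-\Kaa_{\Ai,\si})^{-1}e_{-\vv})(x)|_{x=0}\bigr)_k$, and the row $\mathbf g-\mathbf q$ telescopes to $\bigl(\pa_x^l((\id-\Kaa_{\Ai,\si})^{-1}e_\uu)(x)|_{x=0}\bigr)_l$, exactly as in \eqref{defM:schur}.

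Two small points you handled well and are worth keeping explicit: (i) the analytic-continuation remark for $\langle e_\uu,e_{-\vv}\rangle$ is not cosmetic, since the small loop of $\Gamma_\vv$ around $-s$ lies to the left of $\Gamma_\uu$, so $\Re\vv<\Re\uu$ there and the divergent piece of the scalar term must be understood as a meromorphic continuation (as must the $\langle e_\uu,(\id-\Kaa_{\Ai,\si})^{-1}e_{-\vv}\rangle$ term of \eqref{defM:schur}, whose ``bare'' part $\langle e_\uu,e_{-\vv}\rangle$ carries the same issue); and (ii) passing $\pa_x^k|_{x=0}$ through the Airy-type integral operators is justified by the super-exponential decay of $\Ai$ and its derivatives, which also makes the remaining inner products absolutely convergent because the image of $A_\si$ decays faster than the at-most-exponential growth of $e_\uu$. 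There is no gap.
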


In a very similar way, one can express the functions $g_\sigma$ and $h_\sigma$ without the block matrix notation.

\begin{remark}\label{rem:cafasso}
In the $\alpha=0$ case after the change of variables $\eta=(\vv+s)^{-1}$ and $\xi=(\uu+t)^{-1}$, the kernel in \eqref{deflimkernel} restricted to single time reads
\begin{equation}\label{alpha0singletimekernel}
K^\alpha(t,x;t,y)=\frac1{(2\pi\ii)^2}\int_{\gamma_\eta}\d\eta\int_{\gamma_\xi}\d\xi
\frac{\exp\left(\frac{(\eta^{-1}-t)^3}3+\frac x\eta\right)}{\exp\left(\frac{(\xi^{-1}-t)^3}3+\frac y\xi\right)}\frac1{\xi-\eta}.
\end{equation}
The contour $\gamma_\xi$ is a clockwise curve that has a cusp at the origin such that it leaves the origin in the $e^{\ii2\pi/3}$ direction,
it crosses the positive real axes and it returns to the origin from the $e^{-\ii2\pi/3}$ direction.
The other contour $\gamma_\eta$ consists of two parts: the first one is a clockwise loop inside $\gamma_\xi$ and it has a cusp at the origin
such that it leaves the origin in the $e^{\ii\pi/3}$ direction and it returns to the origin from the $e^{-\ii\pi/3}$ direction;
the other part is a counterclockwise circle around $\gamma_\xi$.

The kernel in \eqref{alpha0singletimekernel} with a further change of variables is a special case of the single time kernels of the form
\begin{equation}\label{singletimekernel}
\frac1{(2\pi\ii)^2}\int_{\gamma_\mu}\d\mu\int_{\gamma_\lambda}\d\lambda\frac{e^{\Theta_x(\mu)-\Theta_y(\lambda)}}{\lambda-\mu}
\end{equation}
where $\Theta_x(\mu)$ is a rational function in $\mu$ which depends on the parameter $x$ linearly, i.e.\ $\Theta_x(\mu)=\Theta_x(0)+x\mu$.
In this case, $\Theta_x$ is a third order rational function.
In the hard edge Pearcey case, it is a second order function, see Corollary~\ref{cor:KMW} below and also \cite{KMW2}.
As discussed in \cite{BC11}, to the type of kernels \eqref{singletimekernel} with some further restrictions on $\Theta_x$, one can associate a Riemann--Hilbert problem
which was used in \cite{BC11} to express the gap probabilities in terms of the tau function of the Riemann--Hilbert problem.
It is natural to ask about the existence of a corresponding Riemann--Hilbert problem in the present case.
\end{remark}

\subsection{Hard edge Pearcey process}\label{ss:hePearcey}

Here we consider the non-intersecting squared Bessel paths which all start from $0$.
More precisely, let
\begin{equation}\label{KnPea}
K_n^{\Pea}(s,x;t,y):=\lim_{a_i\to 0} K_n(s,x;t,y)
\end{equation}
be the confluent limit of the kernel \eqref{extendedkernel:start} of $n$ non-intersecting squared Bessel processes as the starting points tend to $0$.
This means that $K_n^{\Pea}(s,x;t,y)$ is the correlation kernel for $n$ non-intersecting squared Bessel processes
conditioned to start at $0$ at time $0$ and end at positions $b_j,\ j=1,\dots,n$ at time $1$.
This is similar to the situation considered by Kuijlaars, Mart\'\i nez-Finkelshtein and Wielonsky \cite[Fig.\ 1]{KMW2}.

\begin{proposition}\label{prop:confluent:a0}
For any real $\alpha>-1$, the kernel $K_n^{\Pea}$ in \eqref{KnPea} has the double integral representation
\begin{multline}\label{doubleintegral:confluent:a0}
K_n^{\Pea}(s,x;t,y) = -p_{t-s}(x,y) \id_{t>s} \\
- \frac1{2\pi\ii}\int_{-\infty}^0\d w\int_{\Gamma_z}\d z\ p_{t-1}(w,y)
p_{1-s}(x,z)\left(\frac{w}{z}\right)^{\alpha} \exp\left( \frac{z-w}{2} \right)
\frac{1}{w-z}\prod_{j=1}^n \frac{w-b_j}{z-b_j}
\end{multline}
where the $z$-contour $\Gamma_z$ is an counterclockwise loop surrounding the points $b_1,\ldots,b_n$ and being disjoint from the $w$-contour $(-\infty,0]$.
The branch for $\left(w/z\right)^{\alpha}$ is defined by $w^\alpha=e^{\ii\alpha\pi}|w|^\alpha$ and along the contour $\Gamma_z$, we use the extension of $z^{-\alpha}$ from the positive real axis.
\end{proposition}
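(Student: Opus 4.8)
The plan is to take the confluent limit $a_j\to0$ directly in the Karlin--McGregor formula \eqref{extendedkernel:start} and then recognise the resulting finite sum as the announced double contour integral. By Cramer's rule the second term of \eqref{extendedkernel:start} equals
\[
\sum_{j,k=1}^n p_{1-s}(x,b_k)(A^{-1})_{k,j}p_t(a_j,y)=-\frac{1}{\det A}\det\begin{pmatrix}0 & (p_{1-s}(x,b_k))_{k}\\ (p_t(a_j,y))_{j} & A\end{pmatrix}.
\]
For every real $\alpha>-1$ both $a\mapsto p_t(a,\cdot)$ and $a\mapsto p_1(a,\cdot)$ are analytic at $a=0$ by the series \eqref{transitionprob:sqB}--\eqref{BesselI:def}, so the standard confluent-Vandermonde limit applies: numerator and denominator each acquire the common factor $\prod_{i<j}(a_i-a_j)$, which cancels, and in the limit the rows involving $a_j$ are replaced by $\tfrac1{(j-1)!}\partial_a^{j-1}(\cdot)|_{a=0}$. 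This gives $K_n^{\Pea}(s,x;t,y)=-p_{t-s}(x,y)\id_{t>s}+\sum_{k=1}^n p_{1-s}(x,b_k)G_k(y)$, where $(G_k(y))_{k=1}^n$ is the unique solution of
\[
\sum_{k=1}^n \partial_a^{j-1}p_1(a,b_k)\big|_{a=0}\,G_k(y)=\partial_a^{j-1}p_t(a,y)\big|_{a=0},\qquad j=1,\dots,n.
\]

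I would then use two facts. \emph{(i)} By \eqref{transitionprob:sqB}--\eqref{BesselI:def}, for each fixed $j$ the function $\beta\mapsto\beta^{-\alpha}e^{\beta/2}\tfrac1{(j-1)!}\partial_a^{j-1}p_1(a,\beta)|_{a=0}$ is a polynomial in $\beta$ of degree $j-1\le n-1$, hence it coincides with its Lagrange interpolant at the nodes $b_1,\dots,b_n$. \emph{(ii)} The analytic continuation of the Chapman--Kolmogorov identity $\int_0^\infty p_1(a,\zeta)p_u(\zeta,y)\,\d\zeta=p_{1+u}(a,y)$ from $u>0$ to $u=t-1\in(-1,0)$ forces the integration contour onto the negative real axis and yields
\[
p_t(a,y)=-\int_{-\infty}^0 p_1(a,w)\,p_{t-1}(w,y)\,\d w,\qquad 0<t<1,
\]
with the branch $w^\alpha=e^{\ii\alpha\pi}|w|^\alpha$; this is a classical Bessel integral and the displayed sign can be checked directly (e.g.\ from formulas of the type $\int_0^\infty e^{-\rho v}J_\alpha(c\sqrt v)\,\d v$). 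Combining \emph{(i)} and \emph{(ii)} I would verify that
\[
G_k(y)=-\int_{-\infty}^0 p_{t-1}(w,y)\left(\frac w{b_k}\right)^{\!\alpha} e^{(b_k-w)/2}\prod_{l\ne k}\frac{w-b_l}{b_k-b_l}\,\d w
\]
solves the above system: applying $\tfrac1{(j-1)!}\partial_a^{j-1}|_{a=0}$ and using \emph{(i)} collapses the $k$-sum to $\tfrac1{(j-1)!}\partial_a^{j-1}p_1(a,w)|_{a=0}$, after which integration in $w$ against $p_{t-1}(w,y)$ produces $\tfrac1{(j-1)!}\partial_a^{j-1}p_t(a,y)|_{a=0}$ by \emph{(ii)}.

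Finally I would turn the sum $\sum_k p_{1-s}(x,b_k)G_k(y)$ into the double integral. The Lagrange weights $\prod_{l\ne k}(w-b_l)/(b_k-b_l)$ are exactly the residues at $z=b_k$ of $\tfrac1{w-z}\prod_{l=1}^n\tfrac{w-b_l}{z-b_l}$; regarded as a function of $z$, the integrand $p_{1-s}(x,z)(w/z)^\alpha e^{(z-w)/2}\tfrac1{w-z}\prod_l\tfrac{w-b_l}{z-b_l}$ has inside $\Gamma_z$ only the simple poles $z=b_l$ (the factor $z^\alpha$ contained in $p_{1-s}(x,z)$ cancels the $z^{-\alpha}$ coming from $(w/z)^\alpha$, and $z=w<0$ lies outside $\Gamma_z$), so
\[
\sum_{k=1}^n p_{1-s}(x,b_k)\left(\frac w{b_k}\right)^{\!\alpha}e^{(b_k-w)/2}\prod_{l\ne k}\frac{w-b_l}{b_k-b_l}=\frac1{2\pi\ii}\int_{\Gamma_z}p_{1-s}(x,z)\left(\frac wz\right)^{\!\alpha}e^{(z-w)/2}\frac{1}{w-z}\prod_{l=1}^n\frac{w-b_l}{z-b_l}\,\d z.
\]
Substituting this and the formula for $G_k(y)$ into $\sum_k p_{1-s}(x,b_k)G_k(y)$ and inserting into the expression for $K_n^{\Pea}$ from the first paragraph gives precisely \eqref{doubleintegral:confluent:a0}; convergence of the $w$-integral on $(-\infty,0]$ is harmless, since the combined exponential factor is $\exp\!\big(\tfrac{t}{2(1-t)}w\big)$, decaying as $w\to-\infty$, while $|w|^\alpha$ is integrable at $0$ because $\alpha>-1$.

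The main obstacle is fact \emph{(ii)}: pinning down the analytic continuation of Chapman--Kolmogorov with the correct overall sign and the correct branch of $w^\alpha$ on the negative axis, and rigorously justifying the associated contour deformation (equivalently, evaluating the relevant product-of-Bessel-functions integral on the deformed contour). The remaining ingredients --- the confluent-Vandermonde limit, Lagrange interpolation, and the residue computation --- are routine.
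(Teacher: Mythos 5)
Your proposal is correct and uses essentially the same ingredients as the paper's proof: Cramer's rule, the confluent Vandermonde limit, the analytic continuation of Chapman--Kolmogorov to a negative time span (the Markov-type formula \eqref{Markov:sqB}, stated there for $s>t>0$ and used with $s=1$), and the residue interpretation of the Lagrange weights $\prod_{l\neq k}(w-b_l)/(b_k-b_l)$. The only structural difference is the order of operations --- the paper first applies the negative-time Markov formula (Lemma~\ref{lemma:kernel:Joh}) and then takes the confluent limit, computing the resulting Vandermonde ratio directly, whereas you take the confluent limit first and then verify a candidate solution via Lagrange interpolation together with the Markov formula --- a harmless rearrangement of the same argument.
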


This proposition is closely related to a result by Katori--Tanemura \cite[Theorem 2.1]{KT11}.
It can also be considered as the hard edge analogue of Tracy--Widom \cite[eq.\ (2.11)]{TW3}.
For completeness we will provide a proof of the proposition in Section~\ref{s:analysis_Pearcey}.
The confluent limit of the endpoints $b_j\to b$ for $j=1,\dots,n$ is simply obtained by substituting it in \eqref{doubleintegral:confluent:a0}.
If we choose $b=2qn$ where $q\in(0,\infty)$ is a free parameter, then the region filled by the trajectories of the Bessel processes behaves as follows.
For any time $t<1/(1+q)$, the lowest path stays close to $0$ whereas for $t>1/(1+q)$, its distance from $0$ is macroscopic.
Our main focus is the neighbourhood of $t=1/(1+q)$ under the following scaling where the hard edge Pearcey process is observed.

\begin{definition}\label{def:peascaling}
Let the \emph{Pearcey scaling} be the following scaling of the parameters.
We consider $n$ non-intersecting squared Bessel paths, take the confluent limit as all the starting points are $0$ and the endpoints are equal to $b=2qn$ with a free parameter $q\in(0,\infty)$
and we rescale time and space variables $t$ and $y$ as
\begin{equation}\label{spacetimescale}
\frac1{1+q}+\frac q{(1+q)^2}tn^{-1/2},\qquad \frac1{2q}yn^{-1/2}
\end{equation}
respectively. (We make this rescaling for all the considered time and space variables.)
\end{definition}

We define the hard edge Pearcey kernel by
\begin{multline}\label{Lalphadef}
L^\alpha(s,x;t,y)=-p_{\frac{t-s}{2}}(x,y)\id_{t>s}\\
+\left(\frac yx\right)^{\alpha/2}\frac2{\pi\ii}\int_{C}\d v\int_{0}^\infty\d u\, \left(\frac uv\right)^{\alpha}
\frac{uv}{v^2-u^2}\frac{e^{v^4/2+sv^2}}{e^{u^4/2+tu^2}}J_\alpha\left(2\sqrt{y}u\right)J_\alpha\left(2\sqrt{x}v\right)
\end{multline}
where $C$ is the contour which consists of two rays: one from $e^{\ii\pi/4}\infty$ to $0$ and one from $0$ to $e^{-\ii\pi/4}\infty$.
In the single time case $s=t$, the above kernel essentially reduces to the one of Desrosiers--Forrester~\cite[Sec.~5]{DF} which was also discussed in the last paragraph of \cite[Sec.~1]{KMW2}.
Applying an asymptotic analysis to the double integral formula in Proposition~\ref{prop:confluent:a0} similarly to \cite[Sec.\ III]{TW3}, we obtain the multi-time extension of the hard edge Pearcey process.

\begin{theorem}\label{thm:hardedgePearcey}
Let $\alpha>-1$.
The hard edge Pearcey process $\mathcal P^\alpha$ is obtained as the limit of $n$ non-intersecting squared Bessel processes starting at $0$ at time $0$ and ending at $b=2qn$ at time $1$
under the time-space scaling \eqref{spacetimescale}.
It is defined by the following gap probabilities.
For any fixed $k$ and $t_1,\dots,t_k\in\R$ and for any compact set $E\subset\{t_1,\dots,t_k\}\times\R$,
\begin{equation}
\P(\mathcal P^\alpha(\id_E)=\emptyset)=\det(\id-L^\alpha)_{L^2(E)}
\end{equation}
where $L^\alpha$ is the hard edge Pearcey kernel given by \eqref{Lalphadef}.
\end{theorem}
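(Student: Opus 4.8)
The plan is to obtain Theorem~\ref{thm:hardedgePearcey} by a steepest-descent (saddle-point) analysis of the pre-asymptotic double-integral formula in Proposition~\ref{prop:confluent:a0}, following the scheme of Tracy--Widom~\cite[Sec.~III]{TW3} that gives the ordinary Pearcey process from non-intersecting Brownian motions. First I would substitute the Pearcey scaling of Definition~\ref{def:peascaling} into \eqref{doubleintegral:confluent:a0}: write $b=2qn$, replace $s,t$ by $\frac1{1+q}+\frac q{(1+q)^2}sn^{-1/2}$ and $\frac1{1+q}+\frac q{(1+q)^2}tn^{-1/2}$, and $x,y$ by $\frac1{2q}xn^{-1/2}$, $\frac1{2q}yn^{-1/2}$, and insert the explicit transition densities \eqref{transitionprob:sqB}. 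The factor $\prod_{j=1}^n (w-b_j)/(z-b_j)$ becomes $((w-2qn)/(z-2qn))^n$ after the confluent limit $b_j\to b$; rescaling $w=2qn\,w'$ and $z=2qn\,z'$ turns this into $e^{n(\log(w'-1)-\log(z'-1))}$ up to lower-order corrections, which is the source of the quartic exponent. The modified Bessel functions $I_\alpha$ in $p_{t-1}(w,y)$ and $p_{1-s}(x,z)$ should be kept as Bessel functions at this stage — under the scaling their arguments stay of order $n^0$ once the new variables $u,v$ (defined below) are introduced, so they converge to $J_\alpha$-type functions, and this convergence is exactly what is quantified in the appendix (Section~\ref{s:Bessel_limit}).

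The heart of the argument is the saddle-point analysis of the $n$-dependent part of the integrand. With $w=2qn\,w'$, $z=2qn\,z'$ the exponent has the schematic form $n\,\phi(\cdot)$ plus the scaled Bessel/quadratic contributions; one checks that at the critical time $t=1/(1+q)$ the phase function $\phi$ has a degenerate saddle of order four (a monkey-saddle-type coalescence of simple saddles) located at the appropriate point on the contour, which is precisely why the Pearcey scaling exponents $n^{-1/2}$ (space) and $n^{-1/2}$ (time) are the correct ones. After the local change of variables $w' - (\text{saddle}) \sim v\,n^{-1/2}$, $z'-(\text{saddle})\sim u\,n^{-1/2}$ (with the appropriate constants absorbing $q$), the exponential $e^{n\phi}$ converges to $e^{v^4/2+sv^2}/e^{u^4/2+tu^2}$, the prefactor $(w/z)^\alpha (w-z)^{-1}e^{(z-w)/2}$ produces the factor $(u/v)^\alpha\,uv/(v^2-u^2)$ together with Jacobian constants, and the contours $(-\infty,0]$ and $\Gamma_z$ open up under the scaling into the two rays of $C$ and a steepest-descent path for the $u$-variable. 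The $-p_{t-s}(x,y)\id_{t>s}$ term rescales directly to $-p_{(t-s)/2}(x,y)\id_{t>s}$ by the scaling property $p_{\lambda t}(\lambda x,\lambda y)=\lambda^{-1}p_t(x,y)$ of squared Bessel densities together with the factor $q/(1+q)^2$ in the time scaling and $1/(2q)$ in the space scaling; this is a routine bookkeeping check. Collecting everything gives pointwise convergence of the rescaled $K_n^{\Pea}$ to $L^\alpha(s,x;t,y)$ as in \eqref{Lalphadef}, and with uniform control on compact sets of $(s,x;t,y)$.

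To upgrade pointwise kernel convergence to convergence of gap probabilities I would establish the standard dominating bounds: a Gaussian-type tail estimate on the rescaled integrand, uniform in $n$, that makes the truncated Fredholm series converge and allows passing to the limit term by term (Hadamard's inequality plus a uniform kernel bound of the form $|2rn^{-1/2}K_n^{\Pea}(\text{scaled})|\le C e^{-c(|x|+|y|)}$ on the relevant region, using the tail bounds on Bessel functions from Section~\ref{s:Bessel_limit} and the decay coming from the quartic real part of the phase on the steepest-descent contours). Once $K_n^{\Pea}\to L^\alpha$ in trace norm on $L^2(E)$ for every compact $E\subset\{t_1,\dots,t_k\}\times\R$, the determinantal identity $\P(\mathcal P^\alpha(\id_E)=\emptyset)=\det(\id-L^\alpha)_{L^2(E)}$ follows from the continuity of the Fredholm determinant in trace norm, exactly as in the Brownian Pearcey case. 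The main obstacle is the second step of the saddle-point analysis: controlling the error terms uniformly — in particular handling the region where $u,v$ are large (so that the local quartic approximation of the phase breaks down) and the interplay between the $n$-dependent exponent and the oscillatory/growing behaviour of the Bessel functions, so that the deformed contours genuinely are descent contours for the full integrand and not merely for the leading exponential. This is where the explicit uniform Bessel estimates proved in the appendix do the real work.
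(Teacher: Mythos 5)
Your overall framework is correct and matches the paper's: start from the double integral of Proposition~\ref{prop:confluent:a0}, substitute the Pearcey scaling, perform steepest descent, and then pass to gap probabilities via uniform convergence of the kernel on compacts. However, there are two substantive misconceptions in the way you describe the saddle-point step.

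First, the critical point is \emph{quadratic}, not quartic. After the rescaling $w\to 2qn\,w$, $z\to 2qn\,z$ the leading phase is $n\bigl(\log(1-w)+w\bigr)=-\tfrac{n}{2}w^2+\Or(nw^3)$ (equation~\eqref{f0Taylor} in the paper), a simple Gaussian saddle at $w=0$. Consistently, the first limiting form of the kernel the analysis produces has exponent $e^{z^2/2-sz}/e^{w^2/2-tw}$ (see~\eqref{ntoinftylimit}); the quartic form $e^{v^4/2+sv^2}/e^{u^4/2+tu^2}$ in~\eqref{Lalphadef} appears only \emph{afterwards}, by the cosmetic change of variables $u^2=w$, $v^2=z$ dictated by the Bessel-function arguments $J_\alpha(2\sqrt{y w})$. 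Your internal consistency check also fails: a genuine quartic (Pearcey-type) saddle would force the spatial scaling to be $n^{-1/4}$, not $n^{-1/2}$. Here both space and time scale as $n^{-1/2}$ precisely because the saddle is quadratic and the square-root substitution doubles the apparent degree.

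Second, the modified Bessel functions do \emph{not} undergo the Bessel-to-Airy asymptotics of the appendix. Under the Pearcey scaling the argument of each $I_\alpha$ tends to a finite quantity after rescaling, so the $I_\alpha$ survive into the limit. They are converted to $J_\alpha$ not by asymptotics but by the exact identity $e^{\ii\alpha\pi/2}I_\alpha(-2\ii\sqrt z)=J_\alpha(2\sqrt z)$, applied after splitting the $z$-contour $\ii\R$ into $\ii\R_\pm$ (and using $e^{-\ii\alpha\pi}J_\alpha(-2\sqrt{xz})=J_\alpha(2\sqrt{xz})$ on the other half-line). Propositions~\ref{prop:Jconv} and~\ref{prop:Jbound} of the appendix concern the convergence $J_{2t+st^{1/3}+p}(2t)\to\Ai(s)$ etc., which feeds the hard-edge \emph{tacnode} analysis of Section~\ref{s:analysis_tacnode}; the Pearcey analysis only borrows the \emph{method} of its tail-bound proof (the paper cites it for a $\lambda$-uniform exponential bound, noting this case is ``actually easier''). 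Finally, a minor point: the paper separates the $w$ and $z$ saddle analyses by inserting $\frac1{z-w}=\sqrt n\int_0^\infty e^{-\lambda\sqrt n(z-w)}\,\d\lambda$; the authors note this is a convenience rather than a necessity, so not mentioning it is not a gap, but it does streamline the domination argument you allude to in your last paragraph.
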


\begin{remark}\label{rem:Pearceysigmadep}
It is not hard to see from the proof of Theorem~\ref{thm:hardedgePearcey} that if we set $b=2qn(1+\si n^{-1/2})$ for some fixed $\si\in\R$,
then the correlation kernel of the hard edge Pearcey process becomes $L^\alpha(s+\si,x;t+\si,y)$.
Similarly as before, one can think of $\si$ as a \emph{temperature} parameter.
\end{remark}

For non-negative integer values of $\alpha$, we can use our methods involving Toeplitz determinants to obtain an alternative formulation of the kernel $L^{\alpha}$.
The following result is proved in Section~\ref{s:alternative_Pearcey}.

\begin{proposition}[Alternative formula for the hard edge Pearcey kernel]\label{thm:Pearcey:alternative}
If $\alpha$ is a non-negative integer, then we can write $L^\alpha$ in \eqref{Lalphadef} as
\begin{multline}\label{Lalpha:KMW:step1}
L^\alpha(s,x;t,y) = -p_{\frac{t-s}{2}}(x,y)\id_{t>s}\\
+\left(\frac yx\right)^\alpha \frac1{(2\pi\ii)^2}\int_{\Gamma_{-s}}\d w\, \int_{\delta+\ii\R}\d z\,
\frac{1}{w-z} \frac{e^{-\frac{w^2}2+\si w+\frac x{w+s}}}{e^{-\frac{z^2}2+\si z+\frac y{z+t}}}\frac{(w+s)^{\al-1}}{(z+t)^{\al+1}}
\end{multline}
where $\Gamma_{-s}$ is a clockwise oriented circle surrounding the singularity at $-s$, and $\delta>0$ is chosen such that the contour $\delta+\ii\R$
passes to the right of the singularity at $-t$ and to the right of the contour $\Gamma_{-s}$.
\end{proposition}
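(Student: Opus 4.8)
The plan is to start from the Airy/Bessel double-integral formula \eqref{Lalphadef} for $L^\alpha$ and to convert the two Bessel functions into contour integrals, mirroring the passage from \eqref{transitionprob:sqB} to the Toeplitz formula of Theorem~\ref{theorem:toeplitz}. Concretely, I would use the integral representation $J_\alpha(2\sqrt{x}\,v)=\frac1{2\pi\ii}\oint \zeta^{-\alpha-1}\exp\bigl(\zeta-\tfrac{xv^2}{\zeta}\bigr)\,\d\zeta$ (valid for integer $\alpha$ on a small loop around the origin) for each of $J_\alpha(2\sqrt{x}v)$ and $J_\alpha(2\sqrt{y}u)$, together with the classical identity $\int_0^\infty u^{2\alpha+1}\,\frac{uv}{v^2-u^2}\,e^{-\alpha^4\!/2\cdots}$ — more precisely the Gaussian-type $u$-integral that produces the factor $1/(w-z)$ after a Hubbard–Stratonovich / completion-of-the-square step. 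This is exactly the mechanism by which the resolvent factor $1/(v^2-u^2)$ in \eqref{Lalphadef} gets traded for the simple pole $1/(w-z)$ in \eqref{Lalpha:KMW:step1}.

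The key steps, in order, are as follows. \textbf{(1)} Substitute the loop-integral representations of $J_\alpha(2\sqrt{x}v)$ and $J_\alpha(2\sqrt{y}u)$ into \eqref{Lalphadef}; the prefactor $(y/x)^{\alpha/2}$ combined with the $(u/v)^\alpha$ and the $\zeta$-powers collapses to $(y/x)^\alpha$ times clean powers of the new variables. \textbf{(2)} Perform the $u$-integral over $(0,\infty)$: after the substitution $u^2\mapsto$ linear variable, the integrand is Gaussian in $u^2$ with a linear term coming from the $\zeta$'s and $e^{-u^4/2-tu^2}$; one first writes $\frac{uv}{v^2-u^2}$ using a Schwinger parameter (an auxiliary integral), does the Gaussian $u$-integral explicitly, and recognizes the result — this is where the auxiliary variable becomes the new integration variable (call it $w$ after relabelling) and the factor $1/(w-z)$ emerges. \textbf{(3)} Do the $v$-integral over the sector contour $C$ similarly, producing the companion variable $z$ on a vertical line $\delta+\ii\R$; the rays $e^{\pm\ii\pi/4}\infty$ rotate into $\ii\R$ after the Gaussian integration in $v^2$. \textbf{(4)} Carry out the two $\zeta$-loop integrals; each is a residue computation that, by the winding numbers $\alpha$ of the symbols, yields the rational prefactors $(w+s)^{\alpha-1}$ and $(z+t)^{-\alpha-1}$ together with the simple-pole structure $1/(w-z)$ and the exponential $e^{-w^2/2+\sigma w+x/(w+s)}/e^{-z^2/2+\sigma z+y/(z+t)}$. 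Here the shift parameters $s,t$ enter through the scaling \eqref{spacetimescale} and the temperature $\sigma$ through Remark~\ref{rem:Pearceysigmadep}. \textbf{(5)} Finally, deform the contours to the stated ones: $w$ onto the clockwise circle $\Gamma_{-s}$ around the pole at $-s$ coming from $x/(w+s)$, and $z$ onto $\delta+\ii\R$ with $\delta>0$ large enough to sit to the right of $-t$ and of $\Gamma_{-s}$; one must check that the arcs at infinity contribute nothing, using $\Re(-w^2/2)\to-\infty$ and $\Re(-z^2/2)\to-\infty$ on the chosen contours, and that no pole is crossed so that $1/(w-z)$ stays well-defined (this is guaranteed by $\delta$ lying to the right of $\Gamma_{-s}$).

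Alternatively — and this may be cleaner — one can bypass \eqref{Lalphadef} entirely and prove \eqref{Lalpha:KMW:step1} directly from the finite-$n$ Pearcey scaling applied to \eqref{K_nkernelxieta}: with $a=0$ the block operator $M(\xi,\eta)$ degenerates (the relevant Schur complement becomes the explicit $\alpha\times\alpha$ matrix referenced at \eqref{detDDalpha}), so the kernel \eqref{K_nkernelxieta} already has the shape of a double contour integral with a rational $M$; one then just tracks the Pearcey rescaling \eqref{spacetimescale}, sets $\eta=(z+t)^{-1}$, $\xi=(w+s)^{-1}$, and takes $n\to\infty$ to obtain \eqref{Lalpha:KMW:step1} directly. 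Matching this limit with \eqref{Lalphadef} (proved via the route of Theorem~\ref{thm:hardedgePearcey}) then gives the equality of the two formulas for free. The main obstacle in either approach is the bookkeeping of branch cuts and contour orientations: the fractional powers $(y/x)^\alpha$, $(w+s)^{\alpha-1}$, $(z+t)^{-\alpha-1}$ each require a consistent choice of branch, and one must verify that when $\alpha$ is a non-negative integer all these choices are compatible and that the contour deformations in step (5) do not sweep across the cut emanating from $-t$ (for the $z$-integral) or from $-s$ (for the $w$-integral). Getting these signs and the precise location of $\delta$ right — rather than the Gaussian integrations themselves, which are routine — is where the care is needed.
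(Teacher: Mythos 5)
Your first, "direct Bessel-integration" route has a concrete obstruction you do not address: after setting $U=u^2$ the $u$-integral in \eqref{Lalphadef} becomes an integral of a Gaussian $e^{-U^2/2-tU}$ against $J_\alpha(2\sqrt{yU})$ over the \emph{half-line} $U\in(0,\infty)$, and half-line Gaussian integrals produce error functions rather than the clean exponentials $e^{-w^2/2+\si w}$ that appear in \eqref{Lalpha:KMW:step1}. Nowhere do you explain how the Bessel factor actually disappears; the loop representation you write inserts a new exponential $e^{\zeta-xv^2/\zeta}$ but the residual $\zeta$-loop does not visibly collapse to the single rational factor $(w+s)^{\al-1}$. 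The proposal here is sufficiently sketchy that it is not a proof, and I am not convinced it can be made to work without essentially rediscovering the algebraic identity I mention below.

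Your alternative route is much closer to what the paper actually does, but it contains a substantive error. Tracking the Pearcey scaling through \eqref{K_nkernelxieta} (with $\eta=(z+t)^{-1}$, $\xi=(w+s)^{-1}$) does \emph{not} give \eqref{Lalpha:KMW:step1} directly: it gives Proposition~\ref{prop:Pearcey:alternative2}, in which the factor $1/(w-z)$ of the target is replaced by $\Mm(\uu,\vv)=\frac1{\vv-\uu}-\boldsymbol{\UU}\,\DD\alpha^{-1}\,\mathbf h$, a rational function plus a genuinely nonvanishing $\al\times\al$ Schur-complement contribution (the matrix is exactly the $\DD\alpha$ of \eqref{detDDalpha}, and its inverse does not annihilate $\mathbf h$). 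The powers also come out as $(\vv+s)^{-\al-1}(\uu+t)^{\al-1}$ rather than $(w+s)^{\al-1}(z+t)^{-\al-1}$ with the extra prefactor $(y/x)^\al$. Bridging these two forms is precisely the nontrivial content of the paper's proof: it differentiates both kernels with respect to the temperature $\si$, shows (Proposition~\ref{prop:pearcey:rank1}) that each derivative is rank one, and proves via the Hermite-type determinant identities of Lemma~\ref{lemma:rank1:eq} and the repeated integration-by-parts Lemma~\ref{lemma:int:parts:repeated} that the two rank-one factors coincide; integrating back from $\si=+\infty$ (where both double integrals vanish) then gives equality. That algebraic mechanism — trading the Schur complement for the power shift $(\vv+s)^{-\al-1}\!\to x^{-\al}(\vv+s)^{\al-1}$ and similarly in $\uu$ — is entirely absent from your proposal, so what you describe would stop one step short of the statement.

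A correct write-up along your second route would therefore need three things you have not supplied: the intermediate contour-integral formula with the explicit $\Mm(\uu,\vv)$, a demonstration that its $\si$-derivative is rank one, and the determinant/recursion identity showing that this rank-one kernel is identical to the $\si$-derivative of the right-hand side of \eqref{Lalpha:KMW:step1}.
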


\begin{remark}\label{rem:Pearcy:nonint}
Although we derive it only if $\al$ is a non-negative integer, formula~\eqref{Lalpha:KMW:step1} makes sense for any real $\al>-1$ with a few minor modifications.
In that case, we replace $\Gamma_{-s}$ by a clockwise oriented closed loop which intersects the real line at a point to the right of $-s$, and also at $-s$ itself where it has a cusp at angle $\pi$.
We then take the principal branches of the powers $(w+s)^{\al-1}$, $(z+t)^{\al+1}$, i.e., with a branch cut along the negative half-line. See also Remark~\ref{remark:KMW:noninteger}.
\end{remark}

In the single time case $t=s$, the kernel $L^\al$ in \eqref{Lalpha:KMW:step1} can be further reduced to the kernel of Kuijlaars, Mart\'inez--Finkelshtein and Wielonsky,
as stated in~\cite[Eq.\ (1.19)]{KMW2}:
\begin{equation}\label{KMW:kernel}
K_\al^{\KMW}(x,y,t) = \frac{1}{(2\pi\ii)}\int_{\Gamma_v}\ud v\int_{\Gamma_u}\ud u \frac{1}{u-v}\frac{e^{t v^{-1}+v^{-2}/2+xv}}{e^{t u^{-1}+u^{-2}/2+yu}}\frac{v^\al}{u^\al}
\end{equation}
where $\Gamma_v$ is a clockwise circle in the left half-plane touching zero along the imaginary axis, and $\Gamma_u$ is a counterclockwise loop surrounding $\Gamma_v$.

\begin{corollary}\label{cor:KMW}
In the single time case $t=s$, the kernel $L^\al$ in \eqref{Lalpha:KMW:step1} reduces to
\begin{equation}\label{KMW:reduction}
L^\al(t,x;t,y) = \left(\frac yx\right)^\al K_\al^{\KMW}(y,x,t+\si).
\end{equation}
\end{corollary}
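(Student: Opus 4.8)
The plan is to specialise \eqref{Lalpha:KMW:step1} to $s=t$ and then perform an explicit change of integration variables. When $s=t$ the indicator $\id_{t>s}$ vanishes, so the reflection term $-p_{(t-s)/2}(x,y)\id_{t>s}$ drops out and $L^\al(t,x;t,y)$ equals $(y/x)^\al$ times the double contour integral in \eqref{Lalpha:KMW:step1} taken with $s=t$. In that integral I would substitute
\[ u=-\frac1{w+t},\qquad v=-\frac1{z+t}, \]
a Möbius map sending the two finite singularities $w=-t$, $z=-t$ to $u,v=\infty$ and sending $w,z=\infty$ to $u,v=0$. This is precisely the map that converts the quadratic part $-w^2/2$ of the exponent in \eqref{Lalpha:KMW:step1} into the order‑two pole at the origin that appears in the KMW exponent \eqref{KMW:kernel}.

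Under this substitution one computes that the numerator exponent of \eqref{Lalpha:KMW:step1} becomes
\[ -\tfrac{w^2}2+\si w+\tfrac x{w+t}\;=\;-\Big(\tfrac{u^{-2}}2+(t+\si)u^{-1}+xu\Big)-\tfrac{t^2}2-\si t, \]
and likewise for the denominator exponent in the variables $z,v$, \emph{with the same additive constant} $-t^2/2-\si t$. Since this constant occurs in both the numerator and the denominator of the quotient, it cancels, and the exponential factor of \eqref{Lalpha:KMW:step1} turns into exactly the exponential factor of $K_\al^{\KMW}(y,x,t+\si)$; the minus sign in front of the parenthesis above is what moves the $x$‑dependent exponential from the numerator in \eqref{Lalpha:KMW:step1} to the denominator in \eqref{KMW:kernel}, consistently with $x$ being paired with the variable $u$ in \eqref{KMW:kernel}. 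For the algebraic part one uses $w-z=(u-v)/(uv)$, $(w+t)^{\al-1}=(-1)^{\al-1}u^{1-\al}$, $(z+t)^{\al+1}=(-1)^{\al+1}v^{-\al-1}$, $\d w=u^{-2}\d u$ and $\d z=v^{-2}\d v$; the powers of $-1$ cancel because $\al$ is a non‑negative integer, and the whole algebraic factor collapses to $\frac1{u-v}\frac{v^\al}{u^\al}\,\d u\,\d v$, which is the rational prefactor of \eqref{KMW:kernel}.

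It then remains to identify the contours. As $w$ traverses the small clockwise circle $\Gamma_{-t}$ around $-t$, the image $u=-(w+t)^{-1}$ traverses a large counterclockwise circle about the origin, whose radius can be made arbitrarily large; this is the contour $\Gamma_u$ of \eqref{KMW:kernel}. As $z$ runs upward along the vertical line $\delta+\ii\R$, the image $v=-(z+t)^{-1}$ runs clockwise around the circle of centre $-1/(2(\delta+t))$ and radius $1/(2(\delta+t))$, which lies in the left half‑plane and is tangent to the imaginary axis at the origin; this is exactly the contour $\Gamma_v$ of \eqref{KMW:kernel}. The hypothesis that $\delta+\ii\R$ passes to the right of $\Gamma_{-t}$ ensures that the image of $\Gamma_{-t}$ encircles the image of $\delta+\ii\R$, and a homotopy that avoids $v=0$ matches the two images with the contours of \eqref{KMW:kernel}. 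Putting the three ingredients together shows that the double integral in \eqref{Lalpha:KMW:step1} with $s=t$ equals $K_\al^{\KMW}(y,x,t+\si)$, which is \eqref{KMW:reduction}.

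The main obstacle I anticipate is purely bookkeeping: keeping track of the orientations of the image contours and, in the non‑integer extension mentioned in Remark~\ref{rem:Pearcy:nonint}, of the branch cuts of $(w+t)^{\al-1}$, $(z+t)^{\al+1}$ versus those of $u^\al$, $v^\al$ (for integer $\al$ all these powers are rational, so no branch issue arises). One should also record that the transformed integral converges: along the tangent circle the factor $e^{v^{-2}/2}$ tends to $0$ as $v\to0$, which dominates the $v^{-2}$ coming from $\d z$.
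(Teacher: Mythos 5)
Your proof is correct and follows essentially the same route as the paper: the paper introduces the Möbius substitution $u=-(w+s)^{-1}$, $v=-(z+t)^{-1}$ in \eqref{Lalpha:KMW:step1} for general $s,t$ to obtain \eqref{Lalpha:KMW:step2} and then specializes to $s=t$, whereas you specialize first and then substitute — a trivial reordering. The computational details you supply (Jacobians, cancellation of the constant $-t^2/2-\sigma t$, image of $\delta+\ii\R$ being the tangent circle of radius $1/(2(\delta+t))$, orientation reversal of $\Gamma_{-t}$) are all accurate and fill in the steps the paper compresses into ``it is now straightforward.''
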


As a consequence of Theorem~\ref{thm:hardedgePearcey} in the special case of $\alpha=-1/2$, we recover the process described by Borodin and Kuan \cite{BK} via the kernel
\begin{multline}
K^{\BK}(\sigma_1,\eta_1;\sigma_2,\eta_2)=\frac2{\pi^2\ii}\int_C\d v\int_{\R_+}\d u\frac v{v^2-u^2}\frac{e^{v^4+\eta_2v^2}}{e^{u^4+\eta_1u^2}}\cos(\sigma_1u)\cos(\sigma_2v)\\
-\frac1{2\sqrt{\pi(\eta_1-\eta_2)}}\left(\exp\frac{(\sigma_1+\sigma_2)^2}{4(\eta_2-\eta_1)}+\exp\frac{(\sigma_1-\sigma_2)^2}{4(\eta_2-\eta_1)}\right)\id_{\eta_1>\eta_2}
\end{multline}
as follows.

\begin{corollary}\label{cor:BK}
For $\alpha=-1/2$, let us choose
\begin{equation}\label{kernelchangeofvariable}
s=\frac{\eta_2}{\sqrt2},\qquad t=\frac{\eta_1}{\sqrt2},\qquad x=\frac{\sigma_2^2}{4\sqrt2},\qquad y=\frac{\sigma_1^2}{4\sqrt2}.
\end{equation}
Then the hard edge Pearcey process with the above change of the space and time variables is the same as the Borodin--Kuan process, that is, their kernels satisfy the relation
\begin{equation}
L^{-1/2}\left(\frac{\eta_2}{\sqrt2},\frac{\sigma_2^2}{4\sqrt2};\frac{\eta_1}{\sqrt2},\frac{\sigma_1^2}{4\sqrt2}\right)\frac{\d y}{\d\sigma_1}=K^{\BK}(\sigma_1,\eta_1;\sigma_2,\eta_2).
\end{equation}
\end{corollary}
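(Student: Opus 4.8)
The plan is to start from the multi-time hard edge Pearcey kernel $L^{-1/2}$ in the form \eqref{Lalphadef}, substitute $\alpha=-1/2$, and carry out the explicit change of variables \eqref{kernelchangeofvariable} together with a matching change of integration variables in the double integral, then compare term by term with $K^{\BK}$. The first step is to handle the two pieces of the kernel separately: the ``determinantal'' double-integral part, and the $p_{(t-s)/2}(x,y)\id_{t>s}$ part coming from the free transition density. For the transition-density part, I would insert $\alpha=-1/2$ into \eqref{transitionprob:sqB}, use the closed form $I_{-1/2}(z)=\sqrt{2/(\pi z)}\cosh z$, and check by direct computation that after the substitutions \eqref{kernelchangeofvariable} and multiplication by the Jacobian $\d y/\d\sigma_1 = \sigma_1/(2\sqrt2)$ one obtains exactly the $\id_{\eta_1>\eta_2}$ term in $K^{\BK}$ with the sum of the two Gaussian exponentials; this is the ``hard edge = absolute value of Brownian motion'' identity specialized to $d=1$, i.e.\ $\alpha=-1/2$.

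For the double-integral part, I would first use $J_{-1/2}(z)=\sqrt{2/(\pi z)}\cos z$ to turn the two Bessel factors $J_\alpha(2\sqrt y\,u)$ and $J_\alpha(2\sqrt x\,v)$ into cosines $\cos(2\sqrt y\,u)$, $\cos(2\sqrt x\,v)$ up to explicit algebraic prefactors in $u,v,x,y$. Collecting the prefactor $(y/x)^{\alpha/2}$, the $(u/v)^\alpha$ inside the integral, the $uv/(v^2-u^2)$ factor, and the algebraic parts pulled out of the Bessel functions, I expect everything to combine into a clean rational factor $v/(v^2-u^2)$ (the half-integer powers should all cancel). Then I substitute $u\mapsto$ (a rescaling producing $u^4$ from $u^4/2$) and likewise for $v$: comparing $e^{v^4/2+sv^2}$ against $e^{v^4+\eta_2 v^2}$ forces a rescaling $v\to 2^{1/4}v$ (and $u\to 2^{1/4}u$), after which $s=\eta_2/\sqrt2$ makes $s v^2 \to \eta_2 v^2$ and the arguments of the cosines become $\cos(2\sqrt x\, 2^{1/4}v)=\cos(\sigma_2 v)$ precisely because $x=\sigma_2^2/(4\sqrt2)$ so that $2\sqrt x\, 2^{1/4}=\sigma_2$. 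The same bookkeeping on the $u$ side, plus the overall Jacobian $\d y/\d\sigma_1$, should reproduce the first line of $K^{\BK}$, including the constant $2/(\pi^2\ii)$. I also need to check that the contour $C$ (two rays at angles $\pm\pi/4$) for $v$ and the positive ray $\R_+$ for $u$ are preserved under the rescaling $v\mapsto 2^{1/4}v$, $u\mapsto 2^{1/4}u$, which is immediate since these are positive scalings.

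The main obstacle I anticipate is purely a matter of careful bookkeeping of constants, signs, and half-integer powers: one must track the branch of $(y/x)^{\alpha/2}$ and $(u/v)^\alpha$ at $\alpha=-1/2$, the $\sqrt{\phantom{x}}$'s coming out of $I_{-1/2}$ and $J_{-1/2}$, the factor $2/(\pi\ii)$ versus $2/(\pi^2\ii)$ (the extra $1/\pi$ should come from the two $\sqrt{2/\pi\cdot}$ factors of the Bessel functions), and the Jacobian $\d y/\d\sigma_1$. A secondary subtle point is the $\id_{t>s}$ versus $\id_{\eta_1>\eta_2}$ direction: since $t=\eta_1/\sqrt2$ and $s=\eta_2/\sqrt2$, one has $t>s \iff \eta_1>\eta_2$, so the indicator matches, but one should double-check that $p_{(t-s)/2}$ with the rescaled $x,y$ indeed produces the Borodin--Kuan Gaussian with denominator $4(\eta_2-\eta_1)$ and not $4(\eta_1-\eta_2)$; the sign works out because $p_{(t-s)/2}$ appears with a minus sign in $L^\alpha$ and $(t-s)/2=(\eta_1-\eta_2)/(2\sqrt2)$, which combined with the $\sqrt2$'s in \eqref{kernelchangeofvariable} gives the stated $\sqrt{\pi(\eta_1-\eta_2)}$ normalization. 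Once these constants are pinned down, the identity of the two kernels is a direct consequence of Theorem~\ref{thm:hardedgePearcey}.
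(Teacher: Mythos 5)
Your proposal is correct and follows essentially the same route as the paper's own proof: substitute $J_{-1/2}(z)=\sqrt{2/(\pi z)}\cos z$ into the double integral of \eqref{Lalphadef} to simplify the algebraic prefactors to $\frac{2}{\pi^2\ii\sqrt y}\frac{v}{v^2-u^2}$, then apply the rescaling $u\to 2^{1/4}u$, $v\to 2^{1/4}v$ together with \eqref{kernelchangeofvariable} and the Jacobian $\d y/\d\sigma_1=\sigma_1/(2\sqrt 2)$. Your treatment of the $p_{(t-s)/2}$ term via $I_{-1/2}(z)=\sqrt{2/(\pi z)}\cosh z$ is a detail the paper omits as "straightforward," and your sign/constant bookkeeping (in particular $(t-s)/2=(\eta_1-\eta_2)/(2\sqrt2)$ producing the $\sqrt{\pi(\eta_1-\eta_2)}$ normalization and the exponents $\frac{(\sigma_1\mp\sigma_2)^2}{4(\eta_2-\eta_1)}$) is exactly what is needed to close the argument.
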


\begin{remark}
We assume above that the non-intersecting paths start at $a=0$ and end at $b=2qn$. Instead we could let them start at $a=2qn$ and end at $b=0$.
This leads to an alternate hard-edge Pearcey process that we could denote by the superscript $^\alt$. The corresponding kernels are related by
\begin{equation}\label{duality:Pearcey}
L^{\alpha,\alt}(s,x;t,y) = \left(\frac yx\right)^{\al} L^\alpha(t,y;s,x).
\end{equation}
Note that the prefactor $(y/x)^\alpha$ is just a conjugation of the kernel, hence the two kernels give rise to essentially the same correlation functions.
Note that this prefactor also appears in the right-hand side of \eqref{KMW:reduction}.
\end{remark}

\begin{remark}
The equivalence of the two formulas \eqref{Lalphadef} and \eqref{Lalpha:KMW:step1} for the kernel $L^\alpha$ can be seen directly as follows.
In both formulations, it can be shown that the derivative of the kernel with respect to the parameter $\si$ factorizes as
\begin{equation}\label{rank1:Pearcey}
\frac\pa{\pa\si}L^\alpha(s,x;t,y) = \left(\frac yx\right)^\al f(s,x)g(t,y)
\end{equation}
where the functions $f,g$ solve the third-order ODE's
\begin{align}
xf'''+(2-\al)f''-(s+\si) f'+f&=0,\label{f:diff}\\
xg'''+(2+\al)g''-(s+\si) g'-g&=0\label{g:diff}
\end{align}
where the prime denotes the derivative with respect to the space variable $x$ or $y$.
By using this fact and also some extra information to determine the relevant solutions to the ODE's,
one can prove that the functions $f,g$ in \eqref{rank1:Pearcey} are the same for both formulations of the kernel $L^\al$,
and hence the kernels \eqref{Lalphadef} and \eqref{Lalpha:KMW:step1} themselves are equal as well.
We leave the details to the interested reader.

We note that if $\al=0$ the functions $f,g$ in \eqref{rank1:Pearcey}  also satisfy the PDE's
\[\frac\pa{\pa s}f+\frac\pa{\pa x}f+x\frac{\pa^2}{\pa x^2}f=0,\qquad
\frac\pa{\pa t}g-\frac\pa{\pa y}g-y\frac{\pa^2}{\pa y^2}g=0.\]
\end{remark}

\section{Derivation of the kernel $K_n$}\label{s:derivation_finite}

In this section, we prove the different formulations of the kernel $K_n$ stated in Theorems~\ref{theorem:toeplitz}, \ref{theorem:inner:product} and \ref{theorem:kernel:n}.

\subsection{Proof of Theorem~\ref{theorem:toeplitz}}\label{subsection:proof:theorem:Toeplitz}

The proof of Theorem~\ref{theorem:toeplitz} consists of two steps.
First we express the quantity $C_n(\xi,\eta)$ as a Christoffel--Darboux kernel, then we rewrite it as the ratio of two Toeplitz determinants.

\paragraph{Step 1.}
Let us take the confluent limit of the starting and ending points $a_j\to a> 0$ and
$b_j\to b> 0$. The expression
\eqref{extendedkernel:start} reduces to
\begin{equation}\label{extendedkernel:conf}
K_n(s,x;t,y) = -p_{t-s}(x,y) \id_{t>s}+ \sum_{j,k=0}^{n-1} \left(\frac{\pa^{k}}{\pa b^{k}}p_{1-s}(x,b)\right)(A^{-1})_{k,j} \left(\frac{\pa^{j}}{\pa a^{j}}p_t(a,y)\right)
\end{equation}
where
\begin{equation}\label{extendedkernel:Amatrix:conf}
A = \left(\frac{\pa^{j}}{\pa a^{j}}\frac{\pa^{k}}{\pa b^{k}}p_1(a,b)\right)_{j,k=0}^{n-1}.
\end{equation}

Since we assumed that $\alpha$ is an integer, the integral representation for the modified Bessel function
\begin{equation}\label{intrepr:unitcircle:1}
I_{\alpha}(z) = \frac{1}{2\pi \ii} \int_{S_1} u^{\alpha-1} \exp\left(\frac{u+u^{-1}}{2}z\right)\ud u
\end{equation}
holds where $S_1$ denotes the unit circle.
Therefore, the transition probability \eqref{transitionprob:sqB} can be written as
\[p_t(x,y) = \int_{S_1} w(u;x,y,t) \frac{\ud u}{u}\]
with the weight function \eqref{weight:wxyt} on the unit circle $|u|=1$.
We can then evaluate the quantities in \eqref{extendedkernel:conf}--\eqref{extendedkernel:Amatrix:conf}:
\begin{align}
\frac{\pa^{j}}{\pa a^{j}} p_t(a,y) &= \int_{S_1} \left(\frac{u-1}{2t}\right)^{j} w(u;a,y,t) \frac{\ud u}{u},\label{intrepr:unitcircle:diff1}\\
\frac{\pa^{k}}{\pa b^{k}} p_{1-s}(x,b) &= \int_{S_1} \left(\frac{u^{-1}-1}{2(1-s)}\right)^{k} w(u;x,b,1-s) \frac{\ud u}{u},\label{intrepr:unitcircle:diff2}
\end{align}
and
\begin{equation}\label{intrepr:unitcircle:diff3}
\frac{\pa^{j}}{\pa a^{j}}\frac{\pa^{k}}{\pa b^{k}} p_1(a,b)
= \int_{S_1}\left(\frac{u-1}{2}\right)^{j}\left(\frac{u^{-1}-1}{2}\right)^{k}w(u;a,b,1) \frac{\ud u}{u}.
\end{equation}
Inserting these expressions in the kernel \eqref{extendedkernel:conf} and using the linearity of matrix multiplication, we find
\begin{multline}\label{extendedkernel:conf:2}
K_n(s,x;t,y) = -p_{t-s}(x,y) \id_{t>s} \\
+ \sum_{j,k=0}^{n-1} \left(\int_{S_1} Q_{k}\left(\frac{u^{-1}-1}{1-s}\right)w(u;x,b,1-s)\frac{\ud u}{u}\right)\\
\times (A^{-1})_{k,j} \left(\int_{S_1} P_{j}\left(\frac{u-1}{t}\right)w(u;a,y,t)\frac{\ud u}{u}\right)
\end{multline}
with
\begin{equation}\label{extendedkernel:Amatrix:conf:2}
A = \left(\int_{S_1} P_{j}(u-1)Q_{k}(u^{-1}-1)w(u;a,b,1)\frac{\ud u}{u}\right)_{j,k=0}^{n-1}
\end{equation}
where $P_{j}$ and $Q_{k}$ are arbitrary polynomials of degree $j$ and $k$ respectively.
Note that the factors $2$ in the denominators in \eqref{intrepr:unitcircle:diff1}--\eqref{intrepr:unitcircle:diff3} cancel each other.
So we obtain the formula \eqref{extendedkernel:toeplitz}, but with $C_{n}(\xi,\eta)$ there replaced by the Christoffel--Darboux kernel
\begin{equation}\label{CD:kernel:0}
C_{n}(\xi,\eta) := \sum_{j,k=0}^{n-1} P_j(\xi^{-1}-1) Q_k(\eta-1) (A^{-1})_{k,j}
\end{equation}
with again the notation \eqref{xi:eta} and \eqref{extendedkernel:Amatrix:conf:2}.
By taking $P_j(x)=(x+1)^{j}$ and $Q_k(y)=(y+1)^k$, this reduces to
\begin{equation}\label{CD:kernel:00}
C_{n}(\xi,\eta) =  \sum_{j,k=0}^{n-1} \xi^{-j} \eta^k (A^{-1})_{k,j}
\end{equation}
where
\begin{equation}\label{moment:matrix:A}
A = \left(\int_{S_1} z^{j-k} w(z)\frac{\ud z}{z}\right)_{j,k=0}^{n-1}.
\end{equation}
Note that $A$ is a Toeplitz matrix.

\paragraph{Step 2.}
Next we express the Christoffel--Darboux kernel $C_{n}(\xi,\eta)$ in \eqref{CD:kernel:00} as a ratio of two Toeplitz determinants.
We discuss this in a broader context.
Let $w(z)$ be an arbitrary weight function on the unit circle $S_1$.
Define the moments
\begin{equation}\label{moment:mjk}
m_{j,k} = \int_{S_1} z^{j-k}w(z)\frac{\ud z}{z}
\end{equation}
for $j,k\in\zet_{\geq 0}$.
Obviously the moments $m_{j,k}$ depend only on the difference $j-k$.
So they satisfy the Toeplitz property
\[m_{j+1,k+1}=m_{j,k},\]
but we will not use this for the moment.
Let $A=\left(m_{j,k}\right)_{j,k=0}^{n-1}$ be the moment matrix, i.e., the Toeplitz matrix \eqref{moment:matrix:A}.

\begin{lemma}
With the setting of the last paragraph, consider the Christoffel--Darboux kernel \eqref{CD:kernel:00}.
It can be written as a ratio of two Toeplitz determinants
\begin{equation}
C_n(\xi,\eta) = \frac{\eta^{n-1}}{\xi^{n-1}} \begin{vmatrix}
\wtil m_{0,0} & \wtil m_{0,1} & \ldots & \wtil m_{0,n-2} \\
\wtil m_{1,0} & \wtil m_{1,1} & \ldots & \wtil m_{1,n-2} \\
\vdots & \vdots & & \vdots \\
\wtil m_{n-2,0} & \wtil m_{n-2,1} & \ldots & \wtil m_{n-2,n-2}
\end{vmatrix}/\begin{vmatrix}
m_{0,0} & m_{0,1} & \ldots & m_{0,n-1} \\
m_{1,0} & m_{1,1} & \ldots & m_{1,n-1} \\
\vdots & \vdots & & \vdots \\
m_{n-1,0} & m_{n-1,1} & \ldots & m_{n-1,n-1}
\end{vmatrix}
\end{equation}
with the moments $m_{j,k}$ given in \eqref{moment:mjk} and
\[ \wtil m_{j,k} = m_{j,k}-\xi m_{j+1,k}-\eta^{-1}m_{j,k+1}+\xi\eta^{-1}m_{j+1,k+1}\]
which also satisfy the Toeplitz property $\wtil m_{j+1,k+1}=\wtil m_{j,k}$.
\end{lemma}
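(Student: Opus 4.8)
The plan is to derive the identity by pure linear algebra: turn $C_n(\xi,\eta)$ into a single bordered determinant and reduce it by elementary row and column operations.

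\textbf{Step 1 (bordered determinant).} Read the Christoffel--Darboux kernel \eqref{CD:kernel:00} as a bilinear form, $C_n(\xi,\eta)=\mathbf r\,A^{-1}\mathbf c$ with the column vector $\mathbf c=(\xi^{-j})_{j=0}^{n-1}$, the row vector $\mathbf r=(\eta^k)_{k=0}^{n-1}$ and $A=(m_{j,k})_{j,k=0}^{n-1}$, which is invertible (as already implicit in \eqref{CD:kernel:00}). The Schur complement identity gives
\[\det\begin{pmatrix} A & \mathbf c\\ \mathbf r & 0\end{pmatrix}=\det A\cdot\bigl(0-\mathbf r A^{-1}\mathbf c\bigr)=-\det A\cdot C_n(\xi,\eta),\]
so it suffices to evaluate this $(n+1)\times(n+1)$ determinant $\det\mathcal M$, where $\mathcal M$ has top-left $n\times n$ block $A$, last column $(\xi^{-j})_{j=0}^{n-1}$ followed by $0$, and last row $(\eta^k)_{k=0}^{n-1}$ followed by $0$.

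\textbf{Step 2 (eliminate $\xi$).} For $j=0,\dots,n-2$, in increasing order so that each operation still uses the unmodified row $j+1$, replace row $j$ by (row $j$) $-\xi\cdot$(row $j+1$). In the Toeplitz block the $(j,k)$ entry becomes $m_{j,k}-\xi m_{j+1,k}$, and in the last column $\xi^{-j}-\xi\,\xi^{-(j+1)}=0$; hence that column reduces to the single nonzero entry $\xi^{-(n-1)}$ in row $n-1$. Laplace expansion along it yields a sign $(-1)^{(n-1)+n}=-1$ and an $n\times n$ determinant $\det\mathcal M'$ whose first $n-1$ rows are $(m_{j,k}-\xi m_{j+1,k})_{k=0}^{n-1}$ for $j=0,\dots,n-2$ and whose last row is $(\eta^k)_{k=0}^{n-1}$; so $\det\mathcal M=-\xi^{-(n-1)}\det\mathcal M'$.

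\textbf{Step 3 (eliminate $\eta$, and conclude).} Symmetrically, for $k=0,\dots,n-2$ in increasing order, replace column $k$ of $\mathcal M'$ by (column $k$) $-\eta^{-1}\cdot$(column $k+1$). The top $n-1$ rows acquire $(j,k)$ entry $(m_{j,k}-\xi m_{j+1,k})-\eta^{-1}(m_{j,k+1}-\xi m_{j+1,k+1})=\wtil m_{j,k}$, while the last row becomes $\eta^k-\eta^{-1}\eta^{k+1}=0$ except for $\eta^{n-1}$ in column $n-1$. Expanding along that row (sign $(-1)^{(n-1)+(n-1)}=+1$) gives $\det\mathcal M'=\eta^{n-1}\det(\wtil m_{j,k})_{j,k=0}^{n-2}$, and combining the three steps, $C_n(\xi,\eta)=-\det\mathcal M/\det A=(\eta^{n-1}/\xi^{n-1})\det(\wtil m_{j,k})_{j,k=0}^{n-2}/\det(m_{j,k})_{j,k=0}^{n-1}$, as claimed. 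Finally $\wtil m_{j+1,k+1}=\wtil m_{j,k}$ is immediate from the defining expression of $\wtil m$ together with $m_{j+1,k+1}=m_{j,k}$.

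\textbf{Main difficulty.} None of the steps is hard analytically; the only things to be careful about are the order in which the elementary operations are performed (always subtracting an as-yet-untouched neighbouring row or column) and the cofactor signs in the two Laplace expansions, plus the harmless standing hypothesis $\det A\neq0$ under which both sides of the identity make sense.
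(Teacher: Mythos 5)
Your proof is correct and follows essentially the same route as the paper: both form the bordered $(n+1)\times(n+1)$ determinant from the Schur complement identity, perform the row operations (row $j$ minus $\xi$ times row $j+1$) and column operations (column $k$ minus $\eta^{-1}$ times column $k+1$), and extract the factor $-\xi^{-(n-1)}\eta^{n-1}$ by Laplace expansion. You are slightly more explicit than the paper about the order of the elementary operations and the cofactor signs, but the argument is the same.
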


\begin{proof}
The definition of the Christoffel--Darboux kernel \eqref{CD:kernel:00} is equivalent with
\begin{equation}\label{CD:kernel:product}
C_n(\xi,\eta) = \begin{pmatrix}1 & \eta & \ldots & \eta^{n-1}\end{pmatrix}
\begin{pmatrix}
m_{0,0} & m_{0,1} & \ldots & m_{0,n-1}  \\
m_{1,0} & m_{1,1} & \ldots & m_{1,n-1}  \\
\vdots & \vdots & & \vdots \\
m_{n-1,0} & m_{n-1,1} & \ldots & m_{n-1,n-1}  \\
\end{pmatrix}^{-1}\begin{pmatrix}1 \\ \xi^{-1} \\ \vdots \\ \xi^{-(n-1)}\end{pmatrix}
\end{equation}
which in turn yields the determinantal formula
\begin{multline}\label{CD:moment:determinant}
C_n(\xi,\eta) = -\begin{vmatrix}
m_{0,0} & m_{0,1} & \ldots & m_{0,n-1} & 1 \\
m_{1,0} & m_{1,1} & \ldots & m_{1,n-1} & \xi^{-1} \\
\vdots & \vdots & & \vdots & \vdots \\
m_{n-1,0} & m_{n-1,1} & \ldots & m_{n-1,n-1} & \xi^{-(n-1)} \\
1 & \eta & \ldots & \eta^{n-1} & 0
\end{vmatrix}\\
\times\begin{vmatrix}
m_{0,0} & m_{0,1} & \ldots & m_{0,n-1} \\
m_{1,0} & m_{1,1} & \ldots & m_{1,n-1} \\
\vdots & \vdots & & \vdots \\
m_{n-1,0} & m_{n-1,1} & \ldots & m_{n-1,n-1}
\end{vmatrix}^{-1}.
\end{multline}
The denominator of \eqref{CD:moment:determinant} is a Toeplitz determinant.
We want to achieve the same for the determinant in the numerator.
To this end we apply a series of elementary row and column operations.
From the $j$th row we subtract $\xi$ times the $(j+1)$st row for $j=0,\ldots,n-2$, and from the $k$th column we subtract $\eta^{-1}$ times the $(k+1)$st column for $k=0,\ldots,n-2$.
These operations kill all the entries in the last row and column except for the entries $\xi^{-(n-1)}$ and $\eta^{n-1}$.
Hence the determinant factorizes as $-\xi^{-(n-1)}\eta^{n-1}$ times the determinant obtained by skipping the last two rows and columns.
The lemma follows.
\end{proof}
Note that $\wtil m_{j,k}$ are expressed as moments
\[\wtil m_{j,k}=\int_{S_1} z^{j-k}\wtil w(z)\frac{\ud z}{z}\]
where the weight function $\wtil w(z)$ is given in \eqref{weight:wtil}.
This proves \eqref{Toeplitz:ratio}, which ends the proof of Theorem~\ref{theorem:toeplitz}.

\subsection{Proof of Theorem~\ref{theorem:inner:product}}
\label{subsection:proof:theorem:BO}

We prove Theorem~\ref{theorem:inner:product} in three steps.
First we apply a Borodin--Okounkov type formula for the Toeplitz determinants, then we rewrite the ratio of determinants,
finally we use the rank one structure to obtain the assertion.

\paragraph{Step 1.}
We express both Toeplitz determinants in \eqref{Toeplitz:ratio} via a Borodin--Okounkov type formula, in the formulation of B\"ottcher--Widom \cite{BW}.

Let $n$ be a fixed integer.
Recall the semi-infinite matrix $A$ in \eqref{A:def}, viewed as an operator on $l^2(\zet_{\geq 0})$, and similarly define $\wtil A$ by \eqref{Atil:def}.
Both operators $A$ and $\wtil A$ are a trace class perturbation of the identity operator on $l^2(\zet_{\geq 0})$
so their Fredholm determinants $\det A$, $\det \wtil A$ are well-defined.
Then we obtain the following factorizations.

\begin{lemma}\label{thm:Borodin:Okounkov}
With the above notations, the following is true.
\begin{enumerate}
\item
The Toeplitz determinant in the numerator of \eqref{Toeplitz:ratio} can be factorized as
\begin{equation}\label{BO:formula1}
\det\left(\int_{S_1} z^{j-k}\wtil w(z)\frac{\ud z}{z}\right)_{j,k=1}^{n-1} = G^{n+\al-1} \wtil E \wtil F_{n-1,\al}\det \wtil A
\end{equation}
where
\[G:=\frac 12\exp\left(-\frac{a+b}{2}\right),\qquad \wtil E := \frac{\eta}{\eta-\xi}\exp\left(\frac{ab}{4}-\frac{a\eta^{-1}}{2}-\frac{b\xi}{2}\right)\]
and $\wtil F_{n-1,\al}$ will be defined in \eqref{Fn:def}--\eqref{Fn:def:2}.

\item
The Toeplitz determinant in the denominator of \eqref{Toeplitz:ratio} can be factorized as
\begin{equation}
\det\left(\int_{S_1} z^{j-k} w(z)\frac{\ud z}{z}\right)_{j,k=1}^{n} = G^{n+\al} E F_{n,\al}\det A
\end{equation}
where
\[G:=\frac 12\exp\left(-\frac{a+b}{2}\right),\qquad E := \exp\left(\frac{ab}{4}\right)\]
and $F_{n,\al}$ will be defined in the proof.
\end{enumerate}
\end{lemma}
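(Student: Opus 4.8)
The plan is to treat the two Toeplitz determinants in the numerator and denominator of \eqref{Toeplitz:ratio} — with symbols $\wtil w(z)$ of size $n-1$ and $w(z)$ of size $n$, see \eqref{weight:w}--\eqref{weight:wtil} — by the generalization of the Borodin--Okounkov formula to symbols of non-zero winding number due to B\"ottcher and Widom \cite{BW}. Both symbols have winding number $\al$ around the unit circle, and since each is an entire function times a rational function, the smoothness hypotheses of \cite{BW} hold and all the Wiener--Hopf factors below extend analytically and zero-freely to an annulus around $|z|=1$ (which is also what allows the deformation onto the circles $S_\rho,S_{\rho^{-1}}$ used in \eqref{A:def}). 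For each symbol the argument splits into four steps: (i) write the Wiener--Hopf factorization; (ii) read off from it the geometric mean $G$ and the Borodin--Okounkov (strong Szeg\H{o}) constant, getting $E$ resp.\ $\wtil E$; (iii) recognize the trace-class operator furnished by \cite{BW} as precisely $A$ of \eqref{A:def} resp.\ $\wtil A$ of \eqref{Atil:def}; (iv) name the residual $\al\times\al$ determinant produced by the non-zero winding as $F_{n,\al}$ resp.\ $\wtil F_{n-1,\al}$.

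For steps (i)--(ii): by \eqref{weight:wxyt}, up to the monomial $z^{\al}$ and the constant $\tfrac1{4\pi\ii}$, the symbol $w$ is the product of the entire zero-free factor $\exp(\tfrac{b}{2}z)$, the factor $\exp(\tfrac{a}{2}z^{-1})$ which is zero-free on $|z|\ge1$ and equal to $1$ at $\infty$, and the scalar $\exp(-\tfrac{a+b}{2})$; hence the geometric mean, i.e.\ the exponential of the zeroth log-Fourier coefficient, is $G=\tfrac12\exp(-\tfrac{a+b}{2})$, and the only non-zero higher log-Fourier coefficients of $w$ are the $\pm1$-st (equal to $\tfrac a2$ and $\tfrac b2$), so the Borodin--Okounkov constant $E(w)=\exp\big(\sum_{k\ge1}k(\log w)_k(\log w)_{-k}\big)$ reduces to its $k=1$ term $E=\exp(\tfrac{ab}{4})$. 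For $\wtil w$ the extra factor $(1-\xi z)(1-\eta^{-1}z^{-1})$ splits across $|z|=1$ precisely because $|\xi|<1<|\eta|$ — the zero of one of the two binomials lies inside the circle and that of the other outside — so one of them is absorbed into the inner Wiener--Hopf factor and the other into the outer one (the latter still $1$ at $\infty$). Its logarithm adds $-\xi$ to one of $(\log\wtil w)_{\pm1}$ and $-\eta^{-1}$ to the other (which to which being dictated by $|\xi|<1<|\eta|$) and adds the geometric tails $-\tfrac{\xi^k}{k},-\tfrac{\eta^{-k}}{k}$ to the $\pm k$-th coefficients for $k\ge2$; consequently the $k=1$ Szeg\H{o} term becomes $\tfrac{ab}{4}-\tfrac{a\eta^{-1}}{2}-\tfrac{b\xi}{2}+\xi\eta^{-1}$ and the $k\ge2$ terms sum to $\sum_{k\ge2}\tfrac{(\xi\eta^{-1})^k}{k}=-\log(1-\xi\eta^{-1})-\xi\eta^{-1}$, so after the $\xi\eta^{-1}$ cancel one obtains $\wtil E=\tfrac{\eta}{\eta-\xi}\exp\big(\tfrac{ab}{4}-\tfrac{a\eta^{-1}}{2}-\tfrac{b\xi}{2}\big)$. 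The power of $G$ comes out as $G^{n+\al}$ for the size-$n$ determinant and $G^{(n-1)+\al}$ for the size-$(n-1)$ one, the exponent being matrix size plus winding number.

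For step (iii) one writes out the operator appearing in the B\"ottcher--Widom formula: a product of the two Hankel operators built from the ratios of the inner and outer Wiener--Hopf factors, compressed to indices $\ge n+\al$ — the shift by $n+\al$ rather than $n$ being exactly the footprint of the winding number $\al$. Translating the index back by $n+\al$ turns it into an operator $\id-K$ on $l^2(\zet_{\ge0})$ whose $(k,l)$ entry is $\delta_{k-l}$ minus a double contour integral over $S_{\rho^{-1}}\times S_\rho$ with $\rho>1$ carrying the factor $z^{l+n+\al}\,w^{-(k+n+\al+1)}\,(w-z)^{-1}$ together with the ratio of Wiener--Hopf factors, the latter being $\exp\big(\tfrac{w-z}{2}b+\tfrac{z^{-1}-w^{-1}}{2}a\big)$; this is literally the matrix $A$ of \eqref{A:def}, so $\det A$ is the Fredholm determinant in the formula. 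Repeating with $n$ replaced by $n-1$ and carrying the rational factor through both Hankel operators — which is why the radius must now be chosen in $(1,\min\{|\xi|^{-1},|\eta|\})$, so as to stay clear of that factor's zeros and poles — reproduces $\wtil A$ of \eqref{Atil:def} and hence $\det\wtil A$. Finally, step (iv): the non-zero winding leaves behind in the B\"ottcher--Widom formula a determinant of size $\al\times\al$ assembled from finitely many Taylor/Laurent coefficients of the Wiener--Hopf factors (and, for $\wtil w$, of the rational factor) paired against powers of $z$; we take this to be the definition of $F_{n,\al}$ and $\wtil F_{n-1,\al}$, written out explicitly in \eqref{Fn:def}--\eqref{Fn:def:2}. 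The main — and essentially only — obstacle is the bookkeeping inside the non-zero-winding version of \cite{BW}: keeping straight which rational zero goes to which Wiener--Hopf factor, tracking the $n+\al$ index shifts throughout, and cleanly separating the finite $\al\times\al$ determinant from the Fredholm determinant; there is no analytic difficulty.
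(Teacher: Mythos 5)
Your proposal is correct and follows essentially the same route as the paper: apply the B\"ottcher--Widom generalization of Borodin--Okounkov to the winding-number-$\alpha$ symbols, extract $G$ and the Szeg\H{o} constants $E$, $\wtil E$ from the Wiener--Hopf logarithm, identify the compressed Hankel-Hankel product (with the $n+\alpha$ index shift) with $A$ and $\wtil A$, and name the residual $\alpha\times\alpha$ determinant as $F_{n,\alpha}$ resp.\ $\wtil F_{n-1,\alpha}$. The only small differences are that the paper derives part~2 from part~1 via the confluent limit $\xi\to 0$, $\eta\to\infty$ rather than directly, and first performs the sign flip $\wtil w(z)\mapsto\wtil w(-z)$ and inversion $z\mapsto z^{-1}$ to match the conventions of \cite{BW} — a bookkeeping step your write-up glosses over (and where your listing of the factors as $\exp(\tfrac b2 z)$ and $\exp(\tfrac a2 z^{-1})$ briefly swaps $a$ and $b$), but your subsequent Szeg\H{o}-coefficient computation resolves the pairing correctly and yields the right $\wtil E$.
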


\begin{proof}
The second part follows from the first by taking the limit $\xi\to 0$, $\eta\to\infty$ and replacing $n$ by $n+1$.
So it suffices to prove the first part of the lemma.
It will be convenient for us to replace the weight function $\wtil w(z)$ in the left hand side of \eqref{BO:formula1} by $\wtil w(-z)$.
Note that this is equivalent to a conjugation of the Toeplitz matrix by the diagonal matrix $\diag(1,-1,1,-1,\dots)$,
which does not affect the value of the determinant.
Moreover, we multiply the weight function $\wtil w(-z)$ by the prefactor $(-1)^{\alpha}$;
this prefactor multiplies the Toeplitz determinant in \eqref{BO:formula1} by the factor $(-1)^{(n-1)\al}$, but this factor will cancel out later.

We use the setting in B\"ottcher--Widom \cite{BW}. With the notations in that paper, the weight function of the Toeplitz determinant is
\begin{equation}\label{BO:w}
a(z) := \frac12(1+\xi z^{-1})(1+\eta^{-1}z)\exp\left(-\frac{z^{-1}+1}{2}a\right)\exp\left(-\frac{z+1}{2}b\right)
\end{equation}
on the unit circle $|z|=1$.
Note the changes of signs compared to \eqref{weight:wxyt}--\eqref{weight:wtil} by the replacement of $\wt w(z)$ with $\wt w(-z)$,
and note that $z$ turned into $z^{-1}$ due to the conventions in \cite{BW}.
We have the Wiener--Hopf factorization $a(z)=a_+(z)a_-(z)$ with
\begin{align}
a_+(z) &= \frac 12(1+\eta^{-1}z)\exp\left(-\frac{z+1}{2}b\right)\exp\left(-\frac{1}{2}a\right),\\
a_-(z) &= (1+\xi z^{-1})\exp\left(-\frac{z^{-1}}{2}a\right)
\end{align}
where $a_{\pm}(z)$ are analytic inside and outside the unit circle respectively and we normalized them as in B\"ottcher--Widom \cite[Eq.~(1)]{BW}.
Following \cite[Page~2]{BW}, set
\begin{align}
b(z) &= a_-(z)a_+^{-1}(z),\\
\wtil c(z) &= a_+(z^{-1})a_-^{-1}(z^{-1}) = b^{-1}(z^{-1}),
\end{align}
hence
\begin{equation}\label{BO:b}
b(z) = 2(1+\xi z^{-1})(1+\eta^{-1}z)^{-1}\exp\left(\frac{z+1}{2}b\right)\exp\left(\frac{-z^{-1}+1}{2}a\right).
\end{equation}
Let $H(b)$, $H(\wtil c)$ be the semi-infinite Hankel matrices associated to the symbols $b(z)$ and $\wtil c(z)$ respectively.
The row and column indices of these matrices are labelled by $\zet_{>0}$.

From \cite[Theorems~1.1 and 1.3]{BW} (with $n$ replaced by $n-1$ and with $\kappa=\alpha$),
we obtain the required formula \eqref{BO:formula1} for appropriately defined $G,\wtil E,\wtil F_{n-1,\al}$ (see below),
but with the final factor $\det\wtil A$ in \eqref{BO:formula1} replaced by the Fredholm determinant
\begin{equation}\label{Fred1} 
\det(\id-Q_{n+\al-1}H(b)H(\wtil c)Q_{n+\al-1})
\end{equation}
where $Q_k$ denotes the projection matrix $Q_k:=\diag(0_k,\id_\infty)$ with $0_k$ being the sequence of $0$s of length $k$.
(Note that the prefactor $(-1)^{(n-1)\alpha}$ in \cite[Theorem~1.3]{BW} cancels out with the one at the end of the first paragraph of the present proof, see above.) 
We calculate the product of Hankel matrices $H(b)H(\wtil c)$ in \eqref{Fred1}.
Its $(k,l)$th entry, $k,l\in\zet_{>0}$, is given by $1/(2\pi \ii)^2$ times
\begin{align*}
&\sum_{m=0}^{\infty} \left(\int_{S_1} w^{-k-m} b(w)\frac{\ud w}{w}\right)\left(\int_{S_1} z^{-l-m} \wtil c(z)\frac{\ud z}{z}\right)\\
&\qquad=\sum_{m=0}^{\infty} \left(\int_{S_1} w^{-k-m} b(w)\frac{\ud w}{w}\right)\left(\int_{S_1} z^{l+m} b^{-1}(z)\frac{\ud z}{z}\right)\\
&\qquad=\int_{S_{\rho^{-1}}}\frac{\ud z}{z}\int_{S_{\rho}} \frac{\ud w}{w} z^l w^{-k}\left(\sum_{m=0}^{\infty}z^{m} w^{-m}\right)b^{-1}(z)b(w)\\
&\qquad=\int_{S_{\rho^{-1}}}\frac{\ud z}{z}\int_{S_{\rho}} \frac{\ud w}{w} z^l w^{-k} (1-z/w)^{-1}b^{-1}(z)b(w).
\end{align*}
Hence the $(k,l)$th entry of $\id-H(b)H(\wtil c)$ is given by
\[\delta_{k-l}-\frac{1}{(2\pi \ii)^2}\int_{S_{\rho^{-1}}}\ud z\int_{S_{\rho}}\ud w\frac{z^{l-1}}{w^{k}} \frac{1}{w-z} b^{-1}(z)b(w)\]
which corresponds with our definition of $\wtil A$ in \eqref{Atil:def} up to a shift of the indices $k,l$ by $n+\al$.
Therefore the Fredholm determinant \eqref{Fred1} equals
\[ 
\det(\id-Q_{n+\al-1}H(b)H(\wtil c)Q_{n+\al-1}) = \det(\wtil A)_{l^2(\zet_{\geq 0})}\]
as desired.

Next, the expressions
for $G$ and $\wtil E$ are easily calculated from B\"ottcher--Widom \cite[p.~2--3]{BW} (keeping in mind formula \eqref{BO:w} above):
\[G = \exp(\log a(z))_0 = \frac 12\exp\left(-(a+b)/2\right)\]
and
\begin{align*}
\wtil E &=\exp\sum_{k=1}^{\infty} k(\log a(z))_k (\log a(z))_{-k} \\
&=\exp\sum_{k=1}^{\infty} k\left(-\frac b2\delta_{k-1}-\frac{(-\eta)^{-k}}{k}\right)\left(-\frac a2\delta_{k-1}-\frac{(-\xi)^{k}}{k}\right)\\
&=\exp\left(\frac{ab}{4}-\frac{a\eta^{-1}}{2}-\frac{b\xi}{2}\right)\exp\sum_{k=1}^{\infty}\frac{\xi^{k}\eta^{-k}}{k}\\
&=(1-\xi\eta^{-1})^{-1}\exp\left(\frac{ab}{4}-\frac{a\eta^{-1}}{2}-\frac{b\xi}{2}\right)
\end{align*}
as desired. Finally, we discuss the quantity $\wtil F_{n-1,\al}$.
From \cite[Eq.~(4)]{BW}, we have
\begin{equation}\label{Fn:def}
\wtil F_{n,\al} = \det(\FF\EE^{-1}\GG)
\end{equation}
where
\begin{equation}\label{Fn:def:2}
\EE = \id-H(b)Q_{n}H(\wtil c)Q_\al,\quad \FF=\begin{pmatrix} \id_\al & 0\end{pmatrix},\quad \GG=T(z^{-n}b)\begin{pmatrix} \id_\al \\ 0\end{pmatrix}
\end{equation}
where again $Q_n$ denotes the projection matrix $Q_n:=\diag(0_n,\id_\infty)$ with $0_n$ being the sequence of $0$s of length $n$.
The quantity $F_{n,\alpha}$ can be defined similarly.
It is also obtained as the limit of $\wt F_{n,\alpha}$ as $\xi\to0$ and $\eta\to\infty$.
\end{proof}

From Lemma~\ref{thm:Borodin:Okounkov} we obtain

\begin{corollary}\label{theorem:BO}
The ratio of Toeplitz determinants in \eqref{Toeplitz:ratio} can be written as
\begin{equation}
C_{n}(\xi,\eta) = 2 \frac{\xi^{-(n-1)}\eta^{n}}{\eta-\xi}\exp\left(\frac{1-\eta^{-1}}{2}a+\frac{1-\xi}{2}b\right) \frac{\det\wtil A}{\det A} \frac{\wtil F_{n-1,\al}}{F_{n,\al}}.
\end{equation}
\end{corollary}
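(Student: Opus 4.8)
The plan is to obtain Corollary~\ref{theorem:BO} as an immediate algebraic consequence of Theorem~\ref{theorem:toeplitz} and Lemma~\ref{thm:Borodin:Okounkov}. No new analytic input is required: all the hard work --- the B\"ottcher--Widom factorization of the two Toeplitz determinants --- has already been carried out in Lemma~\ref{thm:Borodin:Okounkov}, and it only remains to substitute these factorizations into the ratio \eqref{Toeplitz:ratio} and collect the resulting factors.

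Concretely, I would start from \eqref{Toeplitz:ratio}, insert the factorization of part~(1) of Lemma~\ref{thm:Borodin:Okounkov} for the numerator and that of part~(2) for the denominator, obtaining
\[
C_n(\xi,\eta)=\frac{\eta^{n-1}}{\xi^{n-1}}\cdot\frac{G^{\,n+\al-1}\,\wtil E\,\wtil F_{n-1,\al}\,\det\wtil A}{G^{\,n+\al}\,E\,F_{n,\al}\,\det A}.
\]
Since the factor $G=\tfrac12\exp(-(a+b)/2)$ is common to both factorizations, the powers of $G$ collapse to the single factor $G^{-1}=2\exp((a+b)/2)$. Next, with $\wtil E=\frac{\eta}{\eta-\xi}\exp\!\big(\tfrac{ab}{4}-\tfrac{a\eta^{-1}}{2}-\tfrac{b\xi}{2}\big)$ and $E=\exp(ab/4)$, the $\exp(ab/4)$ terms cancel, leaving $\wtil E/E=\frac{\eta}{\eta-\xi}\exp\!\big(-\tfrac{a\eta^{-1}}{2}-\tfrac{b\xi}{2}\big)$. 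Multiplying $G^{-1}$ by this ratio produces $\frac{2\eta}{\eta-\xi}\exp\!\big(\tfrac{1-\eta^{-1}}{2}a+\tfrac{1-\xi}{2}b\big)$, which is exactly the exponential prefactor appearing in the statement. Finally, the leftover factor $\eta$ combines with $\eta^{n-1}/\xi^{n-1}$ to give $\xi^{-(n-1)}\eta^{n}$, while the quotients $\det\wtil A/\det A$ and $\wtil F_{n-1,\al}/F_{n,\al}$ are carried through unchanged; this reproduces the claimed identity.

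The only subtlety worth flagging is the bookkeeping of the sign factors $(-1)^{(n-1)\al}$: one such factor is introduced in the proof of Lemma~\ref{thm:Borodin:Okounkov} when $\wtil w(z)$ is replaced by $(-1)^\al\wtil w(-z)$, and a second one comes from \cite[Theorem~1.3]{BW}; these cancel, so the factorization \eqref{BO:formula1} as stated is already sign-clean and may be used verbatim. Beyond this, there is essentially no obstacle: the corollary is a one-line substitution followed by routine simplification, and the genuine difficulty of the argument is entirely contained in Lemma~\ref{thm:Borodin:Okounkov} already.
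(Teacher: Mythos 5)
Your proof is correct and matches the paper's (implicit) argument: the paper simply writes ``From Lemma~\ref{thm:Borodin:Okounkov} we obtain'' and leaves the substitution and cancellation to the reader, which is exactly what you carry out. The sign-factor remark is accurate but not strictly necessary since Lemma~\ref{thm:Borodin:Okounkov} is already stated sign-clean; everything else is a correct one-line substitution and simplification.
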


\paragraph{Step 2.}
We express the last factor in Corollary~\ref{theorem:BO} via the determinants of certain block matrices, i.e.\ determinants on the space $L$.
Fix $n\in\zet_{>0}$. We define a block matrix operator
\[\begin{array}{rl} & \hspace{-4mm}\begin{array}{rr} \infty & \al\end{array} \\
\begin{array}{r} \infty\\ \al\end{array} & \hspace{-5mm}\begin{pmatrix} \wtil A & \wtil C \\\wtil B & \wtil D\end{pmatrix}\end{array}\]
of size $(\infty+\al)\times (\infty+\al)$.
Let us describe the four blocks.
Denote again by $S_\rho$ the circle with radius $\rho$ which we choose such that $\rho\in(1,\min\{|\xi|^{-1},|\eta|\})$.
\begin{itemize}
\item
$\wtil A$ is a semi-infinite matrix with $(k,l)$th entry
\begin{multline}\label{Atil:def}
\wtil A_{k,l}=\delta_{k-l}- \frac{1}{(2\pi \ii)^2}\int_{S_{\rho^{-1}}}\ud z\int_{S_{\rho}}\ud w\frac{z^{l+n+\al}}{w^{k+n+\al+1}} \frac{1}{w-z} \frac{w+\xi}{w+\eta}\frac{z+\eta}{z+\xi}\\
\times \exp\left(\frac{w-z}{2}b\right)\exp\left(\frac{z^{-1}-w^{-1}}{2}a\right)
\end{multline}
for $k,l\in\zet_{\geq 0}$.

\item
$\wtil B$ is a matrix of size $\al\times\infty$ with $(k,l)$th entry given by \eqref{Atil:def}, but with the $\delta_{k-l}$ term removed and the factor $\frac{z^{l+n+\al}}{w^{k+n+\al+1}}$ replaced by
$\frac{z^{l+n+\al}}{w^{k+n+1}}$ for $k=0,\ldots,\al-1$ and $l\in\zet_{\geq 0}$.

\item
$\wtil C$ is a Toeplitz matrix of size $\infty\times\al$ with $(k,l)$th entry
\begin{equation}\label{Ctil:def}
\wtil C_{k,l} = \frac{\eta}{2\pi \ii}\int_{S_1}\ud w\,w^{l-k-n-\al-1}\frac{w+\xi}{w+\eta}\exp\left(\frac{bw-aw^{-1}}{2}\right)
\end{equation}
for $k\in\zet_{\geq 0}$ and $l=0,\ldots,\al-1$.

\item
$\wtil D$ is a Toeplitz matrix of size $\al\times\al$ with $(k,l)$th entry given by \eqref{Ctil:def}, but with the factor $w^{l-k-n-\al-1}$ replaced by $w^{l-k-n-1}$ for $k,l=0,\ldots,\al-1$.
\end{itemize}

By taking the limit $\xi\to 0,\eta\to\infty$ and replacing $n$ by $n+1$ in the above formulas for $\wtil A,\wtil B,\wtil C,\wtil D$, we retrieve the formulas for the matrices $A,B,C,D$.

\begin{proposition}\label{prop:block:matrix}
With the above notations, the last factor in Corollary \ref{theorem:BO} can be written as
\begin{align}
\det(\wtil A)\,\wtil F_{n-1,\al} &= c\,\det\begin{pmatrix} \wtil A & \wtil C \\ \wtil B & \wtil D\end{pmatrix},\label{Ftil:blockdet}\\
\det(A)\, F_{n,\al} &= c\,\det\begin{pmatrix} A & C \\ B & D\end{pmatrix}\label{F:blockdet}
\end{align}
where the constant $c$ is the same in both formulas.
\end{proposition}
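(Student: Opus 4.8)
The plan is to prove the identity \eqref{Ftil:blockdet} with a constant $c$ depending on $\al$ only, and to obtain \eqref{F:blockdet} from it by the degeneration $\xi\to0$, $\eta\to\infty$ (together with the accompanying shift of $n$) that was already used to pass from part~1 to part~2 of Lemma~\ref{thm:Borodin:Okounkov}: under that degeneration $\wtil A,\wtil B,\wtil C,\wtil D\to A,B,C,D$ and $\wtil F_{n-1,\al}\to F_{n,\al}$, so the two constants agree as soon as we keep track that $c$ does not depend on $\xi,\eta,n,a,b$. Both sides of \eqref{Ftil:blockdet} are holomorphic in $(\xi,\eta)$ on the region $|\xi|<1<|\eta|$ (the left side because, by Lemma~\ref{thm:Borodin:Okounkov}, $\det(\wtil A)\,\wtil F_{n-1,\al}$ is the size $n-1$ Toeplitz determinant of $\wtil w$ divided by an explicit nonvanishing factor, hence holomorphic there; the right side because it is the Fredholm determinant of $\id$ plus trace class on $L$). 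Hence it suffices to prove the identity on the non-empty open subset where both $\wtil A$ and $\EE$ are invertible — which is the case for $|\xi|$ small and $|\eta|$ large — and then extend by analytic continuation. On that subset the Schur complement formula gives
\[
\det\begin{pmatrix}\wtil A&\wtil C\\\wtil B&\wtil D\end{pmatrix}=\det(\wtil A)\,\det\big(\wtil D-\wtil B\wtil A^{-1}\wtil C\big),
\]
so what remains is to show $\det\big(\wtil D-\wtil B\wtil A^{-1}\wtil C\big)=c\,\det(\FF\EE^{-1}\GG)$.

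I would then put all four blocks into the B\"ottcher--Widom language. From the computation in the proof of Lemma~\ref{thm:Borodin:Okounkov}, $\wtil A$ is, after the index shift used there, the tail compression $\id-Q_{n+\al-1}H(b)H(\wtil c)Q_{n+\al-1}$ of $\id-H(b)H(\wtil c)$, with $b,\wtil c$ the symbols of that proof. Matching integrands in the same way — each factors as a product of monomials in $w,z$ times $b(w)/b(z)$, respectively times $b(w)$ — identifies $\wtil B$ with the block of $\id-H(b)H(\wtil c)$ carried by the $\al$ intermediate rows $\{n,\dots,n+\al-1\}$ and the tail columns, and shows that the Toeplitz blocks $\wtil C,\wtil D$ are $\al$ columns, respectively an $\al\times\al$ corner, of a compression of $T(z^{-n}b)$, the same Toeplitz operator that enters $\GG$ in \eqref{Fn:def:2}. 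Equivalently, after reordering the basis of $L$ so that the $\cee^\al$ summand occupies the coordinates just below the tail on which $\wtil A$ acts, the block matrix $\begin{pmatrix}\wtil A&\wtil C\\\wtil B&\wtil D\end{pmatrix}$ becomes a compression of a single operator assembled from $H(b)$, $H(\wtil c)$ and $T(z^{-n}b)$ — exactly the ingredients of $\EE,\FF,\GG$.

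The core step is the linear-algebra identity merging $\det\wtil A$ with $\det(\FF\EE^{-1}\GG)$ into the block determinant. Writing $Q_{n-1}=Q_{n+\al-1}+R$ with $R$ the rank-$\al$ projection onto the intermediate coordinates, I would split $\EE=\id-H(b)Q_{n-1}H(\wtil c)Q_\al$ into its tail part, which reproduces $\wtil A$, and a remainder of rank $\le\al$, and then apply the $2\times2$ block-determinant formula in the reverse direction: this expresses $\wtil D-\wtil B\wtil A^{-1}\wtil C$ as $\FF\EE^{-1}\GG$ up to multiplication on the left and/or right by fixed invertible $\al\times\al$ matrices built from the factor $T(z^{-n}b)\begin{pmatrix}\id_\al\\0\end{pmatrix}$ of $\GG$ together with the coordinate reversal that mirrors the $\diag(1,-1,1,\dots)$ conjugation and the $(-1)^{(n-1)\al}$ sign used in the proof of Lemma~\ref{thm:Borodin:Okounkov}. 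Taking determinants yields \eqref{Ftil:blockdet} with $c$ equal to the product of the determinants of those fixed matrices, which by construction involves neither $\xi,\eta$ nor $n,a,b$.

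I expect the main obstacle to be the bookkeeping rather than any conceptual point: reconciling the index conventions of \cite{BW} (whether the Hankel matrices are indexed from $0$ or from $1$, whether $Q_\al$ is a head or a tail projection, and where exactly the $n+\al$ versus $n$ shifts sit), carrying the $(-1)^{(n-1)\al}$ sign and the diagonal conjugation through the whole computation, and identifying the fixed $\al\times\al$ matrices precisely enough to be certain that their determinants carry no hidden parameter dependence. Once this dictionary is fixed, the identity itself is just the Schur complement formula read in both directions.
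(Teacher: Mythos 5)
Your plan is essentially the paper's proof: the paper recalls the identity $\det(\FF\EE^{-1}\GG-\HH) = -\det\begin{pmatrix} \EE & \GG \\ \FF & \HH \end{pmatrix}/ \det \EE$, applies it with $\HH=0$ to the B\"ottcher--Widom data $\EE,\FF,\GG$ of \eqref{Fn:def:2}, and reduces the resulting block determinant to $c\det\begin{pmatrix}\wtil A & \wtil C \\ \wtil B & \wtil D\end{pmatrix}/\det\wtil A$, which is exactly the Schur-complement argument you describe read in the other direction; the paper also handles the non-tilde version by declaring it "similar," matching your degeneration $\xi\to0,\eta\to\infty$. One small factual correction: the constant the paper finds is $c=(-1)^{\al+1}2e^{(a+b)/2}$, which \emph{does} depend on $a,b$ — your over-strong claim that $c$ is independent of $a,b$ is not needed and not true, though it is harmless here because $a,b$ are held fixed under the degeneration from \eqref{Ftil:blockdet} to \eqref{F:blockdet}, so the two constants still coincide.
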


\begin{proof}
We only prove \eqref{Ftil:blockdet}, the other identity being similar.
We recall the general matrix identity
\begin{equation}\label{Schur:complement}
\det(\FF\EE^{-1}\GG-\HH) = -\det\begin{pmatrix} \EE & \GG \\ \FF & \HH \end{pmatrix}/ \det \EE
\end{equation}
for arbitrary matrices $\EE,\FF,\GG,\HH$ with compatible dimensions and $\EE$ invertible.
We will apply this identity with the matrices $\EE,\FF,\GG$ in \eqref{Fn:def:2} and $\HH=0$.
Then the left-hand side of \eqref{Schur:complement} reduces to $\wtil F_{n,\al}$ by virtue of \eqref{Fn:def}.
On the other hand, the right-hand side of \eqref{Schur:complement} can be reduced to\
\[ c\det\begin{pmatrix} \wtil A & \wtil C \\ \wtil B & \wtil D \end{pmatrix}/ \det \wtil A\]
with $c=(-1)^{\al+1}2 e^{(a+b)/2}$.
\end{proof}

\paragraph{Step 3.}
We relate the block matrices via a rank 1 formula. Let $T_\al$ be the upper triangular Toeplitz matrix
\begin{equation}\label{Talpha:inverse:def}
T_\al = \frac{1}{\eta}\begin{pmatrix}
1 & -(\xi-\eta) & \xi (\xi-\eta) & -\xi^{2} (\xi-\eta) & \ldots \\
0 & 1 & -(\xi-\eta) & \xi (\xi-\eta) & \ldots \\
0 & 0 & 1 & -(\xi-\eta)& \ldots \\
0&0&0&1&\ldots  \\
\vdots & \vdots & \vdots & \vdots& \ddots
\end{pmatrix}_{\al\times\al}.
\end{equation}

\begin{proposition}\label{prop:rank1:alpha}
The block matrices in Proposition~\ref{prop:block:matrix} satisfy the identity
\begin{equation}\label{rank1:alpha}
\begin{pmatrix} \wtil A & \wtil C \\ \wtil B & \wtil D\end{pmatrix} \begin{pmatrix} I_\infty & 0 \\ 0 & T_\al \end{pmatrix} - \begin{pmatrix} A & C \\ B & D\end{pmatrix}
= (\xi-\eta)\begin{pmatrix} \bf h \\ \bf\what h \end{pmatrix}\begin{pmatrix} \bf g & \boldsymbol{\xi} \end{pmatrix}
\end{equation}
where the right-hand side of \eqref{rank1:alpha} is a rank $1$ matrix and we used the notations \eqref{hk:def}--\eqref{xibold:def}.
\end{proposition}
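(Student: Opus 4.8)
\emph{Plan.} I would prove the matrix identity \eqref{rank1:alpha} by comparing the four blocks on the two sides separately. The decisive observation is that passing from an untilde block ($A,B,C,D$) to its tilde counterpart ($\wtil A,\wtil B,\wtil C,\wtil D$) merely inserts into the corresponding contour integral the rational factor
\[R(w,z):=\frac{w+\xi}{w+\eta}\cdot\frac{z+\eta}{z+\xi}\]
(for the first two blocks), respectively its ``$w$-half'' $\frac{w+\xi}{w+\eta}$ (for the last two). Everything will hinge on the partial-fraction identity
\[R(w,z)-1=-\frac{(\xi-\eta)(w-z)}{(w+\eta)(z+\xi)},\]
obtained by putting the left side over the denominator $(w+\eta)(z+\xi)$ and simplifying the numerator. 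Before starting I would fix a single radius $\rho\in(1,\min\{|\xi|^{-1},|\eta|\})$ and note that $A,B,C,D$, whose integrands have poles only at $w=0$, $z=0$ and $w=z$, may be evaluated on $S_\rho,S_{\rho^{-1}}$ for this $\rho$, and that $\wtil C,\wtil D$ may be deformed from $|w|=1$ to $|w|=\rho$ without meeting the pole at $w=-\eta$; this is what legitimises the term-by-term comparison of integrands.

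\emph{Step 1: the two left blocks.} Forming $\wtil A-A$ from \eqref{Atil:def} and \eqref{A:def}, and likewise $\wtil B-B$, the Kronecker deltas cancel and the integrand gets multiplied by $R(w,z)-1$; by the partial-fraction identity the kernel $1/(w-z)$ is cancelled against the numerator $w-z$, so the double integral factorises into a $z$-integral times a $w$-integral. The $z$-integral is exactly $g_l$ of \eqref{gk:def}; the $w$-integral is $h_k$ of \eqref{hk:def} for the $\wtil A$ block (where the $w$-power is $w^{-(k+n+\al+1)}$) and $\what h_k$ of \eqref{hktil:def} for the $\wtil B$ block (where it is $w^{-(k+n+1)}$). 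Hence $\wtil A-A=(\xi-\eta)\,{\bf h}\,{\bf g}$ and $\wtil B-B=(\xi-\eta)\,{\bf\what h}\,{\bf g}$, the left column of the right-hand side of \eqref{rank1:alpha}.

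\emph{Step 2: the two right blocks.} Here I first identify the Toeplitz matrix $T_\al$ of \eqref{Talpha:inverse:def}: summing the geometric series in its first row shows that $T_\al$ is the $\al\times\al$ principal truncation of the upper-triangular Toeplitz operator with symbol $\eta^{-1}(1+\eta z)/(1+\xi z)$. Using the closed form $\sum_{m=0}^{l-1}(-1)^{l-m}\xi^{l-m-1}w^m=-\frac{w^l-(-\xi)^l}{w+\xi}$, one then obtains, for $l=0,\dots,\al-1$,
\[\sum_{m=0}^{l}w^m(T_\al)_{m,l}=\frac1\eta\cdot\frac{w^l(w+\eta)+(\xi-\eta)(-\xi)^l}{w+\xi}.\]
Substituting this into $(\wtil C\,T_\al)_{k,l}=\sum_{m=0}^{l}\wtil C_{k,m}(T_\al)_{m,l}$ (using \eqref{Ctil:def}), the prefactor $(w+\xi)/(w+\eta)$ of $\wtil C$ cancels the $w+\xi$ in the denominator while the factor $\eta$ in front of $\wtil C$ absorbs the $\eta^{-1}$ from $T_\al$; what is left inside the integral is $w^l+(\xi-\eta)(-\xi)^l/(w+\eta)$. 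Integrating $w^l\cdot w^{-(k+n+\al+1)}\exp(\frac{bw-aw^{-1}}2)$ reproduces $C_{k,l}$ (see \eqref{C:def}), and the remaining simple-pole term gives $(\xi-\eta)(-\xi)^l h_k$; since $(-\xi)^l$ is the $l$th entry of $\boldsymbol{\xi}$ in \eqref{xibold:def}, this is $\wtil C\,T_\al-C=(\xi-\eta)\,{\bf h}\,\boldsymbol{\xi}$. The same computation with $w^{-(k+n+\al+1)}$ replaced by $w^{-(k+n+1)}$ yields $\wtil D\,T_\al-D=(\xi-\eta)\,{\bf\what h}\,\boldsymbol{\xi}$. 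Finally, since $\begin{pmatrix}\wtil A&\wtil C\\\wtil B&\wtil D\end{pmatrix}\begin{pmatrix}I_\infty&0\\0&T_\al\end{pmatrix}=\begin{pmatrix}\wtil A&\wtil C\,T_\al\\\wtil B&\wtil D\,T_\al\end{pmatrix}$, assembling the four block identities gives \eqref{rank1:alpha}.

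\emph{Main obstacle.} Step 1 is essentially forced by the partial-fraction identity, so the only delicate part is Step 2: one must spot the closed form of the finite geometric sum built into $T_\al$ and verify that it conspires with the prefactor of $\wtil C$ (and $\wtil D$) so as to leave behind exactly the monomial $w^l$ together with a single simple pole at $w=-\eta$. This is the unique point where the precise shape \eqref{Talpha:inverse:def} of $T_\al$ is used, and it is in fact what dictates that definition.
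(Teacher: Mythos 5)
Your proposal is correct and takes essentially the same approach as the paper: you use the same partial–fraction identity for the left blocks (equivalent to the paper's displayed identity for $R(w,z)/(w-z)$), and the same telescoping cancellation between $\wtil C T_\al$ and $C$ for the right blocks, merely presenting the finite geometric sum inside $T_\al$ in closed form rather than term by term. The added remark about deforming the $\wtil C,\wtil D$ contours from $S_1$ to $S_\rho$ is a small but legitimate tightening of a step the paper leaves implicit.
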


\begin{proof}
Recall the matrices $\wtil C$ and $T_\al$ in \eqref{Ctil:def} and \eqref{Talpha:inverse:def} respectively.
The $(k,l)$th entry of $\wtil C T_\al$ is given by
\begin{align}
(\wtil C T_\al)_{k,l} &= \frac{1}{2\pi \ii}\int_{S_1}\ud w\,w^{-(k+n+\al+1)}\frac{w+\xi}{w+\eta}\exp\left(\frac{bw-aw^{-1}}{2}\right)\notag\\
&\qquad\qquad\times\left[w^l-(\xi -\eta)(w^{l-1}-w^{l-2}\xi+\ldots+(-\xi)^{l-1})\right]\notag\\
&= \frac{1}{2\pi \ii}\int_{S_1}\ud w\,\frac{w^{-(k+n+\al+1)}}{w+\eta}\exp\left(\frac{bw-aw^{-1}}{2}\right)\notag\\
&\qquad\qquad\times\left[(w+\xi)w^l-(\xi-\eta)(w^{l}-(-\xi)^l)\right]\label{proof:rank1:1}
\end{align}
for $k\in\zet_{\geq 0}$ and $l=0,\ldots,\al-1$.
On the other hand, the $(k,l)$th entry of $C$ is given by
\begin{equation}\label{proof:rank1:2}
C_{k,l} = \frac{1}{2\pi \ii}\int_{S_1}\ud w\,\frac{w^{-(k+n+\al+1)}}{w+\eta}\exp\left(\frac{bw-aw^{-1}}{2}\right)\left[(w+\eta)w^l\right].
\end{equation}
Subtracting \eqref{proof:rank1:2} from \eqref{proof:rank1:1} and cancelling terms, we find
\begin{equation*}
(\wtil C T_\al-C)_{k,l} = \frac{1}{2\pi \ii}\int_{S_1}\ud w\,\frac{w^{-(k+n+\al+1)}}{w+\eta}\exp\left(\frac{bw-aw^{-1}}{2}\right)\left[(\xi-\eta)(-\xi)^l\right].
\end{equation*}
This proves the matrix equality
\[\wtil C T_\al-C = (\xi-\eta)\mathbf{h}\boldsymbol{\xi}\]
on account of \eqref{hk:def} and \eqref{xibold:def}.
In a similar way, one shows that
\[\begin{pmatrix}\wtil C\\ \wtil D\end{pmatrix} T_\al-\begin{pmatrix}C\\ D\end{pmatrix} = (\xi-\eta)\begin{pmatrix}\bf h\\ \bf\what h\end{pmatrix}\boldsymbol{\xi}\]
which yields the second block column of \eqref{rank1:alpha}.
Finally, from the identity
\begin{equation}
\frac{w+\xi}{w+\eta}\frac{z+\eta}{z+\xi} \frac{1}{w-z} = \frac{1}{w-z} - \frac{\xi-\eta}{(w+\eta)(z+\xi)},
\end{equation}
one obtains the matrix equation
\[\begin{pmatrix}\wtil A\\ \wtil B\end{pmatrix} -\begin{pmatrix}A\\ B\end{pmatrix} = (\xi-\eta)\begin{pmatrix}\bf h\\ \bf\what h\end{pmatrix}\mathbf{g}\]
on account of \eqref{hk:def}--\eqref{gk:def}. This yields the first block column of \eqref{rank1:alpha}.
\end{proof}

\begin{corollary}\label{lemma:rankone:doublesum}  The last factor in Corollary \ref{theorem:BO} can be written as
\begin{equation}\label{rank:one:doublesum} \frac{\det\wtil A}{\det A} \frac{\wtil F_{n-1,\al}}{F_{n,\al}} =
\eta^{\al}\left(1+(\xi-\eta)\begin{pmatrix} \mathbf{g} & \boldsymbol{\xi} \end{pmatrix}\begin{pmatrix} A & C \\
B & D\end{pmatrix}^{-1}\begin{pmatrix} \bf h\\ \bf\what h
\end{pmatrix}\right).
\end{equation}
\end{corollary}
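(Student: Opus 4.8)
The plan is to combine the two structural identities just established—Proposition~\ref{prop:block:matrix} and Proposition~\ref{prop:rank1:alpha}—with the classical rank-one (matrix determinant) lemma. First I would use Proposition~\ref{prop:block:matrix}: dividing \eqref{Ftil:blockdet} by \eqref{F:blockdet}, the common constant $c$ cancels, and we are left with
\[
\frac{\det\wtil A}{\det A}\,\frac{\wtil F_{n-1,\al}}{F_{n,\al}}
= \frac{\det\begin{pmatrix}\wtil A & \wtil C\\ \wtil B & \wtil D\end{pmatrix}}{\det\begin{pmatrix}A & C\\ B & D\end{pmatrix}} .
\]
So it suffices to evaluate this ratio of block Fredholm determinants on the space $L$.

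Next I would take determinants in the identity \eqref{rank1:alpha}, rewritten as
\[
\begin{pmatrix}\wtil A & \wtil C\\ \wtil B & \wtil D\end{pmatrix}
\begin{pmatrix}I_\infty & 0\\ 0 & T_\al\end{pmatrix}
= \begin{pmatrix}A & C\\ B & D\end{pmatrix}
+ (\xi-\eta)\begin{pmatrix}\bf h\\ \bf\what h\end{pmatrix}\begin{pmatrix}\mathbf{g} & \boldsymbol{\xi}\end{pmatrix}.
\]
On the left, the matrix $T_\al$ from \eqref{Talpha:inverse:def} is upper triangular of size $\al\times\al$ with every diagonal entry equal to $\eta^{-1}$, so $\det\begin{pmatrix}I_\infty & 0\\ 0 & T_\al\end{pmatrix}=\det T_\al=\eta^{-\al}$, and by multiplicativity of the Fredholm determinant the left-hand side has determinant $\eta^{-\al}\det\begin{pmatrix}\wtil A & \wtil C\\ \wtil B & \wtil D\end{pmatrix}$. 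On the right, the perturbation $(\xi-\eta)\begin{pmatrix}\bf h\\ \bf\what h\end{pmatrix}\begin{pmatrix}\mathbf{g}&\boldsymbol{\xi}\end{pmatrix}$ is a scalar times a column vector in $L$ times a row covector on $L$, hence of rank one; applying the matrix determinant lemma $\det(M+uv)=\det M\,(1+vM^{-1}u)$ (for a column vector $u$ and row vector $v$) with $M=\begin{pmatrix}A & C\\ B & D\end{pmatrix}$ gives
\[
\det\!\left(\begin{pmatrix}A & C\\ B & D\end{pmatrix}+(\xi-\eta)\begin{pmatrix}\bf h\\ \bf\what h\end{pmatrix}\begin{pmatrix}\mathbf{g}&\boldsymbol{\xi}\end{pmatrix}\right)
=\det\begin{pmatrix}A & C\\ B & D\end{pmatrix}\left(1+(\xi-\eta)\begin{pmatrix}\mathbf{g}&\boldsymbol{\xi}\end{pmatrix}\begin{pmatrix}A & C\\ B & D\end{pmatrix}^{-1}\begin{pmatrix}\bf h\\ \bf\what h\end{pmatrix}\right).
\]
Equating the two expressions for the determinant, dividing by $\det\begin{pmatrix}A & C\\ B & D\end{pmatrix}$ and multiplying by $\eta^{\al}$ then yields exactly \eqref{rank:one:doublesum} via the first display.

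The only genuine point of care is the infinite-dimensional bookkeeping: one must check that both block operators $\begin{pmatrix}A & C\\ B & D\end{pmatrix}$ and $\begin{pmatrix}\wtil A & \wtil C\\ \wtil B & \wtil D\end{pmatrix}$ are of the form identity plus trace class on $L=l^2(\zet_{\geq 0})\oplus\cee^\al$, so that their Fredholm determinants—and those of their rank-one perturbations—are well defined, the multiplicativity of $\det$ applies, and the factor $\begin{pmatrix}I_\infty & 0\\ 0 & T_\al\end{pmatrix}$, which differs from the identity only on the finite-dimensional summand, does not spoil this. The trace-class property of the $A$-block was already used in the proof of Lemma~\ref{thm:Borodin:Okounkov}, while $B,C,D$ involve the finite-dimensional summand and are therefore finite rank; hence this is routine. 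Beyond this justification the argument is pure substitution, so I expect no further obstacle.
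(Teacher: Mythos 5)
Your argument is correct and is precisely the argument the paper intends: the paper compresses it to the single sentence ``This follows from Propositions~\ref{prop:block:matrix} and \ref{prop:rank1:alpha},'' and you have simply spelled out the intermediate steps—cancel $c$, take Fredholm determinants in the rank-one identity \eqref{rank1:alpha}, use $\det T_\al=\eta^{-\al}$, and invoke the matrix-determinant lemma. Your remark on trace-class bookkeeping is a sensible addition rather than a deviation; there is nothing to change.
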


\begin{proof}
This follows from Propositions~\ref{prop:block:matrix} and \ref{prop:rank1:alpha}.
\end{proof}

Theorem~\ref{theorem:inner:product} is now an easy consequence of Corollary~\ref{theorem:BO} and Corollary~\ref{lemma:rankone:doublesum}. This ends the proof of Theorem~\ref{theorem:inner:product}.

\subsection{Proof of Theorem~\ref{theorem:kernel:n}}

In what follows, we show that
\begin{multline}\label{K_nkernelxieta:old}
K_n(s,x;t,y)=-p_{t-s}(x,y)\id_{t>s}+\frac1{2(2\pi\ii)^2}\int_{\Gamma_\eta'}\d\eta\int_{\Gamma_\xi'}\d\xi\frac{(t\xi^{-1}+1-t)^{\alpha-1}}{((1-s)\eta+s)^{\alpha+1}}\\
\frac{\eta^{n+\alpha}}{\xi^{n+1}}\exp\left(\frac{\eta-\xi}2b+\frac{\xi^{-1}-\eta^{-1}}2a-\frac{x(\eta-1)}{2(1-s)\eta+2s}-\frac{y(\xi^{-1}-1)}{2t\xi^{-1}+2(1-t)}\right)M(\xi,\eta)
\end{multline}
where $M(\xi,\eta)$ is defined in \eqref{M:xi:eta} where in the definitions of $\mathbf{g},\mathbf{h},\mathbf{\what h}$ in \eqref{hk:def}--\eqref{gk:def} we now take the circle radius $\rho$ such that
\begin{equation}\label{radius:rho:big}
\rho>\max\left\{\frac{2-t}{t},\frac{1+s}{1-s}\right\}\geq\max\{|\xi|^{-1},|\eta|\}>1
\end{equation}
and where the contours $\Gamma_\xi'$ and $\Gamma_\eta'$ are as follows.
Let $\Gamma_\xi'$ and $\Gamma_\eta'$ denote counterclockwise oriented closed contours such that $|\xi|<1$ and $|\eta|>1$ holds for all $\xi\in\Gamma_\xi'$ and $\eta\in\Gamma_\eta'$
and such that $\Gamma_\xi'$ encircles $0$ but it does not encircle the points $-s/(1-s)$ and $-t/(1-t)$ and $\Gamma_\eta'$ does surround the points $-s/(1-s)$ and $-t/(1-t)$.
See Figure~\ref{fig:primecontours} for illustration.

\begin{figure}
\begin{center}
\def\svgwidth{200pt}
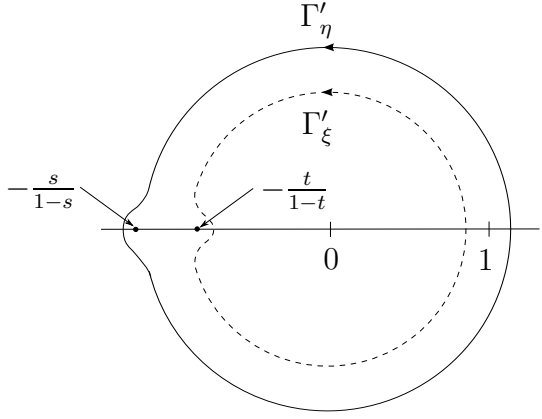
\end{center}
\caption{The integration contours $\Gamma_\xi'$ and $\Gamma_\eta'$.\label{fig:primecontours}}
\end{figure}

As a first step for proving \eqref{K_nkernelxieta:old}, we combine Theorems~\ref{theorem:toeplitz} and \ref{theorem:inner:product} along with \eqref{weight:wxyt} and \eqref{xi:eta} to get
\begin{multline}\label{extendedkernel:uv} K_n(s,x;t,y) = -p_{t-s}(x,y) \id_{t>s} + \frac{1}{(2\pi \ii)^2}\frac{1}{2t(1-s)}\int_{S_1}\int_{S_1} \frac{\ud u}{u}\frac{\ud v}{v}
u^{\alpha}v^{\alpha} \\ \times\exp\left(\frac{\xi^{-1}-\eta^{-1}}{2}a+\frac{\eta-\xi}{2}b+\frac{u^{-1}-1}{2t}y+\frac{v-1}{2(1-s)}x\right)\frac{\eta^{n+\al}}{\xi^{n-1}}M(\xi,\eta)
\end{multline}
where we recall that $\xi,\eta$ are defined in terms of $u,v$ by \eqref{xi:eta}.

Next we show that the circle radius $\rho$ in the definitions of $\mathbf{g},\mathbf{h},\mathbf{\what h}$ can be taken as in \eqref{radius:rho:big}.
Recall that we had $\rho\in(1,\min\{|\xi|^{-1},|\eta|\})$ in the original definitions of $\mathbf{g},\mathbf{h},\mathbf{\what h}$ in \eqref{hk:def}--\eqref{gk:def}. If we enlarge $\rho$ as in \eqref{radius:rho:big}, then a residue term will be picked up in each of these definitions.
These residue terms (ignoring the signs) are given by
\begin{align}
\label{hk:res}h_{k,\res} &= (-\eta)^{-(k+n+\al+1)}\exp\left(\frac{a\eta^{-1}-b\eta}{2}\right),\\
\label{whhk:res}\wh h_{k,\res} &= (-\eta)^{-(k+n+1)}\exp\left(\frac{a\eta^{-1}-b\eta}{2}\right),\\
\label{gl:res}g_{l,\res} &= (-\xi)^{l+n+\al}\exp\left(\frac{b\xi-a\xi^{-1}}{2}\right).
\end{align}
We claim that the contributions from these residue terms all drop out when inserting them in the double integral \eqref{extendedkernel:uv} via \eqref{M:xi:eta}.
Note that the dependence on $\xi$ and $\eta$ (that is, on $u$ and $v$) factorizes in the second term of \eqref{M:xi:eta} if we substitute it in \eqref{extendedkernel:uv}.
Hence in the second term, we can separate the $u$ and the $v$ integrals.
First, we show that the contribution coming from the residue $h_{k,\res}$ vanishes.
Note that there are essential cancellations when we multiply \eqref{hk:res} by the $v$ (and $\eta$) dependent factors in \eqref{extendedkernel:uv}.
The resulting contribution written as an $\eta$ integral is
\begin{equation}
\int\d\eta\frac{1-s}{((1-s)\eta+s)^{\alpha+1}}\exp\left(-\frac{x(\eta-1)}{2(1-s)\eta+2s}\right)\eta^{-(k+1)}
\end{equation}
where the integration contour is the image of the unit circle under \eqref{xi:eta}, that is the circle of radius $1/(1-s)$ around $-s/(1-s)$.
Note that the integrand is analytic outside this circle (also at $\infty$), hence we can blow up the contour and conclude that the integral is $0$ by Cauchy's theorem.
The argument for $\wt h_{k,\res}$ is similar, for $g_{l,\res}$ after the change from $u$ to $\xi$ according to \eqref{xi:eta}, we can shrink the contour to $0$.

Next we change the integration variables $u,v$ in the double integral in \eqref{extendedkernel:uv} to $\xi,\eta$ as defined in \eqref{xi:eta}.
The integration contours also transform under \eqref{xi:eta},
i.e.\ the contour for $\xi$ becomes the circle of radius $1/(2-t)$ around $1-1/(2-t)$ and the contour for $\eta$ will be the circle of radius $1/(1-s)$ around $1-1/(1-s)$.
Note that for any $t\in(0,1)$, the point $-t/(1-t)$ is outside the $\xi$ contour and for any $s\in(0,1)$, the point $-s/(1-s)$ is inside the $\eta$ contour.
Since the $\xi$ integral has no singularity at $-s/(1-s)$ and the $\eta$ integral has no singularity at $-t/(1-t)$,
the contours can be deformed to $\Gamma_\xi'$ and $\Gamma_\eta'$ as shown in Figure~\ref{fig:primecontours} by Cauchy's theorem
and \eqref{extendedkernel:uv} reduces to \eqref{K_nkernelxieta:old}.

Now we are ready to derive Theorem~\ref{theorem:kernel:n} from \eqref{K_nkernelxieta:old}.
Note that the order of the $\xi$ and $\eta$ integration in the double integral \eqref{K_nkernelxieta:old} can be interchanged.
This is due to Fubini's theorem and the fact that the contours $\Gamma_\xi'$ and $\Gamma_\eta'$ are disjoint.
We deform the integration contours in formula \eqref{K_nkernelxieta:old} from $\Gamma_\xi'$ and $\Gamma_\eta'$ to the new contours $\Gamma_\xi$ and $\Gamma_\eta$ in the following steps.
First we enlarge $\Gamma_\eta'$ to $\Gamma_\eta''$ if necessary and we move the contour $\Gamma_\xi'$ to $\Gamma_\xi$ inside $\Gamma_\eta''$.
Then, for any fixed $\xi\in\Gamma_\xi$, we pick up a residue contribution when deforming $\Gamma_\eta''$ to $\Gamma_\eta$ due to the $1/(\eta-\xi)$ term in $M(\xi,\eta)$.
The residue term can be written as
\begin{equation}\label{residueterm}
\frac1{4\pi\ii}\int_{\Gamma_\xi}\d\xi
\frac{((1-t)\xi+t)^{\alpha-1}}{((1-s)\xi+s)^{\alpha+1}}\exp\left(-\frac{x(\xi-1)}{2(1-s)\xi+2s}-\frac{y(\xi^{-1}-1)}{2t\xi^{-1}+2(1-t)}\right).
\end{equation}
The integrand above is analytic away from $-s/(1-s)$ and $-t/(1-t)$, in particular inside $\Gamma_\xi$, hence it vanishes thanks to Cauchy's theorem.

\section{Asymptotic analysis for the hard edge tacnode}\label{s:analysis_tacnode}

In this section, we prove Theorem~\ref{thm:hardedgetacnode}.
The proof of the first part follows the lines of the classical method of steep descent analysis.
The leading terms in the exponent of the kernel $K_n$ can be expressed by the function
\begin{equation}\label{deff0}
f_0(z)=\ln(-z)+\frac{z-z^{-1}}2=\frac16(z+1)^3+\Or((z+1)^4).
\end{equation}
We define the following contours with counterclockwise orientation.
If $r<1$ and close enough to $1$, then let
\begin{equation}\label{defGammar_small}
\Gamma_r=\{-1+e^{\ii\pi/3}t,0\le t\le t^*\}\cup\{-1+e^{-\ii\pi/3}t,0\le t\le t^*\}\cup\{re^{\ii\theta},-\theta^*\le\theta\le\theta^*\}
\end{equation}
where $t^*=(1-\sqrt{4r^2-3})/2$ and $\theta^*=\pi-\arcsin(\sqrt3(1-\sqrt{4r^2-3})/4r)$.
If $r>1$ and close to $1$, then let
\begin{equation}\label{defGammar_big}
\Gamma_r=\{-1+e^{\ii2\pi/3}t,0\le t\le t^*\}\cup\{-1+e^{-\ii2\pi/3}t,0\le t\le t^*\}\cup\{re^{\ii\theta},-\theta^*\le\theta\le\theta^*\}
\end{equation}
where $t^*=(\sqrt{4r^2-3}-1)/2$ and $\theta^*=\pi-\arcsin(\sqrt3(\sqrt{4r^2-3}-1)/4r)$.
The next lemma is about the steep descent properties of these paths.

\begin{lemma}\label{lemma:steep}
\begin{enumerate}
\item There is an $r_1<1$ close enough to $1$ for which $\Gamma_{r_1}$ defines a steep descent contour for the function $\Re(f_0)$ with respect to the critical point at $-1$.
\item There is an $r_2>1$ close enough to $1$ for which $\Gamma_{r_2}$ defines a steep descent contour for the function $-\Re(f_0)$ with respect to the critical point at $-1$.
\end{enumerate}
\end{lemma}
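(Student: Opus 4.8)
The plan is to prove the two parts by the same scheme, treating part~1 in detail and part~2 by the symmetric variant (one could also deduce part~2 from part~1 using $z\mapsto z^{-1}$, which fixes $-1$, maps $|z|=r$ to $|z|=r^{-1}$ and satisfies $\Re f_0(z^{-1})=-\Re f_0(z)$, interchanging the descent rays $e^{\pm\ii\pi/3}$ with the ascent rays $e^{\pm\ii2\pi/3}$; since we only need existence of a suitable $r_2>1$, repeating the computation directly is equally convenient). Throughout, recall $f_0(-1)=0$, and note that $\Re f_0$ is single-valued and smooth on $\cee\setminus\{0\}$ since $\Re\ln(-z)=\ln|z|$, so the position of the branch cut of $\ln(-z)$ is irrelevant. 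The idea is to combine the explicit behaviour of $\Re f_0$ on the circular arc $|z|=r$, the cubic local behaviour at $-1$ recorded in \eqref{deff0}, and a short estimate on the two straight segments.

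\emph{Local model and circular arc.} By \eqref{deff0}, with $z+1=\epsi e^{\ii\phi}$ we have $\Re f_0(z)=\tfrac16\epsi^3\cos(3\phi)+\Or(\epsi^4)$. For part~1 the segments in \eqref{defGammar_small} leave $-1$ along $\phi=\pm\pi/3$, where $\cos(3\phi)=-1$, i.e.\ along the steepest descent rays of $\Re f_0$; hence on a fixed neighbourhood of $-1$ one has $\Re f_0<0$ along the segments, strictly decreasing as one moves away from $-1$. On the circular arc the exact identity $\Re f_0(re^{\ii\theta})=\ln r+\tfrac12(r-r^{-1})\cos\theta$ holds; for $r<1$ the factor $r-r^{-1}$ is negative, so $\Re f_0$ is strictly increasing in $\theta$ on $(0,\theta^*]$ and, by the symmetry $\Re f_0(\bar z)=\Re f_0(z)$, strictly decreasing on $[-\theta^*,0)$. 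Its minimum on the arc is at $z=r$, where $\Re f_0(r)=\ln r+\tfrac12(r-r^{-1})<0$ because $r\mapsto\ln r+\tfrac12(r-r^{-1})$ vanishes at $r=1$ with positive derivative. In particular $\Re f_0<0$ on the whole arc.

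\emph{Segments, gluing, and conclusion.} On $z(\tau)=-1+e^{\ii\pi/3}\tau$, $\tau\in[0,t^*]$, put $g(\tau)=\Re f_0(z(\tau))$; by \eqref{deff0}, $g(\tau)=-\tfrac16\tau^3+\Or(\tau^4)$, hence $g'(\tau)=-\tfrac12\tau^2\bigl(1+\Or(\tau)\bigr)<0$ for $\tau$ in a fixed punctured neighbourhood of $0$. Since $t^*=(1-\sqrt{4r^2-3})/2\to0$ as $r\to1^-$, choosing $r_1<1$ sufficiently close to $1$ forces the entire segment into that neighbourhood, whence $g$ is strictly decreasing on $(0,t^*]$, and likewise on the conjugate segment. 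By the choice of $t^*$ and $\theta^*$ the segment endpoint $z(t^*)$ lies on $|z|=r_1$ at argument $\theta^*$, so the arc and the adjacent segment join continuously, with both one-sided derivatives pointing uphill toward $-1$. Collecting the three pieces: on $\Gamma_{r_1}$ the function $\Re f_0$ attains its maximum, equal to $0$, uniquely at $z=-1$, it decreases strictly and monotonically along each of the two branches of the contour emanating from $-1$ (reaching its global minimum at $z=r_1$), and near $-1$ the decay is cubic --- exactly the steep descent property claimed. Part~2 is the same with the segments of \eqref{defGammar_big} leaving $-1$ along $\phi=\pm2\pi/3$ (so $\cos(3\phi)=+1$ and $g(\tau)=+\tfrac16\tau^3+\Or(\tau^4)$) and with $r>1$ so that $r-r^{-1}>0$; all inequalities flip and $-\Re f_0$ acquires the analogous property.

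\emph{Main obstacle.} Every step except the monotonicity of $g$ on the straight segments is an immediate explicit computation; that estimate is the only place where ``$r$ close to $1$'' is genuinely needed, and it is handled as above by exploiting $t^*\to0$, so that the cubic leading term in \eqref{deff0} controls the sign of $g'$. (Alternatively one could write $g'$ in closed form and verify its negativity over a fixed interval of $r$, but this is not needed for the existence statement.)
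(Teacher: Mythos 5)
Your proof is correct and follows essentially the same strategy as the paper: verify monotonicity of $\Re f_0$ (or $-\Re f_0$) separately on the two straight segments and on the circular arc, using the exact formula $\Re f_0(re^{\ii\theta})=\ln r+\tfrac12(r-r^{-1})\cos\theta$ for the arc. The only real difference is in the segment estimate: the paper computes $\tfrac{\d}{\d t}\Re f_0(-1+e^{\pm\ii\pi/3}t)$ in closed form and checks its sign on an explicit interval ($0\le t\le\sqrt3-1$, resp.\ $0\le t\le\sqrt3+1$), which in fact establishes the steep descent property for a whole range of $r$, whereas you use the cubic Taylor expansion from \eqref{deff0} and the fact that $t^*\to0$ as $r\to1$, which is sufficient for the ``$r$ close enough to $1$'' existence statement but less explicit. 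Your auxiliary remarks (the gluing at the segment endpoints, $\Re f_0<0$ on the whole contour away from $-1$, and the $z\mapsto z^{-1}$ symmetry linking the two parts) are sound but are not needed for the lemma as stated.
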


\begin{proof}
To show the steep descent property, we consider the following derivatives.
In the neighborhood of $-1$, we have
\[\frac{\d}{\d t}\Re f_0\left(-1+e^{\pm\ii\pi/3}t\right)=\frac{-t^2(2-2t-t^2)}{(2-2t-t^2)^2+3(2t-t^2)^2}\le0\]
if $0\le t\le\sqrt3-1$.
Furthermore,
\[-\frac{\d}{\d t}\Re f_0\left(-1+e^{\pm2\ii\pi/3}t\right)=\frac{-t^2(2+2t-t^2)}{(2+2t-t^2)^2+3(2t+t^2)^2}\le0\]
if $0\le t\le\sqrt3+1$.

On the circular part of the contour,
\begin{equation}\label{circularder}
\frac{\d}{\d\theta}\Re f_0\left(re^{\ii\theta}\right)=\frac{r^{-1}-r}2\sin\theta.
\end{equation}
If $r<1$, \eqref{circularder} is negative for $\theta\in[-\pi,0]$ and positive for $\theta\in[0,\pi]$,
whereas for $r>1$, the sign of \eqref{circularder} is opposite.
This proves the required property of the paths.
\end{proof}

\begin{proof}[Proof of Theorem~\ref{thm:hardedgetacnode}]
We use the method of steep descent analysis.
If we substitute \eqref{defa}--\eqref{defb} and the rescaled space and time variables \eqref{timespacescaling} into the double integral in \eqref{K_nkernelxieta},
one sees that the leading term in the exponent is $2N(f_0(q^{-1}\eta)-f_0(q^{-1}\xi))$.
We also substitute \eqref{M:xi:eta} for $M(\xi,\eta)$.
Next we specify the initial contours $\Gamma_\eta$ and $\Gamma_\xi$ to be the steep descent contours $\Gamma_{\eta,\text{steep}}$ and $\Gamma_{\xi,\text{steep}}$.
$\Gamma_{\xi,\text{steep}}$ coincides with $q\Gamma_{r_2}$ except for an $N^{-1/3}$ neighbourhood of $-q$ and there it crosses the real axis to the right of $-s/(1-s)$ and $-t/(1-t)$.
$\Gamma_{\eta,\text{steep}}$ consists of two parts: the first one coincides with $q\Gamma_{r_1}$ except for an $N^{-1/3}$ neighbourhood of $-q$ and it remains inside $\Gamma_{\xi,\text{steep}}$.
The other part is a small (order $N^{-1/3}$) circle around the singularity at $\eta=-s/(1-s)$ which is also in the $N^{-1/3}$ neighbourhood of $-q$ due to the scaling \eqref{timespacescaling}.
See Figure~\ref{fig:xietacontours}.

\begin{figure}
\begin{center}
\def\svgwidth{200pt}
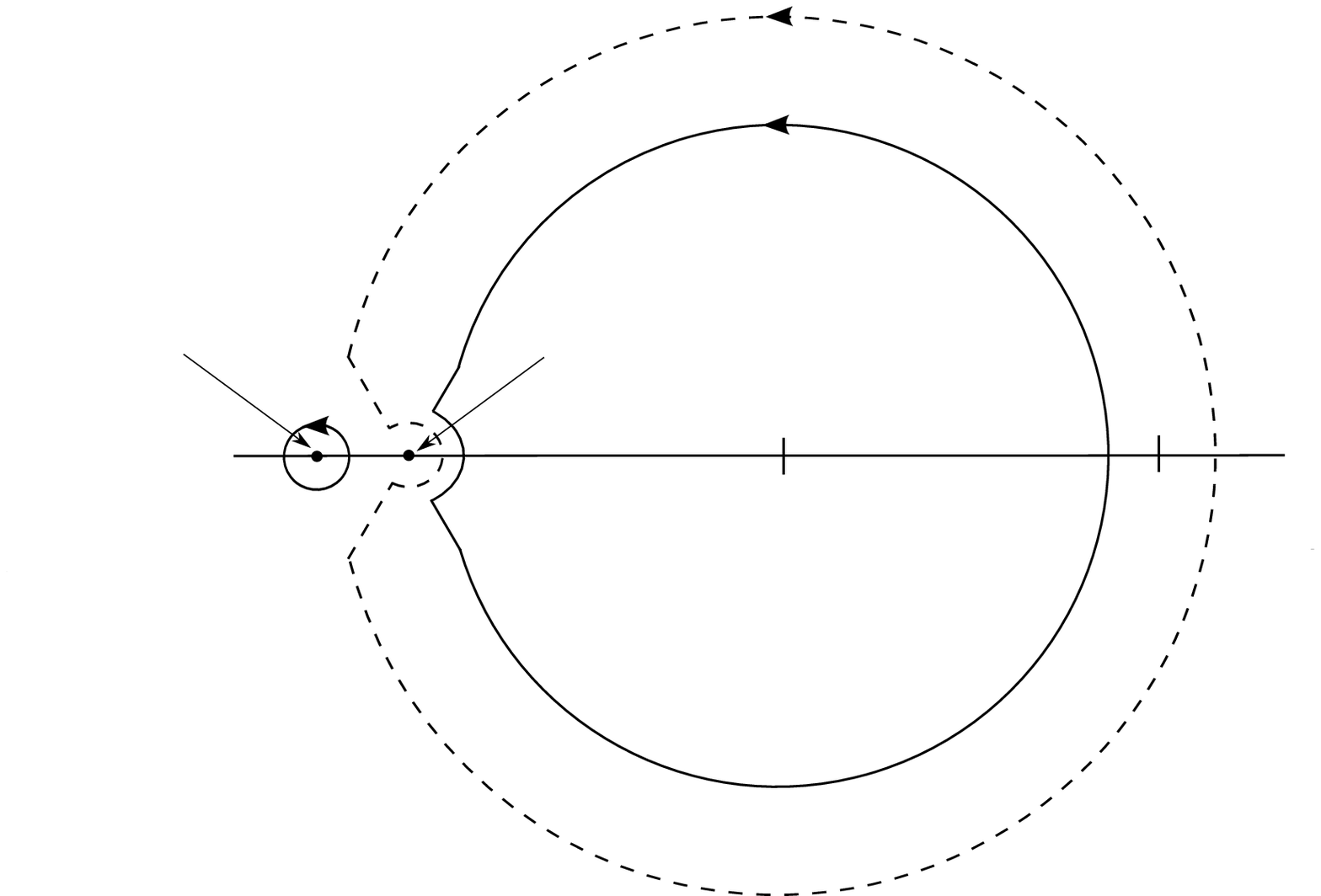
\end{center}
\caption{The contours $\Gamma_{\eta,\text{steep}}$ and $\Gamma_{\xi,\text{steep}}$ provide a possible steep descent choice of the integration contours $\Gamma_\eta$ and $\Gamma_\xi$.\label{fig:xietacontours}}
\end{figure}

Now we are ready for the steep descent analysis.
Consider the kernel \eqref{K_nkernelxieta} with the integration contours replaced by $\Gamma_{\eta,\text{steep}}$ and $\Gamma_{\xi,\text{steep}}$.
Let us choose a small but fixed $\delta>0$ and denote $\Gamma_\eta^\delta=\Gamma_{\eta,\text{steep}}\cap\{\eta:|\eta+q|\le\delta\}$ and
$\Gamma_\xi^\delta=\Gamma_{\xi,\text{steep}}\cap\{\xi:|\xi+q|\le\delta\}$.
For $\delta$ small enough, these contours do not contain the circular part of the original steep descent contour coming from the last part in \eqref{defGammar_small} and \eqref{defGammar_big},
but they completely contain the order $N^{-1/3}$ modifications if $N$ is large enough.
By neglecting the integral over $\Gamma_{\eta,\text{steep}}\setminus\Gamma_\eta^\delta$ and $\Gamma_{\xi,\text{steep}}\setminus\Gamma_\xi^\delta$, we make an error of order $\Or(\exp(-c\delta^3N))$.

In the $\delta$ neighborhood of $-q$, we can replace $f_0$ by its Taylor series.
Now we can do the change of variables given by
\begin{equation}
\eta=q\left(-1+\vv N^{-1/3}\right),\qquad\xi=q\left(-1+\uu N^{-1/3}\right).\label{xietascaling}
\end{equation}
The reader should not be confused by the fact that $\eta$ and $\xi$ were initially defined by $u$ and $v$ in \eqref{xi:eta}
where $u$ and $v$ played different role than in the rescaling \eqref{xietascaling}.
For the factor giving the main exponential term in \eqref{K_nkernelxieta}, we have
\begin{equation}\begin{aligned}
&(\eta\xi^{-1})^{2N}\exp\left(\frac{\eta-\xi}2 b+\frac{\xi^{-1}-\eta^{-1}}2 a\right)\\
&\qquad=(\eta\xi^{-1})^{\sigma N^{1/3}}\exp\left(2N\left(1-\frac\sigma{2N^{2/3}}\right)(f_0(q^{-1}\eta)-f_0(q^{-1}\xi))\right)\\
&\qquad=\exp\left(\frac{\vv^3}3-\frac{\uu^3}3-\sigma(\vv-\uu)+\Or(N^{-1/3})\right)
\end{aligned}\end{equation}
using also \eqref{timespacescaling}.
The other factors after substituting the scaling on the left-hand side of \eqref{kernelconv} become
\begin{multline}
\exp\left(-\frac{2rx(\eta-1)}{2(1-(\frac q{1+q}+rs))\eta+2(\frac q{1+q}+rs)}+\frac{2ry(\xi-1)}{2(1-(\frac q{1+q}+rt))\xi+2(\frac q{1+q}+rt)}\right)\\
=\exp\left(\frac x{\vv+s}-\frac y{\uu+t}\right)(1+\Or(N^{-1/3}))
\end{multline}
and
\begin{equation}
\frac{((1-(\frac q{1+q}+rt))\xi+(\frac q{1+q}+rt))^{\alpha-1}}{((1-(\frac q{1+q}+rs))\eta+(\frac q{1+q}+rs))^{\alpha+1}}
=\frac{(N^{-1/3}q(1+q)^{-1}(\uu+t))^{\alpha-1}}{(N^{-1/3}q(1+q)^{-1}(\vv+s))^{\alpha+1}}(1+\Or(N^{-1/3})).
\end{equation}
Finally, the change of variables \eqref{xietascaling} gives a factor $q^2N^{-2/3}$ and $(\eta/\xi)^{\alpha}$ goes to $1$.
Together with Proposition~\ref{prop:detdetlimit}, this shows that the double integral in \eqref{K_nkernelxieta} multiplied by the prefactor $2r$ is equal to
\begin{equation}
\frac1{(2\pi\ii)^2}\int_{N^{1/3}(q^{-1}\Gamma_\eta^\delta+1)}\d\vv\int_{N^{1/3}(q^{-1}\Gamma_\xi^\delta+1)}\d\uu
\frac{\exp\left(\frac{\vv^3}3+\frac x{\vv+s}\right)}{\exp\left(\frac{\uu^3}3+\frac y{\uu+t}\right)} \frac{(\uu+t)^{\alpha-1}}{(\vv+s)^{\alpha+1}}\Mm(\uu,\vv)
\end{equation}
up to an error of order $\Or(N^{-1/3})$ in the exponent coming from the Taylor approximation and the additive $\Or(\exp(-c\delta^3N))$ error.
By the identity $|e^x-1|\le|x|e^{|x|}$, the error in the exponent can be removed by an additive $\Or(N^{-1/3})$ error.
Finally, we extend the integral over $\vv$ to infinity in the $e^{\pm\ii\pi/3}$ directions and the integral over $\uu$ to infinity in the $e^{\pm\ii2\pi/3}$ directions
on the cost of an $\Or(\exp(-c\delta^3N))$ error again.
This proves the convergence towards the double integral on the right-hand side of \eqref{deflimkernel}.
The term $-p_{\frac{t-s}2}(x,y)\id_{t>s}$ appears by simple substitution.

If $x$ and $y$ are confined in a compact subset of $\R$, then all the above error estimates can be given uniformly in $x$ and $y$ yielding the stated uniform convergence.
This completes the proof of the first part of the theorem.
The second part follows directly from the first one since the uniform convergence of the kernel on bounded intervals
implies the convergence of Fredholm determinants on compact sets by the dominated convergence theorem.
\end{proof}

\begin{proposition}\label{prop:detdetlimit}
Under the scaling \eqref{xietascaling}, we have
\begin{equation}\label{rank:alphaplusone:det:tris}
\lim_{N\to\infty} N^{-1/3} q M(\xi,\eta)=e^{\sigma(\vv-\uu)}\Mm(\uu,\vv)
\end{equation}
as $N\to\infty$, where $M(\xi,\eta)$ is given by \eqref{M:xi:eta} and $\Mm(\uu,\vv)$ is defined in \eqref{defM}.
Furthermore,
\begin{equation}\label{Mbound}
|N^{-1/3}q M(\xi,\eta)|\le Ce^{c(|\vv|+|\uu|)}
\end{equation}
for some $c,C>0$.
\end{proposition}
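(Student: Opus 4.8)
The plan is to substitute the tacnode scaling \eqref{defa}--\eqref{defb} and \eqref{xietascaling} into every ingredient of $M(\xi,\eta)$ in \eqref{M:xi:eta} --- the scalar $1/(\eta-\xi)$, the block matrix \eqref{block:matrix:def} with entries \eqref{A:def}--\eqref{D:def}, and the vectors $\mathbf g,\boldsymbol\xi,\mathbf h,\mathbf{\what h}$ of \eqref{hk:def}--\eqref{xibold:def} --- and to show that, after an $N$-dependent renormalisation of rows and columns, each converges to its counterpart in $\Mm(\uu,\vv)$ of \eqref{defM}. Since $a/b=q^2$ and $\sqrt{ab}=2N-\si N^{1/3}$ \emph{exactly} by \eqref{defa}--\eqref{defb} and \eqref{defN}, the substitution $w=q\check w$ turns the weight in \eqref{A:def}--\eqref{gk:def} into $\exp\!\bigl(\tfrac{\sqrt{ab}}{2}(\check w-\check w^{-1})\bigr)$, and the integral representation \eqref{intrepr:unitcircle:1} together with \eqref{BesselI:def} identifies every entry of the finite blocks $C,D$ with one Bessel function $J_m(\sqrt{ab})$, and every entry of the semi-infinite blocks $A,B$ --- after resumming $1/(w-z)=w^{-1}\sum_{m\ge0}(z/w)^m$, which is the Hankel-product structure behind \eqref{A:def} --- with a series $\sum_{m\ge0}$ of products of two such Bessel functions; the vectors $\mathbf h,\mathbf g$ are the same Bessel functions carried by the simple-pole weights $1/(w+\eta)$, $1/(z+\xi)$, and $\boldsymbol\xi$ is elementary. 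As $\al$ and $n=2N$ are integers, all Bessel orders are integers near $n=2N$ while the argument is $\sqrt{ab}=2N-\si N^{1/3}$, i.e.\ exactly the Airy transition region with edge parameter $\si$; equivalently, the phase $f_0$ of \eqref{deff0} has a double critical point at $-1$ and \eqref{xietascaling} is its Airy blow-up $w=q(1+WN^{-1/3})$.

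Next I would invoke the appendix (Section~\ref{s:Bessel_limit}) for the convergence of $J_m(\sqrt{ab})$, and of its finite differences in the order $m$, to $\Ai$ and its derivatives, together with the accompanying $N$-uniform exponential tail bounds. Identifying the semi-infinite index by $k=xN^{1/3}$ (so that $w^{-k}\sim q^{-k}e^{-xW}$ in the blow-up variable), the series $\sum_{m\ge0}$ becomes, with spacing $N^{-1/3}$, a Riemann sum converging to $\int_\si^\infty\Ai(x+\lambda)\Ai(y+\lambda)\,\d\lambda=\Kaa_{\Ai,\si}(x,y)$, which with the $\delta_{k-l}$ term of \eqref{A:def} reproduces $\Aa_\si=\id-\Kaa_{\Ai,\si}$ of \eqref{defAa}; the mixed blocks give $\Bb_\si,\Cc_\si$ and the finite block gives $\Dd_\si$, provided one first conjugates the $\cee^\al$-factor by a fixed lower-triangular (Pascal-type) matrix and rescales its $k$-th row and column by $N^{k/3}$ --- precisely the operation turning differences of Bessel orders near $2N$ into the successive derivatives $\Ai^{(k)}$, $\Ai^{(l)}$, $\Ai^{(k+l)}$ appearing in \eqref{defAa}--\eqref{defDd}, and simultaneously turning $\boldsymbol\xi$ into $-\UU e^{\si\uu}$ with $\UU$ as in \eqref{defXi}. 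The simple poles localise, $1/(w+\eta)\to1/(W+\vv)$ and $1/(z+\xi)\to1/(Z+\uu)$, so $\mathbf h\to\vbf[h(\cdot;\vv)]$ and $\mathbf g\to(h(y;-\uu))_{y\in\R_+}$ up to scalar factors given by \eqref{h:repr:0}; the residual $\si$-dependence from the $\si N^{1/3}$ detuning and the exponential weights produces exactly the scalars $e^{\si\vv}$ (from $\mathbf h$) and $e^{-\si\uu}$ (from $\mathbf g$), whose product is the overall $e^{\si(\vv-\uu)}$ on the right of \eqref{rank:alphaplusone:det:tris}, while the powers of $q$ and $N^{1/3}$ tracked through the renormalisation leave the prefactor $N^{-1/3}q$ and make the first term obey $N^{-1/3}q/(\eta-\xi)=1/(\vv-\uu)$, matching $e^{\si(\vv-\uu)}\cdot e^{\si(\uu-\vv)}/(\vv-\uu)$ in $e^{\si(\vv-\uu)}\Mm$.

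The entrywise limits then have to be upgraded to convergence of the bilinear form $\begin{pmatrix}\mathbf g&\boldsymbol\xi\end{pmatrix}\begin{pmatrix}A&C\\B&D\end{pmatrix}^{-1}\begin{pmatrix}\mathbf h\\\mathbf{\what h}\end{pmatrix}$. For this I would use the Schur-complement decomposition \eqref{schur:inversion}: for $\si$ large the limiting block matrix is invertible because $\Aa_\si=\id-\Kaa_{\Ai,\si}$ is (a trace-class perturbation of $\id$ with nonvanishing Fredholm determinant) and its Schur complement $\Ss_\si$ is invertible by Proposition~\ref{prop:limSchur}. The appendix tail bounds let one show that the $\id$-minus-trace-class part of the renormalised $A$-block converges to $\id-\Kaa_{\Ai,\si}$ in trace norm, with the Riemann-sum errors uniform, so that the finite-$N$ block matrices are invertible for $N$ large, their inverses converge strongly enough to be paired against the $L^2(\R_+)\oplus\cee^\al$ vectors, and those vectors converge in norm by the same bounds; assembling these pieces with $N^{-1/3}q/(\eta-\xi)\to1/(\vv-\uu)$ and the $e^{\si(\vv-\uu)}$ bookkeeping gives \eqref{rank:alphaplusone:det:tris}. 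The estimate \eqref{Mbound} follows by propagating $N$-uniform bounds through the same steps: $1/|\vv-\uu|$ is bounded on the disjoint contours $\Gamma_\uu,\Gamma_\vv$, the inverse block matrices have uniformly bounded norm, and $\|\mathbf h\|,\|\mathbf g\|,\|\boldsymbol\xi\|$ are at most $Ce^{c|\vv|},Ce^{c|\uu|}$ from the integral representations and the super-exponential decay of $\Ai$.

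The main obstacle will be this last upgrade: one must compare an \emph{infinite} matrix whose discrete index is being rescaled into a continuum variable against an \emph{integral} operator on $L^2(\R_+)$, uniformly over the non-compact parameters $\uu\in\Gamma_\uu$ and $\vv\in\Gamma_\vv$, and control the operator \emph{inverse} in the limit. Pushing the discrete-to-continuum (Riemann-sum) approximation to converge in a topology strong enough for this --- in practice, trace-norm convergence of the $\id-\Kaa_{\Ai,\si}$ part glued to finite-dimensional convergence of the $\cee^\al$-block, all with $N$-uniform tail decay --- is exactly what the Bessel estimates of Section~\ref{s:Bessel_limit} are built to provide, and is the technical heart of the argument.
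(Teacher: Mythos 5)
Your proposal follows essentially the same path as the paper's proof: reduce to $q=1$ by a change of variables in the defining integrals (the paper packages this as a conjugation by diagonal matrices $U_1,U_2,V_1,V_2$, but it is the same substitution $w\to qw$, $z\to qz$ you describe), conjugate the $\cee^\al$ factor by Pascal-type matrices $K,L$ so that finite differences of Bessel orders become Airy derivatives, rewrite all blocks and vectors via \eqref{intrepr:unitcircle:1} as Bessel functions $J_m(\sqrt{ab})$ and Riemann sums over their products, invoke the Bessel-to-Airy limits and super-exponential tail bounds of Section~\ref{s:Bessel_limit} (Propositions~\ref{prop:Jconv} and \ref{prop:Jbound}, packaged in the paper as Propositions~\ref{prop:functionlimits}--\ref{prop:functopbounds}), and pass to the limit of the inverse via the Schur-complement decomposition \eqref{schur:inversion} together with the invertibility of $\id-\Kaa_{\Ai,\si}$ and of $\Ss_\si$ from Proposition~\ref{prop:limSchur}. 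Your remarks on upgrading entrywise limits to convergence of the bilinear form via trace-norm control of the $A$-block are a more explicit rendering of what the paper compresses into ``dominated convergence theorem,'' and the bound \eqref{Mbound} is obtained, as you say, by propagating the same $N$-uniform exponential estimates.
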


\begin{proof}
First we will show that $qM(\xi,\eta)$ is independent of the
parameter $q$. To see this, define the diagonal matrices
\begin{align}
(U_1)_{k,k}&=q^{2N+k+\alpha+1}, & k&=0,1,2,\dots,\\
(U_2)_{k,k}&=q^{2N+k+1}, & k&=0,1,\dots,\alpha-1,\\
(V_1)_{l,l}&=q^{-(2N+l+\alpha+1)}, & l&=0,1,2,\dots,\\
(V_2)_{l,l}&=q^{-(l+1)}, & l&=0,1,\dots,\alpha-1.
\end{align}
In equation \eqref{M:xi:eta}, the matrix product on the right-hand side can be conjugated by the invertible matrices $U_1,U_2,V_1,V_2$.
This yields
\begin{equation}\label{rank:alphaplusone:det:further}
q M(\xi,\eta)=\frac q{\eta-\xi}-\begin{pmatrix} q{\bf g}V_1 & q\boldsymbol{\xi}V_2 \end{pmatrix}
\begin{pmatrix} U_1AV_1 & U_1CV_2 \\ U_2BV_1 & U_2DV_2\end{pmatrix}^{-1}\begin{pmatrix} U_1{\bf h} \\ U_2{\bf\what h} \end{pmatrix}.
\end{equation}
Let us show that the right-hand side of \eqref{rank:alphaplusone:det:further} is independent of $q$.
The idea is to perform a change of variables $w\to qw$ and $z\to qz$ in the defining integrals of $h_k$, $\wh h_k$, $g_l$,
$A_{k,l}$, $B_{k,l}$, $C_{k,l}$ and $D_{k,l}$, see \eqref{hk:def}--\eqref{gk:def} and \eqref{A:def}--\eqref{D:def}.
It is not hard to see that after this change of variables, $(U_1\mathbf{h})_k$ becomes $h_k$ with $\eta$ replaced by $\eta/q$
and with $a$ and $b$ defined for $q=1$, i.e.\ $a=b=2N-\sigma N^{1/3}$ in \eqref{hk:def}.
$(U_2\mathbf{\wh h})_k$ will be $\wh {h}_k$ with $\eta$ replaced $\eta/q$ and with $a$ and $b$ defined for $q=1$ in \eqref{hktil:def}.
Similarly, $(q{\bf g}V_1)_l$ is the same as $g_l$ with $\xi$ replaced by $\xi/q$ and with $a$ and $b$ defined for $q=1$ in \eqref{gk:def}
and $(q\boldsymbol{\xi}V_2)_l$ is equal to $(\boldsymbol{\xi})_l$ with $\xi$ replaced by $\xi/q$ in \eqref{xibold:def}.
By the same change of variables, one sees that the block matrix on the right-hand side of \eqref{rank:alphaplusone:det:further} is equal to
\[\begin{pmatrix} A & C \\ B & D \end{pmatrix}\]
with $a$ and $b$ defined for $q=1$ in \eqref{A:def}--\eqref{D:def}.
Note that by \eqref{xietascaling}, replacing $\eta$ and $\xi$ by $\eta/q$ and $\xi/q$ means the same scaling as taking $\eta$ and $\xi$ for $q=1$.

Hence we have shown that $qM(\xi,\eta)$ is independent of $q$.
Therefore we can and do assume that $q=1$ in what follows.
Denote by $K$ (resp.\ $L$) the lower triangular (resp.\ upper triangular) Pascal triangle type matrix
\begin{equation}
K = \begin{pmatrix}(-1)^{k-l}N^{k/3}\binom{k}{l}\end{pmatrix}_{k,l=0}^{\al-1},\qquad L = \begin{pmatrix}(-1)^{k}N^{l/3}\binom{l}{k}\end{pmatrix}_{k,l=0}^{\al-1}.
\end{equation}
In equation \eqref{M:xi:eta}, the matrix product on the right-hand side can be further conjugated by the invertible matrices $K,L$.
This yields (recall $q=1$)
\begin{equation}\label{rank:alphaplusone:det:KL}
M(\xi,\eta)=\frac 1{\eta-\xi}-\begin{pmatrix} {\bf g} & \boldsymbol{\xi}L \end{pmatrix}
\begin{pmatrix} A & CL \\ KB & KDL\end{pmatrix}^{-1}\begin{pmatrix} \bf h \\ K\bf{\what h} \end{pmatrix}.
\end{equation}
Now \eqref{rank:alphaplusone:det:tris} follows from Propositions~\ref{prop:functionlimits}, \ref{prop:operatorlimits}, \ref{prop:functopbounds} below and the dominated convergence theorem.
Note that the right-hand side of \eqref{rank:alphaplusone:det:KL} can be rewritten by using the block matrix inversion formula \eqref{schur:inversion}.
The matrix inverse in the limit is well-defined since the operator $\id-\Kaa_{\Ai,\sigma}$ is invertible for any $\sigma\in\R$,
and for $\si$ large enough the limiting Schur complement $\Ss_\sigma$ is also invertible by Proposition~\ref{prop:limSchur}.

Finally, the second assertion \eqref{Mbound} follows from the estimates given in Proposition~\ref{prop:functopbounds}.
\end{proof}

\begin{proposition}\label{prop:functionlimits}
Assume that $q=1$. Let $k,l\in\{0,1,\dots,\alpha-1\}$ be fixed and choose $x,y\ge0$.
We have the following limits for the functions in \eqref{rank:alphaplusone:det:KL}:
\begin{align}
h_{N^{1/3}x}&\to e^{\sigma\vv} h(x;\vv),\label{hconvergence}\\
(K\mathbf{\wh h})_k&\to e^{\sigma\vv} \frac{\partial^k}{\partial x^k}h(x;\vv)|_{x=0},\label{whhconvergence}\\
g_{N^{1/3}y}&\to -e^{-\sigma\uu} h(y;-\uu),\label{gconvergence}\\
(\boldsymbol\xi L)_l&\to \uu^l\label{xiconvergence}
\end{align}
as $N\to\infty$ pointwise.
\end{proposition}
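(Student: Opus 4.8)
The plan is to establish each of the four limits by examining the defining contour integrals and performing a steep descent / Laplace-type analysis around the point $w=-1$ (resp.\ $z=-1$), matched with the rescaling $\eta=-1+\vv N^{-1/3}$, $\xi=-1+\uu N^{-1/3}$ coming from~\eqref{xietascaling} (recall $q=1$) and the identifications $a=b=2N-\sigma N^{1/3}$. The key observation is that in each integrand the dominant exponential factor is $\exp\left(\frac{bw-aw^{-1}}{2}\right)$ together with the power $w^{-(k+n+\al+1)}$ (or $z^{l+n+\al}$), and after writing $n=2N$ these combine into $\exp\left(N\left(\ln(1/w)+\frac{w-w^{-1}}{2}\right)\right)$ times lower-order corrections. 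Using the expansion $f_0(w)=\ln(-w)+\frac{w-w^{-1}}{2}=\frac16(w+1)^3+\Or((w+1)^4)$ from~\eqref{deff0}, the substitution $w=-1+\tau N^{-1/3}$ turns the bulk of the integral into an Airy-type integral $\frac{1}{2\pi\ii}\int e^{\tau^3/3-\dots}\,\d\tau$.

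First I would treat~\eqref{hconvergence}. Substituting $k\mapsto N^{1/3}x$ and $w=-1+\tau N^{-1/3}$ in~\eqref{hk:def}, the factor $w^{-(k+n+\al+1)}=w^{-N^{1/3}x-2N-\al-1}$ contributes $e^{x\tau}$ from the $N^{1/3}x$ part (after taking the logarithm and expanding $\ln(-w)=-\tau N^{-1/3}+\Or(N^{-2/3})$), while the $2N$ part combines with the $a,b$-exponentials exactly as in the kernel analysis to give $e^{\tau^3/3}$; the $-\sigma N^{1/3}$ correction in $a=b$ produces the prefactor $e^{\sigma\vv}$ after matching with $\eta=-1+\vv N^{-1/3}$ in the denominator $1/(w+\eta)$. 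The denominator $w+\eta=(\tau-\vv)N^{-1/3}$ scales out the $N^{-1/3}$ from $\d w$, and the $\al$-dependent powers tend to $1$. One recognizes the resulting integral $\frac{1}{2\pi\ii}\int e^{\tau^3/3+x\tau}\frac{\d\tau}{\tau-\vv}$ as $\int_\sigma^\infty \Ai(x+\lambda)e^{-\vv\lambda}\,\d\lambda=h(x;\vv)$ after using the standard Airy integral representation and the identity $\frac1{\tau-\vv}=\int_0^\infty e^{-(\tau-\vv)\lambda}\,\d\lambda$ valid when the contour lies to the left of $\vv$; the lower limit $\sigma$ appears from the $e^{\sigma\vv}$ bookkeeping. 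The limit~\eqref{whhconvergence} follows the same computation, except the exponent of $w$ is shifted from $-(k+n+\al+1)$ to $-(k+n+1)$, which is precisely the shift that, under $w=-1+\tau N^{-1/3}$, produces the extra factor $\tau^k$, i.e.\ differentiation $\partial^k/\partial x^k$ at $x=0$; the Pascal-matrix conjugation by $K$ in~\eqref{rank:alphaplusone:det:KL} is exactly what renormalizes the $(-1)^{k-l}N^{k/3}\binom kl$ bookkeeping so that one ends with $\frac{\partial^k}{\partial x^k}h(x;\vv)|_{x=0}$ cleanly. The limit~\eqref{gconvergence} is the mirror-image computation: in~\eqref{gk:def} one substitutes $l\mapsto N^{1/3}y$, $z=-1+\tau N^{-1/3}$, and now the exponent $z^{l+n+\al}$ has the \emph{opposite} sign of $\tau$ in the linear term and the exponential is $\exp\left(\frac{az^{-1}-bz}2\right)=\exp(-Nf_0(z)+\dots)$, so the contour must be steep descent for $-\Re(f_0)$ (the $r_2$-type contour of Lemma~\ref{lemma:steep}); the overall sign flip of the cubic gives $e^{-\sigma\uu}$ and, after reflecting $\tau\mapsto-\tau$, the Airy representation yields exactly $-h(y;-\uu)$. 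Finally~\eqref{xiconvergence} is elementary: $(\boldsymbol\xi)_l=(-\xi)^l=(1-\uu N^{-1/3})^l$ and the $(k,l)$-entry of $L$ is $(-1)^kN^{l/3}\binom lk$, so $(\boldsymbol\xi L)_l=\sum_k (-\xi)^k(-1)^k N^{l/3}\binom lk \cdot(\text{normalization})$; expanding $(-\xi)^k=(1-\uu N^{-1/3})^k$ and keeping the leading term shows $(\boldsymbol\xi L)_l\to \uu^l$ directly by the binomial theorem, with no analysis needed.

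The main obstacle I expect is \emph{justifying the interchange of limit and integral}: one must show that the contributions from the portion of the contour away from the $\Or(N^{-1/3})$-neighbourhood of $-1$ are exponentially negligible, uniformly, and that on the rescaled contour the integrand is dominated by an integrable function independent of $N$ so that dominated convergence applies. This is where the steep descent properties from Lemma~\ref{lemma:steep} are essential: on the circular part of $\Gamma_r$ one has $\Re f_0 \le -c$ for some $c>0$ (resp.\ $-\Re f_0\le -c$), giving the $\Or(e^{-cN})$ tail bound, while on the near-$(-1)$ arcs the local quadratic/cubic decay $\frac{\d}{\d t}\Re f_0\le 0$ provides the Gaussian-type domination needed after rescaling. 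A secondary technical point is the careful bookkeeping of the three competing powers of $N^{1/3}$ in the exponent ($N$ from $2N$, $N^{1/3}x$ from the index, $N^{1/3}\sigma$ from the $a,b$-shift) and verifying that the $\al$-dependent algebraic prefactors $((1-t)\xi+t)^{\al-1}$-type terms — here the pure powers $w^{\al}$, $z^{\al}$ — genuinely tend to $1$ rather than contributing; this is routine but must be done. The pointwise-in-$x,y$ nature of the claimed convergence means we need the estimates only for fixed $x,y\ge0$, which simplifies matters, though one should note the bounds will later be upgraded to the $x,y$-uniform (and $\uu,\vv$-exponentially-controlled) form needed for Proposition~\ref{prop:functopbounds} and the dominated-convergence step in the proof of Proposition~\ref{prop:detdetlimit}.
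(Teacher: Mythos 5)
Your proposal takes a genuinely different route from the paper's proof. The paper does not do a direct steepest-descent on the contour integrals defining $h_k$, $\wh h_k$, $g_l$; instead it expands $\frac{1}{w+\eta}=\sum_{j\ge 0}\frac{(-\eta)^j}{w^{j+1}}$ (and similarly for $g_l$) so that each term becomes a Bessel function $J_{2N+k+j+\alpha+1}(2N-\sigma N^{1/3})$ via the representation~\eqref{defJ}, and then invokes the Bessel-to-Airy limit and tail bound (Propositions~\ref{prop:Jconv} and~\ref{prop:Jbound} in the appendix), finishing by dominated convergence over the integral in $\lambda=N^{-1/3}j$. The advantage is that the delicate local analysis near the saddle is done \emph{once} for $J^{(k)}(s,t)$ and then reused uniformly across all four limits and across all the entries of the operators in Proposition~\ref{prop:operatorlimits}.

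Your direct approach could also be made to work, but as written it contains a sign error that propagates through the argument: the critical point of the $w$-integral defining $h_k$ is at $w=+1$, not $w=-1$. The dominant exponent is $2N\bigl[-\ln w+\tfrac12(w-w^{-1})\bigr]$, whose derivative $-\tfrac{1}{w}+\tfrac12(1+w^{-2})$ vanishes at $w=1$; this function equals $-f_0(-w)$, so its cubic Taylor expansion is in $(w-1)$, not $(w+1)$. It is $\eta$ (a parameter, not the integration variable) that is scaled about $-1$; the relevant pole is then at $w=-\eta\approx 1-\vv N^{-1/3}$, which correctly sits near the true saddle $w=+1$. Writing $w=1+\tau N^{-1/3}$ gives $w+\eta=(\tau+\vv)N^{-1/3}$ (not $(\tau-\vv)N^{-1/3}$), $w^{-N^{1/3}x}$ contributes $e^{-x\tau}$ (not $e^{+x\tau}$), and the shift in $a=b$ contributes $e^{-\sigma\tau}$. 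The resulting limit
\[\frac{1}{2\pi\ii}\int \frac{e^{\tau^3/3-x\tau-\sigma\tau}}{\tau+\vv}\,\d\tau\]
does equal $e^{\sigma\vv}h(x;\vv)$, using $\Ai(z)=\frac{1}{2\pi\ii}\int e^{\tau^3/3-z\tau}\,\d\tau$ and $\frac{e^{-\sigma(\tau+\vv)}}{\tau+\vv}=\int_\sigma^\infty e^{-\lambda(\tau+\vv)}\,\d\lambda$, but the integral you display, $\frac{1}{2\pi\ii}\int\frac{e^{\tau^3/3+x\tau}}{\tau-\vv}\,\d\tau$, is not $e^{\sigma\vv}h(x;\vv)$: after your change of variables the cubic comes out with the wrong sign. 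The same misplacement affects $g_l$, whose exponent $2N\ln z+N(z^{-1}-z)$ also has its saddle at $z=+1$ (it is $2Nf_0(-z)$, not $-2Nf_0(z)$). Your treatment of $(\boldsymbol\xi L)_l$ is correct (and in fact exact for $q=1$), and your remarks about needing the steep-descent bounds of Lemma~\ref{lemma:steep} and dominated convergence are the right concerns; but the local analysis must be redone at the correct saddle $w=+1$ before the argument is sound.
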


\begin{proposition}\label{prop:operatorlimits}
Assume that $q=1$. Fix the integers $k,l\in\{0,1,\dots,\alpha-1\}$ and let $x,y\ge0$.
Then the operators in \eqref{rank:alphaplusone:det:KL} converge as follows:
\begin{align}
N^{1/3}A_{N^{1/3}x,N^{1/3}y}&\to \Aa_\sigma(x,y),\label{Alimit}\\
N^{1/3}(KB)_{k,N^{1/3}y}&\to \Bb_\sigma(k,y),\label{Blimit}\\
N^{1/3}(CL)_{N^{1/3}x,l}&\to \Cc_\sigma(x,l),\label{Climit}\\
N^{1/3}(KDL)_{k,l}&\to \Dd_\sigma(k,l)\label{Dlimit}
\end{align}
as $N\to\infty$ pointwise.
\end{proposition}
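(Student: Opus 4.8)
The plan is a steep descent analysis of the circle integrals \eqref{A:def}--\eqref{D:def} defining the four blocks. Under the standing hypothesis $q=1$ we have $n=2N$ and $a=b=2N-\sigma N^{1/3}$; a scaled index $N^{1/3}x$ is read as $\lfloor N^{1/3}x\rfloor$, the ambiguity producing a factor $w^{\Or(1)}\to1$ after rescaling. For the single $w$-integrals in $C$, $D$, combining $w^{-n}$ with $\exp(\tfrac b2(w-w^{-1}))$ gives the large factor $\exp\!\big(2N\,(-\ln w+\tfrac12(w-w^{-1}))-\tfrac\sigma2N^{1/3}(w-w^{-1})\big)$, whose exponent has a double critical point at $w=1$ since $\tfrac{\d}{\d w}\big(-2N\ln w+N(w-w^{-1})\big)=N(w-1)^2w^{-2}$; up to the reflection $w\mapsto-w$ built into the B\"ottcher--Widom normalisation \eqref{BO:w}, the leading part is of the form $\exp(\pm 2Nf_0(\cdot))$ with $f_0$ as in \eqref{deff0} and its critical point at $-1$, and the $z$-integrals in $A$, $B$ are of the same type. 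Thus Lemma~\ref{lemma:steep} supplies steep descent contours: I would deform $S_\rho$ to the reflection of $\Gamma_{r_2}$ and $S_{\rho^{-1}}$ to the reflection of $\Gamma_{r_1}$, two loops around $1$. No finite singularity is crossed: the integrands are analytic off $0$ (and, in $A$, $B$, off the pole $w=z$), the deformed $w$-contour stays in $\{|w|\ge1\}$ and the $z$-contour in $\{|z|\le1\}$, and near the common saddle the $w$-contour leaves $1$ in the directions $e^{\pm\ii\pi/3}$ while the $z$-contour leaves $1$ in the directions $e^{\pm\ii2\pi/3}$, so after an $\Or(N^{-1/3})$ shift of one of them the two are disjoint.

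Next I would carry out the local analysis as in the proof of Theorem~\ref{thm:hardedgetacnode}: localise to $|w-1|\le\delta$, $|z-1|\le\delta$ up to an $\Or(e^{-cN\delta^3})$ error, substitute $w=1+\wt wN^{-1/3}$, $z=1+\wt zN^{-1/3}$, Taylor-expand, and let the rescaled contours run to infinity. Because $w=1$ is a cubic critical point of $-\ln w+\tfrac12(w-w^{-1})$ the linear and quadratic terms vanish, so $2N\big(-\ln w+\tfrac12(w-w^{-1})\big)=\tfrac13\wt w^{3}+\Or(N^{-1/3})$, while $-\tfrac\sigma2N^{1/3}(w-w^{-1})=-\sigma\wt w+\Or(N^{-1/3})$; a scaled row or column index gives $w^{-N^{1/3}x}\to e^{-x\wt w}$ or $z^{N^{1/3}y}\to e^{y\wt z}$, shifting the eventual Airy argument by $x$ or $y$, and the residual prefactors $w^{-\al}$, $z^{\al}$, $w^{\Or(1)}$, $z^{\Or(1)}$ tend to $1$. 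Right multiplication by $L$ replaces $\sum_m(-w)^m\binom lm$ by $(1-w)^l=(-1)^l\wt w^lN^{-l/3}$, which against the factor $N^{l/3}$ in $L$ leaves the prefactor $(-1)^l\wt w^l$; similarly $K$ turns $\sum_m(-1)^{k-m}\binom km w^{-m}$ into $(1-1/w)^k=(-1)^k\wt w^kN^{-k/3}$, leaving $(-1)^k\wt w^k$ against $N^{k/3}$. Counting powers of $N$ — one $N^{-1/3}$ from $\ud w$, and for the two-fold integrals an extra $N^{-1/3}$ from $\ud z$ against $N^{1/3}$ from $\tfrac1{w-z}=\tfrac{N^{1/3}}{\wt w-\wt z}$ — shows each block equals $N^{-1/3}$ times a convergent integral, which explains the $N^{1/3}$ normalisation in \eqref{Alimit}--\eqref{Dlimit}. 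Finally I would read off the limits: for $C$, $D$ directly, via $\Ai^{(k)}(z)=\tfrac{(-1)^k}{2\pi\ii}\int\wt w^k e^{\wt w^3/3-z\wt w}\,\ud\wt w$, obtaining $\Ai^{(l)}(x+\sigma)=\Cc_\sigma(x,l)$ and $\Ai^{(k+l)}(\sigma)=\Dd_\sigma(k,l)$; for the two-fold integrals, using $\tfrac1{\wt w-\wt z}=\int_0^\infty e^{-(\wt w-\wt z)\lambda}\,\ud\lambda$ (valid because the two deformed local contours have $\Re\wt w>\Re\wt z$) and integrating in $\lambda$, obtaining $-\int_\sigma^\infty\Ai(x+\lambda)\Ai(y+\lambda)\,\ud\lambda=-\Kaa_{\Ai,\sigma}(x,y)$ and $-\int_\sigma^\infty\Ai^{(k)}(\lambda)\Ai(y+\lambda)\,\ud\lambda=\Bb_\sigma(k,y)$; and for $A$ the Kronecker term $\delta_{k-l}$ is the kernel of the identity operator on $L^2(\R_+)$ under the identification $k\leftrightarrow N^{-1/3}k$, so the full limit is $\delta(x-y)-\Kaa_{\Ai,\sigma}(x,y)=\Aa_\sigma(x,y)$.

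The routine part is the package of error estimates — discarding the contours outside the $\delta$-balls, replacing the phase by its cubic truncation via $|e^{x}-1|\le|x|e^{|x|}$ exactly as in the proof of Theorem~\ref{thm:hardedgetacnode}, dropping the extra prefactors, and extending the rescaled contours to infinity — all controlled by Lemma~\ref{lemma:steep}, each tending to $0$ for fixed $x,y\ge0$ (the uniform bounds needed later for the dominated convergence in Proposition~\ref{prop:detdetlimit} are the separate content of Proposition~\ref{prop:functopbounds}). The step that needs genuine care is the contour bookkeeping for the two-fold integrals $A$ and $KB$: one must deform the nested circles $S_{\rho^{-1}}\subset S_\rho$ to steep descent contours that both approach the coalescing saddle $w=z=1$ without ever meeting the pole $w=z$, and then verify that the resulting local $\wt w$- and $\wt z$-contours are linked so as to produce $\int_\sigma^\infty$ rather than $-\int_{-\infty}^\sigma$ — equivalently, the correct sign $\Aa_\sigma=\id-\Kaa_{\Ai,\sigma}$. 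This is precisely where the opposite exit directions $e^{\pm\ii\pi/3}$ and $e^{\pm\ii2\pi/3}$ at the saddle, inherited from $\Gamma_{r_2}$ and $\Gamma_{r_1}$, enter.
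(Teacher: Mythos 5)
Your proof is correct in substance, but it follows a genuinely different route from the paper. The paper first makes the double integrals discrete: it expands $1/(w-z)=\sum_{j\ge0}z^j/w^{j+1}$ (and substitutes $z\mapsto z^{-1}$) \emph{before} any asymptotics, turning $A_{k,l}$ into $\delta_{k,l}-\sum_{j\ge0}J_{2N+l+j+\al+1}J_{2N+k+j+\al+1}$, and likewise $(KB)_{k,l}$, $(CL)_{k,l}$, $(KDL)_{k,l}$ into (differences of) Bessel functions $J^{(k)}(s,t)$ as in \eqref{defJk}. The whole asymptotic burden is then delegated to the single-variable statements Propositions~\ref{prop:Jconv} and \ref{prop:Jbound} (Bessel-to-Airy convergence plus a tail bound), together with a Riemann-sum/dominated-convergence argument over $\lambda=N^{-1/3}j$. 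You instead perform steep descent directly on the double contour integrals of \eqref{A:def}--\eqref{D:def}, localize at the cubic saddle $w=z=1$, rescale by $N^{-1/3}$, and only then introduce $\tfrac1{\wt w-\wt z}=\int_0^\infty e^{-\lambda(\wt w-\wt z)}\,\ud\lambda$. The two routes produce the same $\lambda$-integral; the paper's version buys modularity (the Bessel-to-Airy lemma is also reused for Proposition~\ref{prop:functionlimits} and its tail bound is exactly what Proposition~\ref{prop:functopbounds} needs), while yours is more direct and self-contained but — as you rightly flag — shifts the difficulty to contour bookkeeping for the nested $S_{\rho^{-1}}\subset S_\rho$ and the pole at $w=z$.

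One small slip worth fixing: you write $\sum_m(-1)^{k-m}\binom km w^{-m}=(1-1/w)^k$, but the correct identity is $\sum_m(-1)^{k-m}\binom km w^{-m}=(-1)^k(1-w^{-1})^k$; and then $(1-1/w)^k$ under $w=1+\wt wN^{-1/3}$ is $\wt w^kN^{-k/3}(1+\Or(N^{-1/3}))$, not $(-1)^k\wt w^kN^{-k/3}$. The two sign errors cancel, so your final limit for $(KB)_{k,\cdot}$ is still $\Bb_\sigma(k,\cdot)$, but the intermediate steps should be corrected. Also note that your assertion that the deformed $w$-contour stays in $\{|w|\ge1\}$ and the $z$-contour in $\{|z|\le1\}$ is only true before the $\Or(N^{-1/3})$ shift you perform to separate them; after shifting, the essential point is just that near the saddle the two local arcs have $\Re\wt w>\Re\wt z$, which is what makes the representation $\tfrac1{\wt w-\wt z}=\int_0^\infty e^{-\lambda(\wt w-\wt z)}\ud\lambda$ and the resulting sign $\Aa_\sigma=\id-\Kaa_{\Ai,\sigma}$ come out right, as you observe.
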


We have the following decay properties of the functions and operators.
\begin{proposition}\label{prop:functopbounds}
Assume that $q=1$. For any $k,l\in\{0,1,\dots,\alpha-1\}$ and any constant $d>0$, there are constants $c,C>0$ such that
\begin{align}
\left|h_{N^{1/3}x}\right|&\le Ce^{c|\vv|-dx},\\
\left|(K\mathbf{\wh h})_k\right|&\le Ce^{c|\vv|},\\
\left|g_{N^{1/3}y}\right|&\le Ce^{c|\uu|-dy},\\
\left|(\boldsymbol\xi L)_l\right|&\le Ce^{c|\uu|},\\
\left|N^{1/3}A_{N^{1/3}x,N^{1/3}y}\right|&\le Ce^{-d(x+y)},\\
\left|N^{1/3}(KB)_{k,N^{1/3}y}\right|&\le Ce^{-dy},\\
\left|N^{1/3}(CL)_{N^{1/3}x,l}\right|&\le Ce^{-dx},\\
\left|N^{1/3}(KDL)_{k,l}\right|&\le C
\end{align}
uniformly for $N>N_0$ and as $x,y>X_0$ for some thresholds $N_0$ and $X_0$.
\end{proposition}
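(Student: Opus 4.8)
The plan is to turn every object occurring in the conjugated formula \eqref{rank:alphaplusone:det:KL} into an explicit (in general infinite) combination of Bessel functions $J_m(a)$ with $a=b=2N-\si N^{1/3}$ (recall $q=1$), and then to estimate these combinations using the uniform Bessel tail bounds collected in Section~\ref{s:Bessel_limit} of the appendix. Starting from the contour integral representations \eqref{hk:def}--\eqref{gk:def} and \eqref{A:def}--\eqref{D:def}, I would expand the rational factors geometrically, $(w+\eta)^{-1}=\sum_{j\geq0}(-\eta)^j w^{-j-1}$, $(w-z)^{-1}=w^{-1}\sum_{m\geq0}(z/w)^m$, $(z+\xi)^{-1}=\sum_{p\geq0}(-1)^p\xi^{-p-1}z^p$ (legitimate for the radius $\rho$ in use), and integrate term by term by means of $\frac1{2\pi\ii}\oint w^{-m-1}\exp(\tfrac a2(w-w^{-1}))\,\ud w=J_m(a)$. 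This yields identities such as
\[ h_k=\sum_{j\geq0}(-\eta)^j J_{k+n+\al+1+j}(a),\qquad A_{k,l}=\delta_{k-l}-\sum_{m\geq0}J_{k+n+\al+m+1}(a)\,J_{l+n+\al+m+1}(a),\]
and completely analogous, simpler expressions for $\wh h_k$, $g_l$ and for the blocks $B,C,D$; in particular $C_{k,l}$ and $D_{k,l}$ are single Bessel functions, $(\boldsymbol\xi L)_l=\uu^l$ exactly, and $(KDL)_{k,l}$ involves only the fixed indices $0,\dots,\al-1$. The Pascal-type conjugations $K,L$ only shift indices and multiply by the harmless prefactor $N^{l/3}$ together with a finite binomial sum.

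Second, I would feed in the Bessel estimates. Writing $\nu:=n+\al+1$ and $k=N^{1/3}x$, the smallest index in $h_k$ is $\nu+k$, which exceeds the argument $a=2N-\si N^{1/3}$ by $(\si+x)N^{1/3}+\al+1$; in the Airy scaling $m=a+\tau\,a^{1/3}$ this means $\tau\approx(\si+x)/2^{1/3}$. The appendix supplies, uniformly in $N$ and for $\tau$ beyond a fixed threshold, a bound of the type $|J_{a+\tau a^{1/3}}(a)|\leq C\,a^{-1/3}e^{-d\tau}$, together with the crude estimate $|J_m(a)|\leq (a/2)^m/\Gamma(m+1)$ which is super-exponentially small once $m-a\gtrsim\sqrt a$. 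The first bound gives $|J_{\nu+k}(a)|\leq CN^{-1/3}e^{-dx}$ for $x>X_0$, and the second controls the tails of the sums over $j$, $p$, $m$ \emph{uniformly in $N$} even on the arcs of $\Gamma_\vv,\Gamma_\uu$ where $|\eta|\geq1$ or $|\xi|^{-1}\geq1$ and the naive geometric estimate $\tfrac1{1-|\eta|}$ fails; making this uniform control precise is, I expect, the main technical obstacle. Combining the two, for $x,y>X_0$ and $N>N_0$ one gets
\[ |h_{N^{1/3}x}|\leq Ce^{c|\vv|-dx},\quad |(K\wh{\mathbf h})_k|\leq Ce^{c|\vv|},\quad |g_{N^{1/3}y}|\leq Ce^{c|\uu|-dy},\quad |(\boldsymbol\xi L)_l|\leq Ce^{c|\uu|},\]
and, writing $A=\id-R$ with $R_{k,l}=\sum_{m\geq0}V_{k,m}V_{l,m}$, $V_{k,m}=J_{\nu+k+m}(a)$,
\[ |N^{1/3}R_{N^{1/3}x,N^{1/3}y}|\leq Ce^{-d(x+y)},\quad |N^{1/3}(KB)_{k,N^{1/3}y}|\leq Ce^{-dy},\quad |N^{1/3}(CL)_{N^{1/3}x,l}|\leq Ce^{-dx},\]
and $|N^{1/3}(KDL)_{k,l}|\leq C$. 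The factor $e^{c|\vv|}$ (resp. $e^{c|\uu|}$) absorbs the growth of the geometric ratio $|\eta|$ (resp. $|\xi|^{-1}$) away from $1$; along the portions of $\Gamma_\vv$ (resp. $\Gamma_\uu$) where those ratios exceed $1$ the variable stays bounded, so the factors remain finite there.

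Finally, I would record that these are exactly the estimates needed in the proofs of Propositions~\ref{prop:detdetlimit}--\ref{prop:operatorlimits}. The decay of $R$ (and of the off-diagonal blocks) shows that $R$, $KB$, $CL$ are Hilbert--Schmidt with norms uniformly bounded in $N$; since their limits $\Kaa_{\Ai,\si}$, $\Bb_\si$, $\Cc_\si$ are trace class and $\id-\Kaa_{\Ai,\si}$ has spectrum in $[0,1)$, the operator $A=\id-R$ has a uniformly bounded inverse for $N$ large, and (for $\si$ large, by Proposition~\ref{prop:limSchur}) so does the Schur complement. Feeding this into the block inversion formula \eqref{schur:inversion}, the exponential decay $e^{-dx},e^{-dy}$ of $\mathbf h$, $\mathbf g$ and of the block entries makes the resulting double and triple sums absolutely convergent with value bounded by $Ce^{c(|\uu|+|\vv|)}$, which is \eqref{Mbound}; the identity part of $A$ contributes only the scalar $\langle e_\uu,e_{-\vv}\rangle$ in the limit and requires no domination, which is why the stated bound on $N^{1/3}A_{N^{1/3}x,N^{1/3}y}$ enters only through its off-diagonal part $N^{1/3}R_{N^{1/3}x,N^{1/3}y}$. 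All constants can be chosen uniformly for $x,y>X_0$ and $N>N_0$, as claimed.
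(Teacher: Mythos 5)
Your proposal is correct and takes essentially the same route as the paper: rewrite all vector and block-matrix entries as (sums of) Bessel functions $J_m(2N-\si N^{1/3})$ by geometric expansion of the rational factors in \eqref{hk:def}--\eqref{gk:def} and \eqref{A:def}--\eqref{D:def}, then apply the uniform tail estimate of Proposition~\ref{prop:Jbound} (with the super-exponential decay of $J^{(k)}(\cdot,N)$ absorbing the growth of $|\eta|^j$ and $|\xi|^{-j}$, producing the $e^{c|\vv|}$, $e^{c|\uu|}$ factors). The paper's own proof is a one-line pointer back to the dominated-convergence estimates in Propositions~\ref{prop:functionlimits}--\ref{prop:operatorlimits}, which are precisely what you spell out; the only point of detail worth noting is that $(\boldsymbol\xi L)_l=\uu^l$ is an exact identity (not just a limit), as your computation correctly shows.
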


The strategy for proving Propositions~\ref{prop:functionlimits} and \ref{prop:operatorlimits} is the following.
To keep the computations simpler, we first prove for $\sigma=0$, and we point out the changes for general $\sigma$.
We first rewrite our functions and operators using the Bessel function $J_n(t)$ defined by
\begin{equation}
J_n(t)=\frac1{2\pi\ii}\int_{S_1}\d z\,z^{-(n+1)}\exp\left(\frac t2\left(z-z^{-1}\right)\right).\label{defJ}
\end{equation}
It turns out that some of the operators and functions can be written even more conveniently using the function
\begin{equation}\label{defJk}
J^{(k)}(s,t)=t^{\frac{k+1}3}\sum_{p=0}^k \binom kp (-1)^{k-p} J_{2t+st^{1/3}+p}(2t)
\end{equation}
which is an approximation of the $k$th derivative of the Airy function as seen from Proposition~\ref{prop:Jconv}.
We remark that one can also rewrite $J^{(k)}(s,t)$ by its definition \eqref{defJ} and the binomial theorem as
\begin{equation}\label{Jdifference}\begin{aligned}
J^{(k)}(s,t)
&=t^{\frac{k+1}3}\sum_{p=0}^k \binom kp (-1)^{k-p} \frac1{2\pi\ii}\int_{S_1}\d z\, z^{-(2t+st^{1/3}+p+1)}\exp\left(t(z-z^{-1})\right)\\
&=\frac{t^{\frac{k+1}3}}{2\pi\ii}\int_{S_1}\d z\, \exp\left(2t\left(-\ln z+\frac{z-z^{-1}}2\right)-st^{1/3}\ln z\right)\frac{\left(z^{-1}-1\right)^k}z.
\end{aligned}\end{equation}
Then the limit statement of Proposition~\ref{prop:Jconv} and the bound of Proposition~\ref{prop:Jbound} imply the claimed convergence results
in Propositions~\ref{prop:functionlimits} and \ref{prop:operatorlimits} by dominated convergence.
The bounds in Proposition~\ref{prop:functopbounds} follow from the bounds given for the dominated convergence argument
in the proofs of Propositions~\ref{prop:functionlimits} and \ref{prop:operatorlimits}.

\begin{proof}[Proof of Proposition~\ref{prop:functionlimits}]
Suppose first that $\sigma=0$.
In the definition of $h_k$ \eqref{hk:def} we write
\begin{equation}\label{geoseries}
\frac1{w+\eta} = \sum_{j\ge0}\frac{(-\eta)^j}{w^{j+1}}
\end{equation}
and use \eqref{defa}--\eqref{defb} (with $q=1$ and $\sigma=0$) to obtain
\begin{equation}\label{h-hres}\begin{aligned}
h_k&= \sum_{j\ge0}(-\eta)^j\frac1{2\pi\ii}\int_{S_{\rho}}\d w\,w^{-(2N+k+j+\alpha+2)}\exp\left(N\left(w-w^{-1}\right)\right)\\
&=\sum_{j\ge0}\left(-\eta\right)^j J_{2N+k+j+\alpha+1}(2N)
\end{aligned}\end{equation}
on account of \eqref{defJ}.
If we substitute the scaling of $\eta$ from \eqref{xietascaling} (with $q=1$) and $k=N^{1/3}x$ and replace the summation by an integral for $\lambda=N^{-1/3}j$,
then the above expression up to $N^{-1/3}$ error is
\begin{multline}
\int_0^\infty\d\lambda\left(1-\vv N^{-1/3}\right)^{N^{1/3}\lambda}N^{1/3}J_{2N+N^{1/3}(x+\lambda)+\alpha+1}(2N)\\
=\int_0^\infty\d\lambda\left(1-\vv N^{-1/3}\right)^{N^{1/3}\lambda}J^{(0)}(x+\lambda+\Or(N^{-1/3}),N).
\end{multline}
The factor $(1-\vv N^{-1/3})^{N^{1/3}\lambda}$ converges to $e^{-\vv\lambda}$ and it is bounded by $e^{|\vv|\lambda}$ uniformly in $N$.
It follows from Propositions~\ref{prop:Jconv} and \ref{prop:Jbound} for $k=0$ that the factor $J^{(0)}(x+\lambda+\Or(N^{-1/3}),N)$ converges to $\Ai(x+\lambda)$ pointwise
and that its decay in $\lambda$ (and in $x$) is faster than any exponential, in particular, faster than $e^{-2|\vv|\lambda}$.
This shows that the dominated convergence theorem applies and it yields \eqref{hconvergence} for $\sigma=0$.

For general $\sigma$, we have $J_{2N+k+j+\alpha+1}(2N-N^{1/3}\sigma)$ instead of $J_{2N+k+j+\alpha+1}(2N)$ on the right-hand side of \eqref{h-hres}.
Then we use Proposition~\ref{prop:Jconv} as
\begin{equation}
J_{2N+N^{1/3}(x+\lambda)+\alpha+1}(2N-N^{-1/3}\sigma)\to\Ai(x+\lambda+\sigma).
\end{equation}
After the change of variable $\lambda+\sigma\to\lambda$ in the integral obtained as the limit of $h_{N^{1/3}x}$, one gets \eqref{hconvergence}.

Suppose again $\sigma=0$.
In the definition of $\wh h_k$ \eqref{hktil:def}, we write $1/(w+\eta)$ as a geometric series as in \eqref{geoseries} to get
\begin{equation}
\wh h_k=\sum_{j\ge0}(-\eta)^j J_{2N+j+k+1}(2N).
\end{equation}
By the definition of the matrix $K$ and by that of the function $J^{(k)}$, we have
\begin{equation}\label{whhcomputations}\begin{aligned}
(K\mathbf{\wh h})_k &=N^{-1/3}\sum_{j\ge0}(-\eta)^j J^{(k)}(N^{-1/3}(j+k+1),N)\\
&=\int_0^\infty\d\lambda\left(1-\vv N^{-1/3}\right)^{N^{1/3}\lambda}J^{(k)}(\lambda+\Or(N^{-1/3}),N)+\Or(N^{-1/3}).
\end{aligned}\end{equation}
By the same dominated convergence argument as above using Propositions~\ref{prop:Jconv} and \ref{prop:Jbound} for $k=0,1,\dots,\alpha-1$,
one can conclude the proof of \eqref{whhconvergence} for $\sigma=0$.
The general case follows as before.

The proof of \eqref{gconvergence} is similar.
We write $1/(z+\xi)=\sum_{j\ge0}(-z)^j\xi^{-(j+1)}$ as a geometric series. Next we substitute $z\to z^{-1}$.
This allows for expressing $g_l$ with Bessel functions as
\begin{equation}
g_l=-\sum_{j\ge0}(-\xi)^{-(j+1)}J_{2N+l+j+\alpha+1}(2N).
\end{equation}
The dominated convergence argument and the extension to general $\sigma$ is the same as for $h_k$. Finally, the proof of \eqref{xiconvergence} is straightforward.
\end{proof}

\begin{proof}[Proof of Proposition~\ref{prop:operatorlimits}]
Since the steps of this proof are rather similar to those of Proposition~\ref{prop:functionlimits}, we only give a sketch here, from which the reader can complete the details.
We also restrict ourselves to the $\sigma=0$ case.

By rewriting $1/(w-z)=\sum_{j\ge0}z^j/w^{j+1}$ as a geometric series,
substituting $z\to z^{-1}$ and using \eqref{defa}--\eqref{defb} (with $q=1$), we get
\begin{equation}
A_{k,l}=\delta_{k,l}-\sum_{j\ge0}J_{2N+l+j+\alpha+1}(2N)J_{2N+k+j+\alpha+1}(2N).
\end{equation}
For $k=N^{1/3}x$ and $l=N^{1/3}y$, an application of Propositions~\ref{prop:Jconv} and \ref{prop:Jbound} for $k=0$ and the dominated convergence theorem gives \eqref{Alimit}.

Similarly,
\begin{equation}
B_{k,l}=-\sum_{j\ge0}J_{2N+l+j+\alpha+1}(2N)J_{2N+k+j+1}(2N).
\end{equation}
As in \eqref{whhcomputations} by applying the matrix $K$, one gets
\[ N^{1/3}(KB)_{k,l}=-\sum_{j\ge0} J_{2N+l+j+\alpha+1}(2N) J^{(k)}(N^{-1/3}(j+1),N).\]
A dominated convergence argument as before yields \eqref{Blimit}.

It is easy to see that
\[C_{k,l}=J_{2N+k-l+\alpha}(2N).\]
If we apply the matrix $L$ from the right, the convergence \eqref{Climit} follows from the equation
\[N^{1/3}(CL)_{k,l}=J^{(l)}(N^{-1/3}(k+\alpha-l),N).\]

Similarly,
\begin{equation}\label{DBessel}
D_{k,l}=J_{2N+k-l}(2N).
\end{equation}
After applying the matrix $K$ from the left and the matrix $L$ from the right, one can rewrite \eqref{DBessel} as
\begin{align*}
N^{1/3}(KDL)_{k,l}&=N^{\frac{k+l+1}3}\sum_{p=0}^k\sum_{r=0}^l \binom kp \binom lr (-1)^{k+p+r}J_{2N+p-r}(2N)\\
&=N^{\frac{k+l+1}3}\sum_{s=0}^{k+l} \binom {k+l}s (-1)^{k+l+s}J_{2N-l+s}(2N)\\
&=J^{(k+l)}(-N^{-1/3}l,N).
\end{align*}
Then \eqref{Dlimit} follows.
This completes the proof of Proposition~\ref{prop:operatorlimits}.
\end{proof}

\begin{proof}[Proof of Proposition~\ref{prop:functopbounds}]
The assertions easily follow from the bounds given in the proofs of Propositions~\ref{prop:functionlimits} and \ref{prop:operatorlimits} for the dominated convergence argument,
which were obtained from Proposition~\ref{prop:Jbound} and from obvious estimates.
\end{proof}

\section{Asymptotic analysis for the hard edge Pearcey process}\label{s:analysis_Pearcey}

In this section, we first prove Proposition~\ref{prop:confluent:a0} which is the starting formula for the asymptotic analysis for the hard edge Pearcey process.
Then, we turn to the actual asymptotic analysis, i.e.\ we prove Theorem~\ref{thm:hardedgePearcey}.
We use the following lemma for the proof of Proposition~\ref{prop:confluent:a0}.

\begin{lemma}\label{lemma:kernel:Joh}
The kernel $K_n$ for $n$ non-intersecting squared Bessel paths in \eqref{extendedkernel:start} can be written as
\begin{equation}\label{kernel:Joh}
K_n(s,x;t,y) = -p_{t-s}(x,y) \id_{t>s}- \int_{-\infty}^0 p_{t-1}(w,y) \sum_{k=1}^n p_{1-s}(x,b_k)\frac{\det\left(p_1(a_i,b_j^{(k)})\right)_{i,j=1}^n}{\det\left(p_1(a_i,b_j)\right)_{i,j=1}^n}\ud w
\end{equation}
for any $x,y>0$ and $s,t\in (0,1)$ where we use the notation
\begin{equation}\label{bjk:def}
b_j^{(k)} = \left\{\begin{array}{ll} b_j, & \mbox{if $j\neq k$},\\ w, & \mbox{if $j= k$}.\end{array}\right.
\end{equation}
\end{lemma}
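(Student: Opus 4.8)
The plan is to start from the confluent-limit formula \eqref{extendedkernel:conf}--\eqref{extendedkernel:Amatrix:conf} — or rather its not-yet-confluent predecessor \eqref{extendedkernel:start}--\eqref{extendedkernel:Amatrix} — and rewrite the finite-rank term $\sum_{j,k=1}^n p_{1-s}(x,b_k)(A^{-1})_{k,j}p_t(a_j,y)$ by recognizing the inverse matrix entries via Cramer's rule. Writing $(A^{-1})_{k,j} = (-1)^{j+k}\det(A^{(j,k)})/\det A$, where $A^{(j,k)}$ is $A$ with row $j$ and column $k$ deleted, the sum over $j$ of $(A^{-1})_{k,j}p_t(a_j,y)$ is, up to the common denominator $\det A = \det(p_1(a_i,b_j))_{i,j=1}^n$, a cofactor expansion along a column. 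Concretely, for fixed $k$, $\sum_{j=1}^n (-1)^{j+k}\det(A^{(j,k)}) p_t(a_j,y)$ equals the determinant of the $n\times n$ matrix obtained from $A$ by replacing its $k$th column $(p_1(a_i,b_k))_i$ with the column $(p_t(a_i,y))_i$ — but this is not quite the desired object because we want a transition-probability-valued determinant in the $b$-variables, not a mixed $(t,y)$ column.

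The key device that converts this into the form \eqref{kernel:Joh} is the Chapman--Kolmogorov / convolution identity for the squared Bessel semigroup: $p_t(a,y) = \int_0^\infty p_1(a,w) p_{t-1}(w,y)\,\d w$ — but since $t<1$ we must instead use $p_1(a,b) = \int p_t(a,w)p_{1-t}(w,b)\,\d w$, or more directly the reproducing property that lets us write $p_t(a_j,y)$ as an integral against $p_1(a_j,\cdot)$. The cleanest route is: $p_t(a_j,y)$ and $p_{t-1}(w,y)$ are related by analytically continuing the time parameter, so that $\int_{-\infty}^0 p_1(a_j,w)p_{t-1}(w,y)\,\d w$ reproduces $p_t(a_j,y)$ — this is exactly the contour-integral / Hankel-type representation underlying \eqref{intrepr:unitcircle:1}, where the negative-$w$ contour encodes the modified Bessel function's integral representation. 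First I would verify this reproducing identity (it is the $a\ne 0$ analogue of Proposition~\ref{prop:confluent:a0}'s integral), then substitute it for the $p_t(a_j,y)$ factor, pull the $w$-integral outside, and at that point the $j$-sum becomes $\sum_{j}(-1)^{j+k}\det(A^{(j,k)}) p_1(a_j,w) = \det\big(p_1(a_i,b_j^{(k)})\big)_{i,j=1}^n$ by cofactor expansion along column $k$, with $b_j^{(k)}$ as defined in \eqref{bjk:def} — the $k$th column of $A$ is replaced by $(p_1(a_i,w))_i$, and all other columns keep $b_j$.

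After this substitution the finite-rank term becomes
\[
\int_{-\infty}^0 \d w\, p_{t-1}(w,y)\sum_{k=1}^n p_{1-s}(x,b_k)\frac{\det(p_1(a_i,b_j^{(k)}))_{i,j=1}^n}{\det(p_1(a_i,b_j))_{i,j=1}^n},
\]
which up to the overall sign is precisely the claimed expression; the sign works out because the $w$-integral over $(-\infty,0]$ introduces the factor that, combined with the reproducing identity's normalization, yields the minus sign in \eqref{kernel:Joh}. I would keep the $-p_{t-s}(x,y)\id_{t>s}$ term untouched throughout. **The main obstacle** will be establishing the reproducing identity $p_t(a,y) = c\int_{-\infty}^0 p_1(a,w)p_{t-1}(w,y)\,\d w$ rigorously for $t\in(0,1)$, including the correct constant and sign and the justification of the $(-\infty,0]$ contour: one must invoke the integral representation of $I_\alpha$ with the branch conventions stated in Proposition~\ref{prop:confluent:a0} (namely $w^\alpha = e^{\ii\alpha\pi}|w|^\alpha$), check convergence of the $w$-integral at $0$ and at $-\infty$ using the Gaussian factors in \eqref{transitionprob:sqB}, and confirm that the deformation from the oscillatory Bessel representation to the negative real axis picks up no stray residues. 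Everything else — Cramer's rule, cofactor expansion, Fubini to exchange the $w$-integral with the finite $k$-sum — is routine linear algebra and bookkeeping.
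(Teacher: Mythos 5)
Your proposal matches the paper's proof almost step for step: both start from Cramer's rule applied to the finite-rank term in \eqref{extendedkernel:start}, both recognize $\sum_j (A^{-1})_{k,j}p_t(a_j,y)$ as the ratio $\det(A|\pee)_k/\det A$ with the $k$th column replaced, and both convert $p_t(a_j,y)$ into $-\int_{-\infty}^0 p_1(a_j,w)p_{t-1}(w,y)\,\d w$ (the "Markov type formula for negative time spans," which the paper simply cites from Katori--Tanemura~\cite[(2.5)--(2.6)]{KT11} rather than re-deriving as you propose) before cofactor-expanding along column $k$ to reach $\det(p_1(a_i,b_j^{(k)}))$. The one item you flag as the "main obstacle" — establishing the reproducing identity with the stated branch conventions — is the same identity the paper invokes and attributes to the literature, so your outline is correct and faithful to the actual argument.
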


\begin{proof}
We follow Johansson \cite[(2.7)--(2.10)]{Joh11}.
Define the column vector
\begin{equation}
\pee = \begin{pmatrix} p_t(a_1,y) & \ldots & p_t(a_n,y) \end{pmatrix}^T.
\end{equation}
Applying Cramer's rule to \eqref{extendedkernel:start}, we find
\begin{equation}\label{Cramer1}
K_n(s,x;t,y) = -p_{t-s}(x,y) \id_{t>s}+ \sum_{k=1}^n p_{1-s}(x,b_k)\frac{\det(A|\pee)_k}{\det A}
\end{equation}
where $(A|\pee)_k$ denotes the matrix $A$ in \eqref{extendedkernel:Amatrix} with its $k$th column replaced by the vector $\pee$.
The following Markov type formula holds for the transition probabilities of the squared Bessel paths given in \eqref{transitionprob:sqB} for \lq negative time spans\rq:
\begin{equation}\label{Markov:sqB}
p_{t}(x,y) = -\int_{-\infty}^0 p_s(x,z) p_{t-s}(z,y)\ud z = -\int_{-\infty}^0 p_{t-s}(x,z) p_s(z,y)\ud z
\end{equation}
for any $x,y>0$ and $s>t>0$.
Here we define the branches of the powers in \eqref{transitionprob:sqB}--\eqref{BesselI:def}
by $(\sqrt{z})^{\alpha} \equiv z^{\alpha/2} \equiv e^{\alpha\pi \ii/2}|z|^{\alpha/2}$ and $(1/z)^{\alpha/2} \equiv e^{-\alpha\pi \ii/2}|z|^{-\alpha/2}$ when $z$ is negative.
The Markov type formula \eqref{Markov:sqB} is the analogue of \cite[(2.5)--(2.6)]{KT11}.
From \eqref{Markov:sqB}, we get
\begin{equation}\label{Cramer2}
\frac{\det(A|\pee)_k}{\det A} = -\int_{-\infty}^0 \frac{\det\left(p_1(a_i,b_j^{(k)})\right)_{i,j=1}^n}{\det\left(p_1(a_i,b_j)\right)_{i,j=1}^n}p_{t-1}(w,y)\ud w
\end{equation}
with the notation \eqref{bjk:def}.
By inserting \eqref{Cramer2} into \eqref{Cramer1} and interchanging the sum and the integral, we obtain \eqref{kernel:Joh}.
This proves the lemma.
\end{proof}

\begin{proof}[Proof of Proposition~\ref{prop:confluent:a0}]
Consider the ratio of determinants in the right hand side of \eqref{kernel:Joh}. From the definition of the transition probability and using the linearity of the determinant, we can write it as
\begin{equation}\label{ratio:determinants}
\frac{\det\left(p_1(a_i,b_j^{(k)})\right)_{i,j}}{\det\left(p_1(a_i,b_j)\right)_{i,j}} = \left(\frac{w}{b_k}\right)^{\alpha/2} \exp\left(\frac{b_k-w}{2} \right) \frac{\det\left(a_i^{-\alpha/2}
I_{\alpha}\left(\sqrt{a_i b_j^{(k)}}\right)\right)_{i,j}}{\det\left(a_i^{-\alpha/2}I_{\alpha}\left(\sqrt{a_i b_j}\right)\right)_{i,j}}
\end{equation}
where we also used \eqref{bjk:def}.
The factors $a_i^{-\alpha/2}$ in the above determinants depend only on the row index $i$, therefore they can be extracted from the determinants cancelling each other completely.
But it will be convenient to keep these factors inside the determinants.

When taking the confluent limit $a_i\to 0$ of \eqref{ratio:determinants}, certain derivatives appear.
This is discussed by Tracy--Widom \cite[pages~7--8]{TW3} in a similar context.
If we apply this idea to \eqref{ratio:determinants}, we obtain
\begin{equation}\label{confluent:limit:a0}
\lim_{a_i\to 0}\frac{\det\left(p_1(a_i,b_j^{(k)})\right)_{i,j}}{\det\left(p_1(a_i,b_j)\right)_{i,j}}
= \left(\frac{w}{b_k}\right)^{\alpha/2} \exp\left(\frac{b_k-w}{2} \right) \frac{\det\left(\frac{\partial^{i-1}}{\partial a^{i-1}}\left[a^{-\alpha/2}
I_{\alpha}\left(\sqrt{a b_j^{(k)}}\right)\right]_{a=0}\right)_{i,j}}
{\det\left(\frac{\partial^{i-1}}{\partial a^{i-1}}\left[ a^{-\alpha/2} I_{\alpha}\left(\sqrt{a b_j}\right)\right]_{a=0}\right)_{i,j}}
\end{equation}
where again $i,j$ run from $1$ to $n$.

We evaluate the derivatives.
From the series expansion  \eqref{BesselI:def} of the modified Bessel function, we find
\begin{equation}
\frac{\partial^{i-1}}{\partial a^{i-1}}\left[a^{-\alpha/2} I_{\alpha}(\sqrt{a b})\right]_{a=0}=\frac{1}{\Gamma(i+\alpha)}\left(\frac{b}{4}\right)^{i-1+\alpha/2}.
\end{equation}
Using this for both matrices on the right-hand side of \eqref{confluent:limit:a0} and cancelling again the common factors, we obtain
\begin{equation}\label{confluent:limit:a0:2}\begin{aligned}
\lim_{a_i\to 0} \frac{\det\left(p_1(a_i,b_j^{(k)})\right)_{i,j}}{\det\left(p_1(a_i,b_j)\right)_{i,j}}
&= \left(\frac{w}{b_k}\right)^{\alpha} \exp\left(\frac{b_k-w}{2} \right)\frac{\det\left((b_j^{(k)})^{i-1}\right)_{i,j}}{\det\left(b_j^{i-1}\right)_{i,j}}\\
&= \left(\frac{w}{b_k}\right)^{\alpha} \exp\left(\frac{b_k-w}{2} \right) \prod_{j\neq k} \frac{w-b_j}{b_k-b_j}\\
&= \left(\frac{w}{b_k}\right)^{\alpha} \exp\left(\frac{b_k-w}{2} \right) \Res\left(\frac{1}{w-z}\prod_{j=1}^n\frac{w-b_j}{z-b_j},z=b_k\right)
\end{aligned}\end{equation}
where the second line follows from the evaluation of the Vandermonde determinants and cancellation of the common factors recalling \eqref{bjk:def}.,
By inserting \eqref{confluent:limit:a0:2} into \eqref{kernel:Joh} and using the residue theorem, we obtain the double integral formula \eqref{doubleintegral:confluent:a0}.
This proves the proposition.
\end{proof}

Next we prove Theorem~\ref{thm:hardedgePearcey} which immediately follows from the following convergence statement for the kernels.

\begin{proposition}
Let $s,t,x,y\in\R$ be fixed numbers. It holds that
\begin{multline}
\lim_{n\to\infty}\frac1{2qn^{1/2}}K_n^{\Pea}\left(\frac1{1+q}+\frac q{(1+q)^2}sn^{-1/2},\frac1{2q}xn^{-1/2};\frac1{1+q}+\frac q{(1+q)^2}tn^{-1/2},\frac1{2q}yn^{-1/2}\right)\\
=L^\alpha(s,x;t,y)
\end{multline}
and the convergence is uniform for $x$ and $y$ from compact subsets of $\R$.
\end{proposition}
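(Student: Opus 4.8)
The plan is to carry out a steepest descent analysis of the double contour integral in Proposition~\ref{prop:confluent:a0}, along the lines of \cite[Sec.~III]{TW3}. First I would take the confluent limit $b_j\to b=2qn$ in \eqref{doubleintegral:confluent:a0}, so that $\prod_{j=1}^n\frac{w-b_j}{z-b_j}$ becomes $\bigl(\frac{w-b}{z-b}\bigr)^n$, and then substitute the Pearcey scaling of Definition~\ref{def:peascaling}, i.e.\ $b=2qn$, the shifted times, and the rescaled space variables of \eqref{spacetimescale}. Writing the transition probabilities out via \eqref{transitionprob:sqB}, the integrand splits into a purely exponential factor $e^{E_w(w)+E_z(z)}$ and a subexponential factor (powers of $w$ and $z$, the two modified Bessel functions, the term $\frac1{w-z}$, and the prefactors $\frac1{2(t-1)},\frac1{2(1-s)}$). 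To leading order $E_w(w)=n\log(w-2qn)+\frac{w}{2q}$ and $E_z(z)=-n\log(z-2qn)-\frac{z}{2q}$. The crucial observation is that the $O(1)$ part of $E_w$ (resp.\ $E_z$) linear in $w$ (resp.\ $z$) vanishes identically --- this is precisely what the fine tuning $b=2qn$ achieves --- so that $E_w$ has a simple saddle at $w=0$ with $E_w(w)=E_w(0)-\frac{w^2}{8q^2n}+O(w^3/n^2)$ and $E_z$ a simple saddle at $z=0$ with $E_z(z)=E_z(0)+\frac{z^2}{8q^2n}+O(z^3/n^2)$.

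Next, the $w$-contour $(-\infty,0]$ already lies along the steepest descent (real) direction of $\Re E_w$, with the saddle at its endpoint $w=0$, whereas the closed loop $\Gamma_z$ around $b=2qn$ has to be deformed to the steepest descent contour of $\Re E_z$ through $0$, which is the imaginary axis; in deforming $\Gamma_z$ one closes the loop into $\{\Re z>0\}$, where $e^{-z/(2q)}(z-2qn)^{-n}$ decays fast enough for the large-arc contribution to vanish, the pole at $z=w$ is not crossed (it lies on the negative real axis), and for non-integer $\alpha$ one additionally uses a keyhole around the branch cut issuing from $0$. One then performs a change of variables $w=-c_1n^{1/2}u^2$ with $u\in(0,\infty)$ and $z=c_2n^{1/2}v^2$ with $v\in C$, for suitable positive constants $c_1,c_2$ depending on $q$, so that the deformed $z$-contour becomes a rescaled copy of the contour $C$ in \eqref{Lalphadef}. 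Under this substitution: (i) the quadratic terms $-\frac{w^2}{8q^2n}$ and $\frac{z^2}{8q^2n}$ become the quartic terms $-\frac{u^4}2$ and $\frac{v^4}2$; (ii) the $O(n^{-1/2})$ corrections linear in $w$ and $z$ produced by the shifted times $s,t$ become exactly the quadratic terms $-tu^2$ and $sv^2$; (iii) the cubic and higher-order terms of $E_w,E_z$ become $O(n^{-1/2})$ and drop out; (iv) the arguments $\frac{\sqrt{wy}}{t-1}$ and $\frac{\sqrt{xz}}{1-s}$ of the modified Bessel functions remain of order one, are rotated onto imaginary directions, and, through the connection formula $I_\alpha(\pm\ii\zeta)=e^{\pm\ii\alpha\pi/2}J_\alpha(\zeta)$, turn $I_\alpha$ into constant multiples of $J_\alpha(2\sqrt y\,u)$ and $J_\alpha(2\sqrt x\,v)$; and (v) the Jacobian $\d w\,\d z$, the factor $\frac1{w-z}$, the prefactors $\bigl(\tfrac yw\bigr)^{\alpha/2},\bigl(\tfrac zx\bigr)^{\alpha/2},\bigl(\tfrac wz\bigr)^\alpha,\frac1{2(t-1)},\frac1{2(1-s)}$ and the overall normalisation $\frac1{2qn^{1/2}}$ combine --- with the powers of $n$ cancelling exactly --- to reproduce the full integrand of the double integral in \eqref{Lalphadef}, including the prefactor $\bigl(\tfrac yx\bigr)^{\alpha/2}\bigl(\tfrac uv\bigr)^\alpha\frac{uv}{v^2-u^2}$ and the constant $\frac2{\pi\ii}$.

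It then remains to justify the passage to the limit. Along the chosen contours $\Re E_w$ attains a strict maximum and $\Re E_z$ a strict minimum at the origin, so the contribution of the part of the contours outside a fixed $\delta$-neighbourhood of $0$ is $O(e^{-cn})$; the non-compactness of the $w$-integration is harmless because the integrand decays like $e^{w/(2q)}$ times polynomial factors as $w\to-\infty$. On the remaining $\delta$-neighbourhood one Taylor expands the exponent, dominates the integrand by a fixed integrable function --- using the super-exponential decay of $e^{v^4/2}J_\alpha(2\sqrt x\,v)$ along $C$ and the explicit tail bounds, in the style of Section~\ref{s:Bessel_limit} --- and passes to the limit by dominated convergence, obtaining the double integral of \eqref{Lalphadef}. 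Since all these estimates are uniform for $x,y$ in a compact set, this gives the asserted uniform convergence. Finally, the inhomogeneous term $-p_{t-s}(x,y)\id_{t>s}$ converges to $-p_{\frac{t-s}2}(x,y)\id_{t>s}$ by direct substitution of the scaling into \eqref{transitionprob:sqB}--\eqref{BesselI:def} and continuity of $I_\alpha$.

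I expect the main obstacle to be the rigorous control of the $\Gamma_z$-deformation together with the tail estimates: one must move the closed loop $\Gamma_z$, initially around $b=2qn$, down to the imaginary axis through the origin while verifying, uniformly in $w\in(-\infty,0]$, that the moving contour never meets the pole at $z=w$ --- which is delicate when $|w|$ is small --- and while handling the branch point at $z=0$ for non-integer $\alpha$, and one must simultaneously produce uniform-in-$n$ integrable majorants on the non-compact $w$-tail. The remaining work, namely the precise bookkeeping of constants and the local convergence of the Bessel functions, is lengthy but routine and parallels the corresponding steps for the tacnode scaling carried out in Section~\ref{s:analysis_tacnode}.
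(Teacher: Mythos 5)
Your overall plan is the same one the paper follows: steepest descent on the double contour integral of Proposition~\ref{prop:confluent:a0} as in Tracy--Widom, with a double saddle at the origin, quadratic-to-quartic passage via $w\sim u^2$, $z\sim v^2$, and Bessel connection formulas to convert the $I_\alpha$'s into $J_\alpha$'s. Two concrete gaps remain, both of which the paper resolves by a specific device you have not supplied.

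First, and most seriously, you propose to deform the closed $z$-contour $\Gamma_z$ all the way to the imaginary axis (since $C$ under $v\mapsto v^2$ parametrises $\ii\R$). For fixed $n$ this deformation cannot be justified: along arcs tangent to $\ii\R$ the exponential factor $e^{-\Re z/(2q)}$ loses its bite (its modulus is $1$ on $\ii\R$), while $I_\alpha\bigl(\tfrac{\sqrt{xz}}{1-s}\bigr)$ still grows like $e^{c\sqrt{|z|}}$, so the polynomial damping $(z-b)^{-n}$ does not control the large-arc contribution near $\arg z=\pm\pi/2$. The paper flags this explicitly and deforms only to the sector contour $\Gamma_\varepsilon$ at angle $\pm(\pi/2-\varepsilon)$, where $\Re z\ge(\sin\varepsilon)|z|$ gives exponential decay; the limiting integral is then moved from $\Gamma_\varepsilon$ to $\ii\R$ \emph{after} passing to the limit, using the Gaussian decay of the limiting integrand. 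Your plan as written skips this and would fail at the deformation step.

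Second, you keep the coupling $1/(w-z)$ in the double integral and pass to the limit in both variables at once, and you yourself note the delicacy near the coincidence $z=w$ close to the origin. The paper decouples the two integrals by writing $1/(z-w)=\sqrt n\int_0^\infty e^{-\lambda\sqrt n(z-w)}\ud\lambda$ (valid since $\Re(z-w)>0$ along the chosen contours), carries out the saddle-point analysis separately in $w$ and in $z$ with a dominated-convergence argument in the auxiliary variable $\lambda$, and undoes the representation at the end. The paper remarks that the analysis could in principle be done without this trick, so your plan is not wrong in spirit, but without either the decoupling or a replacement argument the dominated-convergence step near the double saddle is left unaddressed. Everything else in your write-up (the fine tuning $b=2qn$ killing the linear term, the emergence of $-tu^2$, $sv^2$ from the $O(n^{-1/2})$ time shifts, the change of variables and bookkeeping of constants, and the trivial limit of $-p_{t-s}(x,y)\id_{t>s}$) matches the paper.
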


\begin{proof}
We use again the method of steep descent analysis.
First we substitute the scaled parameter values of the Pearcey scaling to \eqref{doubleintegral:confluent:a0} and use the definition of the transition kernel \eqref{transitionprob:sqB} to get
\begin{equation}\label{rescaledKPea}\begin{aligned}
\frac1{2qn^{1/2}}&K_n^{\Pea}\left(\frac1{1+q}+\frac q{(1+q)^2}sn^{-1/2},\frac1{2q}xn^{-1/2};\frac1{1+q}+\frac q{(1+q)^2}tn^{-1/2},\frac1{2q}yn^{-1/2}\right)\\
=&-p_{\frac{t-s}2}(x,y)\id_{t>s}\\
&-\left(\frac yx\right)^{\alpha/2}\frac1{4q\sqrt n\pi\ii}\int_\Gamma\d z\int_{-\infty}^0\d w\left(\frac wz\right)^{\alpha/2}
\exp\left(\frac{w+\frac y{2q\sqrt n}}{\frac{2q}{1+q}-\frac{2qt}{(1+q)^2\sqrt n}}-\frac{\frac x{2q\sqrt n}+z}{\frac{2q}{1+q}-\frac{2qs}{(1+q)^2\sqrt n}}\right)\\
&\quad\times I_\alpha\left(-2\sqrt{w\frac y{2q\sqrt n}}(1+\Or(n^{-1/2}))\right)I_\alpha\left(2\sqrt{\frac x{2q\sqrt n}z}(1+\Or(n^{-1/2}))\right)\\
&\quad\times\exp\left(\frac{z-w}2\right)\left(\frac{1-\frac w{2qn}}{1-\frac z{2qn}}\right)^n\frac1{z-w}+\Or(n^{-1/2}).
\end{aligned}\end{equation}
We perform the change of variables $w\to2qnw$ and $z\to2qnz$ in the double integral above.
Since $\Re(z-w)>0$, the following exponential integral representation holds:
\begin{equation}\label{intrepr}
\frac1{z-w}=\sqrt n\int_0^\infty\d\lambda\,e^{-\lambda\sqrt n(z-w)}.
\end{equation}
The advantage of this formula is that it separates the dependence of the integrand of \eqref{rescaledKPea} on the variables $w$ and $z$,
so the $n\to\infty$ limit of the $w$-integral and the $z$-integral can be taken separately, as we will proceed.
At the end, we will redo the operation \eqref{intrepr}.
In fact, one could perform the asymptotic analysis without using the identity \eqref{intrepr}, but then one would have even longer formulas in the proof.
Using \eqref{intrepr} in \eqref{rescaledKPea}, we arrive at
\begin{equation}\label{rescaledKPea2}\begin{aligned}
\frac1{2qn^{1/2}}&K_n^{\Pea}\left(\frac1{1+q}+\frac q{(1+q)^2}sn^{-1/2},\frac1{2q}xn^{-1/2};\frac1{1+q}+\frac q{(1+q)^2}tn^{-1/2},\frac1{2q}yn^{-1/2}\right)\\
=&-p_{\frac{t-s}2}(x,y)\id_{t>s}\\
&-\left(\frac yx\right)^{\alpha/2}\frac n{2\pi\ii}\int_\Gamma\d z\int_{-\infty}^0\d w\int_0^\infty\d\lambda\left(\frac wz\right)^{\alpha/2}\exp\left(n(\log(1-w)-\log(1-z))\right)\\
&\quad\times\exp\left(\frac{2qnw+\frac y{2q\sqrt n}}{\frac{2q}{1+q}\left(1-\frac t{(1+q)\sqrt n}\right)}-\frac{2qnz+\frac x{2q\sqrt n}}{\frac{2q}{1+q}\left(1-\frac s{(1+q)\sqrt n}\right)}+qn(z-w)
-\lambda\sqrt n(z-w)\right)\\
&\quad\times I_\alpha\left(-2n^{1/4}\sqrt{yw}(1+\Or(n^{-1/2}))\right)I_\alpha\left(2n^{1/4}\sqrt{xz}(1+\Or(n^{-1/2}))\right).
\end{aligned}\end{equation}

We start with the asymptotic analysis for the $w$-integral.
We are to take the limit of
\begin{equation}\label{wintegral}
n^{1/2+\alpha/4}\int_{-\infty}^0\d w\,w^{\alpha/2}e^{n(\log(1-w)+w)+\sqrt ntw+\sqrt n\lambda w+\Or(w)}I_\alpha\left(-2n^{1/4}\sqrt{yw}(1+\Or(n^{-1/2}))\right)
\end{equation}
where $\Or(w)$ means an error term which is bounded in absolute value by constant times $|w|$ for $n$ large enough.
The constant may depend on other parameters: $y$ and $t$ here.
The term $\Or(n^{-1/2})$ has a similar meaning in \eqref{wintegral}.

We use the method of steep descent analysis to determine the limit of \eqref{wintegral} as explained also in \cite{BF} and \cite{FV}.
Note that by the exponential asymptotics $I_\alpha(z)=e^{z+\Or(1)}$ as $z\to\infty$, see e.g.\ 9.7.1 in \cite{AS},
the Bessel function in \eqref{wintegral} corresponds to a term of order $n^{1/4}\sqrt w$ in the exponent.
The first step of the analysis is to consider the function that is multiplied by the highest power of $n$ in the exponent
\begin{equation}\label{f0Taylor}
f_0(w)=\log(1-w)+w=-\frac{w^2}2+\Or(w^3).
\end{equation}
We show that $\R_-$ is a steep descent path for the function $\Re(f_0(w))$ which means that it reaches its maximum at $w=0$ and that $\Re(f_0(w))$ is monotone along $\R_-$.
It is easily seen by taking the derivative
\begin{equation}
\frac{\d}{\d t}\Re(f_0(-t))=-\frac t{1+t}\le0
\end{equation}
for all $t\ge0$.

For a $\delta>0$ small enough which will be given later, restricting the integral in \eqref{wintegral} to $(-\delta,0)$
gives an error of order $\Or(\exp(-c\delta^2n))$ as $n\to\infty$ because of the steep descent property of the path.
We substitute $f_0$ by its Taylor expansion given in \eqref{f0Taylor},
and now we choose $\delta$ small enough such that the error of the Taylor expansion is much smaller than the main terms in the exponent.
We do a change of variables $\sqrt nw\to w$.
As $n\to\infty$, the error term of the Taylor expansion is $\Or(n^{-1/2})$.
In order to compare the integral with and without this error term in the exponent,
we use the inequality $|e^x-1|\le|x|e^{|x|}$ and we get that the difference of the two integrals is $\Or(n^{-1/2})$.
In the integral without the error term in the exponent, we extend the steepest descent path to $\R_-$ again where an error of $\Or(\exp(-c\delta^2n))$ is made.

This procedure of steep descent analysis proves that \eqref{wintegral} converges to
\begin{equation}
\int_{-\infty}^0\d w\,w^{\alpha/2} e^{-\frac{w^2}2+tw+\lambda w}I_\alpha\left(-2\sqrt{yw}\right)
\end{equation}
as $n\to\infty$. It is also seen from the above argument that the convergence is uniform if $y$ is in a compact interval.

With a similar method for the $z$-integral, one can show that
\begin{multline}\label{zintegral}
\frac{n^{1/2-\alpha/4}}{2\pi\ii}\int_\Gamma\d z\,z^{-\alpha/2} e^{n(-\log(1-z)-z)-\sqrt nsz-\sqrt n\lambda z+\Or(z)}I_\alpha\left(2n^{1/4}\sqrt{xz}(1+\Or(n^{-1/2}))\right)\\
\to\frac1{2\pi\ii}\int_{-\ii\R}\d z\,z^{-\alpha/2} e^{\frac{z^2}2-sz-\lambda z}I_\alpha\left(2\sqrt{xz}\right)
\end{multline}
as $n\to\infty$ uniformly as $x$ is in a finite interval where $-\ii\R$ is $\ii\R$ with opposite orientation.
The error terms here have similar meaning as in \eqref{wintegral}.
In this argument, one has to first specify the contour $\Gamma$.
Before taking the $n\to\infty$ limit, let $\Gamma$ to be the counterclockwise oriented sector contour
$\{e^{-\ii(\pi/2-\varepsilon)}t,t\in[0,R]\}\cup\{e^{\ii\theta}R,\theta\in[-\pi/2+\varepsilon,\pi/2-\varepsilon]\}\cup\{e^{\ii(\pi/2-\varepsilon)}t,t\in[R,0]\}$
where $t\in[R,0]$ refers to opposite orientation. Here $\varepsilon>0$ is a small value and $R$ is sufficiently large such that the contour surrounds the point $b=2n$.
As $R\to\infty$, the integrand on the left-hand side of \eqref{zintegral} along $\{e^{\ii\theta}R,\theta\in[-\pi/2+\varepsilon,\pi/2-\varepsilon]\}$
becomes exponentially small in $R$ for large enough but fixed $n$,
hence we can replace $\Gamma$ by the contour $\Gamma_\varepsilon$ consisting of two rays one coming from $e^{\ii(\pi/2-\varepsilon)}\infty$ to $0$ and one from $0$ to $e^{-\ii(\pi/2-\varepsilon)}\infty$.

The reason why we use the contour $\Gamma_\varepsilon$ for some small $\varepsilon>0$ for the analysis instead of $\ii\R=\Gamma_0$ is that
by deforming the contour on the left-hand side of \eqref{zintegral}, we do not have enough control on the decay of the integrand along $\{e^{\pm\ii\theta}R,\theta\in[\pi/2-\varepsilon,\pi/2]\}$.

The path $\Gamma_\varepsilon$ is a steep descent contour for the function $-\Re(f_0(z))$, since
\[-\frac{\d}{\d t}\Re(f_0(e^{\pm\ii(\pi/2-\varepsilon)}t))
=\frac{\cos^2\left(\frac\pi2-\varepsilon\right)-\sin^2\left(\frac\pi2-\varepsilon\right)-\cos\left(\frac\pi2-\varepsilon\right)t}
{\left(1-\cos\left(\frac\pi2-\varepsilon\right)t\right)^2+\left(\sin\left(\frac\pi2-\varepsilon\right)t\right)^2}\le0\]
for all $t\ge0$ if $\varepsilon>0$ is small enough.
One can perform a similar asymptotic analysis as for the $w$-integral using the steep descent contour $\Gamma_\varepsilon$ for the $z$-integral.
One gets the convergence \eqref{zintegral} with $-\ii\R$ replaced by $\Gamma_\varepsilon$ on the right-hand side,
but these two integrals are the same due to the decay coming from the quadratic term in the  exponent.

To show that the second term in the right-hand side of \eqref{rescaledKPea} without the minus sign converges to
\begin{equation}\label{ntoinftylimit}
\left(\frac yx\right)^{\alpha/2}\frac1{2\pi\ii}\int_{\ii\R}\d z\int_{-\infty}^0\d w\left(\frac wz\right)^{\alpha/2}
\frac1{z-w}\frac{e^{z^2/2-sz}}{e^{w^2/2-tw}}I_\alpha\left(-2\sqrt{yw}\right)I_\alpha\left(2\sqrt{xz}\right),
\end{equation}
we use a dominated convergence argument for the $\lambda$ integral of \eqref{rescaledKPea2}.
Following the lines of the proof of Proposition~\ref{prop:Jbound}, but actually easier as in Lemma 4.1 of \cite{FV},
one can give an exponential bound in $\lambda$ on the expression in \eqref{wintegral} and on the left-hand side of \eqref{zintegral} uniformly in $n$.
We omit the proof here.
This justifies that the second term on the right-hand side of \eqref{rescaledKPea} converges to \eqref{ntoinftylimit}.

What is left to show is that \eqref{ntoinftylimit} can be rewritten as the double integral in \eqref{Lalphadef}.
To this end, it is convenient to split the integration contour $\ii\R$ in \eqref{ntoinftylimit} into the half-lines $\ii\R_+$ and $\ii\R_-$.
Let us first consider the contribution from $\ii\R_-$.
By substituting $z$ by $-z$ and $w$ by $-w$ and taking into account the branches of the powers, we get the contribution
\begin{equation}\label{halfline1}
\left(\frac yx\right)^{\alpha/2}\frac1{2\pi\ii}\int_{\ii\R_+}\d z\int_{0}^\infty\d w\left(\frac wz\right)^{\alpha/2}
\frac1{w-z}\frac{e^{z^2/2+sz}}{e^{w^2/2+tw}}J_\alpha\left(2\sqrt{yw}\right)J_\alpha\left(2\sqrt{xz}\right)
\end{equation}
where we also used the identity
\[e^{\ii\alpha\pi/2}I_\alpha\left(-2\ii\sqrt z\right)=J_\alpha\left(2\sqrt z\right)\]
for any $z\in\cee$ for the Bessel functions (see e.g.\ 9.6.3 in \cite{AS}).
Similarly, the contribution from the half-line $\ii\R_+$ in \eqref{ntoinftylimit} can be written in exactly the same form as \eqref{halfline1}, but with $\ii\R_+$ replaced by $\ii\R_-$.
To get this, we also use the identity
\[e^{-\ii\alpha\pi}J_\alpha\left(-2\sqrt{xz}\right)=J_\alpha\left(2\sqrt{xz}\right)\]
for $x\in\R_+$ and $z\in\ii\R_-$ which is a consequence of the series expansion of the Bessel function (see e.g.\ 9.1.10 in \cite{AS}).

Combining these two contributions, we conclude that \eqref{ntoinftylimit} equals
\begin{equation}\label{halfline2}
\left(\frac yx\right)^{\alpha/2}\frac1{2\pi\ii}\int_{\ii\R}\d z\int_{0}^\infty\d w\left(\frac wz\right)^{\alpha/2}
\frac1{w-z}\frac{e^{z^2/2+sz}}{e^{w^2/2+tw}}J_\alpha\left(2\sqrt{yw}\right)J_\alpha\left(2\sqrt{xz}\right).
\end{equation}
Making the change of variables $u^2=w$ and $v^2=z$,
we rewrite this as
\begin{equation}
-\left(\frac yx\right)^{\alpha/2}\frac1{2\pi\ii}\int_{C}\d v\int_{0}^\infty\d u\, 4uv\left(\frac uv\right)^{\alpha}
\frac1{v^2-u^2}\frac{e^{v^4/2+sv^2}}{e^{u^4/2+tu^2}}J_\alpha\left(2\sqrt{y}u\right)J_\alpha\left(2\sqrt{x}v\right)
\end{equation}
where we recall the definition of the contour $C$. Hence we obtain the double integral on the right-hand side of \eqref{Lalphadef}.
\end{proof}

\section{Proofs of alternative hard edge Pearcey formulations and corollaries}\label{s:alternative_Pearcey}

In this section, we prove Proposition~\ref{thm:Pearcey:alternative} and Corollaries~\ref{cor:KMW} and \ref{cor:BK}.
The strategy for proving the proposition is the following.
If $\alpha$ is an integer, we can give another formulation of the kernel $L^\alpha$ in Proposition~\ref{prop:Pearcey:alternative2} below via the approach used to derive the hard edge tacnode kernel.
Then we show in Proposition~\ref{prop:pearcey:rank1} that the temperature derivative of the kernel has a rank one structure.
Finally, in Lemma~\ref{lemma:rank1:eq}, we see that the functions which appear in the rank one derivative can be written in a simpler form given in \eqref{g1:Pea}--\eqref{g2:Pea}.
The proof of Proposition~\ref{thm:Pearcey:alternative} is easy after this.

Let
\begin{equation}\label{Bx:def}
B(x) = e^{-x^2/2} = \frac{1}{\sqrt{2\pi}\ii}\int_{\ii\er}\d w e^{w^2/2-xw}.
\end{equation}
Let $\DD\alpha$ be the matrix of size $\al\times\al$ with $(k,l)$th entry
\begin{equation}\label{D:Pea}
(\DD\alpha)_{k,l} = B^{(k+l)}(-\si),\qquad k,l=0,\ldots,\al-1
\end{equation}
where the superscript denotes the $(k+l)$th derivative.
Let the row vector ${\boldsymbol \UU}$ of size $\al$ be defined as before by \eqref{defXi}, and let ${\bf h}$ be the column vector of size $\al$ with $k$th entry
\begin{equation}\label{hk:Pea}
h_k = \int_0^\infty \d\lam\, e^{-\vv\lam} B^{(k)}(\lam-\si),\qquad k=0,\ldots,\al-1.
\end{equation}
Then we put
\begin{equation}\label{M:Pea}
\Mm(\uu,\vv) = \frac{1}{\vv-\uu}-\boldsymbol{\UU}(\DD\alpha)^{-1}{\bf h}.
\end{equation}
Note that we use the same symbols ${\bf h}$ and $\Mm(\uu,\vv)$ in this section for different functions as in the sections about the hard edge tacnode process, but this should not lead to any confusion.
The invertibility of the matrix $\DD\alpha$ follows from the identity
\begin{equation}\label{detDDalpha}
\det\DD\alpha = \left(\prod_{j=0}^{\al-1} (-1)^j j!\right) e^{-\al\si^2/2}
\end{equation}
which is easily proved by induction on $\al$, see \eqref{Hermite:id2:step1bis} below.
Now we are ready to state

\begin{proposition}\label{prop:Pearcey:alternative2}
If $\alpha$ is a non-negative integer, then we can write $L^\alpha$ in \eqref{Lalphadef} as
\begin{multline}\label{Lalpharewrite}
L^\alpha(s,x;t,y)=-p_{\frac{t-s}{2}}(x,y)\id_{t>s}\\
+\frac1{(2\pi\ii)^2}\int_{\Gamma_{-s}}\d\vv\int_{\delta+\ii\R}\d\uu\,
\frac{e^{-\frac{\vv^2}2+\si\vv+\frac x{\vv+s}}}{e^{-\frac{\uu^2}2+\si\uu+\frac y{\uu+t}}}\frac{(\uu+t)^{\al-1}}{(\vv+s)^{\al+1}}\Mm(\uu,\vv)
\end{multline}
where $\Gamma_{-s}$ is a clockwise oriented circle surrounding the singularity at $-s$ and $\delta>0$ is chosen such that the contour $\delta+\ii\R$
passes to the right of the singularity at $-t$ and to the right of the contour $\Gamma_{-s}$.
\end{proposition}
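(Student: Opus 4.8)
The plan is to obtain \eqref{Lalpharewrite} from the finite-$n$ resolvent formula of Theorem~\ref{theorem:kernel:n} by first passing to the limit $a\to 0$ and then running a steep descent analysis under the Pearcey scaling, exactly in parallel with Section~\ref{s:analysis_tacnode}. By \eqref{KnPea}, $K_n^{\Pea}$ is the $a\to0$ confluent limit of \eqref{K_nkernelxieta}; both sides of \eqref{K_nkernelxieta} are continuous in $a$ at $a=0$, so the formula persists there even though the intermediate Toeplitz determinant representation \eqref{Toeplitz:ratio} does \emph{not} (for $\al\ge1$ the symbol $w(z;0,b,1)$ has no Fourier coefficients below $z^\al$, so the denominator determinant vanishes). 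At $a=0$ the block matrix \eqref{block:matrix:def} collapses: in \eqref{A:def} and \eqref{B:def} the inner integral over $z\in S_{\rho^{-1}}$ has an integrand which is analytic inside $S_{\rho^{-1}}$ (its only pole in $z$, at $z=w$ with $|w|=\rho>1$, lies outside), hence vanishes by Cauchy, so $A\to\id$ and $B\to 0$; the matrix thus becomes block-upper-triangular, with explicit inverse, and is invertible because $D$ is (its determinant is a nonzero ratio of factorials, closely related to \eqref{detDDalpha}). A residue evaluation also simplifies \eqref{gk:def} to $g_l=(-\xi)^{l+n+\al}e^{b\xi/2}$.

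Substituting the triangular inverse into \eqref{M:xi:eta} yields
\begin{equation}\label{Mrewrite:a0:plan}
M(\xi,\eta)\big|_{a=0}=\frac1{\eta-\xi}-\mathbf g\cdot\mathbf h-\big(\boldsymbol\xi-\mathbf gC\big)D^{-1}\mathbf{\what h}.
\end{equation}
The scalar $\mathbf g\cdot\mathbf h$ and the row vector $\mathbf gC$ are computed by resumming the geometric series $\sum_l(-\xi/w)^l=w/(w+\xi)$ and then splitting $\frac1{(w+\eta)(w+\xi)}=\frac1{\eta-\xi}\big(\frac1{w+\xi}-\frac1{w+\eta}\big)$, which reduces every ingredient of \eqref{Mrewrite:a0:plan} to residues at $w=0$ of rational times exponential integrands. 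In particular the $\ell^2(\zet_{\ge0})$ degrees of freedom drop out entirely, in agreement with the fact that the limiting kernel \eqref{Lalphadef} carries only a finite $\al\times\al$ block structure.

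Next I would insert the Pearcey scaling of Definition~\ref{def:peascaling} with $b=2qn(1+\si n^{-1/2})$, which produces the temperature parameter $\si$ as in Remark~\ref{rem:Pearceysigmadep}. The leading exponent in \eqref{K_nkernelxieta} is then governed by $\phi(z)=\ln(-z)+qz$, the $a=0$ analogue of \eqref{deff0} with the $z^{-1}$ term absent; $\phi$ has a \emph{non-degenerate} critical point at $z_c=-1/q$ with $\phi''(z_c)=-q^2\neq0$. Taking $\Gamma_\eta,\Gamma_\xi$ of Definition~\ref{def:contour:xi:eta} to be steep descent for $\pm\Re\phi$ and zooming in at scale $n^{-1/2}$ via $\eta=-\tfrac1q(1-\vv n^{-1/2})$, $\xi=-\tfrac1q(1-\uu n^{-1/2})$, a Taylor expansion gives $n[\phi(\eta)-\phi(\xi)]+q\si n^{1/2}(\eta-\xi)\to(-\tfrac{\vv^2}2+\si\vv)-(-\tfrac{\uu^2}2+\si\uu)$: the cubic corrections now carry an extra factor $n^{-1/2}$ and vanish, so the Airy factor of the tacnode case is replaced by the Gaussian factor $e^{-\vv^2/2+\si\vv}$. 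The remaining $x,y$- and power-factors converge to $e^{x/(\vv+s)-y/(\uu+t)}$ and $(\uu+t)^{\al-1}/(\vv+s)^{\al+1}$, the Jacobian together with the surviving prefactors combine with the normalization $\tfrac1{2qn^{1/2}}$ (cf.\ the proof of Theorem~\ref{thm:hardedgePearcey}), and the contours deform (with appropriate orientations) onto $\Gamma_{-s}$ and $\delta+\ii\R$.

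It then remains to show that, with this normalization, $M(\xi,\eta)|_{a=0}\to\Mm(\uu,\vv)$ of \eqref{M:Pea}: conjugating $D$, $C$, $\mathbf{\what h}$, $\boldsymbol\xi$ by Pascal-triangle matrices as in Proposition~\ref{prop:detdetlimit} and applying Stirling to $D_{k,l}|_{a=0}=(b/2)^{n+k-l}/(n+k-l)!$ identifies the limit of the conjugated $D$ with $\DD\al=(B^{(k+l)}(-\si))$ of \eqref{D:Pea} (probabilists' Hermite polynomials appear), and likewise $\mathbf{\what h}\to\mathbf h$ of \eqref{hk:Pea} and $\boldsymbol\xi\to\UU$; the scalar/row pieces, whose residue-at-$0$ sums are of Poisson type ($\sum_j c^j/j!$ with $c$ of order $n$), are controlled by a local central limit (Laplace) estimate and supply, after dividing by $\eta-\xi\sim(\vv-\uu)n^{-1/2}/q$, exactly the $\tfrac1{\vv-\uu}$ term of \eqref{M:Pea} together with corrections that cancel against the finite-dimensional part. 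Dominated convergence, with the (here Gaussian, hence simpler than those in Proposition~\ref{prop:functopbounds}) tail bounds, justifies the interchange of limits, and since $\tfrac1{2qn^{1/2}}K_n^{\Pea}\to L^\al$ by Theorem~\ref{thm:hardedgePearcey}, the identity \eqref{Lalpharewrite} follows. The main obstacle is precisely this last step: tracking how the $\ell^2$-collapse of \eqref{Mrewrite:a0:plan} is reorganized under the scaling so that the cancelled $\tfrac1{\eta-\xi}$ re-emerges as the $\tfrac1{\vv-\uu}$ of \eqref{M:Pea}, and matching the Poisson/Laplace asymptotics of the residue sums to the Gaussian--Hermite data $(\DD\al,\mathbf h,\UU)$, on top of the routine but lengthy dominated-convergence bookkeeping.
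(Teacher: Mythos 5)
Your proposal is correct and follows the route the paper itself indicates (but explicitly leaves to the reader): a steep descent analysis of the resolvent formula of Theorem~\ref{theorem:kernel:n} specialized to $a=0$, parallel to Section~\ref{s:analysis_tacnode}, with the nondegenerate quadratic critical point replacing the cubic one and the infinite-dimensional block collapsing. Your observation that the intermediate Toeplitz factorization \eqref{Toeplitz:ratio} degenerates at $a=0$ (strictly upper-triangular moment matrix for $\al\geq1$) while the final formula \eqref{K_nkernelxieta} persists by continuity, and that the block matrix becomes upper block-triangular with $A\to\id$, $B\to 0$, is exactly the right way to justify the $a=0$ step that the paper glosses over.
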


\begin{proposition}\label{prop:pearcey:rank1}
The derivative of the kernel \eqref{Lalpharewrite} with respect to $\si$ is the rank 1 kernel
\[\frac{\pa}{\pa\si} L^\alpha(s,x;t,y) = g_1(s,x)g_2(t,y)\]
where
\begin{align}
\label{g1:Pea:Schur} g_1(s,x) &= \frac1{2\pi\ii}\int_{\Gamma_{-s}}\d\vv\, e^{-\frac{\vv^2}2+\si\vv+\frac x{\vv+s}}(\vv+s)^{-(\al+1)}
\left[\begin{pmatrix} 0 & \cdots & 0 & 1 \end{pmatrix}\DD\alpha^{-1}{\bf h}\right],\\
\label{g2:Pea:Schur} g_2(t,y) &= \frac1{2\pi\ii}\int_{\delta+\ii\R}\d\uu\, e^{\frac{\uu^2}2-\si\uu-\frac y{\uu+t}}(\uu+t)^{\al-1}
\left[\uu^{\al}-{\boldsymbol \UU}\DD\alpha^{-1}\left( B^{(k+\al)}(-\si) \right)_{k=0}^{\al-1}\right].
\end{align}
If $\al=0$ then the expressions between square brackets above are understood to be $1$.
\end{proposition}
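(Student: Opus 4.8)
The plan is to differentiate the representation \eqref{Lalpharewrite} with respect to $\si$ under the double integral sign and to show that the resulting integrand factorizes. Differentiation under the integral is legitimate: the $\vv$-contour $\Gamma_{-s}$ is compact, on the $\uu$-contour $\delta+\ii\R$ the factor $e^{\uu^2/2}$ has Gaussian decay (which dominates the polynomial growth of $(\uu+t)^{\al-1}$ and of the entries of $\boldsymbol{\UU}$), and $\DD\alpha$ is invertible with $\si$-analytic inverse by \eqref{detDDalpha}; the term $-p_{\frac{t-s}{2}}(x,y)\id_{t>s}$ carries no $\si$-dependence and drops out. Write $G_1(\vv)=e^{-\vv^2/2+\si\vv+x/(\vv+s)}(\vv+s)^{-(\al+1)}$ for the $\vv$-dependent factors and $G_2(\uu)=e^{\uu^2/2-\si\uu-y/(\uu+t)}(\uu+t)^{\al-1}$ for the $\uu$-dependent ones, so that the integrand equals $G_1(\vv)\,G_2(\uu)\,\Mm(\uu,\vv)$. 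Since $\partial_\si G_1=\vv G_1$ and $\partial_\si G_2=-\uu G_2$, we obtain $\partial_\si[G_1G_2\Mm]=G_1G_2\big[(\vv-\uu)\Mm+\partial_\si\Mm\big]$, and the whole problem reduces to showing that $(\vv-\uu)\Mm(\uu,\vv)+\partial_\si\Mm(\uu,\vv)$ is a product of a function of $\uu$ and a function of $\vv$.

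To differentiate $\Mm=\tfrac1{\vv-\uu}-\boldsymbol{\UU}\DD\alpha^{-1}{\bf h}$ I would use $\partial_\si(\DD\alpha^{-1})=-\DD\alpha^{-1}(\partial_\si\DD\alpha)\DD\alpha^{-1}$ together with two elementary facts. Writing $d_m:=B^{(m)}(-\si)$, one has $\partial_\si\DD\alpha=-(d_{k+l+1})_{k,l=0}^{\al-1}$ directly from \eqref{D:Pea}. Integrating \eqref{hk:Pea} by parts (the boundary term at $\lambda=\infty$ vanishes by the Gaussian decay of $B^{(k)}$, the one at $\lambda=0$ gives $-d_k$) yields $\partial_\si h_k=d_k-\vv h_k$, i.e. $\partial_\si{\bf h}={\bf d}-\vv{\bf h}$ where ${\bf d}=(d_k)_{k=0}^{\al-1}$ is the first column of $\DD\alpha$, whence $\DD\alpha^{-1}{\bf d}={\bf e}_0$. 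Combining these, and using $\boldsymbol{\UU}{\bf e}_0=1$, a short computation gives
\[
(\vv-\uu)\Mm+\partial_\si\Mm
=\uu\,\boldsymbol{\UU}\DD\alpha^{-1}{\bf h}-\boldsymbol{\UU}\DD\alpha^{-1}(d_{k+l+1})_{k,l=0}^{\al-1}\DD\alpha^{-1}{\bf h}.
\]

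The crux is the cancellation in this expression. Let $\Sigma$ be the $\al\times\al$ shift matrix $\Sigma_{k,l}=\delta_{k,l+1}$, so that right multiplication by $\Sigma$ shifts columns one step to the left with a zero last column. Two structural identities are then available: the Hankel structure of $\DD\alpha$ gives $(d_{k+l+1})_{k,l=0}^{\al-1}=\DD\alpha\,\Sigma+{\bf d}'{\bf e}_{\al-1}^T$ with ${\bf d}'=(d_{k+\al})_{k=0}^{\al-1}=\big(B^{(k+\al)}(-\si)\big)_{k=0}^{\al-1}$, and the Vandermonde structure of $\boldsymbol{\UU}$ gives $\uu\,\boldsymbol{\UU}=\boldsymbol{\UU}\,\Sigma+\uu^{\al}{\bf e}_{\al-1}^T$. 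Substituting both into the previous display, the two terms $\boldsymbol{\UU}\Sigma\DD\alpha^{-1}{\bf h}$ cancel and one is left with
\[
(\vv-\uu)\Mm+\partial_\si\Mm=\Big(\uu^{\al}-\boldsymbol{\UU}\DD\alpha^{-1}{\bf d}'\Big)\Big({\bf e}_{\al-1}^T\DD\alpha^{-1}{\bf h}\Big),
\]
a product of the $\uu$-dependent scalar $\uu^{\al}-\boldsymbol{\UU}\DD\alpha^{-1}{\bf d}'$ and the $\vv$-dependent scalar ${\bf e}_{\al-1}^T\DD\alpha^{-1}{\bf h}$ (recall that ${\bf h}={\bf h}(\vv)$ through \eqref{hk:Pea}).

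Feeding this back into $\partial_\si L^\alpha=\tfrac1{(2\pi\ii)^2}\iint G_1G_2\big[(\vv-\uu)\Mm+\partial_\si\Mm\big]$ and separating the $\vv$- and $\uu$-integrals gives exactly $g_1(s,x)\,g_2(t,y)$ with $g_1,g_2$ as in \eqref{g1:Pea:Schur}--\eqref{g2:Pea:Schur} (note that ${\bf e}_{\al-1}^T=(0\;\cdots\;0\;1)$ and that $G_1,G_2$ are precisely the prefactors written there). For $\al=0$ the matrices are empty, $\Mm=1/(\vv-\uu)$, $(\vv-\uu)\Mm+\partial_\si\Mm=1$, and the bracketed factors are read as $1$, in accordance with the stated convention. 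The only genuinely delicate points are the justification of differentiation under the integral and — the real obstacle — recognizing the Hankel and Vandermonde shift decompositions that produce the cancellation; everything else is bookkeeping.
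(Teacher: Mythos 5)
Your proof is correct. Note that the paper itself gives no proof of this proposition: it merely remarks that the argument is ``very similar'' to the (also omitted) proof of Theorem~\ref{theorem:temp:der} for the tacnode kernel, so there is no paper argument to compare against; your write-up actually supplies the missing verification. I have checked the key algebra: the differentiation rules $\partial_\si G_1=\vv G_1$, $\partial_\si G_2=-\uu G_2$; the identity $\partial_\si h_k = -h_{k+1}=d_k-\vv h_k$ obtained by integration by parts in \eqref{hk:Pea}; the reduction to $(\vv-\uu)\Mm+\partial_\si\Mm = \uu\,\boldsymbol{\UU}\DD\alpha^{-1}{\bf h}-\boldsymbol{\UU}\DD\alpha^{-1}(d_{k+l+1})_{k,l}\DD\alpha^{-1}{\bf h}$ using $\DD\alpha^{-1}{\bf d}={\bf e}_0$ and $\boldsymbol{\UU}{\bf e}_0=1$; and, most importantly, the two shift identities $(d_{k+l+1})_{k,l}=\DD\alpha\Sigma+{\bf d}'{\bf e}_{\al-1}^T$ (from the Hankel structure of $\DD\alpha$) and $\uu\boldsymbol{\UU}=\boldsymbol{\UU}\Sigma+\uu^\al{\bf e}_{\al-1}^T$ (from the Vandermonde structure of $\boldsymbol{\UU}$), which make the $\boldsymbol{\UU}\Sigma\DD\alpha^{-1}{\bf h}$ terms cancel and leave the rank-one factorization $(\uu^\al-\boldsymbol{\UU}\DD\alpha^{-1}{\bf d}')({\bf e}_{\al-1}^T\DD\alpha^{-1}{\bf h})$. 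This matches \eqref{g1:Pea:Schur}--\eqref{g2:Pea:Schur} exactly, including the $\al=0$ degenerate case. Your justification of differentiation under the integral (compact $\vv$-contour, Gaussian decay in $\uu$, analyticity of $\DD\alpha^{-1}$ in $\si$ from \eqref{detDDalpha}) is adequate. The shift-matrix decomposition is a cleaner way to organize the cancellation than grinding through entrywise identities, and it is the sort of argument the authors plausibly had in mind; it would serve well as a published proof.
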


The proofs of Propositions~\ref{prop:Pearcey:alternative2} and \ref{prop:pearcey:rank1} are very similar to the ones for the analogous statements for the hard edge tacnode kernel.
We leave this to the interested reader.

Contrary to the tacnode case, it turns out that the above formulas can be considerably simplified.

\begin{lemma}\label{lemma:rank1:eq} The functions given in \eqref{g1:Pea:Schur}--\eqref{g2:Pea:Schur} can be written in the simplified form
\begin{align}
\label{g1:Pea} g_1(s,x) &= \frac{(-x)^{-\al}}{2\pi\ii}\int_{\Gamma_{-s}}\d\vv\, e^{-\frac{\vv^2}2+\si\vv+\frac x{\vv+s}}(\vv+s)^{\al-1},\\
\label{g2:Pea} g_2(t,y) &= \frac{(-y)^{\al}}{2\pi\ii}\int_{\delta+\ii\R}\d\uu\, e^{\frac{\uu^2}2-\si\uu-\frac y{\uu+t}}(\uu+t)^{-\al-1}.
\end{align}
\end{lemma}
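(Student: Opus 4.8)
The plan is to evaluate the two Schur‑complement quantities occurring in \eqref{g1:Pea:Schur}–\eqref{g2:Pea:Schur} in closed form and then to simplify the resulting contour integrals. Since $\DD\alpha=(B^{(k+l)}(-\si))_{k,l=0}^{\al-1}$ with $B(x)=e^{-x^2/2}$ is a Hankel matrix built from derivatives of a Gaussian, the linear systems defining these quantities can be recast as Hermite interpolation problems. For $g_1$, write $\phi(\vv):=\begin{pmatrix}0&\cdots&0&1\end{pmatrix}\DD\alpha^{-1}{\bf h}$; solving $\DD\alpha\,{\bf c}={\bf h}$ means choosing $\psi(\lam):=\sum_{l=0}^{\al-1}c_l B^{(l)}(\lam-\si)$ so that $\psi^{(k)}(0)=h_k$ for $k=0,\dots,\al-1$. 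The integration by parts $h_k=\vv h_{k-1}-B^{(k-1)}(-\si)$ in \eqref{hk:Pea} shows that $\chi(\lam):=e^{\vv\lam}\int_\lam^\infty e^{-\vv\mu}B(\mu-\si)\,\d\mu$ has the same first $\al$ Taylor coefficients at $\lam=0$ (it satisfies $\chi'=\vv\chi-B(\cdot-\si)$, $\chi(0)=h_0$). Using $B^{(l)}(\lam-\si)=(-1)^l H_l(\lam-\si)e^{-(\lam-\si)^2/2}$ with $H_l$ the monic Hermite polynomial of degree $l$, the number $c_{\al-1}$ equals $(-1)^{\al-1}$ times the coefficient of $\lam^{\al-1}$ in the degree‑$\le\al-1$ Taylor polynomial of $\chi(\lam)e^{(\lam-\si)^2/2}$; a Gaussian computation gives $\chi(\lam)e^{(\lam-\si)^2/2}=\int_0^\infty e^{-(\vv+\lam-\si)r-r^2/2}\,\d r$, hence
\[\phi(\vv)=\frac1{(\al-1)!}\int_0^\infty r^{\al-1}e^{-(\vv-\si)r-r^2/2}\,\d r.\]
The same scheme applied to $g_2$, with $\lam\mapsto B^{(\al)}(\lam-\si)$ playing the role of $\chi$ and with the operator identity $H_l=e^{-\frac12 D^2}x^l$ ($D=\d/\d x$) used to pass back from the Hermite basis to the monomial basis, gives
\[\uu^\al-\boldsymbol\UU\,\DD\alpha^{-1}\big(B^{(k+\al)}(-\si)\big)_{k=0}^{\al-1}=\wt H_\al(\uu-\si),\]
where $\wt H_\al(x):=e^{\frac12 D^2}x^\al$ is the modified Hermite polynomial (monic of degree $\al$, with generating function $\sum_{\al\ge0}\wt H_\al(x)\tau^\al/\al!=e^{x\tau+\tau^2/2}$).

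Next I would substitute these formulas into \eqref{g1:Pea:Schur} and \eqref{g2:Pea:Schur}. For $g_1$, completing the square gives $e^{-\vv^2/2+\si\vv}\phi(\vv)=\frac{e^{\si^2/2}}{(\al-1)!}\int_0^\infty r^{\al-1}e^{-(\vv-\si+r)^2/2}\,\d r$, and since $\pa_\vv e^{-(\vv-\si+r)^2/2}=\pa_r e^{-(\vv-\si+r)^2/2}$, applying $\pa_\vv^\al$ and integrating by parts $\al$ times in $r$ (the polynomial $r^{\al-1}$ has degree $\al-1$, so every boundary term at $r=0$ vanishes except the one appearing at the last step) yields the Rodrigues‑type identity
\[\frac{\d^\al}{\d\vv^\al}\Big(e^{-\vv^2/2+\si\vv}\,\phi(\vv)\Big)=(-1)^\al e^{-\vv^2/2+\si\vv}.\]
Since $\Gamma_{-s}$ is a clockwise loop around the only singularity $-s$ of the integrands, both $g_1$ in \eqref{g1:Pea:Schur} and $g_1$ in \eqref{g1:Pea} equal minus the residue at $\vv=-s$. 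Expanding $e^{x/(\vv+s)}$ in powers of $(\vv+s)^{-1}$, the residue of $e^{x/(\vv+s)}(\vv+s)^{m}g(\vv)$ at $-s$ is $\sum_{j\ge0}\frac{x^j}{j!}\,g_{j-m-1}$, where $g_k$ is the $k$th Taylor coefficient of $g$ at $-s$ and terms with $k<0$ are dropped; in particular for $m=-\al-1$ it depends on $g$ only through $g^{(\al)}$. Matching the two residues — the one from \eqref{g1:Pea:Schur}, with $g=e^{-\vv^2/2+\si\vv}\phi(\vv)$ and $m=-\al-1$, against the one from \eqref{g1:Pea}, with $g=e^{-\vv^2/2+\si\vv}$, $m=\al-1$ and scalar factor $(-x)^{-\al}$ — and using the displayed identity reduces the claim to the elementary relation between the Taylor coefficients of $e^{-\vv^2/2+\si\vv}\phi$ and of $e^{-\vv^2/2+\si\vv}$, and proves \eqref{g1:Pea}.

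For $g_2$ the argument runs in parallel. After substituting $\tilde\phi(\uu)=\wt H_\al(\uu-\si)$ into \eqref{g2:Pea:Schur} and changing variables $v=\uu+t$, both integrands take the form $e^{v^2/2-(t+\si)v-y/v}$ times $v^{\al-1}\wt H_\al(v-t-\si)$, respectively $(-y)^\al v^{-\al-1}$, on a vertical line to the right of $0$. The structural ingredient is now the modified‑Hermite analogue of the Rodrigues formula, $\frac{\d^\al}{\d v^\al}e^{v^2/2-(t+\si)v}=\wt H_\al(v-t-\si)\,e^{v^2/2-(t+\si)v}$, which follows because both sides satisfy the recursion $\wt H_{k+1}(z)=z\wt H_k(z)+\wt H_k'(z)$. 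Expanding $e^{-y/v}$ in powers of $1/v$ and converting each of the resulting line integrals into a residue at $0$ via $w=1/v$ (so the line becomes a circle through the origin), one matches Laurent coefficients exactly as in the $g_1$ case and obtains \eqref{g2:Pea}. Alternatively — and for small $\al$ more transparently — one can exhibit a rational function $R$ with a pole only at the origin such that the difference of the two integrands equals $\pa_\uu\!\big[e^{\uu^2/2-\si\uu-y/(\uu+t)}R(\uu+t)\big]$ (for $\al=1$ the antiderivative is $e^{\uu^2/2-\si\uu-y/(\uu+t)}$ itself), which decays at $\delta\pm\ii\infty$ so that the difference integrates to $0$.

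The main obstacle is the first step: the Cramer‑rule expressions for the Schur‑complement quantities are ratios of determinants that are not manageable directly, and the decisive move is to recognize $\DD\alpha$ as the Hankel matrix of the derivatives‑of‑a‑Gaussian system and to reformulate the linear systems as Hermite interpolation problems — after which the operator calculus $e^{\pm\frac12 D^2}$ handles the change of basis and produces $\wt H_\al$. Once $\phi$ and $\tilde\phi$ are identified, the rest is routine contour manipulation; the only points that need care are the orientations and the admissibility of the contour deformations and substitutions (the clockwise loop $\Gamma_{-s}$, and the passage from $\delta+\ii\R$ to a circle under $w=1/(\uu+t)$), together with the interchanges of $\int_0^\infty\d r$ with the contour integrals, which are justified by the Gaussian decay in $r$ and the rapid decay of the integrands along the contours.
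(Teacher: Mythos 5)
Your proof is correct, and it takes a genuinely different route from the paper's. The paper first reduces the problem, via Lemma~\ref{lemma:int:parts:repeated} (repeated integration by parts on the contour), to two algebraic identities, \eqref{Hermite:id1} and \eqref{Hermite:id2}, and then proves each of those by induction on $\al$, working with the Schur complement written as a ratio of determinants, applying column operations to $\DD\al$, and using the recursion \eqref{B:recursion} to unwind one size step at a time. You instead recognize $\DD\alpha$ as the Hankel matrix of a Hermite interpolation problem, which lets you write the Cramer--rule quantity $\phi(\vv)=\begin{pmatrix}0&\cdots&0&1\end{pmatrix}\DD\alpha^{-1}{\bf h}$ in the closed integral form $\phi(\vv)=\tfrac{1}{(\al-1)!}\int_0^\infty r^{\al-1}e^{-(\vv-\si)r-r^2/2}\,\d r$ (a formula the paper never produces), and you then obtain the Rodrigues-type identity $\pa_\vv^\al(e^{-\vv^2/2+\si\vv}\phi)=(-1)^\al e^{-\vv^2/2+\si\vv}$ by a direct $\al$-fold integration by parts in $r$, with no induction. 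The identity for $g_2$, namely $\uu^\al-\tilde\phi(\uu)=\wt H_\al(\uu-\si)$, you get by the same interpolation viewpoint together with the change of basis $e^{\pm\frac12 D^2}$ between monomials and Hermite polynomials. This is more structural and more self-contained than the paper's determinantal induction, and I have checked the key computations ($\chi(\lam)e^{(\lam-\si)^2/2}=\int_0^\infty e^{-(\vv+\lam-\si)r-r^2/2}\d r$, the integration by parts giving $(-1)^\al$, the Hermite reflection $\wt H_\al(-x)=(-1)^\al\wt H_\al(x)$, and the $\al=1,2$ sanity checks) and they are all correct.

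Both proofs finish with the same contour-manipulation step. You phrase it for $g_1$ as matching Laurent coefficients of the residue at $\vv=-s$, which works; for $g_2$ your $w=1/v$ change of variables to ``a circle through the origin'' is the only imprecise spot in the write-up, since the image of the vertical line is a circle passing through the origin only in a limiting sense and the interchange of the $1/v$-expansion with the line integral would need justification. The antiderivative alternative you mention in the last sentence of the $g_2$ paragraph is cleaner, and in fact it is exactly the content of Lemma~\ref{lemma:int:parts:repeated} applied on the line $\delta+\ii\R$, where the boundary terms vanish at $\delta\pm\ii\infty$ by Gaussian decay; using it directly (as the paper does) would streamline that part.
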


An important ingredient for the proof of Lemma~\ref{lemma:rank1:eq} is the following elementary analysis lemma which we give with a draft of the proof.

\begin{lemma}[Repeated integration by parts]\label{lemma:int:parts:repeated}
For any $\al\in\zet_{\geq 0}$ and any function $f$ with sufficient decay at infinity in the direction of the contour $\Gamma_\vv$, we have
\begin{equation}\label{int:parts:repeated}
\int_{\Gamma_\vv}\ud\vv\, e^{\frac{x}{\vv+s}}(\vv+s)^{-\al-1}f(\vv) = x^{-\al}\int_{\Gamma_\vv}\ud\vv\, e^{\frac{x}{\vv+s}}(\vv+s)^{\al-1}\frac{\pa^{\al} f}{\pa\vv^{\al}}(\vv).
\end{equation}
\end{lemma}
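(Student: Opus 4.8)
The plan is to prove \eqref{int:parts:repeated} by induction on $\al$, the only analytic ingredient being the elementary observation
\[
\pa_\vv\, e^{\frac{x}{\vv+s}}=-\frac{x}{(\vv+s)^2}\,e^{\frac{x}{\vv+s}},
\]
together with the fact that, under the decay hypothesis on $f$, integration by parts along $\Gamma_\vv$ produces no boundary contribution: the two infinite branches of $\Gamma_\vv$ contribute nothing by the assumed decay of $f$ and its derivatives, while the small loop of $\Gamma_\vv$ around $-s$ is closed. For $\al=0$ the identity holds trivially, the two sides being literally equal; for $\al=1$ it is a single integration by parts, since $e^{x/(\vv+s)}(\vv+s)^{-2}=-\tfrac1x\,\pa_\vv e^{x/(\vv+s)}$, so moving the derivative onto $f$ gives exactly \eqref{int:parts:repeated}.

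For the inductive step I would fix $\al\geq2$ and assume \eqref{int:parts:repeated} for all smaller nonnegative exponents (a \emph{strong} induction). Applying the $\al=1$ case to the function $h(\vv)=(\vv+s)^{1-\al}f(\vv)$ rewrites the left-hand side of \eqref{int:parts:repeated} as
\[
\frac1x\int_{\Gamma_\vv}\!\ud\vv\, e^{\frac{x}{\vv+s}}\,\pa_\vv\big[(\vv+s)^{1-\al}f\big]
=\frac{1-\al}{x}\int_{\Gamma_\vv}\!\ud\vv\, e^{\frac{x}{\vv+s}}(\vv+s)^{-\al}f
+\frac1x\int_{\Gamma_\vv}\!\ud\vv\, e^{\frac{x}{\vv+s}}(\vv+s)^{1-\al}f'.
\]
The first integral on the right is of the form \eqref{int:parts:repeated} at exponent $\al-1$ applied to $f$, and the second is of that form at exponent $\al-2$ applied to $f'$; by the induction hypothesis both become integrals of $e^{x/(\vv+s)}$ against a power of $(\vv+s)$ times $f^{(\al-1)}$, with explicit powers of $x$ in front (using $(f')^{(\al-2)}=f^{(\al-1)}$ for the second). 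It then remains to check that the resulting linear combination coincides with the target $x^{-\al}\int_{\Gamma_\vv}e^{x/(\vv+s)}(\vv+s)^{\al-1}f^{(\al)}$; this is done by one last integration by parts of the target, using $\pa_\vv\big[e^{x/(\vv+s)}(\vv+s)^{\al-1}\big]=e^{x/(\vv+s)}\big[-x(\vv+s)^{\al-3}+(\al-1)(\vv+s)^{\al-2}\big]$, after which the two sides match term by term.

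The step I expect to be the only real subtlety is precisely the reason the induction must be \emph{strong}: a single integration by parts in the step unavoidably generates the extra, lower-order term $\tfrac{1-\al}{x}\int e^{x/(\vv+s)}(\vv+s)^{-\al}f$, so a naive induction from the single predecessor $\al-1$ does not close; one recognizes this term as itself an instance of \eqref{int:parts:repeated} at level $\al-1$, and feeding in \emph{both} levels $\al-1$ and $\al-2$ then makes everything cancel. The rest is a short, purely algebraic matching of coefficients, which I would not spell out. One should also be mildly careful that the phrase ``sufficient decay at infinity in the direction of $\Gamma_\vv$'' is understood to cover the relevant derivatives of $f$ as well, so that all of these integrations by parts are legitimate.
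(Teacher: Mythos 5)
Your proof is correct, and the algebra does close: after applying the $\al=1$ case to $h(\vv)=(\vv+s)^{1-\al}f(\vv)$ and then the induction hypothesis at levels $\al-1$ (to $f$) and $\al-2$ (to $f'$), the left-hand side becomes $(1-\al)x^{-\al}\int e^{x/(\vv+s)}(\vv+s)^{\al-2}f^{(\al-1)}\ud\vv + x^{-(\al-1)}\int e^{x/(\vv+s)}(\vv+s)^{\al-3}f^{(\al-1)}\ud\vv$, which is exactly what one gets by integrating the target $x^{-\al}\int e^{x/(\vv+s)}(\vv+s)^{\al-1}f^{(\al)}\ud\vv$ by parts once. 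The route is genuinely different from the paper's: the paper performs $\al$ successive integrations by parts on the left-hand side directly (each time writing $e^{x/(\vv+s)}(\vv+s)^k\ud\vv=-x^{-1}(\vv+s)^{k+2}\ud\,e^{x/(\vv+s)}$), then observes that the resulting spray of terms telescopes to a single survivor; your version packages that cancellation into a two-step (strong) induction, trading the bookkeeping of the full $\al$-fold expansion for one final integration-by-parts check. Your version is a bit cleaner to verify because the potential for missed cancellations is confined to one short identity, and your observation that naive single-step induction does not close (because the extra $(\vv+s)^{1-\al}f'$ term is of level $\al-2$, not $\al-1$) is exactly the right point to flag. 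One small alternative worth noting: a single-step induction does work if one instead applies the level-$(\al-1)$ hypothesis to $g=f'$ after one integration by parts lowering the power by two, but your strong-induction formulation is equally valid and self-contained. As you note, the decay hypothesis on $f$ should be read as covering $f,f',\dots,f^{(\al)}$ against the relevant powers of $(\vv+s)$ so that all boundary terms vanish along the unbounded branches of $\Gamma_\vv$; the paper leaves this equally implicit.
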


\begin{proof}[Sketch of proof]
This is trivial for $\al=0$.
For $\al=1$, it follows by applying integration by parts to the left-hand side of \eqref{int:parts:repeated}.
For higher values of $\al$, it follows from a repeated ($\al$-fold) use of integration by parts
writing each time $e^{\frac{x}{\vv+s}}(\vv+s)^{k}\ud\vv = -x^{-1}(\vv+s)^{k+2}\ud e^{\frac{x}{\vv+s}}$ and then integrating by parts.
This allows to express the left-hand side of \eqref{int:parts:repeated} as $x^{-\al}$ times a sum of several terms.
Then one notes that these terms cancel out each other completely except for one surviving term which is precisely the right-hand side of \eqref{int:parts:repeated}.
\end{proof}

\begin{proof}[Proof of Lemma~\ref{lemma:rank1:eq}]
This is obvious for $\al=0$.
For general $\al$, the first step in the proof is to use Lemma~\ref{lemma:int:parts:repeated}.
Then it remains to prove two identities, one for $g_1$ and one for $g_2$.

We start with the identity for $g_1$ which reads
\begin{equation}\label{Hermite:id1}
\begin{pmatrix} 0 & \cdots & 0 & 1 \end{pmatrix}\DD\al^{-1} \frac{\pa^{\al}}{\pa\vv^\al}\left(e^{-\vv^2/2+\si\vv}{h_l}\right)_{l=0}^{\alpha-1} = (-1)^\al e^{-\vv^2/2+\si\vv}.
\end{equation}
To prove this identity, we first derive the formula
\begin{equation}\label{h:derivative}
\frac{\pa}{\pa\vv}h_l=(\vv-\sigma)h_l-B^{(l)}(-\sigma)+lh_{l-1}
\end{equation}
as follows.
Note that the derivatives of the function $B$ satisfy the recursion relation
\begin{equation}\label{B:recursion}
\lambda B^{(l)}(\lambda-\sigma)=\sigma B^{(l)}(\lambda-\sigma)-B^{(l+1)}(\lambda-\sigma)-lB^{(l-1)}(\lambda-\sigma)
\end{equation}
for all $l=0,1,2,\dots$ which is closely related to the recursion for the Hermite polynomials.
By taking the $\vv$ derivative of \eqref{hk:Pea} and using \eqref{B:recursion}, one gets
\begin{equation}\label{h:derivative2}
\frac{\pa}{\pa\vv}h_l=h_{l+1}+lh_{l-1}-\sigma h_l.
\end{equation}
Integration by parts yields the equation
\begin{equation}
h_{l+1}=\vv h_l-B^{(l)}(-\sigma)
\end{equation}
which substituted into \eqref{h:derivative2} exactly gives \eqref{h:derivative}.

If we apply one derivation on the left-hand side of \eqref{Hermite:id1} to the vector of entries $e^{-\vv^2/2+\si\vv}h_l$, then by \eqref{h:derivative}, we get
\begin{equation}\label{h:first:deriv}\begin{aligned}
\frac{\pa}{\pa\vv}\left(e^{-\vv^2/2+\si\vv}{h_l}\right)_{l=0}^{\alpha-1}&=e^{-\vv^2/2+\sigma\vv}\left(-B^{(l)}(-\sigma)+lh_{l-1}\right)_{l=0}^{\alpha-1}\\
&=e^{-\vv^2/2+\sigma\vv}\left(-\DD\alpha \begin{pmatrix} 1 & 0 & \dots & 0 \end{pmatrix}^T+\left(lh_{l-1}\right)_{l=0}^{\alpha-1}\right).
\end{aligned}\end{equation}
If $\alpha=1$, then $(lh_{l-1})_{l=0}^{\alpha-1}$ is $0$ and \eqref{Hermite:id1} follows easily.
Otherwise (if $\al>1$) after substituting \eqref{h:first:deriv} into the left-hand side of \eqref{Hermite:id1} and cancelling $\DD\alpha$ with its inverse, the scalar product of the two vectors is $0$,
hence the left-hand side of \eqref{Hermite:id1} can be written as
\begin{equation}\label{Hermite:id1:step1}
\begin{pmatrix} 0 & \cdots & 0 & 1 \end{pmatrix}\DD\al^{-1}  \frac{\pa^{\al-1}}{\pa\vv^{\al-1}}\left(e^{-\vv^2/2+\si\vv}{l h_{l-1}}\right)_{l=0}^{\alpha-1}.
\end{equation}

At this point we apply the following elementary column operations to the matrix $\DD\al$.
For $l=\al,\al-1,\ldots,1$ we update column $l$ by subtracting $\si$ times column $l-1$ and adding $l-1$ times column $l-2$.
By \eqref{B:recursion} with $\lambda=0$, the resulting matrix has zeros in its zeroth row, except for the $(0,0)$ entry, which is $e^{-\si^2/2}$.
We should apply the same operations to the row vector $\begin{pmatrix} 0 & \cdots & 0 & 1 \end{pmatrix}$, but this vector does not change under the operations.
Then \eqref{Hermite:id1:step1} reduces to
\begin{equation}\label{Hermite:id1:step2}
\begin{pmatrix} 0 & \cdots & 0 & 1 \end{pmatrix}\begin{pmatrix} e^{-\si^2/2} & 0 \\ * & \left( -(k+1)B^{(k+l)}(-\si) \right)_{k,l=0}^{\al-2} \end{pmatrix}^{-1}
\frac{\pa^{\al-1}}{\pa\vv^{\al-1}}\left(e^{-\vv^2/2+\si\vv}{l h_{l-1}}\right)_{l=0}^{\alpha-1}
\end{equation}
where $*$ denotes an unimportant column vector.
Taking into account the sparsity pattern of the involved matrices and vectors and using the block inversion formula \eqref{schur:inversion}, this in turn can be reduced to
\begin{equation}\label{Hermite:id1:step3}
\begin{pmatrix} 0 & \cdots & 0 & 1 \end{pmatrix}
\left[\begin{pmatrix} -(k+1)B^{(k+l)}(-\si) \end{pmatrix}_{k,l=0}^{\al-2}\right]^{-1} \frac{\pa^{\al-1}}{\pa\vv^{\al-1}}\left(e^{-\vv^2/2+\si\vv}{l h_{l-1}}\right)_{l=1}^{\alpha-1}.
\end{equation}
Finally, we note that the factors $(k+1)$ in the matrix rows and $l$ in the entries of the column vector cancel each other.
So we get
\begin{equation}\label{Hermite:id1:step4}
(-1)\begin{pmatrix} 0 & \cdots & 0 & 1 \end{pmatrix} \DD{\al-1}^{-1}  \frac{\pa^{\al-1}}{\pa\vv^{\al-1}}\left(e^{-\vv^2/2+\si\vv}{h_{l}}\right)_{l=0}^{\alpha-2}.
\end{equation}
But this is just what we get from the identity for $\al-1$ in \eqref{Hermite:id1},
so the proof of \eqref{Hermite:id1} ends by induction yielding the equivalence of $g_1$ in \eqref{g1:Pea} and in \eqref{g1:Pea:Schur}.

Now we turn to the identity for $g_2$. Using Lemma~\ref{lemma:int:parts:repeated}, we will prove the identity
\begin{equation}\label{Hermite:id2}
\left(\frac{\pa^{\al}}{\pa \uu^\al}e^{\uu^2/2-\si\uu}\right)e^{-\uu^2/2+\si\uu} = \uu^\al- {\boldsymbol \UU}\DD\al^{-1}\left( B^{(k+\al)}(-\si) \right)_{k=0}^{\al-1}
\end{equation}
by induction on $\al$. For $\al=0$ the identity is trivial since both sides are $1$.
For $\al\geq 1$, denote the left-hand side of \eqref{Hermite:id2} by $H_\al$ and the right-hand side by $\wtil H_\al$.
Note that $H_\al$ is basically a Hermite polynomial and it satisfies the recursion
\begin{equation}\label{Hermite:id2:rec}
H_{\al} = (\uu-\si)H_{\al-1}+H_{\al-1}',\qquad \al\in\zet_{>0}
\end{equation}
where the prime denotes the $\uu$-derivative.
We will show that $\wtil H_\al$ satisfies the same recursion.
The expression $\wtil H_\al$ has a Schur complement form and therefore it can be written as a ratio of two nested determinants:
\begin{equation}\label{Hermite:id2:Schur} \wtil H_\al = \det E_\al/\det \DD\al \qquad \textrm{with}\qquad E_\al := \begin{pmatrix} \DD\al & \left( B^{(k+\al)}(-\si) \right)_{k=0}^{\al-1} \\
{\boldsymbol \UU} & \uu^\al\end{pmatrix}.
\end{equation}
Note that the last column of $E_\al$ nicely follows the pattern of the previous columns.
We note that the entries in the topmost rows of $E_\al$ are basically the Hermite polynomials and they satisfy the recursion \eqref{B:recursion} with $\lambda=0$.
We use the following column operations for $E_\al$: for $l=\al,\al-1,\ldots,1$ we update column $l$ by subtracting $\si$ times column $l-1$ and adding $l-1$ times column $l-2$.
The resulting matrix has zeros in its zeroth row, except for the $(0,0)$ entry, which is $e^{-\si^2/2}$, and so we can reduce the size of the determinant. This gives us
\[\det E_\al = e^{-\si^2/2}\det\begin{pmatrix} \left( -(k+1)B^{(k+l)}(-\si) \right)_{k=0}^{\al-2} \\ (\uu-\si+l\uu^{-1})\uu^{l} \end{pmatrix}_{l=0}^{\al-1}.\]
We take out the factor $-(k+1)$ from the $k$th row yielding a total contribution
\[(-1)^{\al-1}(\al-1)!.\]
Next we split the determinant by linearity with respect to its last row, recognizing the smaller matrix $E_{\al-1}$ in one of the terms, and its $\uu$-derivative $E_{\al-1}'$ in the other term.
This yields
\begin{equation}\label{Hermite:id2:step1}
\det E_\al = (-1)^{\al-1}(\al-1)!e^{-\si^2/2}\left[ (\uu-\si)\det E_{\al-1}+\det E_{\al-1}'\right].
\end{equation}
Similar (but actually simpler) manipulations yield
\begin{equation}\label{Hermite:id2:step1bis}
\det \DD\al = (-1)^{\al-1}(\al-1)!e^{-\si^2/2}\det \DD{\al-1}.
\end{equation}
Using these relations in \eqref{Hermite:id2:Schur}, we find that $\wtil H_\al$ satisfies the required recursion relation \eqref{Hermite:id2:rec}.
This ends the proof of the identity \eqref{Hermite:id2}, hence the two definitions \eqref{g2:Pea} and \eqref{g2:Pea:Schur} of $g_2$ coincide.
\end{proof}

\begin{proof}[Proof of Proposition~\ref{thm:Pearcey:alternative}]
By Proposition~\ref{prop:pearcey:rank1}, the derivative of the kernel $L^\alpha$ with respect to $\sigma$ can be written
in a rank one form with $g_1$ and $g_2$ given by \eqref{g1:Pea:Schur}--\eqref{g2:Pea:Schur}.
Note that the derivative of the right-hand side of \eqref{Lalpha:KMW:step1} is also rank one but with $g_1$ and $g_2$ given by \eqref{g1:Pea}--\eqref{g2:Pea}.
Hence Lemma~\ref{lemma:rank1:eq} shows that the two derivatives are the same.
Moreover, both double integrals converge to zero in the $\si\to +\infty$ limit.
This ends the proof.
\end{proof}

Now we turn to the proofs of the two corollaries.

\begin{proof}[Proof of Corollary~\ref{cor:KMW}]
Similarly to Remark~\ref{rem:cafasso} for the tacnode kernel, by introducing the new integration variables $u:=-(w+s)^{-1}$ and $v:=-(z+t)^{-1}$ in \eqref{Lalpha:KMW:step1},
the kernel can be written alternatively as
\begin{equation}\label{Lalpha:KMW:step2}\begin{aligned}
L^\alpha(s,x;t,y) = &-p_{\frac{t-s}{2}}(x,y)\id_{t>s}\\
&+ \left(\frac yx\right)^\alpha \frac1{(2\pi\ii)^2}\int_{\Gamma_u}\d u\, \int_{\Gamma_v}\d v\, \frac{1}{u-v+uv(t-s)}
\frac{e^{(v^{-1}+t)^2/2+\si (v^{-1}+t)+yv}}{e^{(u^{-1}+s)^2/2+\si (u^{-1}+s)+xu}}\frac{v^{\al}}{u^{\al}}
\end{aligned}\end{equation}
where $\Gamma_v$ is a clockwise circle in the left half-plane touching zero along the imaginary axis, and $\Gamma_u$ is a counterclockwise loop surrounding $\Gamma_v$.
It is now straightforward to get \eqref{KMW:reduction}.
\end{proof}

\begin{remark}\label{remark:KMW:noninteger}
Similarly to Remark~\ref{rem:Pearcy:nonint}, \eqref{Lalpha:KMW:step2} also extends to any real $\al>-1$.
In this case, we take $\Gamma_v$ as above but we replace $\Gamma_u$ by a counterclockwise contour coming from $+\infty$ from above, encircling $\Gamma_v$, and returning to $+\infty$ from below.
We then take the powers $u^\al$, $v^\al$ with a branch cut along the positive half-line, i.e., with $0<\arg u,\arg v<2\pi$, see also \cite[Eq.\ (1.19)]{KMW2}.
\end{remark}

\begin{proof}[Proof of Corollary~\ref{cor:BK}]
For $\alpha=-1/2$, we have
\begin{align*}
J_{-1/2}\left(2\sqrt yu\right)&=\frac1{\sqrt\pi y^{1/4}\sqrt u}\cos\left(2\sqrt yu\right)
\end{align*}
if $u>0$ or $u\in C$. Substituting this, we get the following expression for the kernel
\begin{equation}\begin{aligned}
L^{-1/2}(s,x;t,y)= &-p_{\frac{t-s}2}(x,y)\id_{t>s}\\
&+\frac2{\pi^2\ii\sqrt y}\int_C\d v\int_{\R_+}\d u\frac v{v^2-u^2}\frac{e^{v^4/2+sv^2}}{e^{u^4/2+tu^2}}\cos\left(2\sqrt yu\right)\cos\left(2\sqrt xv\right).
\end{aligned}\end{equation}
To conclude the proof, one has to do the change of variables $u\to2^{1/4}u$ and $v\to2^{1/4}v$ and substitute the new variables from \eqref{kernelchangeofvariable}
along with the derivative $\frac{\d y}{\d\sigma_1}=\frac{\sigma_1}{2\sqrt2}$, the rest is straightforward.
\end{proof}

\section{Asymptotic invertibility}\label{s:asymptinv}

In this section, we prove Proposition~\ref{prop:limSchur}.
It relies on Lemmas~\ref{lemma:detAirewrite}, \ref{lemma:fklasympt} and \ref{lemma:detAn}.
In order to state them, we introduce the following notation.
Then we give the proof of the proposition, finally, we prove the lemmas.
Inspired by the asymptotics of the Airy function
\begin{equation}\label{Aiasympt}
\Ai(x)=\frac1{2\sqrt\pi}x^{-1/4}\exp\left(-\frac23 x^{3/2}\right)\left(1+\Or(x^{-3/2})\right)
\end{equation}
as $x\to\infty$, see e.g.\ 10.4.59 in \cite{AS}, let us denote
\begin{equation}\label{deffk}
f_k(x)=2\sqrt\pi\exp\left(\frac23 x^{3/2}\right)\frac{\pa^k}{\pa x^k}\Ai(x)
\end{equation}
for $k=0,1,2,\dots$.

\begin{lemma}\label{lemma:detAirewrite}
For any positive integer $n$, one has the equality
\begin{equation}\label{detAirewrite}
\det\left(\frac{\pa^{k+l}}{\pa x^{k+l}}\Ai(x)\right)_{k,l=0}^{n-1}=\frac{e^{-\frac 23 n x^{3/2}}}{2^n \pi^{n/2}}\det\left(f_k^{(l)}(x)\right)_{k,l=0}^{n-1}
\end{equation}
where $f_k^{(l)}$ is the $l$th derivative of $f_k$.
\end{lemma}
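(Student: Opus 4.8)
The plan is to exhibit the matrix $\bigl(\frac{\pa^{k+l}}{\pa x^{k+l}}\Ai(x)\bigr)_{k,l=0}^{n-1}$ as a product of two matrices whose determinants are transparent. First I would record the trivial consequence of the definition \eqref{deffk}: writing $g(x):=e^{-\frac23 x^{3/2}}$, one has
\[
\frac{\pa^k}{\pa x^k}\Ai(x) = \frac{1}{2\sqrt\pi}\,g(x)\,f_k(x),
\qquad\text{hence}\qquad
\frac{\pa^{k+l}}{\pa x^{k+l}}\Ai(x) = \frac1{2\sqrt\pi}\,\frac{\d^l}{\d x^l}\bigl(g(x)f_k(x)\bigr).
\]

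Next I would apply the Leibniz rule to the $l$th derivative of the product $g f_k$, giving
\[
\frac{\d^l}{\d x^l}\bigl(g f_k\bigr)(x) = \sum_{j=0}^{l}\binom{l}{j}\,g^{(l-j)}(x)\,f_k^{(j)}(x).
\]
The key observation is that the right-hand side is exactly the $(k,l)$ entry of the matrix product $F\,U$, where $F:=\bigl(f_k^{(j)}(x)\bigr)_{k,j=0}^{n-1}$ and $U$ is the $n\times n$ matrix with entries $U_{j,l}=\binom{l}{j}g^{(l-j)}(x)$ for $j\le l$ and $U_{j,l}=0$ for $j>l$. Thus $U$ is upper triangular with every diagonal entry equal to $g(x)$, so $\det U=g(x)^n$. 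Pulling the constant $\tfrac1{2\sqrt\pi}$ out of each of the $n$ rows of $\bigl(\frac{\pa^{k+l}}{\pa x^{k+l}}\Ai\bigr)_{k,l}$ and using multiplicativity of the determinant then yields
\[
\det\left(\frac{\pa^{k+l}}{\pa x^{k+l}}\Ai(x)\right)_{k,l=0}^{n-1}
= \Bigl(\tfrac1{2\sqrt\pi}\Bigr)^n\det F\,\det U
= \frac{g(x)^n}{2^n\pi^{n/2}}\,\det\bigl(f_k^{(l)}(x)\bigr)_{k,l=0}^{n-1},
\]
which is \eqref{detAirewrite} since $g(x)^n=e^{-\frac23 n x^{3/2}}$ and $2^n\pi^{n/2}=(2\sqrt\pi)^n$.

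There is essentially no serious obstacle here; the only point to verify with care is that the Leibniz expansion genuinely factors as the matrix product $FU$ — i.e.\ that the summation index $j$ simultaneously plays the role of the column index of $F$ and the row index of $U$, and that the triangular structure ($U_{j,l}=0$ for $j>l$) together with the constant diagonal $U_{l,l}=g(x)$ is correctly read off. I would also note that $g$ and all its derivatives are smooth on $(0,\infty)$, so every manipulation is legitimate on the half-line relevant to Proposition~\ref{prop:limSchur}, and that the reduction to $\det\bigl(f_k^{(l)}(x)\bigr)_{k,l=0}^{n-1}$ is precisely the form treated by the subsequent Lemmas~\ref{lemma:fklasympt} and~\ref{lemma:detAn}.
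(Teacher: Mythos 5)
Your proof is correct, and it takes a genuinely different (and arguably cleaner) route than the paper's. The paper first uses the definition \eqref{deffk} to write $\frac{\pa^{k+l}}{\pa x^{k+l}}\Ai(x)=\frac{g(x)}{2\sqrt\pi}f_{k+l}(x)$ with $g(x)=e^{-\frac23x^{3/2}}$, pulls the scalar factor out of the determinant, and then reduces the problem to the intermediate identity $\det\bigl(f_{k+l}\bigr)_{k,l}=\det\bigl(f_k^{(l)}\bigr)_{k,l}$. That identity is established by deriving the recursion $f_{k+1}=-x^{1/2}f_k+f_k'$, expressing $f_{k+l}$ by induction as a combination $c_{l,0}f_k+\cdots+c_{l,l-1}f_k^{(l-1)}+f_k^{(l)}$ with $k$-independent coefficients, and then performing explicit column eliminations. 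You instead use the equivalent representation $\frac{\pa^{k+l}}{\pa x^{k+l}}\Ai=\frac1{2\sqrt\pi}\frac{\d^l}{\d x^l}(g f_k)$, expand by Leibniz, and read off a one-shot factorization $\bigl(\Ai^{(k+l)}\bigr)=\frac1{2\sqrt\pi}\,F\,U$ with $U$ upper triangular and $\det U=g(x)^n$. In effect your triangular matrix $U$ packages both the paper's scalar extraction and its column operations into a single multiplicative step; the paper's recursion \eqref{recursion_fk} is, underneath, the same product rule you invoke via Leibniz, but your version avoids the induction on $l$ and the bookkeeping of the coefficients $c_{l,j}$. Both proofs are elementary and rigorous; yours is more structural and would generalize readily to any $f_k$ obtained from a single function by multiplication by a fixed nonvanishing $g$, while the paper's argument is phrased so that the subsequent lemmas can reuse the recursion \eqref{recursion_fk} directly.
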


By the definition $\Ai''(x)=x\Ai(x)$ of the Airy function, all higher derivatives of $f_k(x)$ can be expressed in terms $\Ai(x)$ and $\Ai'(x)$.
By \eqref{Aiasympt} and the asymptotics of $\Ai'(x)$, see 10.4.61 in \cite{AS}, it follows that
\begin{equation}\label{fklasympt}
f_k^{(l)}(x)\sim(-1)^k 4^{-l} x^{k/2-1/4-l}a_{k,l}
\end{equation}
as $x\to\infty$ with
\begin{equation}\label{defakl}
a_{k,l}=(2k-1)(2k-5)\dots(2k+3-4l)
\end{equation}
where the empty product is defined to be $1$.

Note that \eqref{fklasympt} already yields the following

\begin{lemma}\label{lemma:fklasympt}
We have the asymptotics
\begin{equation}\label{detfklasympt}
\det\left(f_k^{(l)}(x)\right)_{k,l=0}^{n-1}\sim(-1)^{\binom n2}4^{-\binom n2}x^{-n^2/4}\det\left(\left(a_{k,l}\right)_{k,l=0}^{n-1}\right)
\end{equation}
as $x\to\infty$.
\end{lemma}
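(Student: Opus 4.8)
The plan is to read off the leading behaviour of the determinant directly from the entrywise asymptotics \eqref{fklasympt}. First I would invoke the multilinearity of the determinant in its rows and in its columns to pull out of row $k$ the scalar $(-1)^k x^{k/2-1/4}$ and out of column $l$ the scalar $4^{-l}x^{-l}$. A short bookkeeping of exponents --- $\prod_{k=0}^{n-1}(-1)^k=(-1)^{\binom n2}$, $\prod_{l=0}^{n-1}4^{-l}=4^{-\binom n2}$, and $\sum_{k=0}^{n-1}(k/2-1/4)-\sum_{l=0}^{n-1}l=-n^2/4$ --- gives, for all large $x$, the identity
\[\det\left(f_k^{(l)}(x)\right)_{k,l=0}^{n-1}=(-1)^{\binom n2}\,4^{-\binom n2}\,x^{-n^2/4}\,\det\left(\frac{f_k^{(l)}(x)}{(-1)^k4^{-l}x^{k/2-1/4-l}}\right)_{k,l=0}^{n-1}.\]

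Next I would let $x\to\infty$ in the remaining determinant. By \eqref{fklasympt} each entry of the matrix it contains converges to $a_{k,l}$; note that each $a_{k,l}$ in \eqref{defakl} is a nonzero integer, so the relation $\sim$ in \eqref{fklasympt} is a genuine asymptotic equivalence. Since the determinant is a polynomial, and hence continuous, function of its entries, this reduced determinant converges to $\det\left(a_{k,l}\right)_{k,l=0}^{n-1}$. Combined with the displayed identity, this shows that the ratio of the two sides of \eqref{detfklasympt} tends to $1$, which is exactly the assertion --- provided the limiting determinant is nonzero.

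It therefore remains to check $\det\left(a_{k,l}\right)_{k,l=0}^{n-1}\neq0$. Here I would note that, viewed as a function of $k$, the quantity $a_{k,l}=(2k-1)(2k-5)\cdots(2k+3-4l)$ is a polynomial of degree exactly $l$ with leading coefficient $2^l$; consequently the matrix $\left(a_{k,l}\right)_{k,l=0}^{n-1}$ differs from the Vandermonde matrix $\left(k^l\right)_{k,l=0}^{n-1}$ only by column scalings by the factors $2^l$ and by determinant-preserving upper-triangular column operations. Hence $\det\left(a_{k,l}\right)_{k,l=0}^{n-1}=2^{\binom n2}\prod_{j=0}^{n-1}j!\neq0$; this explicit value will also be recorded in Lemma~\ref{lemma:detAn}, but for the present purpose only its non-vanishing is needed.

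Every step here is elementary, so I do not expect a serious obstacle; the only places that call for a little care are the exponent bookkeeping (which must produce exactly $x^{-n^2/4}$) and the claim about the degree and leading coefficient of $k\mapsto a_{k,l}$, both of which are immediate from \eqref{defakl}.
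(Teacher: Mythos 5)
Your argument is correct and takes the same (only) natural route the paper implies: the paper simply asserts that \eqref{fklasympt} ``already yields'' the lemma, and your factor-out-and-pass-to-the-limit argument, together with the exponent bookkeeping giving $\sum_k(k/2-1/4)-\sum_l l=-n^2/4$, is exactly what makes that rigorous. You also rightly flag that the equivalence $\sim$ in \eqref{detfklasympt} is only meaningful because $\det(a_{k,l})\neq0$; your observation that $k\mapsto a_{k,l}$ is a polynomial of degree $l$ with leading coefficient $2^l$, so that $(a_{k,l})$ is a scaled generalized Vandermonde matrix, is a clean way to see the nonvanishing, and it complements (indeed, gives a shorter alternative to) the explicit row-reduction evaluation the paper carries out in Lemma~\ref{lemma:detAn}.
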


The matrix in the lemma has the form
\begin{equation}
\left(a_{k,l}\right)_{k,l=0}^{n-1} =\begin{pmatrix} 1 & -1 & (-1)(-5) & (-1)(-5)(-9) & \hdots \\ 1 & 1 & 1(-3) & 1(-3)(-7) & \hdots \\ 1 & 3 & 3(-1) & 3(-1)(-5) & \hdots \\
1 & 5 & 5\cdot1 & 5\cdot1(-3) & \hdots \\ \vdots & \vdots & \vdots & \vdots & \ddots \end{pmatrix}.
\end{equation}

\begin{lemma}\label{lemma:detAn}
The determinant of the $n\times n$ matrix defined by \eqref{defakl} is
\begin{equation}\label{detAn}
\det\left(\left(a_{k,l}\right)_{k,l=0}^{n-1}\right)=2^{\binom n2}\left(\prod_{j=0}^{n-1} j!\right).
\end{equation}
\end{lemma}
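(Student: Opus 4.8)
The plan is to reduce the matrix $\left(a_{k,l}\right)_{k,l=0}^{n-1}$ to a Vandermonde matrix by column operations, which do not change the determinant. The key observation is that, writing out the factors in \eqref{defakl}, one has $a_{k,l}=\prod_{j=0}^{l-1}(2k-1-4j)$, so that for each fixed $l$ the entry $a_{k,l}$ is a polynomial in $k$ of degree exactly $l$ with leading coefficient $\prod_{j=0}^{l-1}2=2^l$. Hence there are constants $c_{l,m}$ (independent of $k$) such that
\[a_{k,l}=2^l k^l+\sum_{m=0}^{l-1}c_{l,m}k^m .\]

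Next I would process the columns of $\left(a_{k,l}\right)_{k,l=0}^{n-1}$ from left to right. Column $0$ is already the constant vector $(1)_k=(k^0)_k$. Assume inductively that columns $0,\dots,l-1$ have been transformed into the vectors $\left(2^m k^m\right)_{k=0}^{n-1}$ for $m=0,\dots,l-1$. Subtracting $\sum_{m=0}^{l-1}2^{-m}c_{l,m}$ times the (already transformed) column $m$ from column $l$ then turns column $l$ into $\left(2^l k^l\right)_{k=0}^{n-1}$. Since each such step adds a scalar multiple of one column to another, the determinant is unchanged, and after processing all columns the matrix has become $\left(2^l k^l\right)_{k,l=0}^{n-1}$. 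The triangular dependence $a_{k,l}=2^l k^l+(\text{lower degree in }k)$ is exactly what guarantees that this reduction can be carried out consistently, so this is also the only point that needs a little care; I do not expect a genuine obstacle.

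Finally, factoring $2^l$ out of the $l$th column gives
\[\det\left(\left(a_{k,l}\right)_{k,l=0}^{n-1}\right)=\left(\prod_{l=0}^{n-1}2^l\right)\det\left(\left(k^l\right)_{k,l=0}^{n-1}\right)=2^{\binom n2}\prod_{0\le i<j\le n-1}(j-i),\]
using the Vandermonde determinant with nodes $0,1,\dots,n-1$. Since $\prod_{i=0}^{j-1}(j-i)=j!$ for each $j$, the right-hand side equals $2^{\binom n2}\prod_{j=1}^{n-1}j!=2^{\binom n2}\prod_{j=0}^{n-1}j!$, which is precisely \eqref{detAn}. (One could instead prove \eqref{detAn} by induction on $n$, expanding along the last column after the same column reduction, but the Vandermonde route is shorter and avoids keeping track of cofactors.)
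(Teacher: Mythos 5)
Your proof is correct. The observation that $a_{k,l}=\prod_{j=0}^{l-1}(2k-1-4j)$ is a polynomial of degree $l$ in $k$ with leading coefficient $2^l$ is accurate, and the column-reduction to $(2^l k^l)_{k,l}$ followed by the Vandermonde evaluation at nodes $0,1,\dots,n-1$ gives exactly \eqref{detAn}, since $\prod_{0\le i<j\le n-1}(j-i)=\prod_{j=0}^{n-1}j!$.

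Your route is related to but genuinely different from the paper's. The paper also performs column operations, but only to clear the top row (except the $(0,0)$ entry): it then expands along that row, factors $2k$ out of the $k$th remaining row to obtain $2^{n-1}(n-1)!$, and observes that what is left is the same kind of matrix of size $n-1$; the formula then follows by induction on $n$. Your version instead exploits the polynomial structure in $k$ globally, reducing the whole matrix at once to a Vandermonde matrix. What the paper's approach buys is that it never needs to identify $a_{k,l}$ as a polynomial in $k$ — it works directly from the three-term product pattern visible in the explicit matrix — but it needs an inductive bookkeeping step. Your approach buys a one-shot reduction to a classical determinant, at the modest cost of first recasting the entries as polynomials in $k$; it is arguably cleaner and avoids the induction entirely.
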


\begin{proof}[Proof of Proposition~\ref{prop:limSchur}]
\begin{enumerate}
\item The asymptotics \eqref{Airy:der:matrix} readily follows if one puts together \eqref{detAirewrite}, \eqref{detfklasympt} and \eqref{detAn}.
\item The assertion is a consequence of the first part and the discussion in the paragraph before the statement of the proposition.
\end{enumerate}
\end{proof}

\begin{proof}[Proof of Lemma~\ref{lemma:detAirewrite}]
By definition \eqref{deffk}, one immediately gets \eqref{detAirewrite} with $f_k^{(l)}(x)$ replaced by $f_{k+l}(x)$ in the determinant on the right-hand side.
Hence one only needs to show that
\begin{equation}\label{fdetequality}
\det\left(f_{k+l}(x)\right)_{k,l=0}^{n-1}=\det\left(f_k^{(l)}(x)\right)_{k,l=0}^{n-1}.
\end{equation}

It follows from \eqref{deffk} that the recursion relation
\begin{equation}\label{recursion_fk}
f_{k+1}(x)=-x^{1/2}f_k(x)+f_k'(x)
\end{equation}
holds for any $k\ge0$ integer.
In \eqref{recursion_fk}, $f_{k+1}$ is expressed in terms of $f_k$ and its derivative where the coefficient of the latter is $1$.
By applying \eqref{recursion_fk} and its derivative on the right-hand side of \eqref{recursion_fk} again,
one can express $f_{k+2}$ in terms of $f_k$ and its first and second derivative where the coefficient of the second derivative is $1$.
In general, one gets by induction that
\begin{equation}\label{generalrecursion_fk}
f_{k+l}(x)=c_{l,0}(x)f_k(x)+c_{l,1}(x)f_k'(x)+\dots+c_{l,l-1}(x)f_k^{(l-1)}(x)+f_k^{(l)}(x)
\end{equation}
for all $k,l$ non-negative integers where $c_{l,0}(x),\dots,c_{l,l-1}(x)$ are coefficients that depend on $x$ but not on $k$.

The equality of the determinants \eqref{fdetequality} now follows by simple column transformations.
We first represent the entries on the left-hand side of \eqref{fdetequality} as given in \eqref{generalrecursion_fk}.
Then we perform the following step for $l=1,\dots,n-1$:
from the $l$th column of the matrix, we subtract $c_{l,0}$ times the $0$ column, $c_{l,1}$ times the first column, up to $c_{l,l-1}$ times the $l-1$st column.
After these transformations, we recover the right-hand side of \eqref{fdetequality}, which completes the proof.
\end{proof}

\begin{proof}[Proof of Lemma~\ref{lemma:detAn}]
We apply elementary column operation on the matrix $\left(a_{k,l}\right)_{k,l=0}^{n-1}$ in order to get $0$ everywhere in the first row except for the first position as follows.
Starting from the last column and proceeding from right to left until the second column,
we add an appropriate multiple of the previous column to each of the columns such that we get $0$ in the top position of the column.
With this procedure, we get that
\begin{equation}\label{Anexpand}
\det\left(\left(a_{k,l}\right)_{k,l=0}^{n-1}\right)=\det\begin{pmatrix} 1 & 0 & 0 & 0 & \hdots \\ 1 & 2 & 1\cdot2 & 1(-3)\cdot2 & \hdots \\ 1 & 4 & 3\cdot4 & 3(-1)\cdot4 & \hdots \\
1 & 6 & 5\cdot6 & 5\cdot1\cdot6 & \hdots \\ \vdots & \vdots & \vdots & \vdots & \ddots\end{pmatrix}
=\det\begin{pmatrix} 2 & 1\cdot2 & 1(-3)\cdot2 & \hdots \\ 4 & 3\cdot4 & 3(-1)\cdot4 & \hdots \\ 6 & 5\cdot6 & 5\cdot1\cdot6 & \hdots \\
\vdots & \vdots & \vdots & \ddots\end{pmatrix}
\end{equation}
where we expanded the determinant along the first row in the last step.
By dividing the $k$th row on the right-hand side of \eqref{Anexpand} by $2k$,
we get that the determinant of the matrix $\left(a_{k,l}\right)_{k,l=0}^{n-1}$ is $2^{n-1}(n-1)!$ times a similar $(n-1)\times(n-1)$ determinant as that of $\left(a_{k,l}\right)_{k,l=0}^{n-1}$.
The statement of the lemma now follows by induction.
\end{proof}

\appendix

\section{Limit of Bessel functions}\label{s:Bessel_limit}

\begin{proposition}\label{prop:Jconv}
Fix a non-negative integer $k$. Then for any $s>0$,
\begin{equation}\label{Jconv}
J^{(k)}(s,t) \to \frac{\partial^k}{\partial s^k} \Ai(s)
\end{equation}
pointwise as $t\to\infty$.
\end{proposition}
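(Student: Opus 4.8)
The plan is to run a steepest-descent analysis on the contour-integral representation of $J^{(k)}(s,t)$ recorded in \eqref{Jdifference},
\[
J^{(k)}(s,t)=\frac{t^{\frac{k+1}{3}}}{2\pi\ii}\int_{S_1}\exp\!\Bigl(2t\,h(z)-st^{1/3}\ln z\Bigr)\,\frac{(z^{-1}-1)^{k}}{z}\,\d z,
\qquad h(z):=-\ln z+\frac{z-z^{-1}}{2},
\]
where for non-integer values of $2t+st^{1/3}$ one reads $S_1$ as the negative of a Hankel contour encircling $0$ and avoiding the cut along $(-\infty,0]$, exactly as in Remark~\ref{remark:nonint}. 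The first thing I would record is the local data at the relevant saddle: since $h'(z)=(z-1)^{2}/(2z^{2})$, the point $z=1$ is a double (cubic) saddle, and a direct Taylor expansion gives $h(1+u)=\tfrac16u^{3}+\Or(u^{4})$, together with $\ln(1+u)=u+\Or(u^{2})$ and $(1+u)^{-1}-1=-u+\Or(u^{2})$.

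Next I would deform $S_1$ to the steepest-descent contour through $z=1$, namely the level set $\{\Im h=0,\ \Re h\le0\}$ issuing from the saddle: it leaves $z=1$ along the two conjugate rays at angles $\pm\pi/3$ and runs off to $-\infty$ on either side of the negative real axis, so it is itself a Hankel-type loop around $0$, homotopic to $S_1$ in the domain of analyticity of the integrand (which is singular only at $z=0$; for integer index the loop and $S_1$ coincide). Along this contour $\Re h<0$ except at $z=1$, so for fixed small $\delta>0$ the part with $|z-1|\ge\delta$ contributes $\Or\!\bigl(t^{(k+1)/3}e^{-ct}\bigr)\to0$ for some $c>0$. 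On the remaining arc I would substitute $z=1+\zeta t^{-1/3}$. Then $2t\,h(z)=\tfrac13\zeta^{3}+\Or(t^{-1/3})$, $-st^{1/3}\ln z=-s\zeta+\Or(t^{-1/3})$, $(z^{-1}-1)^{k}=(-\zeta)^{k}t^{-k/3}\bigl(1+\Or(t^{-1/3})\bigr)$ and $\d z/z=t^{-1/3}\,\d\zeta\,(1+\Or(t^{-1/3}))$, the error terms being $\Or(t^{-1/3})$ times polynomials in $\zeta$; the powers of $t$ cancel exactly ($t^{(k+1)/3}\cdot t^{-k/3}\cdot t^{-1/3}=1$), and the rescaled contour tends to the standard Airy contour $\mathcal C$ running from $e^{-\ii\pi/3}\infty$ to $e^{\ii\pi/3}\infty$.

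Finally, using $\Re(\zeta^{3})\le-c_1|\zeta|^{3}$ along the rays of $\mathcal C$ (together with the cubic decay $\Re h(z)\le -c_1|z-1|^3$ on the descent arc near the saddle) to build a $t$-independent integrable majorant, dominated convergence yields
\[
J^{(k)}(s,t)\ \longrightarrow\ \frac{1}{2\pi\ii}\int_{\mathcal C}e^{\zeta^{3}/3-s\zeta}\,(-\zeta)^{k}\,\d\zeta
=\frac{\pa^{k}}{\pa s^{k}}\!\left(\frac{1}{2\pi\ii}\int_{\mathcal C}e^{\zeta^{3}/3-s\zeta}\,\d\zeta\right)
=\frac{\pa^{k}}{\pa s^{k}}\Ai(s),
\]
which is the assertion. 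I expect the only genuinely delicate points to be (i) verifying the global geometry of the deformation — in particular that $S_1$, which is itself a component of the level set $\{\Re h=0\}$, can be pushed onto the descent loop without crossing $0$, and that $\Re h$ is strictly negative on it away from $z=1$ — and (ii) producing the uniform-in-$t$ dominating function near the saddle (the bound $2t\Re h(z)\le-2c_1|\zeta|^{3}$ for $|\zeta|$ in the intermediate range $M\le|\zeta|\le\delta t^{1/3}$) that legitimises the interchange of limit and integral. Both are routine for a cubic Airy-type saddle and overlap almost entirely with the estimates needed for the companion tail bound in Proposition~\ref{prop:Jbound}.
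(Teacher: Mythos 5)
Your argument follows essentially the same route as the paper's proof: both start from the integral representation \eqref{Jdifference}, identify the cubic saddle at $z=1$ with $f(1+u)=\tfrac16 u^3 + \Or(u^4)$, deform to a steep-descent loop, rescale via $z=1+\zeta t^{-1/3}$ to arrive at the Airy-type integral $\tfrac{1}{2\pi\ii}\int e^{\zeta^3/3-s\zeta}(-\zeta)^k\,\d\zeta$, and then identify this with $\partial^k_s\Ai(s)$ by a dominated-convergence/differentiation-under-the-integral argument. The only cosmetic differences are that the paper uses the explicit circle $S_{1+\varepsilon}$ (modified near $z=1$ by the $e^{\pm\ii\pi/3}$ rays) rather than the exact level-set contour, and that it implements the final identification of the Airy derivative via finite differences; both variants are correct and interchangeable.
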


\begin{proof}
In the integral representation of $J^{(k)}$ in \eqref{Jdifference}, the coefficient of the leading term in the exponent as $t\to\infty$ is the function
\[f(z)=-\ln z+\frac{z-z^{-1}}2=\frac16(z-1)^3+\Or((z-1)^4)\]
for which we introduce the following steep descent path.
It follows from the more general derivative calculation in \eqref{steepdescwtf} that the circle $S_{1+\varepsilon}$ is a steep descent path for $\Re(f(z))$ for any $\varepsilon>0$.
Let us choose a small $\varepsilon$ such that $S_{1+\varepsilon}$ replaced in a small neighborhood of $1$ by a segment of the path $\{1+e^{\ii\pi/3}s,s>0\}\cup\{1+e^{-\ii\pi/3}s,s>0\}$
such that they form a closed loop around the origin.

For the contour given above, we perform the usual steep descent analysis, so we first neglect the integral on the right-hand side of \eqref{Jdifference} except for a small neighborhood of $1$.
After the change of variable $z=1+Zt^{-1/3}$, we get that
\begin{equation}\label{Zkint}
J^{(k)}(s,t)\to\frac1{2\pi\ii}\int_{e^{-\ii\pi/3}\infty}^{e^{\ii\pi/3}\infty} \d Z e^{Z^3/3-sZ}(-Z)^k.
\end{equation}
To show that the right-hand side of \eqref{Zkint} and that of \eqref{Jconv} are the same, we apply a dominated convergence argument.
With the notation
\[f(s,Z)=e^{Z^3/3-sZ},\]
we can write
\begin{align*}
\frac1{2\pi\ii}\int_{e^{-\ii\pi/3}\infty}^{e^{\ii\pi/3}\infty}\d Z \frac{\partial^k}{\partial s^k} f(s,Z)
&=\frac1{2\pi\ii}\int_{e^{-\ii\pi/3}\infty}^{e^{\ii\pi/3}\infty}\d Z\lim_{h\to 0}h^{-k}\sum_{p=0}^k\binom kp(-1)^{k-p}f(s+ph,Z)\\
&=\lim_{h\to 0}h^{-k}\sum_{p=0}^k\binom kp(-1)^{k-p}\frac1{2\pi\ii}\int_{e^{-\ii\pi/3}\infty}^{e^{\ii\pi/3}\infty}\d Zf(s+ph,Z)\\
&=\frac{\partial^k}{\partial s^k}\frac1{2\pi\ii}\int_{e^{-\ii\pi/3}\infty}^{e^{\ii\pi/3}\infty}\d Z f(s,Z)
\end{align*}
where we used the dominated convergence theorem in the second step above.
The integrand can be dominated uniformly in $h$ because of the fast decay of $f(s,Z)$ along the $Z$ contour.
\end{proof}

\begin{proposition}\label{prop:Jbound}
Fix a non-negative integer $k$.
For any $c>0$, there is a finite constant $C$ and thresholds $t_0$, $s_0$ such that
\begin{equation}\label{Jbound}
\left|J^{(k)}(s,t)\right|\le Ce^{-cs}
\end{equation}
holds uniformly for $t>t_0$ if $s>s_0$.
\end{proposition}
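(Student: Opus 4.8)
The plan is to run a steep descent analysis with a contour that depends on both $s$ and $t$, chosen so that the combined phase sits below $-cs$ along the whole contour. In contrast to the fixed contour used for the pointwise limit in Proposition~\ref{prop:Jconv}, a contour independent of $s$ cannot work here: any such contour must pass near the critical point $z=1$ of $f$, and there the phase is too large, producing only polynomial decay in $s$. Concretely, I start from the integral representation \eqref{Jdifference}, in which for non-integer $2t+st^{1/3}$ the circle $S_1$ is to be read as a Hankel contour via Schl\"afli's formula, and write the integrand as $e^{\Phi(z)}(z^{-1}-1)^kz^{-1}$ with
\[
\Phi(z)=-(2t+st^{1/3})\ln z+t(z-z^{-1}).
\]
Given $c>0$, I deform the contour onto the circle $S_{r_0}$ with $r_0=1+2ct^{-1/3}$ (the radius shrinking to $1$ at the rate $t^{-1/3}$ is the crucial scaling), together with the two banks of the cut $(-\infty,-r_0]$, which are present only in the Hankel case.

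For the circular part one has the exact identity $\Re\Phi(r_0e^{\ii\theta})=-(2t+st^{1/3})\ln r_0+t(r_0-r_0^{-1})\cos\theta$. Expanding $\ln r_0$ and $r_0-r_0^{-1}$ around $r_0=1$ and inserting $r_0-1=2ct^{-1/3}$, the terms of order $t$ cancel and $-2cs$ survives as the leading term, dominating all error terms once $t\ge t_0(c)$ and $s\ge s_0(c)$; hence $\Re\Phi(r_0)\le -cs$. Using in addition $1-\cos\theta\ge 2\theta^2/\pi^2$ on $[-\pi,\pi]$ and $r_0-r_0^{-1}\ge 2ct^{-1/3}$, one gets the Gaussian bound $\Re\Phi(r_0e^{\ii\theta})\le\Re\Phi(r_0)-c_1t^{2/3}\theta^2$ with $c_1=c_1(c)>0$. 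Since $|z^{-1}-1|\le(r_0-1)+|\theta|=2ct^{-1/3}+|\theta|$ on $S_{r_0}$, the circular contribution to $|J^{(k)}(s,t)|$ is at most
\[
\frac{t^{(k+1)/3}}{2\pi}\,e^{\Re\Phi(r_0)}\int_{\R}\bigl(2ct^{-1/3}+|\theta|\bigr)^k e^{-c_1t^{2/3}\theta^2}\,\d\theta .
\]
The substitution $\theta=\psi t^{-1/3}$ reduces the integral to $t^{-(k+1)/3}$ times a finite constant depending only on $c$ and $k$, which cancels the prefactor $t^{(k+1)/3}$ and leaves a bound $\le C\,e^{-cs}$.

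For the two banks of the cut, write $z=-\tau$ with $\tau\ge r_0$; then $\Re\Phi(-\tau)\le-(2t+st^{1/3})\ln\tau\le -2t\ln r_0-cs$, the last step using $t^{1/3}\ln r_0\ge c$ for $t$ large, so these contributions are at most $C\,t^{(k+1)/3}r_0^{-2t}e^{-cs}$, which is $\le C'e^{-cs}$ because $r_0^{-2t}$ decays faster than any power of $t$. Adding the circular and the cut pieces yields \eqref{Jbound}. The one genuinely delicate point is the bookkeeping of powers of $t$: the prefactor $t^{(k+1)/3}$ must be matched exactly by the factor $t^{-k/3}$ coming from the size of $z^{-1}-1$ near the top of the contour together with the factor $t^{-1/3}$ coming from the width of the Gaussian peak, and all the error terms in the expansion of $\Phi$ near $z=1$ have to be controlled uniformly for $t>t_0$ and $s>s_0$; choosing the radius of $S_{r_0}$ to shrink at precisely the rate $t^{-1/3}$ is what makes this balancing automatic.
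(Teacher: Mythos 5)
Your proof is correct, and it takes a genuinely different route from the paper's. The paper rescales $s$ to $\wt s=t^{-2/3}s$ and moves the contour to a circle of radius $\alpha=1+\min(\wt s^{1/2},\varepsilon^{1/2})$, which approximately tracks the true saddle of the $\wt s$-dependent phase $\wt f$; it then bounds the integral by the value $Q(\alpha)$ at $z=\alpha$ and shows $Q(\alpha)$ decays, obtaining a super-exponential rate $\exp(-\tfrac{2}{3}s^{3/2})$ in the regime $\wt s\le\varepsilon$ and a rate $\exp(-\tfrac13\sqrt\varepsilon\, t^{1/3}s)$ for $\wt s>\varepsilon$, which specialize to $e^{-cs}$ after tuning $\varepsilon$ and $t_0$. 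You instead fix the radius $r_0=1+2ct^{-1/3}$ independently of $s$, compute $\Re\Phi(r_0)$ exactly enough to see that the $O(t)$ terms cancel (indeed, $-2t\ln r_0+t(r_0-r_0^{-1})\le\tfrac{8c^3}{3}$ with no asymptotics needed, since $\tfrac{v^2}{(1+v)^2}\le v^2$), leaving $-2cs$ as the surviving leading term, and then drive the whole circle down with a single Gaussian estimate whose width $t^{-1/3}$ cancels the prefactor $t^{(k+1)/3}$ exactly. Your approach is cleaner — one contour choice, one estimate, no case split — at the cost of getting only the linear rate $e^{-cs}$ (which is all the proposition asks for) rather than the paper's sharper rates. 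Two small remarks. First, your opening claim that ``a contour independent of $s$ cannot work'' is imprecise, since your own $S_{r_0}$ is independent of $s$; what you mean is that a contour independent of both $s$ and $t$ (like the one in the proof of Proposition~\ref{prop:Jconv}) does not give the uniform bound. Second, the statement $\Re\Phi(r_0)\le -cs$ holds only after absorbing the additive constant $\tfrac{8c^3}{3}$ and taking $t_0,s_0$ large enough in terms of $c$ — this is exactly what your phrase ``dominating all error terms once $t\ge t_0(c)$ and $s\ge s_0(c)$'' is doing, so it is fine, but it is worth making the $s_0$-dependence explicit. Your remark on the Hankel contour for non-integer order $2t+st^{1/3}+p$ points at a real subtlety that the paper glosses over (the circle representation \eqref{defJ} assumes integer order), even though in the paper's application the orders are always integers, so the banks contribution never actually arises.
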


\begin{proof}
We adapt the method of proof of Proposition 5.3 in~\cite{BF07} or that of Lemma 4.1 in~\cite{FV} to our setting.
In order to investigate the large values of $s$, we rescale it as
\begin{equation}\label{defwts}
\wt s=t^{-2/3}s
\end{equation}
and we define the function
\begin{equation}\label{defwtf}
\wt f(z)=-\ln z+\frac{z-z^{-1}}2-\frac12\wt s\ln z
\end{equation}
which gives the leading contribution in the exponent of the integrand on the right-hand side of \eqref{Jdifference} if $s$ is of order $t^{2/3}$.

For small values of $\wt s$, this function has two critical points at $1\pm\wt s^{1/2}$ at first order, and we will pass through the larger one.
Hence define
\begin{equation}\label{defalpha}
\alpha=\left\{\begin{array}{ll} 1+\wt s^{1/2} & \mbox{if}\ \wt s\le\varepsilon,\\ 1+\varepsilon^{1/2} & \mbox{if}\ \wt s>\varepsilon\end{array}\right.
\end{equation}
for some small $\varepsilon>0$ to be chosen later.

On the right-hand side of \eqref{Jdifference}, we can change the integration contour by the Cauchy theorem to a circle of radius $\alpha$.
Using the definition \eqref{defwtf}, this yields
\begin{equation}\label{Jkwithwtf}
J^{(k)}(s,t)=t^{\frac{k+1}3}\frac1{2\pi\ii}\int_{S_\alpha}\d z \exp(2t\wt f(z))z^{-1}\left(z^{-1}-1\right)^k.
\end{equation}
The path $S_\alpha$ is steep descent for the function $\Re(\wt f(z))$, since
\begin{equation}\label{steepdescwtf}
\frac{\partial}{\partial\theta}\Re(\wt f(\alpha e^{\ii\theta}))=-\frac{\alpha-\alpha^{-1}}2\sin\theta
\end{equation}
which is negative for $\alpha>1$ and $\theta\in(0,\pi)$.

We show that the integral in \eqref{Jkwithwtf} can be bounded by the value of the integrand at $\alpha$.
To this end, define
\begin{equation}
Q(\alpha)=t^{k/3}\exp\left(\Re\left(2t\wt f(\alpha)\right)\right)\left|\alpha^{-1}(\alpha^{-1}-1)^k\right|.
\end{equation}
Let $S_\alpha^\delta=\{\alpha e^{\ii\theta},|\theta|\le\delta\}$ for some small $\delta>0$.
By the steep descent property of $S_\alpha$, the contribution of the integral over $S_\alpha\setminus S_\alpha^\delta$ in \eqref{Jkwithwtf}
is bounded by $Q(\alpha)\Or(e^{-\gamma t})$ where $\gamma>0$ does not depend on $t$.

To bound the integral over $S_\alpha^\delta$, we first observe by series expansion that
\begin{equation}\label{wtfTaylor}
\Re(\wt f(\alpha e^{\ii\theta})-\wt f(\alpha))=-\frac{\alpha-\alpha^{-1}}4\theta^2(1+\Or(\theta)).
\end{equation}
We also use that for any $z\in S_\alpha^\delta$
\begin{equation}\label{ratio1}
\left|\frac{z^{-1}(z^{-1}-1)^k}{\alpha^{-1}(\alpha^{-1}-1)^k}\right|\le\wt K\wt s^{-k/2}\le Kt^{k/3}
\end{equation}
holds for some $\wt K$ and $K$ constants by \eqref{defalpha} if $s$ is larger than some $s_0$.
If $z=\alpha e^{\ii\theta}$ is such that $|\theta|\le t^{-1/3}$, then the stronger bound
\begin{equation}\label{ratio2}
\left|\frac{z^{-1}(z^{-1}-1)^k}{\alpha^{-1}(\alpha^{-1}-1)^k}\right|\le K
\end{equation}
applies.
Putting the estimates \eqref{wtfTaylor}, \eqref{ratio1} and \eqref{ratio2} together,
we get that the integral on the right-hand side of \eqref{Jkwithwtf} taken only over $S_\alpha^\delta$ can be bounded by
\begin{multline}\label{maincontr}
Q(\alpha)\left|\frac{t^{1/3}}{2\pi\ii}\int_{S_\alpha^\delta}\d z\exp\left(2t(\wt f(z)-\wt f(\alpha))\right)\frac{z^{-1}(z^{-1}-1)^k}{\alpha^{-1}(\alpha^{-1}-1)^k}\right|\\
\le Q(\alpha)\frac{t^{1/3}}{2\pi}\alpha\bigg[\int_{[-t^{-\frac13},t^{-\frac13}]} \d\theta e^{-\frac{\alpha-\alpha^{-1}}2 \theta^2 t(1+\Or(\theta))}K
+\int_{\substack{[-\delta,\delta]\setminus\\ [-t^{-\frac13},t^{-\frac13}]}} \d\theta e^{-\frac{\alpha-\alpha^{-1}}2 \theta^2 t(1+\Or(\theta))}Kt^{k/3}\bigg]
\end{multline}
after the change of variable $z=\alpha e^{\ii\theta}$.
For $t$ large enough, the error terms in the exponents on the right-hand side of \eqref{maincontr} and factor $t^{k/3}$ in the second integral can be removed
by replacing $(\alpha-\alpha^{-1})/2$ by $(\alpha-\alpha^{-1})/4$.
Thus we get that
\begin{equation}
\eqref{maincontr}\le Q(\alpha)\frac{t^{1/3}}{2\pi}\alpha\int_{-\delta}^\delta \d\theta\exp\left(-\frac{\alpha-\alpha^{-1}}4t\theta^2\right)
\le Q(\alpha)\frac\alpha{\sqrt{2\pi\frac{\alpha-\alpha^{-1}}4 t^{1/3}}}
\end{equation}
by bounding the Gaussian integral.
The estimate above is the largest if $\alpha$ is close to $1$.
Note that, by \eqref{defalpha} and \eqref{defwts},
\begin{equation}
(\alpha-\alpha^{-1})t^{1/3}\sim2\wt s^{1/2}t^{1/3}\sim2s^{1/2}
\end{equation}
which is large if $s$ is large enough, therefore \eqref{maincontr} is at most constant times $Q(\alpha)$.

Hence, it remains to bound $Q(\alpha)$ exponentially in $s$.
For this end, we use the Taylor expansion
\begin{equation}
\wt f(z)=\left(\frac{(z-1)^3}6-\frac12\wt s(z-1)\right)(1+\Or(z-1)).
\end{equation}

If $\wt s\le\varepsilon$, then
\begin{equation}\begin{aligned}
Q(\alpha)&=\exp\left(-\frac23t\wt s^{3/2}\left(1+\Or\left(\sqrt\varepsilon\right)\right)\right)t^{k/3}\wt s^{k/2}\left(1+\Or\left(\wt s^{-1/2}\right)\right)\\
&=\exp\left(-\frac23s^{3/2}\left(1+\Or\left(\sqrt\varepsilon\right)\right)\right)s^{k/2}\left(1+\Or\left(t^{-1/3}\right)\right)
\end{aligned}\end{equation}
which is even stronger than what we had to prove.

If $\wt s>\varepsilon$, then
\begin{equation}
Q(\alpha)=\exp\left(t\sqrt\varepsilon\left(\frac{\varepsilon}3-\wt s\right)\left(1+\Or\left(\sqrt\varepsilon\right)\right)\right)t^{k/3}\varepsilon^{k/2}\left(1+\Or\left(\sqrt\varepsilon\right)\right)
\le\exp\left(-\frac13\sqrt\varepsilon t^{1/3} s\right)
\end{equation}
since $\varepsilon/3-\wt s\le-\frac23\wt s=-\frac23 t^{-2/3}s$, and the error terms can be removed by replacing $\frac23$ by $\frac13$ for any given $\varepsilon>0$ and for $t$ large enough.
This finishes the proof.
\end{proof}

\paragraph{Acknowledgements.}
The authors thank Patrik Ferrari for many illuminating discussions at different stages of the project and Mattia Cafasso for the discussion which led to Remark~\ref{rem:cafasso}.
B.\ V.\ thanks the invitation of Lun Zhang and the hospitality of the KU Leuven where the present research was initiated.
He is grateful for the generous support of the Humboldt Research Fellowship for Postdoctoral Researchers during his stay at the University of Bonn
and for the Postdoctoral Fellowship of the Hungarian Academy of Sciences.
His work was partially supported by OTKA (Hungarian National Research Fund) grant K100473.
S.\ D.\ is a Postdoctoral Fellow of the Fund for Scientific Research -- Flanders (Belgium).

\end{document}